\documentclass[11pt,]{article}
\pdfoutput=1
\usepackage{todonotes}

\usepackage[linktocpage,colorlinks,linkcolor=blue,anchorcolor=blue,citecolor=blue,urlcolor=blue,pagebackref]{hyperref}
\usepackage{tikz}
\RequirePackage[authoryear]{natbib}
\setcitestyle{authoryear,open={(},close={)}} 

\usepackage{mathtools,amsthm,amssymb,bm,mathrsfs}
\allowdisplaybreaks

\usepackage{algorithm}
\usepackage[noend]{algpseudocode}
\usepackage{booktabs,tabularx,array,longtable}
\usepackage{enumitem}
\usepackage{xcolor}

\makeatletter
\providecommand*{\theHALG@line}{}
\renewcommand*{\theHALG@line}{\thealgorithm.\arabic{ALG@line}}
\makeatother
\usepackage{aliascnt}
\usepackage[capitalise,nameinlink,noabbrev]{cleveref}

\usepackage[margin=1in]{geometry}

\renewcommand*{\backref}[1]{\ifx#1\relax \else Page #1 \fi}
\renewcommand*{\backrefalt}[4]{%
  \ifcase #1 \footnotesize{(Not cited.)}%
  \or        \footnotesize{(Cited on page~#2.)}%
  \else      \footnotesize{(Cited on pages~#2.)}%
  \fi
}
\numberwithin{equation}{section}

\usepackage{tikz}
\usetikzlibrary{arrows.meta,angles,quotes,calc}

\theoremstyle{plain}
\newtheorem{theorem}{Theorem}[section]
\newaliascnt{proposition}{theorem}
\newtheorem{proposition}[proposition]{Proposition}
\aliascntresetthe{proposition}
\newaliascnt{lemma}{theorem}
\newtheorem{lemma}[lemma]{Lemma}
\aliascntresetthe{lemma}
\newaliascnt{corollary}{theorem}
\newtheorem{corollary}[corollary]{Corollary}
\aliascntresetthe{corollary}
\theoremstyle{definition}
\newaliascnt{definition}{theorem}
\newtheorem{definition}[definition]{Definition}
\aliascntresetthe{definition}
\newaliascnt{assumption}{theorem}
\newtheorem{assumption}[assumption]{Assumption}
\aliascntresetthe{assumption}
\newaliascnt{example}{theorem}
\newtheorem{example}[example]{Example}
\aliascntresetthe{example}
\theoremstyle{remark}
\newaliascnt{remark}{theorem}
\newtheorem{remark}[remark]{Remark}
\aliascntresetthe{remark}

\crefname{theorem}{Theorem}{Theorems}
\crefname{proposition}{Proposition}{Propositions}
\crefname{lemma}{Lemma}{Lemmas}
\crefname{corollary}{Corollary}{Corollaries}
\crefname{definition}{Definition}{Definitions}
\crefname{assumption}{Assumption}{Assumptions}
\crefname{remark}{Remark}{Remarks}
\crefname{example}{Example}{Examples}

\newcommand{\R}{\mathbb{R}}
\newcommand{\N}{\mathbb{N}}
\newcommand{\Nzero}{\mathbb{N}_0}
\newcommand{\Z}{\mathbb{Z}}
\newcommand{\E}{\mathbb{E}}
\newcommand{\Pp}{\mathbb{P}}
\newcommand{\1}{\mathbf{1}}

\newcommand{\TV}{\mathrm{TV}}
\newcommand{\Law}{\operatorname{Law}}
\newcommand{\supp}{\operatorname{supp}}
\newcommand{\dist}{\operatorname{dist}}
\newcommand{\ov}{\operatorname{OV}}
\newcommand{\Tr}{\operatorname{tr}}
\newcommand{\ip}[2]{\left\langle #1,#2\right\rangle}
\newcommand{\norm}[1]{\left\lVert #1\right\rVert}
\newcommand{\abs}[1]{\left\lvert #1\right\rvert}
\newcommand{\dd}{\,\mathrm{d}}

\newcommand{\PM}{P_{\mathrm M}}
\newcommand{\PB}{P_{\mathrm B}}

\newcommand{\RB}{R_{\mathrm B}}
\newcommand{\flow}{\varphi}

\newcommand{\gap}{\operatorname{gap}}
\newcommand{\sinc}{\operatorname{sinc}}

\title{\textbf{A Profile-Separation Framework for Quantitative Convergence of No-U-Turn Samplers}}

\author{Krishnakumar Balasubramanian\\
Department of Statistics, University of California, Davis. \\
\texttt{kbala@ucdavis.edu}
}

\date{}
\begin{document}
\maketitle

\begin{abstract}
We study multinomial and biased-progressive No-U-Turn Samplers for strongly log-concave targets satisfying $mI_d\preceq \nabla^2U(x)\preceq LI_d,$ and $\|\nabla^2U(x)-\nabla^2U(y)\|_{\mathrm F}\le \gamma L^{3/2}\|x-y\|$ with \(\kappa\coloneqq L/m\).  We introduce profile separation, a sufficient sign condition on the stationary mean U-turn diagnostics, and combine it with diagnostic concentration, leapfrog fidelity, and whole-orbit energy control to show that on a high-probability certification event, every doubling realization reaches a common terminal depth through a genuine U-turn.  If \(T_\star\) is the selected physical trajectory length and \(a_\star=\sqrt m\,T_\star\), a terminal-depth transfer argument yields restricted conductance and warm-start mixing without lazifying either kernel.  Up to logarithmic warm-start and accuracy factors, the transition bounds are
\[
  \widetilde O\!\left(
    1+a_\star^2\kappa^2(1+\gamma)^{4/3}
  \right)
  \quad\text{and}\quad
  \widetilde O\!\left(
    1+a_\star^4\kappa^3(1+\gamma)^2
  \right)
\]
for multinomial and biased-progressive selection, respectively. These transition bounds are unconditional.  Gradient-work bounds are deterministic when the maximum-depth cap is comparable to the certified depth and otherwise take cap-aware expected and high-probability forms. The framework recovers the Gaussian dimension dependence under these work-accounting conditions, provides population-profile and exact-diagnostic verification for nonlinear product targets, a near-isotropic specialization of the practical-tree certificate, and quantifies when a fixed post-warmup metric removes linear anisotropy.
\end{abstract}

\noindent\textbf{Keywords:} Hamiltonian Monte Carlo; No-U-Turn Sampler; profile separation; strong log-concavity; biased progressive sampling; Gaussian phase transition.
\medskip

\section{Introduction}
\label{sec:introduction}

Hamiltonian Monte Carlo (HMC)~\citep{duane1987hybrid,neal2011mcmc,betancourt2017geometric} suppresses random-walk behavior by using
Hamiltonian trajectories to generate distant proposals, but its
performance depends strongly on the integration time.  A trajectory
that is too short produces little movement, whereas a trajectory that
is too long can waste computation by retracing regions already
visited.  The No-U-Turn Sampler (NUTS) of
\citet{HoffmanGelman2014} addresses this tuning problem by constructing
a leapfrog orbit recursively and terminating it when endpoint momentum
vectors indicate that the trajectory has begun to turn back toward its
starting point.  This state-dependent choice of trajectory length, implemented in widely used probabilistic-programming systems such as Stan, PyMC, NumPyro, and Turing.jl, is central to the practical success of NUTS, but it is also the main obstacle to a quantitative mixing theory.

The difficulty is not merely that the integration time is random.  A
NUTS transition depends jointly on a refreshed momentum, a sequence of
random left--right doubling decisions, the signs of endpoint
diagnostics on every completed tree and recursively inspected subtree,
the numerical energy error over the resulting orbit, and the rule used
to select the next state from that orbit.  Consequently, quantitative
results for fixed-time HMC do not transfer directly to NUTS.  One
must first show that the adaptive tree construction itself is stable
and then determine whether the completed orbit contains enough
well-behaved fixed-time movement to cross arbitrary bottlenecks of the
target distribution.

The invariance and qualitative convergence theory of dynamic HMC and NUTS constructions has been developed by
\citet{DurmusEtAl2024} and within the Gibbs Self-tuning (GIST) framework of
\citet{BouRabeeCarpenterMarsden2026}.  These works isolate the orbit-selection and
re-rooting symmetries needed for reversibility and ergodicity.
Quantitative mixing results have appeared more recently and have so far
relied primarily on Gaussian structure.  For the canonical Gaussian,
\citet{BouRabeeOberdoerster2024} prove a
\(\widetilde O(d^{1/4})\) mixing bound by showing that the random U-turn
diagnostics concentrate around the deterministic function \(d\sin t\).
For anisotropic Gaussian targets,
\citet{Oberdoerster2025} \textcolor{black}{identifies} a covariance-dependent deterministic
term and show that its sign pattern explains both accelerated and
trapped orbit-selection regimes.  \citet{GruffazEtAl2026} compare
multinomial and biased-progressive state selection and obtain
\(\widetilde O(d^{1/4})\) Gaussian mixing results for both variants.
These analyses establish that concentration of a deterministic
time-dependent signal can make the adaptive NUTS transition nearly
state independent.  

This paper develops such a formulation for $(m,L)$-strongly log-concave
smooth targets.  We study the revised random-doubling orbit builder with
recursive sub-U-turn rejection and two state-selection rules:
multinomial NUTS and biased-progressive NUTS.  The step size, kinetic
metric, and maximum depth are fixed during the stationary sampling
phase.  No fixed-length HMC branch, artificial self-loop, modified
stopping rule, or auxiliary safeguard is added to either kernel.

\paragraph{The stationary U-turn profile.}
Let \((X_t,V_t) \in \mathbb{R}^{2d}\) denote exact Hamiltonian flow initialized from
equilibrium.  We consider the stationary U-turn profile
\begin{align*}
  u(t)
  =
  \E\!\left[
    V_0^\top(X_t-X_0)
  \right]
  =
  \E\!\left[
    V_t^\top(X_t-X_0)
  \right].
\end{align*}
Time reversibility and differentiation of the equilibrium
mean-square displacement give
\begin{align*}
  u(t)
  =
  \frac12\frac{\dd}{\dd t}
  \E\norm{X_t-X_0}^2.
\end{align*}
Gaussian integration by parts gives the additional Jacobi-field
representation
\begin{align*}
  u(t)
  =
  \E\Tr(J_t),
  \qquad
  J_t=\frac{\partial X_t}{\partial V_0}.
\end{align*}
Thus \(u(t)\) is the equilibrium mean of either endpoint diagnostic
used by the NUTS stopping rule.  It is also the derivative of the
mean-square displacement and the trace of the linear response of
position to the initial momentum.

For a leapfrog step size \(h\), the physical durations inspected by the
doubling procedure are $  \tau_k=h(2^k-1).$ The central population-level object in this paper is
\emph{profile separation}, introduced in
\Cref{def:profile-separated}.  A depth \(k_\star\) is profile separated
when the profile is uniformly positive at every relevant preterminal
dyadic duration and uniformly negative at the candidate terminal
duration, with a margin of order \(d/\sqrt m\).  Very short intervals
are excluded from this macroscopic margin condition and are handled by
a deterministic no-U-turn estimate.


The population profile alone does not determine a numerical NUTS tree.
We therefore introduce intrinsic quantities that measure precisely the
errors relevant to the practical stopping rule: concentration of the
exact endpoint diagnostics around \(u(t)\), leapfrog error in those
scalar diagnostics, failure of a uniform energy window, and failure of
positivity on very short checked intervals.  These quantities do not
require a bound on the full Hamiltonian Jacobian.

The main result, \Cref{thm:adaptive}, states that if a depth
\(k_\star<k_{\max}\) is profile separated and the combined stochastic
and numerical diagnostic error is smaller than the profile margin,
then, on a high-probability phase-space event,

\begin{enumerate}[noitemsep, label=(\roman*),leftmargin=2.2em]
\item every completed orbit and recursively inspected proper subtree
      below depth \(k_\star\) has two positive numerical endpoint
      diagnostics;

\item every possible completed orbit at depth \(k_\star\) has two
      negative numerical endpoint diagnostics;

\item every realization of the random left--right doubling decisions
      returns the same cardinality
      \(K_\star=2^{k_\star}\);

\item termination occurs through the U-turn criterion, strictly before
      the maximum-depth cap; and

\item numerical energies remain uniformly comparable along every
      possible terminal orbit.
\end{enumerate}

Writing
\[
  T_\star=(K_\star-1)h,
  \qquad
  a_\star=\sqrt m\,T_\star,
  \qquad
  \kappa=\frac Lm,
\]
the theorem gives the warm-start transition bounds
\begin{align*}
  n_{\mathrm M}
  &\le
  C\left[
    1+
    a_\star^2\kappa^2(1+\gamma)^{4/3}
  \right]
  \log\!\left(\frac{CM}{\epsilon}\right),
  \\
  n_{\mathrm B}
  &\le
  C\left[
    1+
    a_\star^4\kappa^3(1+\gamma)^2
  \right]
  \log\!\left(\frac{CM}{\epsilon}\right),
\end{align*}
for multinomial and biased-progressive NUTS, respectively.  These are unconditional transition bounds.  On a certified transition the orbit cost is of order \(K_\star\), but an uncertified transition may run to the maximum size
\[
  K_{\mathrm{cap}}=2^{k_{\max}}.
\]
Accordingly, the deterministic work bound for \(n_a\) transitions is of order \(K_{\mathrm{cap}}n_a\).  We prove sharper expected and high-probability bounds that interpolate between \(K_\star n_a\) and \(K_{\mathrm{cap}}n_a\), and recover deterministic order \(K_\star n_a\) when \(K_{\mathrm{cap}}=O(K_\star)\).  These bounds are deliberately expressed in
terms of the trajectory length selected by the U-turn rule rather than
an externally prescribed integration time.

\subsection{Contributions}
The main contributions are as follows.

\begin{enumerate}[leftmargin=2.4em,label=\textup{(\arabic*)}]

\item
\emph{A target-independent stopping certificate:}
We abstract the deterministic-profile mechanism implicit in the
Gaussian analyses of
\citet{BouRabeeOberdoerster2024,Oberdoerster2025} into profile separation.
The profile
\[
  u(t)
  =
  \frac12\frac{\dd}{\dd t}
  \E\norm{X_t-X_0}^2
  =
  \E\Tr(J_t)
\]
is defined for a general strongly log-concave target and replaces the
explicit Gaussian sine functions.  The intrinsic terminal-depth
certificate in \Cref{prop:intrinsic-certificate} transfers its signs
simultaneously to every full orbit and recursive subtree that the
practical leapfrog tree can inspect.

\item
\emph{A terminal-depth-to-conductance theorem for two NUTS selectors:}
Once a common terminal depth and an energy window have been certified,
we show that each NUTS transition contains an explicit mixture of
fixed-index leapfrog proposals.  The selector weights are triangular
for multinomial selection and double-tent shaped for
biased-progressive selection.  A fixed-time overlap estimate then
gives overlap of nearby NUTS transition laws, and Cheeger isoperimetry
converts this local overlap into restricted conductance.  This is
formalized by \Cref{thm:terminal-transfer}.  The result separates the
trajectory problem---proving where the adaptive tree stops---from the
mixing problem---proving that the resulting kernel crosses every
measurable cut.

\item
\emph{Positive operators for the original, non-lazy kernels:}
The initial-point symmetry of revised random doubling and the
finite-orbit Biased Progressive Sampling (BPS) detailed-balance identity are established properties
of the modern NUTS construction
\citep{DurmusEtAl2024,BouRabeeCarpenterMarsden2026}.  We use these identities in a
different way.  For multinomial NUTS, orbit re-rooting is represented
as an orthogonal projection on an augmented Hilbert space, implying $  \PM\succeq0.$ For BPS, the finite-orbit selector can have negative eigenvalues, but
reversibility implies $  \RB=\PB^2\succeq0.$ These positive operators allow the conductance argument to control the
original multinomial chain and the unchanged two-step BPS skeleton,
without artificially lazifying either sampler.

\item
\emph{General trajectory and numerical-analysis modules:}
We prove equilibrium identities for the nonlinear profile,
mean-square-displacement comparison inequalities, and the universal
initial-positivity bound
\[
  u(t)
  \ge
  \frac d{\sqrt L}\sin(\sqrt L\,t),
  \qquad
  0<t\le\frac{\pi}{2\sqrt L}.
\]
We establish concentration of the exact scalar endpoint diagnostics,
a deterministic leapfrog-to-flow comparison for those diagnostics,
and a simultaneous no-U-turn lemma for short numerical intervals.
From the one-step energy moment estimate of
\citet{ChenGatmiryJiang2026}, we derive a whole-orbit energy window by
a stopped induction and volume-preserving changes of variables.  From
the same work we use the fixed-index leapfrog proposal-overlap
estimate. Unlike~\cite{ChenGatmiryJiang2026}, we do not use Metropolis correction, fixed-length HMC kernel or lazification.

\item
\emph{Non-Gaussian profile verification:}
We give a Koopman spectral representation
\[
  u(t)
  =
  \int_{(0,\infty)}
  \omega\sin(\omega t)\,\nu_U(\dd\omega)
\]
and derive a positive-frequency crossing criterion.  It gives profile crossing and exact-diagnostic concentration
for one-dimensional strongly log-concave smooth targets and for independent
nonlinear products at arbitrary finite condition number. We also give perturbative profile bounds for near-isotropic coupled
targets and a metric-adapted result showing that a fixed post-warmup
kinetic metric can remove raw linear anisotropy.  For the product class, the practical leapfrog fidelity and whole-orbit energy conditions remain separate hypotheses unless verified by an additional numerical argument.  For more general
coupled nonlinear targets, we formulate polynomial tangent-stability
and relative log-Hessian variation conditions that suffice to verify
the intrinsic certificate.  These are sufficient verification routes,
not assumptions built into the main theorem.

\item
\emph{Gaussian specialization and reconciliation with recent theory:}
For \(\pi=N(0,C)\), with precision \(\Lambda=C^{-1}\), we show
\begin{align*}
  u(t)
  =
  \Tr\!\left(
    \Lambda^{-1/2}\sin(\Lambda^{1/2}t)
  \right)
  =
  \Tr\!\left(
    \sin(C^{-1/2}t)C^{1/2}
  \right).
\end{align*}
The second expression is exactly the deterministic uniform term in
\citet{Oberdoerster2025}.  We prove a finite-dimensional margin
certificate, direct mode-by-mode leapfrog fidelity, and a uniform
energy bound (with no exponential dependence on \(\kappa\)).  We then
show that profile separation recovers the two-scale accelerated/trapped
dichotomy: the accelerated phase is precisely the absence of a
negative profile value during the first fast period, while trapped
step sizes are dyadic preimages of the negative fast-profile set.

For the exact-flow, equal-energy Gaussian profile kernels associated
with both selectors, let \(\widetilde T=Kh\) be the selector-rescaling time
and \(\widetilde a=\sqrt m\,\widetilde T\).  The physical span of the
\(K\)-point orbit is \((K-1)h\), which is universally comparable to
\(\widetilde T\) for \(K\ge4\).  We obtain
\[
  K n_\sigma
  =
  \widetilde O\!\left(
    \sqrt\kappa\,d^{1/4}
    \frac{
      \widetilde a
    }{
      \min\{\widetilde a^2,1\}
    }
    \log\!\frac{M}{\epsilon}
  \right).
\]
This gives
\(\widetilde O(\sqrt\kappa\,d^{1/4})\) when
\(\widetilde a=\Theta(1)\) and
\(\widetilde O(\kappa d^{1/4})\) when
\(\widetilde a=\Theta(\kappa^{-1/2})\), subject to the stated time-grid
resolution condition.  The sharp interpolation is proved for the
Gaussian profile kernels.  We do not claim the corresponding sharp
practical leapfrog interpolation for anisotropic BPS.

\end{enumerate}

\paragraph{Relation to existing proof techniques.}
The paper builds upon several established ingredients with many new
NUTS-specific reductions.  Initial-point symmetry of random doubling,
finite-orbit BPS detailed balance, and the ideal triangular and
double-tent selector laws come from the modern dynamic-HMC and Gaussian
NUTS literature
\citep{DurmusEtAl2024,BouRabeeCarpenterMarsden2026,Oberdoerster2025,GruffazEtAl2026}.
The single-step leapfrog energy moment and fixed-index proposal-overlap
estimates are obtained following \citet{ChenGatmiryJiang2026}.  The conversion from conductance to
warm-start mixing for a positive kernel is a standard
nonnegative-spectrum variant of the classical conductance argument.

The new proof architecture begins before those ingredients can be
used.  It identifies a population profile for nonlinear targets,
quantifies the stochastic and numerical errors of the actual U-turn
diagnostics, certifies one common terminal tree depth for every
doubling realization on a high-probability phase-space event, derives energy-window minorizations of both
state-selection rules, and converts the resulting fixed-time movement
into conductance for the original NUTS kernels.  In this sense, the
main contribution is not a new fixed-time HMC bound.  It is a
trajectory-to-mixing reduction showing when the adaptive stopping
mechanism of practical NUTS behaves predictably enough for fixed-time
HMC estimates to become useful. The transition-mixing conclusions are global; computational work is accounted for separately because uncertified transitions remain part of the actual Markov chain and may run to the maximum-depth cap.

\subsection{Related work}
\label{sec:related-work}

Hamiltonian Monte Carlo (HMC) was introduced by
\citet{duane1987hybrid} and developed as a general-purpose MCMC
methodology by \citet{neal2011mcmc}.  Its practical and theoretical
performance depends on both the numerical step size and the integration
time.  High-dimensional scaling for leapfrog HMC was studied by
\citet{BeskosEtAl2013}, while nonasymptotic convergence analyses for
strongly log-concave and more general targets were developed using
continuous-dynamics arguments, numerical-integrator estimates,
coupling, and conductance
\citep{MangoubiSmith2019,MangoubiSmith2021,
BouRabeeEberleZimmer2020,ChenDwivediWainwrightYu2020}.
Randomizing the integration time provides a principled way to avoid
periodic or poorly tuned trajectories, as formalized by randomized HMC
\citep{BouRabeeSanzSerna2017} and exploited for sharp Gaussian sampling
bounds by \citet{ApersGriblingSzilagyi2024}.  Most directly relevant to
the present analysis, \citet{ChenGatmiryJiang2026} obtain a warm-start
\(\widetilde O(d^{1/4})\) gradient complexity for Metropolized HMC under
smoothness, Frobenius-Hessian regularity, and isoperimetry; we use their
single-step energy-moment and fixed-time proposal-overlap estimates, but
not their fixed-length kernel, Metropolis correction, or lazification.

The No-U-Turn Sampler (NUTS) was introduced by
\citet{HoffmanGelman2014} to adapt the HMC path length through a
recursively constructed trajectory and an endpoint U-turn criterion.
The dependence of the completed orbit and selected state on the current
position, refreshed momentum, doubling decisions, and numerical
trajectory makes correctness and convergence substantially more
delicate than for fixed-time HMC.  \citet{DurmusEtAl2024} place NUTS
within a general dynamic-HMC framework and establish invariance,
irreducibility, and qualitative ergodicity under appropriate
conditions.  The GIST framework of
\citet{BouRabeeCarpenterMarsden2026} gives an auxiliary-variable
formulation of locally adaptive HMC and identifies NUTS, randomized HMC,
and related path-length adaptations as instances of a common
construction.  These works provide the orbit-selection, re-rooting, and
detailed-balance structure used below; their principal focus is
algorithmic correctness and qualitative convergence rather than
nonasymptotic mixing for nonlinear strongly log-concave targets.

Quantitative theory for NUTS is more recent and has largely relied on
Gaussian structure.  \citet{BouRabeeOberdoerster2024} prove a
\(\widetilde O(d^{1/4})\) gradient bound for the canonical Gaussian by
showing that concentration makes the locally adapted NUTS transition
nearly state independent.  \citet{Oberdoerster2025} \textcolor{black}{extends} this
viewpoint to anisotropic and two-scale Gaussian targets, where a
covariance-dependent deterministic U-turn signal explains both
accelerated and trapped orbit-selection regimes.
\citet{GruffazEtAl2026} compare multinomial and biased-progressive NUTS,
establish \(\widetilde O(d^{1/4})\) Gaussian mixing for both selectors,
and study their qualitative ergodicity.  The present work abstracts the
deterministic-signal mechanism through the stationary U-turn profile
and the sufficient condition of profile separation.  Combined with
intrinsic diagnostic concentration, leapfrog fidelity, and energy
control, this yields a terminal-depth-to-conductance analysis for both
selectors on strongly log-concave targets, while recovering the
deterministic Gaussian profile and the previously identified two-scale
regimes as special cases.

Recent work has broadened high-accuracy sampling theory beyond Metropolized HMC.  In complementary directions, \citet{LuLuo2026Windowed} give exact
windowed-thinning implementations and Gaussian-cold-start query bounds
for the bouncy particle and Zigzag samplers, while
\citet{MonmarcheWang2026Entropy} prove square-root entropic acceleration
for continuous-time randomized HMC under a log-Sobolev inequality and
mild nonconvexity, but show that uniform exponential contraction in
relative entropy fails for bouncy particle and Zigzag samplers even for a standard Gaussian
target.  Finally, \citet{ChenChewiRakhlinZhang2026} use path-space
rejection correction of underdamped Langevin diffusion to obtain
\(\widetilde O(\kappa^{2/3}d^{1/3})\) first-order query complexity in
Rényi divergence, with improved \(d^{1/5}\) dependence in a bounded
higher-smoothness regime using Hessian--vector queries.  These results
are complementary to the HMC analysis: they replace the practical
leapfrog--Metropolis implementation and warm-start conductance argument
by, respectively, exact PDMP simulation, continuous-time entropic
analysis, or rejection correction of an entire diffusion path.

\subsection{Organization.}
\Cref{sec:common-kernel} defines the common random-doubling orbit and
the two state-selection rules.
\Cref{sec:main} introduces profile separation, the intrinsic
diagnostic moduli, and the main theorem.
\Cref{sec:spectral} develops reversibility and the positive operators
used in the mixing argument. \Cref{sec:terminal} states the terminal-depth transfer theorem.
\Cref{sec:analytic} records the short-trajectory, energy, and
fixed-time overlap estimates.
\Cref{sec:profile-proof} develops the nonlinear stationary profile and
the terminal-depth certificate.
The non-Gaussian and Gaussian specializations appear in
\Cref{sec:non-gaussian-specializations,sec:gaussian-specialization}, respectively.
The positive-kernel conductance argument, selector laws, and detailed
specialization proofs are deferred to the appendices.

\section{The common dynamic NUTS orbit}
\label{sec:common-kernel}

\subsection{Hamiltonian dynamics and leapfrog integration}

Let
\[
  \pi(\dd x)=Z^{-1}e^{-U(x)}\dd x,
  \qquad x\in\R^d,
\]
where \(U:\R^d\to\R\) is such that \(U\in C^3(\R^d)\) and \(Z<\infty\).  Introduce an independent momentum \(v\sim N(0,I_d)\) and the phase-space law
\begin{align*}
  \bar\pi(\dd x\,\dd v)
  =\pi(\dd x)\varphi(\dd v)
  =\bar Z^{-1}e^{-H(x,v)}\dd x\,\dd v,
  \qquad
  H(x,v)=U(x)+\frac12\norm{v}^2.
\end{align*}
For \(h>0\), one leapfrog step \(\Phi_h\) is
\begin{align}
  v_{1/2}&=v-\frac h2\nabla U(x),\label{eq:lf1}\\
  x'&=x+h v_{1/2},\notag\\
  v'&=v_{1/2}-\frac h2\nabla U(x').\label{eq:lf3}
\end{align}
Defining the momentum-kick and position-drift maps respectively by $K_\tau(x,v)
  =
  \bigl(x,\;v-\tau\nabla U(x)\bigr)$ and $D_\tau(x,v)
  =
  \bigl(x+\tau v,\;v\bigr)$, the leapfrog map is $\Phi_h
  =
  K_{h/2}\circ D_h\circ K_{h/2}$. Equivalently,
\[
  \Phi_h(x,v)
  =
  \left(
    x+h v-\frac{h^2}{2}\nabla U(x),
    \;
    v-\frac h2\nabla U(x)
      -\frac h2\nabla U\!\left(
        x+h v-\frac{h^2}{2}\nabla U(x)
      \right)
  \right).
\]
The momentum-flip map is $  S(x,v)=(x,-v).$ Recall that a \emph{shear map} leaves one block of coordinates fixed and translates
the other block by an amount depending only on the fixed block.  In this context,
\(K_\tau\) leaves \(x\) unchanged and translates \(v\) by
\(-\tau\nabla U(x)\), whereas \(D_\tau\) leaves \(v\) unchanged and
translates \(x\) by \(\tau v\).  Although such shear maps may distort shapes,
they do not change phase-space volume.  Hence each leapfrog substep is
volume preserving, and therefore their composition $K_{h/2}\circ D_h\circ K_{h/2}
$ is also volume preserving. Direct substitution into \eqref{eq:lf1}--\eqref{eq:lf3} gives
\begin{align}
  \Phi_h^{-1}=S\circ\Phi_h\circ S.
  \label{eq:lf-reversible}
\end{align}

\subsection{Index orbits, endpoint U-turns, and recursive sub-U-turns}

An \emph{index orbit} is a finite consecutive set \(I\subset\Z\)
containing zero. Given \(z=(x,v)\), write
\[
  z_i=(x_i,v_i)=\Phi_h^i z,
  \qquad i\in\Z.
\]
Writing \(a=\min I\) and \(b=\max I\), its endpoint U-turn indicator is
\begin{align}
  u_z(I)
  =
  \1\!\left\{
    v_b^\top(x_b-x_a)<0
    \ \text{or}\ 
    v_a^\top(x_b-x_a)<0
  \right\}.
  \label{eq:uturn}
\end{align}

\begin{tikzpicture}[
    >=Latex,
    point/.style={
        circle,
        fill=black,
        inner sep=2pt
    },
    displacement/.style={
        ->,
        very thick,
        blue
    },
    velocity/.style={
        ->,
        very thick,
        red
    },
    every node/.style={
        font=\small
    }
]

\begin{scope}[xshift=0cm]

\node[font=\bfseries] at (2,2.8) {No U-turn: $u_z(I)=0$};

\coordinate (A1) at (0,0);
\coordinate (B1) at (4,0);

\node[point,label=below:$x_a$] at (A1) {};
\node[point,label=below:$x_b$] at (B1) {};

\draw[displacement]
    (A1) -- (B1)
    node[midway,below=8pt] {$x_b-x_a$};

\draw[velocity]
    (A1) -- ++(1.4,1.2)
    node[above] {$v_a$};

\draw[velocity]
    (B1) -- ++(1.2,1.0)
    node[above] {$v_b$};

\node[align=center] at (2,-1.25) {
    $v_a^\top(x_b-x_a)>0$\\
    $v_b^\top(x_b-x_a)>0$
};

\end{scope}

\draw[dashed,gray] (6,-1.8) -- (6,3);

\begin{scope}[xshift=8cm]

\node[font=\bfseries] at (2,2.8) {U-turn: $u_z(I)=1$};

\coordinate (A2) at (0,0);
\coordinate (B2) at (4,0);

\node[point,label=below:$x_a$] at (A2) {};
\node[point,label=below:$x_b$] at (B2) {};

\draw[displacement]
    (A2) -- (B2)
    node[midway,below=8pt] {$x_b-x_a$};

\draw[velocity]
    (A2) -- ++(1.4,1.2)
    node[above] {$v_a$};

\draw[velocity]
    (B2) -- ++(-1.4,1.2)
    node[above] {$v_b$};

\node[align=center] at (2,-1.25) {
    $v_a^\top(x_b-x_a)>0$\\
    $\boxed{v_b^\top(x_b-x_a)<0}$
};

\end{scope}

\node[blue] at (4.8,-2.2) {$\longrightarrow$ displacement};
\node[red]  at (8.0,-2.2) {$\longrightarrow$ velocity};

\end{tikzpicture}

\vspace{0.2in}
For a dyadic interval \(I\), let \(I^-\) and \(I^+\) be its left and right halves.  Define the recursive sub-U-turn indicator by
\begin{align}
  \sigma_z(I)
  =
  \begin{cases}
    0, & |I|=1,\\[0.25em]
    \max\{\sigma_z(I^-),\sigma_z(I^+),u_z(I)\},&|I|\ge2.
  \end{cases}
  \label{eq:subuturn}
\end{align}
Thus \(\sigma_z(I)=1\) exactly when at least one dyadic descendant of \(I\), including \(I\) itself, satisfies the endpoint criterion.

\begin{tikzpicture}[
  every node/.style={font=\small},
  boxyes/.style={
    draw,
    rounded corners,
    fill=red!15,
    align=center,
    minimum width=2.5cm,
    minimum height=1.1cm
  },
  boxno/.style={
    draw,
    rounded corners,
    fill=green!15,
    align=center,
    minimum width=2.5cm,
    minimum height=1.1cm
  },
  leaf/.style={
    draw,
    rounded corners,
    fill=gray!10,
    align=center,
    minimum width=1.3cm,
    minimum height=0.9cm
  }
]

\node[font=\bfseries] at (0,9.3) {Recursive sub-U-turn indicator};

\node[boxyes] (r) at (0,8) {$[0\!:\!7]$\\ $u_z=0$\\ $\sigma_z=1$};

\node[boxyes] (l1) at (-4,6) {$[0\!:\!3]$\\ $u_z=0$\\ $\sigma_z=1$};
\node[boxno]  (r1) at ( 4,6) {$[4\!:\!7]$\\ $u_z=0$\\ $\sigma_z=0$};

\draw[->] (r) -- (l1);
\draw[->] (r) -- (r1);

\node[boxno]  (l2a) at (-6,4) {$[0\!:\!1]$\\ $u_z=0$\\ $\sigma_z=0$};
\node[boxyes] (l2b) at (-2,4) {$[2\!:\!3]$\\ $u_z=1$\\ $\sigma_z=1$};

\node[boxno]  (r2a) at ( 2,4) {$[4\!:\!5]$\\ $u_z=0$\\ $\sigma_z=0$};
\node[boxno]  (r2b) at ( 6,4) {$[6\!:\!7]$\\ $u_z=0$\\ $\sigma_z=0$};

\draw[->] (l1) -- (l2a);
\draw[->] (l1) -- (l2b);
\draw[->] (r1) -- (r2a);
\draw[->] (r1) -- (r2b);

\node[leaf] (a0) at (-7,2) {$[0]$\\ $\sigma_z=0$};
\node[leaf] (a1) at (-5,2) {$[1]$\\ $\sigma_z=0$};
\node[leaf] (a2) at (-3,2) {$[2]$\\ $\sigma_z=0$};
\node[leaf] (a3) at (-1,2) {$[3]$\\ $\sigma_z=0$};

\node[leaf] (b0) at ( 1,2) {$[4]$\\ $\sigma_z=0$};
\node[leaf] (b1) at ( 3,2) {$[5]$\\ $\sigma_z=0$};
\node[leaf] (b2) at ( 5,2) {$[6]$\\ $\sigma_z=0$};
\node[leaf] (b3) at ( 7,2) {$[7]$\\ $\sigma_z=0$};

\draw[->] (l2a) -- (a0);
\draw[->] (l2a) -- (a1);
\draw[->] (l2b) -- (a2);
\draw[->] (l2b) -- (a3);

\draw[->] (r2a) -- (b0);
\draw[->] (r2a) -- (b1);
\draw[->] (r2b) -- (b2);
\draw[->] (r2b) -- (b3);

\node[align=left] at (0,0.3) {%
\begin{minipage}{12cm}
\centering
Red nodes indicate $\sigma_z(I)=1$.\\[0.3em]
The interval $[2\!:\!3]$ is the first one where the endpoint test fires, so $u_z([2\!:\!3])=1$.\\
Then the recursion propagates this upward:
$\sigma_z([2\!:\!3])=1 \Rightarrow \sigma_z([0\!:\!3])=1 \Rightarrow \sigma_z([0\!:\!7])=1$.
\end{minipage}
};

\end{tikzpicture}

\subsection{Random doubling}

Fix a maximum depth \(k_{\max}\in\N\) and write $K=2^{k_{\max}}.$  The orbit builder begins with \(I_0=\{0\}\).  At level \(r\), if \(|I_r|=K\) or \(u_z(I_r)=1\), it stops and returns \(I_r\).  Otherwise it samples a fair left/right bit and proposes the adjacent block \(N_r\) of size \(|I_r|\) on that side.  If \(\sigma_z(N_r)=1\), it rejects the block and returns \(I_r\).  If \(\sigma_z(N_r)=0\), it accepts the doubling, sets
\begin{align*}
  I_{r+1}=I_r\sqcup N_r,
\end{align*}
and continues.  For example, starting from \(I_0=\{0\}\), the builder may choose right and
accept \(N_0=\{1\}\), giving \(I_1=[0:1]\). It may then choose left and
accept \(N_1=[-2:-1]\), giving \(I_2=[-2:1]\). If the next proposed block
\(N_2=[2:5]\) satisfies \(\sigma_z(N_2)=1\), it is rejected and the final
returned orbit is \(I_2=[-2:1]\). The returned history is
\begin{align}
  \mathcal H=(I_0,N_0,I_1,N_1,\ldots,N_{R-1},I_R),
  \label{eq:history}
\end{align}
where \(I_R\) is the final orbit.  The history is needed only to define BPS; multinomial NUTS uses only \(I_R\).

\begin{algorithm}[t]
\caption{Common revised NUTS orbit builder}
\label{alg:orbit}
\begin{algorithmic}
\Require Phase point \(z=(x,v)\), step size \(h\), maximum size \(K=2^{k_{\max}}\)
\State Set \(I\gets\{0\}\), \(\mathcal H\gets(I)\)
\While{\(|I|<K\) and \(u_z(I)=0\)}
  \State Draw \(B\sim\operatorname{Bernoulli}(1/2)\)
  \If{\(B=0\)}
    \State Let \(N\) be the adjacent block of length \(|I|\) to the left
  \Else
    \State Let \(N\) be the adjacent block of length \(|I|\) to the right
  \EndIf
  \If{\(\sigma_z(N)=1\)}
    \State \textbf{break}
  \Else
    \State Append \(N\) to \(\mathcal H\), set \(I\gets I\sqcup N\), and append \(I\) to \(\mathcal H\)
  \EndIf
\EndWhile
\State \Return final orbit \(I\) and accepted-doubling history \(\mathcal H\)
\end{algorithmic}
\end{algorithm}

Let \(O_z(I)\) denote the probability that \cref{alg:orbit}, started from \(z\), returns the interval \(I\).  The following symmetry is the structural reason that both index-selection mechanisms preserve the target. For a dyadic interval \(D\subset\Z\), let \(\mathscr D(D)\) be the family containing \(D\) and every interval produced by recursively bisecting \(D\) into its left and right halves.  Then \(\sigma_z(D)=\max_{E\in\mathscr D(D)}u_z(E)\).

\begin{proposition}[]
\label{prop:initial-point-symmetry}
For every possible returned interval \(I\) and every \(i\in I\),
\begin{align}
  O_{\Phi_h^iz}(I-i)=O_z(I),
  \qquad
  I-i=\{r-i:r\in I\}.
  \label{eq:initial-point-symmetry}
\end{align}
\end{proposition}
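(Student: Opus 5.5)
The plan is to compute $O_z(I)$ in closed form and check that the formula depends only on the interval $I$, the dyadic family $\mathscr D(I)$, the two blocks adjacent to $I$, and the values $u_z(E)$ on these intervals. Any such formula is invariant under the re-rooting $(z,I)\mapsto(\Phi_h^iz,I-i)$.

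\emph{Step 1 (shift covariance of the indicators).} Put $z'=\Phi_h^iz$. Then $z'_j=\Phi_h^{j}\Phi_h^iz=z_{j+i}$ for every $j\in\Z$, and the indicator \eqref{eq:uturn} reads only the endpoints of its interval. Hence $u_{z'}(E-i)=u_z(E)$ for every finite interval $E$. Shifting commutes with bisection, so $\mathscr D(D-i)=\mathscr D(D)-i$. Combined with $\sigma_z(D)=\max_{E\in\mathscr D(D)}u_z(E)$, this gives $\sigma_{z'}(D-i)=\sigma_z(D)$ for every dyadic $D$.

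\emph{Step 2 (the history is determined by $I$).} Suppose $I$ with $|I|=2^R$ is returned from root $0$. Since $I_{r+1}=I_r\sqcup N_r$ with $|N_r|=|I_r|$, induction shows that $I_r$ is the unique element of $\mathscr D(I)$ of size $2^r$ containing $0$. Its sibling in the bisection tree is $N_r$, and the bit drawn at level $r$ is forced: right if $N_r$ lies to the right of $I_r$, left otherwise. Re-rooting at $i\in I$ replaces these by the chain of elements of $\mathscr D(I)$ containing $i$. So both roots see the same tree $\mathscr D(I)$ and the same number $R$ of forced bits; only the root-to-leaf path differs.

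\emph{Step 3 (closed form).} Along the forced path, the builder needs $u_z(I_r)=0$ and $\sigma_z(N_r)=0$ for each $r<R$. The sets $I_0,\dots,I_{R-1}$ together with the subtrees $\mathscr D(N_0),\dots,\mathscr D(N_{R-1})$ partition $\mathscr D(I)\setminus\{I\}$. Therefore all acceptance conditions together are equivalent to
\[
\sigma_z(I^-)=\sigma_z(I^+)=0,
\]
which does not refer to the root. The termination factor is also root-free:
\begin{itemize}
\item $1$ if $|I|=K$ or $u_z(I)=1$;
\item otherwise $\tfrac12\bigl[\sigma_z(L_I)+\sigma_z(R_I)\bigr]$, where $L_I$ and $R_I$ are the two blocks of length $|I|$ adjacent to $I$.
\end{itemize}
Hence
\[
O_z(I)=2^{-R}\,\1\{\sigma_z(I^-)=\sigma_z(I^+)=0\}\,\mathsf{stop}_z(I).
\]
The same computation for root $i$ gives the identical expression with $z'$ and $I-i$. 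Every factor in it matches the original by Step 1, using also $(I-i)^\pm=I^\pm-i$ and that the adjacent blocks shift accordingly. This proves \eqref{eq:initial-point-symmetry}.

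The main obstacle is Step 3's partition claim. Its bookkeeping must be exact: every non-root node of $\mathscr D(I)$ is either on the path, where only its own endpoint test $u$ is checked as an accumulated orbit, or inside exactly one sibling subtree, where it is checked through $\sigma$. In addition, the final endpoint test on $I$ must be handled separately, since it enters through the stopping rule rather than an acceptance test. The degenerate case $R=0$, i.e.\ $I=\{0\}$, should be checked directly from the same formula.
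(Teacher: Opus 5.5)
Your proof is correct, and it takes a different route from the paper. The paper gives no argument for \eqref{eq:initial-point-symmetry}: it identifies the identity with the symmetry condition in Proposition~5 of Durmus et al.\ (initial-point symmetry in Remark~7 of Bou-Rabee, Carpenter and Marsden) and cites it.

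Your closed form $O_z(I)=2^{-R}\,\1\{\sigma_z(I^-)=\sigma_z(I^+)=0\}\,\mathsf{stop}_z(I)$ is right, for three reasons.
\begin{enumerate}
\item The number of points of $I$ to the left of $0$ determines the bit string, so the $R$ bits are forced.
\item Termination can only occur at level $R$, and it is independent of the first $R$ bits.
\item The partition you flag as the main obstacle does hold. An off-path node of $\mathscr D(I)\setminus\{I\}$ lies in $\mathscr D(N_r)$, where $I_{r+1}$ is the lowest path node above it. Conversely, nodes of $\mathscr D(N_r)$ never lie on the path, because $0\notin N_r$.
\end{enumerate}
Shift covariance in Step~1 only needs negative iterates of $\Phi_h$, which exist because $\Phi_h$ is a bijection by \eqref{eq:lf-reversible}.

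Your route buys an elementary, self-contained proof. It holds for every interval of size $2^R$ containing $0$, including those of probability zero, so the qualifier ``possible returned'' is not even needed. It also makes precise the paper's remark that recursive sub-U-turn rejection is essential. Without the $\sigma$ checks on new blocks, the acceptance event would test only the path-dependent family $\{I_0,\dots,I_{R-1}\}$, which changes under re-rooting. With them, every root tests exactly $\mathscr D(I)\setminus\{I\}$. Likewise, the termination factor visibly depends on $I$ only through $|I|$, $u_z(I)$, and $\sigma_z$ on the two adjacent blocks.

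The citation route instead places the identity inside the general dynamic-HMC orbit-selection framework of Durmus et al., where it holds for a broader class of orbit kernels. It also sits naturally beside the finite-orbit BPS detailed-balance result the paper imports from the same source.
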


The identity \eqref{eq:initial-point-symmetry} is an established
property of the revised random-doubling NUTS orbit-selection kernel.
In the notation of \citet[Proposition~5]{DurmusEtAl2024}, it is
the symmetry condition
\[
  P_h(J+j\mid \Phi_h^{-j}z)=P_h(J\mid z),
  \qquad -j\in J.
\]
The same property is discussed explicitly as \emph{initial-point
symmetry} in \citet[Remark~7]{BouRabeeCarpenterMarsden2026}.  It is the
kernel-level counterpart of the re-rooting requirement underlying
the original NUTS construction; see
\citet[Condition~C.4]{HoffmanGelman2014}.  The recursive rejection of
extensions containing sub-U-turns is essential for this identity:
after re-rooting at any point of a returned orbit, the alternative
sequence of intermediate doubled intervals must not terminate
prematurely.

We use \eqref{eq:initial-point-symmetry} to compare different
rootings of the same physical leapfrog orbit.  For multinomial state
selection, combining this symmetry with the Boltzmann re-rooting
identity makes the augmented orbit re-rooting operator self-adjoint.
Its idempotence then identifies it as an orthogonal projection, from
which positive semidefiniteness of the multinomial NUTS position
kernel follows.  For biased-progressive state selection, the same
symmetry, together with detailed balance of the finite-orbit BPS
kernel, yields reversibility of the corresponding phase-space and
position kernels.

\subsection{State selection on the completed orbit}
\label{sec:selectors}

For a finite index set \(A\subset\Z\), define its unnormalized Boltzmann weight along the orbit from \(z\) by
\begin{align*}
  W_z(A)=\sum_{j\in A}e^{-H(\Phi_h^jz)}.
\end{align*}
For nonempty \(A\), write
\begin{align}
  \mu_z^A(j)
  =
  \frac{e^{-H(\Phi_h^jz)}}{W_z(A)},
  \qquad j\in A.
  \label{eq:restricted-multinomial}
\end{align}

\subsubsection{Multinomial NUTS}

Starting from \(x\), draw \(v\sim N(0,I_d)\), run \cref{alg:orbit} from \(z=(x,v)\), and let \(I\) be the returned orbit.  Multinomial NUTS draws $J\sim\mu_z^I$ and returns the position component of \(\Phi_h^Jz\).  Its \emph{position kernel} is denoted by \(\PM=\PM^{h,K}\).

\begin{algorithm}[t]
\caption{Capped multinomial NUTS transition}
\label{alg:mul}
\begin{algorithmic}
\Require Current position \(x\), step size \(h\), maximum size \(K\)
\State Draw \(v\sim N(0,I_d)\) and set \(z=(x,v)\)
\State Run \cref{alg:orbit} and obtain final orbit \(I\)
\State Draw \(J\in I\) according to \(\mu_z^I\) in \eqref{eq:restricted-multinomial}
\State \Return \(X'=\Pi_x\Phi_h^Jz\)
\end{algorithmic}
\end{algorithm}

\subsubsection{Biased-progressive NUTS}

Suppose \cref{alg:orbit} accepts \(R\) doublings and returns the history \eqref{eq:history}.  BPS initializes \(C_0=0\).  At accepted doubling \(r\), define
\begin{align}
  A_r
  =
  1\wedge\frac{W_z(N_r)}{W_z(I_r)}.
  \label{eq:bps-acceptance}
\end{align}
Independently conditional on the history and orbit states, set
\begin{align}
  C_{r+1}
  =
  \begin{cases}
    Y_r, &\text{with probability }A_r,\quad Y_r\sim\mu_z^{N_r},\\
    C_r, &\text{with probability }1-A_r.
  \end{cases}
  \label{eq:bps-recursion}
\end{align}
The next position is \(\Pi_x\Phi_h^{C_R}z\).  The resulting \emph{position kernel} is denoted by \(\PB=\PB^{h,K}\).  The recursion \eqref{eq:bps-recursion} is exactly the biased progressive index-selection rule: it may be implemented online as the orbit is built or applied afterward to the recorded accepted-doubling history.

\begin{algorithm}[t]
\caption{Capped NUTS-BPS transition}
\label{alg:bps}
\begin{algorithmic}
\Require Current position \(x\), step size \(h\), maximum size \(K\)
\State Draw \(v\sim N(0,I_d)\) and set \(z=(x,v)\)
\State Run \cref{alg:orbit} and obtain \(\mathcal H=(I_0,N_0,\ldots,N_{R-1},I_R)\)
\State Set \(C\gets0\)
\For{\(r=0,\ldots,R-1\)}
  \State Set \(A_r\gets 1\wedge W_z(N_r)/W_z(I_r)\)
  \State With probability \(A_r\), draw \(C\sim\mu_z^{N_r}\); otherwise retain \(C\)
\EndFor
\State \Return \(X'=\Pi_x\Phi_h^Cz\)
\end{algorithmic}
\end{algorithm}

The two kernels use exactly the same momentum law, leapfrog map, U-turn rule, recursive subtree checks, random-doubling bits, and maximum depth.  They differ only in how the final root is selected.  The distinction relevant to the proof is summarized in \cref{tab:selector-summary}; the time laws and positivity statements are proved later.

\begin{table}[t]
\centering
\caption{Selector-specific structure used in the proof.}
\label{tab:selector-summary}
\begin{tabularx}{\textwidth}{@{}>{\raggedright\arraybackslash}p{0.18\textwidth}>{\raggedright\arraybackslash}p{0.27\textwidth}>{\raggedright\arraybackslash}p{0.27\textwidth}>{\raggedright\arraybackslash}X@{}}
\toprule
Variant & Selection mechanism & Ideal signed-index law at a completed size-\(K\) orbit & Positive operator used for mixing \\
\midrule
Multinomial & One Boltzmann draw from the full orbit & \(\lambda_{\mathrm M,K}(j)=(K-|j|)/K^2\) & \(\PM\) itself \\
BPS & Progressive old/new replacement; final update targets the last new half & \(\lambda_{\mathrm B,K}(j)=2\min\{|j|,K-|j|\}/K^2\), \(j\ne0\) & \(\PB^2\) \\
\bottomrule
\end{tabularx}
\end{table}

\begin{remark}[Notation used in the main argument]
The selector label is \(a\in\{\mathrm M,\mathrm B\}\);
\(\underline a_{\mathrm M}\) and \(\underline a_{\mathrm B}\) are
selector masses assigned to the useful fixed-time band, whereas
\(a_\star=\sqrt m\,T_\star\) is the normalized selected trajectory
length.  The function \(R(t)\) denotes mean-square displacement,
\(\mathcal R\) denotes orbit re-rooting, and
\(R_a=P_a^{r_a}\) denotes the positive kernel used for mixing.
Finally, \(K_{\mathrm{cap}}=2^{k_{\max}}\) is the deterministic maximum
orbit size, while \(K_\star=2^{k_\star}\) is the cardinality certified
on the good phase-space event.
\end{remark}

Our results concern \cref{alg:orbit,alg:mul,alg:bps} with a fixed step size and fixed maximum depth during sampling.  They do not cover adaptation that continues across retained samples, learned mass matrices, variable step sizes within a trajectory, divergence-based truncation, slice selection, or implementation-specific stopping predicates different from \eqref{eq:uturn}--\eqref{eq:subuturn}.

\section{Profile separation and the main theorem}
\label{sec:main}



We make the following assumptions in the rest of the draft, mainly motivated by~\cite{ChenGatmiryJiang2026}. In particular the Frobenius Hessian regularity below is motivated (and implied) by $\gamma$-self concordance. 

\begin{assumption}[Strong log-concavity and smoothness]
\label{ass:convex-smooth}
There are constants \(0<m\le L<\infty\) such that
\begin{align}
mI_d\preceq\nabla^2U(x)\preceq LI_d,
\qquad x\in\R^d.
\label{eq:strong-convexity}
\end{align}
The condition number is \(\kappa=L/m\).
\end{assumption}

\begin{assumption}[Frobenius Hessian regularity]
\label{ass:hessian-lip}
There is a dimensionless \(\gamma\ge0\) such that
\begin{align}
\norm{\nabla^2U(x)-\nabla^2U(y)}_{\mathrm F}
\le \gamma L^{3/2}\norm{x-y},
\qquad x,y\in\R^d.
\label{eq:hessian-lip}
\end{align}
Equivalently, $\norm{\nabla^3U(x)[a,\cdot,\cdot]}_{\mathrm F}
\le \gamma L^{3/2}\norm a.$ We use the lower-curvature normalization $\bar\gamma=\gamma\kappa^{3/2},$ which is the Frobenius Hessian-Lipschitz coefficient after the rescaling \(y=\sqrt m(x-x_\star)\).
\end{assumption}

\begin{definition}[Warm start]
\label{def:warm}
A probability law \(\nu\) is \(M\)-warm with respect to \(\pi\) if $\nu(A)\le M\pi(A)$ for every measurable set \(A\).  Equivalently, \(\nu\ll\pi\) and \(\dd\nu/\dd\pi\le M\), \(\pi\)-almost everywhere.
\end{definition}

We refer to~\cite{zhang2026algorithmic} for practical approaches to achieve such warm starts. The theorem concerns the stationary kernel after warmup: the mass matrix, step size \(h\), and maximum depth \(k_{\max}\) are fixed.  Under \eqref{eq:strong-convexity}, \(\pi\) satisfies the Cheeger inequality
\begin{align}
\pi(S_3)\ge \psi\,\dist(S_1,S_2)\pi(S_1)\pi(S_2),
\qquad
\psi=(\log2)\sqrt m,
\label{eq:cheeger}
\end{align}
for every measurable partition \(\R^d=S_1\sqcup S_2\sqcup S_3\); see \cite{BakryGentilLedoux2014}.

\subsection{The stationary profile and the sufficient certificate}

Let \((X_t,V_t)\) solve exact Hamiltonian dynamics
\begin{align*}
\dot X_t=V_t,
\qquad
\dot V_t=-\nabla U(X_t),
\end{align*}
with \((X_0,V_0)\sim\pi\otimes N(0,I_d)\).  Define
\begin{align*}
D_t^-=V_0^\top(X_t-X_0),
\qquad
D_t^+=V_t^\top(X_t-X_0),
\end{align*}
\begin{align}
R(t)=\E\norm{X_t-X_0}^2,
\qquad
u(t)=\E D_t^-=\E D_t^+.
\label{eq:profile-def}
\end{align}
The equality of the means and the identity \(u=R'/2\) are proved in \Cref{lem:profile-representations}.

For a leapfrog step size \(h\), the physical duration of a dyadic orbit of cardinality \(2^k\) is
\begin{align*}
\tau_k=h(2^k-1),
\qquad k\in\Nzero.
\end{align*}

\begin{definition}[]
\label{def:profile-separated}
Fix \(\rho_0>0\), a dimensionless horizon \(S>0\), and a short-time cutoff \(T_{\mathrm{sm}}>0\).  A depth \(k_\star\ge2\) is \((\rho_0,S,T_{\mathrm{sm}})\)-profile separated if
\begin{align}
\tau_{k_\star}\le\frac S{\sqrt m},
\qquad
u(\tau_{k_\star})\le-\rho_0\frac d{\sqrt m},
\label{eq:profile-terminal-margin}
\end{align}
and
\begin{align}
u(\tau_r)\ge\rho_0\frac d{\sqrt m}
\quad
\text{for every }1\le r<k_\star
\text{ with }\tau_r\ge T_{\mathrm{sm}}.
\label{eq:profile-preterminal-margin}
\end{align}
\end{definition}

Equivalently, define the normalized dyadic margin
\begin{align*}
\mathfrak m_h(k)
=
\frac{\sqrt m}{d}
\min\!\left\{
-u(\tau_k),
\min_{\substack{1\le r<k\\ \tau_r\ge T_{\mathrm{sm}}}}
u(\tau_r)
\right\},
\end{align*}
with the inner minimum interpreted as \(+\infty\) when the index set is empty.  Then \(k\) is profile separated with margin \(\rho_0\) exactly when \(\tau_k\le S/\sqrt m\) and \(\mathfrak m_h(k)\ge\rho_0\).

The cutoff is needed because \(u(t)=dt+O(t^3)\) near the origin and therefore cannot have a fixed \(d/\sqrt m\) margin at time \(h\).  Shorter intervals are certified by a deterministic no-U-turn estimate.  At longer times, profile separation is a finite-grid nonresonance condition.

We emphasize that profile separation is a sufficient abstract condition used in the proof.  We do not show that it is necessary for rapid mixing, necessary for NUTS to stop before the cap, or necessary for the random stopping depth to concentrate.  In particular, the condition can even fail when a dyadic time lies close to a profile zero even though the realized random diagnostics still select useful trajectories.  

\subsection{Intrinsic diagnostic stability}
\label{sec:intrinsic-stability}

The profile is a deterministic stationary-mean object.  To transfer its signs to the practical leapfrog tree, we
measure only the scalar quantities that enter the NUTS stopping and selection rules.  Let
\(Z_0=(X_0,V_0)\sim\bar\pi\), let
\[
Z_t=(X_t,V_t)=\flow_tZ_0,
\qquad
\widehat Z_j=(\widehat X_j,\widehat V_j)=\Phi_h^jZ_0,
\]
and, for an integer interval \(I=[a:b]\cap\Z\), define
\begin{align*}
D_I^-
&=V_{ah}^{\top}(X_{bh}-X_{ah}),
&
D_I^+
&=V_{bh}^{\top}(X_{bh}-X_{ah}),
\\
\widehat D_I^-
&=\widehat V_a^{\top}(\widehat X_b-\widehat X_a),
&
\widehat D_I^+
&=\widehat V_b^{\top}(\widehat X_b-\widehat X_a).
\end{align*}
For an integer interval \(I=[a:b]\cap\Z\), write
\[
  \# I=b-a+1,
  \qquad
  \ell_h(I)=h(b-a)=h(\# I-1)
\]
for its cardinality and physical span, respectively.  In particular, a singleton has
\(\ell_h(I)=0\). For a power of two \(K\), let \(\mathscr I_K\) denote the family of every completed orbit
and every recursively inspected dyadic suborbit that can occur under some realization of the
doubling bits before terminal cardinality \(K\).  Since every endpoint lies in
\([-K+1:K-1]\),
\begin{align*}
N_K:=|\mathscr I_K|\le(2K-1)^2\le4K^2.
\end{align*}

\begin{definition}[]
\label{def:diagnostic-concentration-modulus}
For \(p\ge2\) and \(S>0\), define
\begin{align}
\mathfrak C_p(S;U)
=
\max_{\sigma\in\{-,+\}}
\sup_{0\le t\le S/\sqrt m}
\frac{
\sqrt m\,\norm{D_t^\sigma-u(t)}_{L^p(\bar\pi)}
}{\sqrt{dp}+p}.
\label{eq:intrinsic-Cp}
\end{align}
\end{definition}

For \(\eta>0\), let
\begin{align*}
\mathcal H_{h,K}(\eta)
=
\left\{
\max_{|j|\le K}
\abs{H(\Phi_h^jZ_0)-H(Z_0)}\le\eta
\right\}.
\end{align*}

\begin{definition}[]
\label{def:numerical-fidelity-modulus}
For \(p\ge2\), define
\begin{align}
\mathfrak E_{p,h}(K,\eta;U)
=
\frac{\sqrt m}{d}
\left\|
\max_{I\in\mathscr I_K}
\max_{\sigma\in\{-,+\}}
\abs{\widehat D_I^\sigma-D_I^\sigma}
\,\1_{\mathcal H_{h,K}(\eta)}
\right\|_{L^p(\bar\pi)}.
\label{eq:intrinsic-Ep}
\end{align}
Also put
\begin{align}
\delta_H(h,K,\eta)
&=\bar\pi\bigl(\mathcal H_{h,K}(\eta)^c\bigr),
\label{eq:intrinsic-delta-H}\\
\delta_{\mathrm{sm}}(h,K,T_{\mathrm{sm}})
&=
\bar\pi\!\left(
\begin{array}{c}
\exists I\in\mathscr I_K:\ \# I\ge2,\ \ell_h(I)<T_{\mathrm{sm}},
\min\{\widehat D_I^-,\widehat D_I^+\}\le0
\end{array}
\right).
\label{eq:intrinsic-delta-sm}
\end{align}
\end{definition}

Singleton intervals are excluded from \(\delta_{\mathrm{sm}}\): their displacement is zero and their U-turn indicator is automatically zero, so strict positivity is neither true nor needed.

The first modulus measures fluctuations of the exact scalar stopping statistic around its
population profile.  The second measures only the numerical error of that statistic, not the
operator norm of the full tangent flow.  The two failure probabilities isolate the other two
facts required by the proof: comparable numerical Boltzmann weights over the completed orbit
and positivity of the very short dyadic intervals for which a fixed \(d/\sqrt m\) profile
margin is unavailable.

For a candidate separated depth \(k_\star\), write
\begin{align*}
K_\star=2^{k_\star},
\qquad
T_\star=(K_\star-1)h,
\qquad
a_\star=\sqrt m\,T_\star.
\end{align*}
For \(p\ge2\), set
\begin{align}
\Xi_{p,h}(S,K_\star)
=
e\,\mathfrak C_p(S;U)
\left(\sqrt{\frac pd}+\frac pd\right)
+e\,\mathfrak E_{p,h}(K_\star,\log2;U),
\label{eq:intrinsic-Xi}
\end{align}
\begin{align*}
\zeta_{p,h}(K_\star)
=(2N_{K_\star}+1)e^{-p}
+\delta_H(h,K_\star,\log2)
+\delta_{\mathrm{sm}}(h,K_\star,T_{\mathrm{sm}}).
\end{align*}

Let
\begin{align}
t_{\mathrm{ov}}
=
\frac{c_{\mathrm{ov}}}{\sqrt L(1+\gamma)^{1/3}},
\qquad
r_\star=\frac{t_{\mathrm{ov}}}{K_\star h},
\label{eq:intrinsic-overlap-time}
\end{align}
where \(c_{\mathrm{ov}}>0\) is the universal constant in the fixed-index overlap estimate.
After decreasing \(c_{\mathrm{ov}}\), profile negativity at \(T_\star\) and the universal
initial positivity in \eqref{eq:profile-positive} imply \(r_\star\le1/4\).  Define
\begin{align*}
\underline a_{\mathrm M}=c_1r_\star,
\qquad
\underline a_{\mathrm B}=c_2r_\star^2,
\qquad
\theta_\star
=
\min\left\{1,\frac{\psi t_{\mathrm{ov}}}{128}\right\},
\end{align*}
where \(c_1,c_2>0\) are universal selector-mass constants.

For \(a\in\{\mathrm M,\mathrm B\}\), \(n_a\) denotes the number of
applications of the NUTS position kernel \(P_a\), and
\(N_{\nabla U}^{(a)}\) denotes the total number of density and gradient
evaluations used by those transitions.  Let
\[
  K_{\mathrm{cap}}=2^{k_{\max}}.
\]
Every transition uses at most \(C_{\mathrm{cost}}K_{\mathrm{cap}}\)
evaluations, while a transition on the certification event of
\Cref{prop:intrinsic-certificate} uses at most
\(C_{\mathrm{cost}}K_\star\).  Thus the unconditional deterministic
bound is
\[
  N_{\nabla U}^{(a)}\le
  C_{\mathrm{cost}}K_{\mathrm{cap}}n_a,
\]
and \Cref{prop:exceptional-work} gives refined expected and
high-probability bounds.  Separately,
\(N_{K_\star}=|\mathscr I_{K_\star}|\) denotes the number of possible
completed or recursively inspected intervals and is used only in the
simultaneous probability bounds.

\begin{theorem}[Profile separation and diagnostic stability imply genuine U-turn mixing]
\label{thm:adaptive}
Suppose \Cref{ass:convex-smooth,ass:hessian-lip} hold, let \(\nu\) be \(M\)-warm, and
let \(\epsilon\in(0,1/2)\).  Suppose that \(k_\star<k_{\max}\) is
\((\rho_0,S,T_{\mathrm{sm}})\)-profile separated and that
\begin{align}
\Xi_{p,h}(S,K_\star)\le\frac{\rho_0}{4}
\label{eq:intrinsic-margin-condition}
\end{align}
for some \(p\ge2\).  Assume the fixed-time resolution condition
\begin{align}
\frac{t_{\mathrm{ov}}}{h}\ge8.
\label{eq:intrinsic-resolution}
\end{align}
For a selector \(a\in\{\mathrm M,\mathrm B\}\), suppose
\begin{align}
\zeta_{p,h}(K_\star)
\le
c_0\,\underline a_a\,\theta_\star
\frac{\epsilon^2}{M^2},
\label{eq:intrinsic-failure-condition}
\end{align}
where \(c_0>0\) is a sufficiently small universal constant.

Set
\[
  s=\frac{\epsilon^2}{128M^2},
  \qquad
  \delta_a=\frac{3\underline a_a}{32}.
\]
Then there are measurable sets \(D_a\subset\R^d\) and
\(G_{a,x}\subset\R^d\), \(x\in D_a\), satisfying
\[
  \sup_{x\in D_a}\Pp_V(G_{a,x}^c)\le\delta_a,
  \qquad
  \pi(D_a^c)
  \le
  \frac{\zeta_{p,h}(K_\star)}{\delta_a}
  \le
  \frac{\theta_\star s}{64}.
\]
Every \((x,v)\) with \(x\in D_a\) and \(v\in G_{a,x}\) has the following properties:

\begin{enumerate}[label=(\roman*)]
\item every nontrivial completed orbit and recursively inspected proper suborbit below depth
\(k_\star\) has both numerical endpoint diagnostics strictly positive; singleton intervals have U-turn indicator zero;
\item every possible completed orbit at depth \(k_\star\) has both numerical endpoint
diagnostics strictly negative;
\item the numerical Hamiltonian differs from its initial value by at most \(\log2\) at
every point of the terminal orbit.
\end{enumerate}
Consequently, for every such certified phase point \((x,v)\), every realization of the left--right doubling decisions stops at cardinality
\(K_\star\) through the U-turn rule, and the maximum-depth cap is inactive.

For multinomial NUTS,
\begin{align}
n_{\mathrm M}
\le
C\left[1+a_\star^2\kappa^2(1+\gamma)^{4/3}\right]
\log\!\left(\frac{CM}{\epsilon}\right),
\label{eq:intrinsic-transitions-M}
\end{align}
and for biased-progressive NUTS,
\begin{align*}
n_{\mathrm B}
\le
C\left[1+a_\star^4\kappa^3(1+\gamma)^2\right]
\log\!\left(\frac{CM}{\epsilon}\right).
\end{align*}
For every selector satisfying \eqref{eq:intrinsic-failure-condition},
\begin{align}
\norm{\nu P_a^{n_a}-\pi}_{\mathrm{TV}}\le\epsilon.
\label{eq:intrinsic-tv}
\end{align}
On the certification event, the returned orbit has cardinality
\begin{align}
K_\star=1+\frac{T_\star}{h},
\label{eq:intrinsic-K}
\end{align}
and one transition uses at most
\(C_{\mathrm{cost}}K_\star\) density and gradient evaluations.  Without
an additional restriction on the maximum depth, the unconditional
deterministic work bound at the mixing time is only
\begin{align}
N_{\nabla U}^{(a)}
\le
C_{\mathrm{cost}}K_{\mathrm{cap}}n_a,
\qquad
K_{\mathrm{cap}}=2^{k_{\max}}.
\label{eq:intrinsic-work-cap}
\end{align}
Refined expected and high-probability work bounds are stated in
\Cref{prop:exceptional-work}.
All constants denoted by \(C,c_0,c_1,c_2,c_{\mathrm{ov}}\) are universal. The evaluation-cost constant \(C_{\mathrm{cost}}\) is also universal.
\end{theorem}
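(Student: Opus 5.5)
The proof splits into two parts: a trajectory certification step and a terminal-depth-to-conductance step.

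\textbf{Certification.} First I would build a phase-space event $\mathcal G\subset\R^{2d}$ with $\bar\pi(\mathcal G^c)\le\zeta_{p,h}(K_\star)$ on which (i)--(iii) hold. I intend to take this directly from the intrinsic certificate (\Cref{prop:intrinsic-certificate}); its proof would go as follows.

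For each $I\in\mathscr I_{K_\star}$ with $\ell_h(I)\ge T_{\mathrm{sm}}$, and each sign $\sigma$, decompose
\[
\widehat D_I^\sigma=u(\ell_h(I))+\bigl(D_I^\sigma-u(\ell_h(I))\bigr)+\bigl(\widehat D_I^\sigma-D_I^\sigma\bigr).
\]
The exact flow preserves $\bar\pi$, so $D_I^\sigma$ has the same law as $D^\sigma_{\ell_h(I)}$. Markov's inequality at order $p$, with threshold $e\|\cdot\|_p$, bounds the middle term by $e\,\mathfrak C_p(\sqrt{dp}+p)/\sqrt m$ except on probability $e^{-p}$ per interval and sign. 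That gives the $2N_{K_\star}e^{-p}$ contribution. The last term is bounded on $\mathcal H_{h,K_\star}(\log2)$ by $e\,\mathfrak E_{p,h}\,d/\sqrt m$, again outside probability $e^{-p}$.

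The key observation for the profile term is that every interval in $\mathscr I_{K_\star}$ is dyadic, so its span is some $\tau_r$. By \eqref{eq:intrinsic-margin-condition} the total error is at most $\rho_0 d/(4\sqrt m)$, so signs are inherited from $u(\tau_r)$: positive for $r<k_\star$ and negative at $r=k_\star$. Short intervals are handled by $\delta_{\mathrm{sm}}$, and energy by $\delta_H$.

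Given (i)--(ii), the random-doubling argument is deterministic. At every level $r<k_\star$, neither $u_z(I_r)=1$ nor $\sigma_z(N_r)=1$ can occur, whichever side is chosen. At $|I|=K_\star<K_{\mathrm{cap}}$ the U-turn fires. Hence the orbit cardinality is $K_\star$ and the cost is $C_{\mathrm{cost}}K_\star$.

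Next I pass from phase space to positions by Fubini/Markov: set
\[
D_a=\{x:\Pp_V(\mathcal G_x^c)\le\delta_a\},\qquad G_{a,x}=\mathcal G_x .
\]
Then $\pi(D_a^c)\le\zeta/\delta_a$. Condition \eqref{eq:intrinsic-failure-condition}, with $c_0$ small, gives $\zeta/\delta_a\le\theta_\star s/64$.

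\textbf{Mixing.} For $x\in D_a$ and $v\in G_{a,x}$, the selector is a fixed mixture over a length-$K_\star$ orbit with comparable Boltzmann weights, by (iii). I would invoke the terminal-depth transfer theorem (\Cref{thm:terminal-transfer}), which requires three inputs.

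\begin{enumerate}
\item A minorization of $P_a(x,\cdot)$ by the selector mass $\underline a_a$ on fixed-index leapfrog proposals with index $|j|h\le t_{\mathrm{ov}}$. The triangular law gives mass of order $r_\star$, and the double-tent law gives order $r_\star^2$. The weight factor $e^{\pm\log2}$ costs only constants, and \eqref{eq:intrinsic-resolution} ensures the band contains at least a few indices.
\item The fixed-index overlap estimate for $\|x-y\|\lesssim t_{\mathrm{ov}}$.
\item Cheeger's inequality \eqref{eq:cheeger} with restricted sets.
\end{enumerate}

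Together these yield $s$-restricted conductance of order $\underline a_a\theta_\star$ for the positive kernel $R_a$, which is $\PM$ or $\PB^2$. For BPS the two-step skeleton needs the minorization squared, but this only affects constants. The positive-kernel conductance lemma then gives TV mixing after
\[
\Phi^{-2}\log(M/\epsilon)\asymp (\underline a_a\theta_\star)^{-2}\log(M/\epsilon)
\]
steps.

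Plugging in,
\[
r_\star^{-1}\asymp\frac{K_\star h\sqrt L(1+\gamma)^{1/3}}{c_{\mathrm{ov}}}\asymp a_\star\sqrt\kappa(1+\gamma)^{1/3},
\qquad
\theta_\star^{-1}\asymp\sqrt\kappa(1+\gamma)^{1/3}.
\]
This yields $a_\star^2\kappa^2(1+\gamma)^{4/3}$ for M and $a_\star^4\kappa^3(1+\gamma)^2$ for B, where the cruder exponents come from how $\theta$ enters. The work bound \eqref{eq:intrinsic-work-cap} is immediate from the cap.

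\textbf{Main obstacle.} I expect the hardest part to be the exceptional-set bookkeeping inside the restricted-conductance step. The uncertified mass $\pi(D_a^c)$ and the momentum failure probability $\delta_a$ must both be absorbed without lazification. This is why $\delta_a=3\underline a_a/32$ and the $\theta_\star s/64$ threshold are chosen as they are.
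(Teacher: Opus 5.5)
Your proposal is correct and follows the paper's proof step by step. It uses the same union bound giving $\bar\pi(\mathcal C_\star^c)\le\zeta_{p,h}(K_\star)$, the same sign transfer from the dyadic profile values via \eqref{eq:intrinsic-margin-condition}, and \Cref{prop:terminal-depth}. It then uses the Fubini--Markov construction of $D_a$ and $G_{a,x}$, applies \Cref{thm:terminal-transfer} with $K=K_\star$, $\eta=\log2$, $\Delta=t_{\mathrm{ov}}/128$, $\beta=3/4$, and finishes with \Cref{prop:PSD-mixing} and the same arithmetic for $r_\star$ and $\theta_\star$.

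Two points in your mixing sketch are misstated.

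\emph{The BPS skeleton.} It does not need a squared minorization. The paper passes the one-step overlap to $R_{\mathrm B}=\PB^2$ by contraction of total variation under the common kernel $\PB$:
$\norm{\PB^2(x,\cdot)-\PB^2(y,\cdot)}_{\mathrm{TV}}\le\norm{\PB(x,\cdot)-\PB(y,\cdot)}_{\mathrm{TV}}$.
So $\rho_{\mathrm B}\ge c\,\underline a_{\mathrm B}$ carries over unchanged. Composing two minorizations literally would replace $\underline a_{\mathrm B}$ by $\underline a_{\mathrm B}^2$ and change the exponents, not just the constants. Your final rate is right only because you invoke \Cref{thm:terminal-transfer}, which uses the contraction argument. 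Likewise, the larger BPS exponents come from $\underline a_{\mathrm B}\asymp r_\star^2$, not from $\theta_\star$, which enters both selectors identically.

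\emph{The band.} The useful band must have a lower cutoff:
$\mathcal J_\star=\{\lceil r_\star K_\star/2\rceil\le j\le\lfloor r_\star K_\star\rfloor\}$, so that $t_{\mathrm{ov}}/2\le jh\le t_{\mathrm{ov}}$.
\Cref{prop:overlap-input} needs $\norm{x-y}\le jh/64$, so only this cutoff makes the single radius $\Delta=t_{\mathrm{ov}}/128$ admissible. Without it, the index $j=0$ in the multinomial law gives zero overlap. The band-mass bounds of \Cref{lem:short-band-mass} also need $r_\star\le1/4$. The paper gets this from $T_\star>\pi/(2\sqrt L)$, which follows from terminal negativity against the initial-positivity bound \eqref{eq:profile-positive}, after shrinking $c_{\mathrm{ov}}$.
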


\begingroup\color{black}
\begin{proposition}[]
\label{prop:exceptional-work}
Let
\[
  K_{\mathrm{cap}}=2^{k_{\max}},
  \qquad
  \zeta_\star=\zeta_{p,h}(K_\star),
  \qquad
  q_\star=\min\{1,M\zeta_\star\}.
\]
For \(a\in\{\mathrm M,\mathrm B\}\), let
\(\mathsf W_n^{(a)}\) be the total number of density and gradient
evaluations used by the first \(n\) applications of \(P_a\), starting
from an \(M\)-warm law.  Then
\begin{align}
  \E_\nu \mathsf W_n^{(a)}
  \le
  C_{\mathrm{cost}}n
  \left[
    K_\star+(K_{\mathrm{cap}}-K_\star)q_\star
  \right].
  \label{eq:expected-work-exceptional}
\end{align}
For every \(\delta\in(0,1)\), with probability at least \(1-\delta\),
\begin{align}
  \mathsf W_n^{(a)}
  \le
  C_{\mathrm{cost}}
  \left[
    nK_\star+
    (K_{\mathrm{cap}}-K_\star)
    \min\left\{n,\frac{nq_\star}{\delta}\right\}
  \right].
  \label{eq:high-probability-work-exceptional}
\end{align}
Moreover,
\begin{align}
  \Pp_\nu\!\left(
    \mathsf W_n^{(a)}
    \le
    C_{\mathrm{cost}}nK_\star
  \right)
  \ge
  1-nq_\star.
  \label{eq:all-transitions-certified}
\end{align}
If, in addition,
\begin{align}
  K_{\mathrm{cap}}
  \le
  C_{\mathrm{cap}}K_\star,
  \label{eq:cap-comparability}
\end{align}
then the deterministic bound
\begin{align}
  \mathsf W_n^{(a)}
  \le
  C_{\mathrm{cost}}C_{\mathrm{cap}}K_\star n
  \label{eq:deterministic-cap-work}
\end{align}
holds for every realization.  In particular, the expected work is of
order \(K_\star n\) whenever
\[
  \left(\frac{K_{\mathrm{cap}}}{K_\star}-1\right)q_\star=O(1).
\]
\end{proposition}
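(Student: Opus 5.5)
The plan is to write the per-transition cost as $K_\star$ plus an excess that appears only when the transition is uncertified, and then to bound how often that happens. Let $X_0\sim\nu$ and let $X_{i}$ be the $i$-th iterate of $P_a$. Transition $i$ draws a fresh momentum $V_i\sim N(0,I_d)$ and builds an orbit from $Z_i=(X_{i-1},V_i)$. Let $\mathcal G\subset\R^{2d}$ be the certification event of \Cref{prop:intrinsic-certificate}, used as in \Cref{thm:adaptive}. On $\mathcal G$, every realization of the doubling bits stops at cardinality $K_\star$, so that transition costs at most $C_{\mathrm{cost}}K_\star$. Every transition costs at most $C_{\mathrm{cost}}K_{\mathrm{cap}}$. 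Writing $\mathsf c_i$ for the cost of transition $i$, we therefore have the pathwise bound
\[
\mathsf c_i\le C_{\mathrm{cost}}\bigl[K_\star+(K_{\mathrm{cap}}-K_\star)\1\{Z_i\notin\mathcal G\}\bigr].
\]
Here we use $K_\star\le K_{\mathrm{cap}}$, which holds since $k_\star<k_{\max}$. Summing over $i\le n$ gives
\[
\mathsf W_n^{(a)}\le C_{\mathrm{cost}}\bigl[nK_\star+(K_{\mathrm{cap}}-K_\star)N_n\bigr],\qquad N_n=\sum_{i=1}^n\1\{Z_i\notin\mathcal G\}.
\]

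The key step is the bound $\Pp_\nu(Z_i\notin\mathcal G)\le q_\star$ for every $i$. By stationarity and warmness propagation, $\nu P_a^{i-1}$ is again $M$-warm: if $\nu\le M\pi$, then $\nu P_a^{i-1}\le M\pi P_a^{i-1}=M\pi$, using invariance of $\pi$ (the reversibility results of \Cref{sec:spectral}). Hence $\Law(Z_i)=(\nu P_a^{i-1})\otimes N(0,I_d)$ is $M$-warm with respect to $\bar\pi$, and so
\[
\Pp(Z_i\notin\mathcal G)\le M\,\bar\pi(\mathcal G^c)\le M\zeta_\star.
\]
The last inequality uses the union bound that defines $\zeta_{p,h}(K_\star)$ in the certificate: $2N_{K_\star}+1$ Markov/moment tail events at level $e^{-p}$, plus $\delta_H$ and $\delta_{\mathrm{sm}}$. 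I expect this to be the main point needing care. One must check that the certificate event is a phase-space event whose $\bar\pi$-complement is bounded by exactly $\zeta_\star$, rather than the position-level sets $D_a,G_{a,x}$ with their loss factor $1/\delta_a$. One must also check that the event does not depend on the doubling bits, which holds because it quantifies over all of $\mathscr I_{K_\star}$. Since a probability is at most $1$, we obtain the bound $q_\star$.

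The statements then follow directly. Taking expectations gives $\E N_n\le nq_\star$, which yields \eqref{eq:expected-work-exceptional}. For the high-probability bound, Markov's inequality gives $\Pp(N_n>nq_\star/\delta)\le\delta$; combined with the trivial bound $N_n\le n$, this gives \eqref{eq:high-probability-work-exceptional}. For \eqref{eq:all-transitions-certified}, a union bound gives $\Pp(N_n\ge1)\le nq_\star$, and on the event $\{N_n=0\}$ the bound $\mathsf W_n^{(a)}\le C_{\mathrm{cost}}nK_\star$ holds. Finally, under \eqref{eq:cap-comparability} every transition costs at most $C_{\mathrm{cost}}K_{\mathrm{cap}}\le C_{\mathrm{cost}}C_{\mathrm{cap}}K_\star$ pathwise, which gives \eqref{eq:deterministic-cap-work}. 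The closing remark follows by factoring $K_\star$ out of the expected bound.
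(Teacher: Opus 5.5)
Your proposal is correct and follows essentially the same argument as the paper. Both proofs preserve warmness via $\pi$-invariance, which bounds each transition's uncertification probability by $q_\star=\min\{1,M\bar\pi(\mathcal C_\star^c)\}$, then use the pathwise decomposition $\mathsf W_n^{(a)}\le C_{\mathrm{cost}}\bigl[nK_\star+(K_{\mathrm{cap}}-K_\star)N_n\bigr]$, and finish with expectation, Markov's inequality, a union bound, and cap comparability; the phase-space event $\mathcal C_\star$ with $\bar\pi(\mathcal C_\star^c)\le\zeta_\star$ from \Cref{prop:intrinsic-certificate}, rather than the position-level sets $D_a$, is exactly what the paper uses.
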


\begin{proof}
Let \(\mu_t=\nu P_a^t\).  By the \(\pi\)-invariance of \(P_a\), proved
in \Cref{sec:spectral}, warmness is preserved: if
\(\mu_t\le M\pi\), then for every measurable set \(A\),
\[
  \mu_tP_a(A)
  =
  \int P_a(x,A)\,\mu_t(\dd x)
  \le
  M\int P_a(x,A)\,\pi(\dd x)
  =
  M\pi(A).
\]
After refreshing the Gaussian momentum, the phase-space law at
transition \(t\) is therefore dominated by \(M\bar\pi\).  If
\(\mathcal C_{\star,t}\) denotes the certification event at that
transition, then \Cref{prop:intrinsic-certificate} gives
\[
  \Pp_\nu(\mathcal C_{\star,t}^c)
  \le
  \min\{1,M\bar\pi(\mathcal C_\star^c)\}
  \le
  q_\star.
\]
Let
\[
  B_n
  =
  \sum_{t=0}^{n-1}
  \mathbf 1_{\mathcal C_{\star,t}^c}
\]
be the number of uncertified transitions.  A certified transition uses
at most \(C_{\mathrm{cost}}K_\star\) evaluations, while every transition
uses at most \(C_{\mathrm{cost}}K_{\mathrm{cap}}\).  Hence
\[
  \mathsf W_n^{(a)}
  \le
  C_{\mathrm{cost}}
  \left[
    nK_\star+(K_{\mathrm{cap}}-K_\star)B_n
  \right].
\]
Since \(\E_\nu B_n\le nq_\star\), this proves
\eqref{eq:expected-work-exceptional}.  Markov's inequality gives
\[
  \Pp_\nu\!\left(
    B_n>\frac{nq_\star}{\delta}
  \right)
  \le
  \delta,
\]
and the trivial inequality \(B_n\le n\) proves
\eqref{eq:high-probability-work-exceptional}.  A union bound gives
\(\Pp_\nu(B_n\ge1)\le nq_\star\), proving
\eqref{eq:all-transitions-certified}.  Finally,
\eqref{eq:deterministic-cap-work} follows directly from
\eqref{eq:cap-comparability}.
\end{proof}
\endgroup

\begingroup\color{black}
The result shows that, whenever the stationary profile is separated
from zero by more than the combined stochastic and numerical errors,
the practical NUTS tree behaves predictably on the certification event:
every doubling realization reaches the same depth, stops through the
U-turn criterion before the maximum-depth cap, and maintains comparable
numerical energies along the selected orbit.  The resulting multinomial
and biased-progressive kernels mix globally from an \(M\)-warm start in
\(n_{\mathrm M}\) and \(n_{\mathrm B}\) transitions, respectively.  A
certified transition costs order \(K_\star\), while the cost accumulated
by the actual chain is governed by \Cref{prop:exceptional-work}; order
\(K_\star n_a\) is deterministic under cap comparability and otherwise
holds in the stated expected or high-probability sense.  The different
transition rates arise from the different amounts of probability that
the two state-selection rules assign to the short fixed-time proposal
band used in the conductance argument.
\endgroup

We also emphasize that the result does not assume a polynomial bound on the full Hamiltonian Jacobian.  Its
hypotheses are the scalar concentration and fidelity bounds in
\eqref{eq:intrinsic-Cp}--\eqref{eq:intrinsic-Ep}, together with the two probabilities in
\eqref{eq:intrinsic-delta-H}--\eqref{eq:intrinsic-delta-sm}.  A tangent-flow estimate is one
way to verify these quantities.  The failure requirement depends on the selector because the BPS double-tent law puts
quadratically less mass than the multinomial triangular law near a short fixed-time band.

\paragraph{Proof strategy.}
We first prove that a reversible positive-semidefinite kernel mixes from
an \(M\)-warm start once its restricted conductance is bounded below (\Cref{sec:spectral}),
and apply this result to $R_{\mathrm M}=P_{\mathrm M},$ and $R_{\mathrm B}=P_{\mathrm B}^2.$ It therefore remains to establish conductance.  Profile separation,
diagnostic concentration, leapfrog fidelity, and whole-orbit energy
control imply that \textcolor{black}{on the certification event, every doubling realization terminates at the same
certified orbit size \(K_\star\)}.  On this orbit, each NUTS kernel
minorizes an explicit mixture of fixed-index leapfrog proposals.
Local overlap of those proposals yields local overlap of the NUTS
transition laws, and the Cheeger inequality converts this local overlap
into a restricted-conductance lower bound, completing the mixing proof. These steps are executed in \cref{sec:terminal}-\cref{sec:profile-proof}.

\section{Spectral structure and reversibility}
\label{sec:spectral}

The two selectors require different spectral arguments.  Multinomial re-rooting is an augmented-space projection, which proves one-step positivity.  BPS uses a reversible finite-orbit transition that is not generally a projection; the unchanged two-step position skeleton supplies positivity instead.

Let $\mathsf Z=\R^d\times\R^d,$ $z=(x,v)$ and let \(\bar\pi\) be the phase-space law defined in \cref{sec:common-kernel}.  Let \(\mathscr I\) denote the countable collection of finite consecutive subsets of \(\Z\) that contain \(0\).  Recall that \(O_z(I)\) is the probability that random doubling started at \(z\) returns \(I\in\mathscr I\).

Define a probability measure on the augmented space
\(\mathsf Z\times\mathscr I\) by
\begin{align*}
  \eta(\dd z,\dd I)
  =
  \bar\pi(\dd z)\,O_z(I),
\end{align*}
where counting measure is understood in the orbit coordinate.  For \(i\in I\), define
\begin{align*}
  q_i(z,I)
  =
  \frac{\exp[-H(\Phi_h^iz)]}
  {\sum_{r\in I}\exp[-H(\Phi_h^rz)]}.
\end{align*}
For \(g\in L^2(\eta)\), define the re-rooting operator
\begin{align*}
  (\mathcal Rg)(z,I)
  =
  \sum_{i\in I}
  q_i(z,I)\,
  g(\Phi_h^iz,I-i).
\end{align*}

\begin{proposition}[Orbit re-rooting is an orthogonal projection]
\label{prop:R-projection}
The operator \(\mathcal R\) satisfies
\[
  \mathcal R^2=\mathcal R,
  \qquad
  \mathcal R^*=\mathcal R
\]
on \(L^2(\eta)\).  Consequently,
\[
  \ip{g}{\mathcal Rg}_{L^2(\eta)}
  =
  \norm{\mathcal Rg}_{L^2(\eta)}^2
  \ge0.
\]
\end{proposition}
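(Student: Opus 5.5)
Two facts do the work: the re-rooting group law $\Phi_h^j(\Phi_h^iz)=\Phi_h^{i+j}z$ together with $(I-i)-j=I-(i+j)$, and the invariance of the augmented measure $\eta$ under the re-rooting change of variables. For the second, fix $I\in\mathscr I$ and $i\in I$, and consider $\Psi_i(z,I)=(\Phi_h^iz,I-i)$. The leapfrog map is volume preserving, so $\Phi_h^i$ preserves Lebesgue measure on $\mathsf Z$. Also $e^{-H}$ is the $\bar\pi$ density, and \Cref{prop:initial-point-symmetry} gives $O_{\Phi_h^iz}(I-i)=O_z(I)$. Hence for nonnegative $F$,
\[
\int \sum_{I}\sum_{i\in I} O_z(I)\,e^{-H(\Phi_h^i z)}F(z,I,i)\,\dd z
\]
can be rewritten in the rerooted coordinates $w=\Phi_h^iz$, $J=I-i$, $j=-i\in J$. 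This gives the same expression with the weight $e^{-H(w)}O_w(J)$ and $e^{-H(\Phi_h^{j}w)}=e^{-H(z)}$ in the role of the root weight. The denominator $W_z(I)=\sum_{r\in I}e^{-H(\Phi_h^rz)}$ is invariant: $W_{\Phi_h^iz}(I-i)=W_z(I)$, since it sums over the same physical orbit.

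\textbf{Self-adjointness.} Write
\[
\ip{f}{\mathcal Rg}_\eta=\bar Z^{-1}\int\sum_I\sum_{i\in I}O_z(I)\frac{e^{-H(z)}e^{-H(\Phi_h^iz)}}{W_z(I)}f(z,I)\,g(\Phi_h^iz,I-i)\,\dd z.
\]
The kernel $e^{-H(z)}e^{-H(\Phi_h^iz)}/W_z(I)$ is symmetric under exchanging the root $z$ with $\Phi_h^iz$, using $W$-invariance. Apply the change of variables $(z,I,i)\mapsto(\Phi_h^iz,I-i,-i)$, which is a bijection of the index set $\{(I,i):i\in I\}$ for each phase point. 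Together with volume preservation and initial-point symmetry, this turns the expression into $\ip{\mathcal Rf}{g}_\eta$. Boundedness follows because $\mathcal R$ is a Markov averaging operator that preserves $\eta$: by Jensen, $\norm{\mathcal Rg}^2\le\ip{1}{\mathcal R(g^2)}=\norm g^2$, where the last equality is self-adjointness applied with $f=1$. So everything is well defined on $L^2(\eta)$, and I would first do the computation for bounded nonnegative $g$ and then extend.

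\textbf{Idempotence.} Compute
\[
\mathcal R^2g(z,I)=\sum_{i\in I}q_i(z,I)\sum_{j\in I-i}q_j(\Phi_h^iz,I-i)\,g(\Phi_h^{i+j}z,I-i-j).
\]
By $W$-invariance, $q_j(\Phi_h^iz,I-i)=e^{-H(\Phi_h^{i+j}z)}/W_z(I)=q_{i+j}(z,I)$. Substituting $k=i+j\in I$, the inner sum equals $(\mathcal Rg)(z,I)$ for every $i$. The outer weights sum to one, so $\mathcal R^2=\mathcal R$. A self-adjoint idempotent is an orthogonal projection, and then $\ip{g}{\mathcal Rg}=\ip{g}{\mathcal R^2g}=\ip{\mathcal Rg}{\mathcal Rg}\ge0$.

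\textbf{Main obstacle.} I expect the delicate step to be the measure-theoretic bookkeeping in self-adjointness. One must verify that the pair map $(I,i)\mapsto(I-i,-i)$ is an involution on admissible pairs; in particular, whenever $O_z(I)>0$, the returned set $I-i$ is itself an admissible returned interval from $\Phi_h^iz$, which is exactly where \Cref{prop:initial-point-symmetry} enters. One must also justify Fubini and the interchange with the countable sum over $\mathscr I$. I would handle these by working with nonnegative integrands, where Tonelli applies directly.
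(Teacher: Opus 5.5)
Your proposal is correct and follows essentially the same route as the paper. Both arguments rest on the re-rooting measure identity, which comes from volume preservation, initial-point symmetry and $W_{\Phi_h^iz}(I-i)=W_z(I)$. Self-adjointness then follows from the involution $(z,I,i)\mapsto(\Phi_h^iz,I-i,-i)$, idempotence from $q_j(\Phi_h^iz,I-i)=q_{i+j}(z,I)$, and positivity from $\ip{g}{\mathcal Rg}=\norm{\mathcal Rg}^2$. The only cosmetic difference is how you get the $L^2$ contraction: you combine Jensen with $\mathcal R1=1$ and self-adjointness, whereas the paper integrates the Jensen bound directly through the same change of variables.
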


\begin{proof}
We first record the measure identity associated with re-rooting.  Fix
\(I\in\mathscr I\) and \(i\in I\), and set
\[
  z'=\Phi_h^iz,
  \qquad
  I'=I-i.
\]
The map \(z\mapsto z'\) is volume preserving.  By
\cref{prop:initial-point-symmetry},
\[
  O_{z'}(I')=O_z(I).
\]
Writing \(\bar\pi(z)=\bar Z^{-1}e^{-H(z)}\) for the phase-space density, a
direct calculation gives
\begin{align*}
  \bar\pi(z)q_i(z,I)
  &=
  \frac{\bar Z^{-1}e^{-H(z)}e^{-H(z')}}
  {\sum_{r\in I}e^{-H(\Phi_h^rz)}}
  \notag\\
  &=
  \bar\pi(z')
  \frac{e^{-H(z)}}
  {\sum_{r'\in I'}e^{-H(\Phi_h^{r'}z')}}
  \notag\\
  &=
  \bar\pi(z')q_{-i}(z',I').
\end{align*}
Consequently,
\begin{align}
  \bar\pi(\dd z)O_z(I)q_i(z,I)
  =
  \bar\pi(\dd z')O_{z'}(I')q_{-i}(z',I').
  \label{eq:reroot-measure-identity}
\end{align}

We next verify that \(\mathcal R\) is a bounded operator on \(L^2(\eta)\).
For a bounded measurable \(g\), Jensen's inequality gives
\[
  |(\mathcal Rg)(z,I)|^2
  \le
  \sum_{i\in I}q_i(z,I)
  |g(\Phi_h^iz,I-i)|^2.
\]
Integrating and using \eqref{eq:reroot-measure-identity} together with the
bijection $(I,i,z)\longleftrightarrow(I-i,-i,\Phi_h^iz)$ yields
\begin{align*}
  \norm{\mathcal Rg}_{L^2(\eta)}^2
  &\le
  \sum_{I\in\mathscr I}\sum_{i\in I}
  \int
  |g(\Phi_h^iz,I-i)|^2
  \bar\pi(\dd z)O_z(I)q_i(z,I)
  \notag\\
  &=
  \sum_{I'\in\mathscr I}\sum_{r\in I'}
  \int
  |g(z',I')|^2
  \bar\pi(\dd z')O_{z'}(I')q_r(z',I')
  \notag\\
  &=
  \norm{g}_{L^2(\eta)}^2.
\end{align*}
Thus \(\mathcal R\) extends uniquely from bounded measurable functions to a
contraction on all of \(L^2(\eta)\).  It is therefore enough to prove
idempotence and self-adjointness first for bounded functions; continuity then
extends both identities to \(L^2(\eta)\).

For idempotence, applying \(\mathcal R\) twice gives
\begin{align}
  (\mathcal R^2g)(z,I)
  &=
  \sum_{i\in I}q_i(z,I)
  \sum_{k\in I-i}
  q_k(\Phi_h^iz,I-i)
  g(\Phi_h^{i+k}z,I-i-k).
  \label{eq:R2-expand}
\end{align}
Fix \(i\in I\) and set \(r=i+k\).  As \(k\) ranges through \(I-i\), the index
\(r\) ranges through \(I\), and \(I-i-k=I-r\).  Moreover,
\begin{align*}
  q_k(\Phi_h^iz,I-i)=
  \frac{e^{-H(\Phi_h^{i+k}z)}}
  {\sum_{m\in I-i}e^{-H(\Phi_h^{m+i}z)}}=
  \frac{e^{-H(\Phi_h^rz)}}
  {\sum_{s\in I}e^{-H(\Phi_h^sz)}}
  =q_r(z,I).
\end{align*}
Hence the inner sum in \eqref{eq:R2-expand} is $
  \sum_{r\in I}q_r(z,I)g(\Phi_h^rz,I-r)
  =
  (\mathcal Rg)(z,I),$ independently of \(i\).  Since \(\sum_{i\in I}q_i(z,I)=1\), \textcolor{black}{we have}
\begin{align}
  \mathcal R^2g=\mathcal Rg.
  \label{eq:R-idempotent}
\end{align}

For self-adjointness, let \(f,g\) be bounded.  By definition,
\begin{align*}
  \ip{f}{\mathcal Rg}_{L^2(\eta)}
  &=
  \sum_{I\in\mathscr I}\sum_{i\in I}
  \int
  f(z,I)g(\Phi_h^iz,I-i)
  \bar\pi(\dd z)O_z(I)q_i(z,I).
\end{align*}
Apply \eqref{eq:reroot-measure-identity} and the same bijection of triples.
With \(r=-i\), the right-hand side becomes
\begin{align*}
  \sum_{I'\in\mathscr I}\sum_{r\in I'}
  \int
  f(\Phi_h^rz',I'-r)g(z',I')
  \bar\pi(\dd z')O_{z'}(I')q_r(z',I')
  &=
  \ip{\mathcal Rf}{g}_{L^2(\eta)}.
\end{align*}
Thus \(\mathcal R^*=\mathcal R\).  Together with
\eqref{eq:R-idempotent}, this makes \(\mathcal R\) an orthogonal projection.
Therefore
\[
  \ip{g}{\mathcal Rg}_{L^2(\eta)}
  =
  \ip{\mathcal Rg}{\mathcal Rg}_{L^2(\eta)}
  =
  \norm{\mathcal Rg}_{L^2(\eta)}^2,
\]
as claimed.
\end{proof}

We now compress this projection to the phase and position spaces.  Define the isometry
\[
  \mathcal A:L^2(\bar\pi)\longrightarrow L^2(\eta),
  \qquad
  (\mathcal Af)(z,I)=f(z).
\]
Indeed,
\[
  \norm{\mathcal Af}_{L^2(\eta)}^2
  =
  \int_{\mathsf Z}
  |f(z)|^2
  \left(\sum_{I\in\mathscr I}O_z(I)\right)
  \bar\pi(\dd z)
  =
  \norm{f}_{L^2(\bar\pi)}^2.
\]
The phase-space NUTS transition operator is
\begin{align}
  \mathcal K=\mathcal A^*\mathcal R\mathcal A.
  \label{eq:K-compression}
\end{align}
To verify \eqref{eq:K-compression}, observe that
\begin{align*}
  (\mathcal A^*\mathcal R\mathcal Af)(z)
  &=
  \sum_{I\in\mathscr I}O_z(I)
  \sum_{i\in I}q_i(z,I)f(\Phi_h^iz),
\end{align*}
which is exactly orbit generation followed by multinomial re-rooting.

Define another isometry
\[
  \mathcal B:L^2(\pi)\longrightarrow L^2(\bar\pi),
  \qquad
  (\mathcal Bf)(x,v)=f(x).
\]
Its adjoint integrates the momentum: $ (\mathcal B^*g)(x)
  =
  \int_{\R^d}g(x,v)\varphi(\dd v).$ Momentum refreshment, the phase-space transition, and projection onto position therefore give
\begin{align}
  \PM
  =
  \mathcal B^*\mathcal K\mathcal B
  =
  \mathcal B^*\mathcal A^*\mathcal R\mathcal A\mathcal B.
  \label{eq:P-compression}
\end{align}

\begin{corollary}[Positivity]
\label{cor:PSD}
For every \(f\in L^2(\pi)\),
\begin{align}
  \ip{f}{\PM f}_{L^2(\pi)}
  =
  \norm{\mathcal R\mathcal A\mathcal Bf}_{L^2(\eta)}^2
  \ge0.
  \label{eq:PSD}
\end{align}
Thus \(\PM\) is a self-adjoint positive-semidefinite contraction, and its spectrum is contained in \([0,1]\).
\end{corollary}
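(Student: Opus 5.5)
The plan is to read positivity directly off the factorization \eqref{eq:P-compression}, using \cref{prop:R-projection} for the projection property and the fact that $\mathcal A$ and $\mathcal B$ are isometries.

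\emph{Positivity.} For $f\in L^2(\pi)$, set $g=\mathcal A\mathcal Bf\in L^2(\eta)$. By \eqref{eq:P-compression},
\[
  \ip{f}{\PM f}_{L^2(\pi)}
  =\ip{f}{\mathcal B^*\mathcal A^*\mathcal R\mathcal A\mathcal B f}_{L^2(\pi)}
  =\ip{g}{\mathcal Rg}_{L^2(\eta)}.
\]
\cref{prop:R-projection} identifies the right-hand side as $\norm{\mathcal Rg}_{L^2(\eta)}^2\ge0$. This is \eqref{eq:PSD}.

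\emph{Self-adjointness.} Since $\mathcal R^*=\mathcal R$, taking adjoints of the product in \eqref{eq:P-compression} gives
\[
  \PM^*=(\mathcal A\mathcal B)^*\mathcal R^*(\mathcal A\mathcal B)=\PM .
\]

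\emph{Contraction.} Both $\mathcal A$ and $\mathcal B$ are isometries, so their adjoints have norm at most one. An orthogonal projection also has norm at most one. Hence
\[
  \norm{\PM}\le\norm{\mathcal A\mathcal B}^2\norm{\mathcal R}\le1 .
\]
Equivalently, $0\le\ip{f}{\PM f}=\norm{\mathcal Rg}^2\le\norm{g}^2=\norm f^2$, using the isometry property in the last step.

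\emph{Spectrum.} For a bounded self-adjoint operator, the spectrum lies in the closed convex hull of its numerical range. The two bounds above give $0\le\ip{f}{\PM f}\le\norm f^2$, so the numerical range lies in $[0,1]$ and therefore so does the spectrum.

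The one step that needs care is whether $\mathcal B^*\mathcal A^*\mathcal R\mathcal A\mathcal B$ really is the Markov operator of the multinomial kernel defined in \cref{alg:mul}. The computation after \eqref{eq:K-compression} shows that $\mathcal A^*\mathcal R\mathcal A$ is exactly orbit generation followed by multinomial re-rooting. Composing with $\mathcal B$ on the right means evaluating a function of position only, and composing with $\mathcal B^*$ on the left integrates out the refreshed Gaussian momentum. Together these reproduce \cref{alg:mul}. Apart from this identification, which the paper has already recorded, the argument is routine functional analysis.
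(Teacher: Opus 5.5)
Your proof is correct and follows essentially the same route as the paper: factor $\PM$ through \eqref{eq:P-compression} and use $\mathcal R=\mathcal R^*=\mathcal R^2$ from \cref{prop:R-projection} to get both self-adjointness and $\ip{f}{\PM f}=\norm{\mathcal R\mathcal A\mathcal Bf}^2\ge0$. The only cosmetic difference is in the contraction step. You bound $\norm{\PM}$ using the isometries $\mathcal A,\mathcal B$ and $\norm{\mathcal R}\le1$, whereas the paper cites the general fact that every Markov operator is an $L^2(\pi)$ contraction and then lets positivity exclude the negative part of $[-1,1]$.
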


\begin{proof}
Equation \eqref{eq:P-compression} and
\(\mathcal R=\mathcal R^*=\mathcal R^2\) give
\[
  \ip{f}{\PM f}
  =
  \ip{\mathcal A\mathcal Bf}
      {\mathcal R\mathcal A\mathcal Bf}
  =
  \ip{\mathcal R\mathcal A\mathcal Bf}
      {\mathcal R\mathcal A\mathcal Bf}.
\]
Reversibility follows from self-adjointness.  Every Markov operator is an
\(L^2(\pi)\) contraction, so its self-adjoint spectrum lies in
\([-1,1]\); \eqref{eq:PSD} excludes the negative part.
\end{proof}

\subsection{Reversibility and a positive two-step skeleton for NUTS-BPS}
\label{sec:bps-spectral}

For a returned orbit \(I\), let $ q_{\mathrm B,z}^{I}(j),$ for $j\in I,$ be the probability that the recursion \eqref{eq:bps-recursion}, initialized at root zero, selects index \(j\).

\begingroup\color{black}
We now define the same finite selector from an arbitrary physical root.  Let
\(I\) have cardinality \(2^R\), and let \(\mathcal T_I\) be its canonical binary
bisection tree: the root is \(I\), and every non-singleton node is split into
its left and right halves.  For \(a\in I\), let
\[
  I_0^{(a)}=\{a\}\subset I_1^{(a)}\subset\cdots\subset I_R^{(a)}=I
\]
be the unique ancestor chain of the leaf \(a\) in \(\mathcal T_I\), and put
\(N_r^{(a)}=I_{r+1}^{(a)}\setminus I_r^{(a)}\).  Starting from candidate
\(C_0=a\), apply the BPS update \eqref{eq:bps-recursion} successively to the
old/new pairs \((I_r^{(a)},N_r^{(a)})\).  We denote the resulting transition
probability from physical root \(a\) to physical root \(b\) by
\(\bar q_{\mathrm B,z}^{I}(a,b)\).  When \(a=0\), the accepted-doubling
history of Algorithm~\ref{alg:bps} is exactly this ancestor chain, so
\(q_{\mathrm B,z}^{I}(j)=\bar q_{\mathrm B,z}^{I}(0,j)\).  This construction
is the finite rooted-tree kernel to which the detailed-balance result quoted
below applies.
\endgroup

\begin{proposition}[Finite-orbit BPS detailed balance]
\label{prop:bps-finite-balance}
For every returned weighted dyadic orbit \(I\), the finite BPS root kernel is reversible with respect to the normalized weights
\[
  \omega_z^I(a)
  =
  \frac{e^{-H(\Phi_h^az)}}{W_z(I)},
  \qquad a\in I.
\]
Equivalently,
\begin{align}
  e^{-H(\Phi_h^az)}
  \bar q_{\mathrm B,z}^{I}(a,b)
  =
  e^{-H(\Phi_h^bz)}
  \bar q_{\mathrm B,z}^{I}(b,a),
  \qquad a,b\in I.
  \label{eq:bps-finite-detailed-balance}
\end{align}
In particular, for \(j\in I\),
\begin{align}
  e^{-H(z)}q_{\mathrm B,z}^{I}(j)
  =
  e^{-H(\Phi_h^jz)}
  q_{\mathrm B,\Phi_h^jz}^{I-j}(-j).
  \label{eq:bps-reroot-balance}
\end{align}
\end{proposition}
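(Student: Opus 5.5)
We will prove the detailed-balance identity \eqref{eq:bps-finite-detailed-balance} by induction on the depth \(R\) of the canonical bisection tree \(\mathcal T_I\), and then derive \eqref{eq:bps-reroot-balance} from it by re-indexing. Throughout, write \(w(a)=e^{-H(\Phi_h^az)}\) and \(W(A)=\sum_{a\in A}w(a)\). The inductive claim is stronger than detailed balance: for every node \(E\) of \(\mathcal T_I\) and every \(a,b\in E\), the BPS kernel run along the ancestor chain of \(a\) up to \(E\) satisfies
\[
w(a)\,\bar q^{E}(a,b)=w(b)\,\bar q^{E}(b,a).
\]
The base case \(|E|=1\) is trivial. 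For the inductive step, split \(E=E^-\sqcup E^+\) into its halves. Suppose \(a\in E^-\), so that the last update uses the old set \(E^-\) and the new set \(N=E^+\). The kernel then factors as
\[
\bar q^{E}(a,b)=
\begin{cases}
(1-A_-)\,\bar q^{E^-}(a,b), & b\in E^-,\\[0.2em]
A_-\,w(b)/W(E^+), & b\in E^+,
\end{cases}
\qquad A_-=1\wedge\frac{W(E^+)}{W(E^-)},
\]
and symmetrically for \(a\in E^+\), with \(A_+=1\wedge W(E^-)/W(E^+)\).

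There are two cases. If \(a,b\) lie in the same half, say \(E^-\), the required identity reduces to the inductive hypothesis on \(E^-\) multiplied by the common factor \(1-A_-\). If \(a\in E^-\) and \(b\in E^+\), we need
\[
w(a)\,A_-\,\frac{w(b)}{W(E^+)}=w(b)\,A_+\,\frac{w(a)}{W(E^-)},
\]
that is, \(A_-/W(E^+)=A_+/W(E^-)\). Both sides equal \(1/\max\{W(E^-),W(E^+)\}\), which closes the induction. Applying the claim at \(E=I\) gives \eqref{eq:bps-finite-detailed-balance}.

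One point needs care: the ancestor chain of \(a\) inside \(E\) extends the chain inside the appropriate half. This holds by the definition of \(\mathcal T_I\) as the canonical bisection. The randomness of the intermediate updates then composes as Markov kernels, which justifies the factorization displayed above.

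For \eqref{eq:bps-reroot-balance}, set \(z'=\Phi_h^jz\) and \(I'=I-j\). The orbit states of \(z'\) at index \(r\) coincide with those of \(z\) at index \(r+j\). The bisection tree of \(I'\) is the shift of \(\mathcal T_I\), because both are defined intrinsically by halving the same physical orbit. Hence \(\bar q^{I'}_{z'}(a-j,b-j)=\bar q^{I}_z(a,b)\). Combining this with \(q_{\mathrm B,z}^{I}(j)=\bar q^I_z(0,j)\) and the identity \(q^{I-j}_{\mathrm B,z'}(-j)=\bar q^{I}_z(j,0)\), and taking \(a=0\), \(b=j\) in \eqref{eq:bps-finite-detailed-balance}, gives the claim.

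The main obstacle is the bookkeeping. We must check that the BPS recursion of Algorithm~\ref{alg:bps}, started from root \(a\), really follows the ancestor chain in \(\mathcal T_I\); that is, accepted doublings from any root are exactly dyadic-aligned, which is the content of \cref{prop:initial-point-symmetry}. We must also verify that shift-invariance of the tree holds for the specific dyadic alignment of \(I\). The algebraic core is the one-line identity \(A_-/W(E^+)=A_+/W(E^-)\).
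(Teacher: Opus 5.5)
Your proposal is correct. For \eqref{eq:bps-reroot-balance} you argue exactly as the paper does: identify $q_{\mathrm B,z}^{I}(j)=\bar q_{\mathrm B,z}^{I}(0,j)$ and $q_{\mathrm B,\Phi_h^jz}^{I-j}(-j)=\bar q_{\mathrm B,z}^{I}(j,0)$ by shifting the bisection tree, then set $a=0$, $b=j$.

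The difference is in \eqref{eq:bps-finite-detailed-balance}. The paper does not prove it; it imports it as Proposition~6 of \citet{DurmusEtAl2024}. You prove it directly by induction over the nodes of $\mathcal T_I$. For $a\in E^-$ you use the decomposition $\bar q^{E}(a,\cdot)=(1-A_-)\bar q^{E^-}(a,\cdot)+A_-\mu_z^{E^+}$. This reduces the cross-half case to the identity $A_-/W(E^+)=A_+/W(E^-)=1/\max\{W(E^-),W(E^+)\}$.

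What each route buys:
\begin{itemize}
\item Your route is self-contained. It shows that the kernel $\bar q$ as the paper defines it (ancestor-chain updates with $A_r=1\wedge W_z(N_r)/W_z(I_r)$) is exactly the reversible object. It also makes visible why the biased acceptance ratio is needed: the minimum is what symmetrizes the cross-half flux.
\item The paper's route is shorter. However, it leaves the reader to check that the cited binary-tree result applies to this particular rooted-tree formulation.
\end{itemize}

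One small correction concerns your final paragraph. The fact that the accepted-doubling history from any root is the ancestor chain in $\mathcal T_I$ is not the content of \cref{prop:initial-point-symmetry}. It is purely combinatorial: each $I_r$ is one of the two halves of $I_{r+1}$ and must contain the root, so the final interval forces the whole chain. Initial-point symmetry concerns the return probabilities $O_z(I)$ and is needed only later, in \cref{prop:bps-reversible}. Likewise, shift-invariance of $\mathcal T_I$ needs no verification. The tree is defined by halving the interval itself, and no absolute dyadic alignment is involved; for example, $\{-1,0,1,2\}$ is a legitimate returned orbit.
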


\begin{proof}
The detailed-balance identity \eqref{eq:bps-finite-detailed-balance} is Proposition~6 of Durmus et al. \cite{DurmusEtAl2024}, proved there by an explicit binary-tree representation of the progressive recursion.  We verify the implication \eqref{eq:bps-reroot-balance}, which is the form needed below.

The law \(q_{\mathrm B,z}^{I}(j)\) is the row of the finite root kernel that starts from physical root zero: $q_{\mathrm B,z}^{I}(j)
  =\bar q_{\mathrm B,z}^{I}(0,j).$ Re-root the same physical orbit at \(z_j=\Phi_h^jz\).  Its index set is \(I-j\).  Starting from index zero in the re-rooted representation is the same as starting from physical root \(j\) in the original representation, while selecting index \(-j\) is the same as selecting physical root zero.  Therefore $q_{\mathrm B,z_j}^{I-j}(-j)
  =\bar q_{\mathrm B,z}^{I}(j,0).$ Substitute \(a=0\) and \(b=j\) into \eqref{eq:bps-finite-detailed-balance} to obtain \eqref{eq:bps-reroot-balance}.
\end{proof}

Define the BPS phase-space operator
\begin{align}
  (\mathcal K_{\mathrm B}f)(z)
  =
  \sum_{I\in\mathscr I}O_z(I)
  \sum_{j\in I}q_{\mathrm B,z}^{I}(j)f(\Phi_h^jz).
  \label{eq:bps-phase-operator}
\end{align}

\begin{proposition}[\textcolor{black}{Reversibility of NUTS-BPS}]
\label{prop:bps-reversible}
The phase-space operator \(\mathcal K_{\mathrm B}\) is self-adjoint on \(L^2(\bar\pi)\).  Consequently,
\begin{align}
  \PB=\mathcal B^*\mathcal K_{\mathrm B}\mathcal B
  \label{eq:bps-position-compression}
\end{align}
is self-adjoint on \(L^2(\pi)\), and \(\pi\) is invariant for \(\PB\).
\end{proposition}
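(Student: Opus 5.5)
We mirror the self-adjointness argument for \(\mathcal R\) in \Cref{prop:R-projection}, now with the finite BPS root kernel in place of the multinomial weights. For bounded measurable \(f,g\) on \(\mathsf Z\), we write
\[
  \ip{f}{\mathcal K_{\mathrm B}g}_{L^2(\bar\pi)}
  =
  \sum_{I\in\mathscr I}\sum_{j\in I}
  \int f(z)\,g(\Phi_h^jz)\,
  \bar\pi(\dd z)\,O_z(I)\,q_{\mathrm B,z}^{I}(j).
\]
The key step is a measure identity analogous to \eqref{eq:reroot-measure-identity}. Fix \(I\) and \(j\in I\), and put \(z'=\Phi_h^jz\) and \(I'=I-j\). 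We combine three facts.
\begin{enumerate}[label=(\alph*)]
\item The leapfrog map is volume preserving, so \(\dd z=\dd z'\).
\item \Cref{prop:initial-point-symmetry} gives \(O_{z'}(I')=O_z(I)\).
\item The re-rooted detailed balance \eqref{eq:bps-reroot-balance} gives \(e^{-H(z)}q_{\mathrm B,z}^{I}(j)=e^{-H(z')}q_{\mathrm B,z'}^{I'}(-j)\).
\end{enumerate}
Multiplying by \(\bar Z^{-1}\) and combining these, we obtain
\[
  \bar\pi(\dd z)\,O_z(I)\,q_{\mathrm B,z}^{I}(j)
  =
  \bar\pi(\dd z')\,O_{z'}(I')\,q_{\mathrm B,z'}^{I'}(-j).
\]

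Next, we change variables via the bijection of triples \((I,j,z)\leftrightarrow(I-j,-j,\Phi_h^jz)\) on \(\{(I,j,z):I\in\mathscr I,\ j\in I\}\), exactly as in the proof of \Cref{prop:R-projection}. Since \(z=\Phi_h^{-j}z'\) and \(-j\in I'\), the integrand \(f(z)g(\Phi_h^jz)\) becomes \(f(\Phi_h^{r}z')g(z')\) with \(r=-j\in I'\). The sum therefore equals
\[
  \sum_{I'}\sum_{r\in I'}\int g(z')\,f(\Phi_h^{r}z')\,\bar\pi(\dd z')\,O_{z'}(I')\,q_{\mathrm B,z'}^{I'}(r)
  =\ip{\mathcal K_{\mathrm B}f}{g}.
\]
To make this rigorous, we note that \(\mathcal K_{\mathrm B}\) is a Markov operator. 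Hence, by Jensen's inequality and the same identity with \(f\equiv1\), it is a contraction on \(L^2(\bar\pi)\), and self-adjointness extends from bounded functions by density.

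Finally, self-adjointness of \(\PB\) follows from the compression \eqref{eq:bps-position-compression}: \((\mathcal B^*\mathcal K_{\mathrm B}\mathcal B)^*=\mathcal B^*\mathcal K_{\mathrm B}^*\mathcal B\). We verify that this compression is indeed the position kernel, since it amounts to Gaussian momentum refreshment, the BPS phase transition, and projection to \(x\). Invariance follows by taking \(f\equiv1\): \(\mathcal K_{\mathrm B}1=1\) because \(\sum_IO_z(I)\sum_jq^I_{\mathrm B,z}(j)=1\). Hence \(\int \PB g\,\dd\pi=\ip{\PB1}{g}=\int g\,\dd\pi\).

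The main obstacle is the bookkeeping in the triple bijection. We must check that \(j\in I\) implies \(-j\in I-j\), and that \(I-j\) ranges over returned intervals. Support issues are harmless because \(O_{z'}(I')=O_z(I)\) vanishes on both sides simultaneously. We must also confirm that \(q_{\mathrm B,z'}^{I'}\) is defined through the canonical bisection tree of the same physical interval, so that \eqref{eq:bps-reroot-balance} applies. This holds because the dyadic ancestor chain of a leaf in \(\mathcal T_I\) depends only on the physical interval, not on the root labeling.
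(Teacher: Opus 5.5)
Your proposal is correct and follows essentially the same route as the paper: the measure identity from volume preservation, initial-point symmetry, and \eqref{eq:bps-reroot-balance}; the involution \((z,I,j)\mapsto(\Phi_h^jz,I-j,-j)\); then compression by \(\mathcal B\), with invariance obtained from \(\PB 1=1\). Your explicit contraction-and-density step, which extends self-adjointness from bounded functions to all of \(L^2(\bar\pi)\), is a detail the paper writes out only for \(\mathcal R\) and leaves implicit here.
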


\begin{proof}
Let \(f,g\) be bounded measurable functions on phase space.  From \eqref{eq:bps-phase-operator},
\begin{align}
  \ip{f}{\mathcal K_{\mathrm B}g}_{L^2(\bar\pi)}
  &=
  \sum_{I\in\mathscr I}\sum_{j\in I}
  \int
  f(z)g(\Phi_h^jz)
  O_z(I)q_{\mathrm B,z}^{I}(j)
  \bar\pi(\dd z).
  \label{eq:bps-self-adjoint-start}
\end{align}
For a fixed \((I,j)\), set
\[
  z'=\Phi_h^jz,
  \qquad I'=I-j,
  \qquad j'=-j.
\]
Leapfrog volume preservation gives \(\dd z'=\dd z\).  By \cref{prop:initial-point-symmetry}, $O_{z'}(I')=O_z(I).$ By \eqref{eq:bps-reroot-balance},
\begin{align*}
  \bar\pi(\dd z)q_{\mathrm B,z}^{I}(j)
  =
  \bar\pi(\dd z')q_{\mathrm B,z'}^{I'}(j').
\end{align*}
The map \((z,I,j)\mapsto(z',I',j')\) is an involution.  Applying it to \eqref{eq:bps-self-adjoint-start} yields
\begin{align*}
  \ip{f}{\mathcal K_{\mathrm B}g}_{L^2(\bar\pi)}
  &=
  \sum_{I'\in\mathscr I}\sum_{j'\in I'}
  \int
  f(\Phi_h^{j'}z')g(z')
  O_{z'}(I')q_{\mathrm B,z'}^{I'}(j')
  \bar\pi(\dd z')\\
  &=
  \ip{\mathcal K_{\mathrm B}f}{g}_{L^2(\bar\pi)}.
\end{align*}
Thus \(\mathcal K_{\mathrm B}\) is self-adjoint.  The identity \eqref{eq:bps-position-compression} follows from Gaussian momentum refreshment and projection exactly as for multinomial NUTS.  Compression by \(\mathcal B\) preserves self-adjointness.  Finally, a self-adjoint Markov operator fixes constants, so for every bounded \(f\),
\[
  \int \PB f\,\dd\pi
  =\ip{1}{\PB f}_{\pi}
  =\ip{\PB1}{f}_{\pi}
  =\ip{1}{f}_{\pi},
\]
which proves invariance.
\end{proof}

The multinomial projection argument does not extend to the BPS selector.  The obstruction appears on the smallest possible finite orbit.

\begin{example}[A negative finite-orbit BPS eigenvalue]
\label{ex:bps-negative}
Consider a two-point orbit with equal Boltzmann weights.  From either root, the old set is the singleton containing that root and the new set is the other singleton.  The replacement probability \eqref{eq:bps-acceptance} equals one.  The finite BPS root transition is therefore
\begin{align*}
  \begin{pmatrix}0&1\\1&0\end{pmatrix},
\end{align*}
whose eigenvalues are \(1\) and \(-1\).  Thus the finite BPS re-rooting operator is reversible but not positive semidefinite.  This example does not assert that every position-space BPS kernel has negative spectrum; it shows that the multinomial orthogonal-projection proof cannot be transferred to BPS.
\end{example}

\begin{proposition}[Positive two-step BPS skeleton]
\label{prop:bps-square-PSD}
The unchanged two-step skeleton $  \RB=\PB^2$ is reversible and positive semidefinite on \(L^2(\pi)\).  More precisely, for every \(f\in L^2(\pi)\), $\ip{f}{\RB f}_{\pi}
  =\norm{\PB f}_{L^2(\pi)}^2
\ge0.$
\end{proposition}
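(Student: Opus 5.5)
The plan is to derive the statement directly from \Cref{prop:bps-reversible}, with no further structure of the BPS selector needed. That proposition shows \(\PB\) is self-adjoint on \(L^2(\pi)\) and leaves \(\pi\) invariant. The first step is to confirm that \(\PB\) is a bounded operator on \(L^2(\pi)\), in fact a contraction. Take \(f\in L^2(\pi)\). Jensen's inequality applied to the Markov kernel gives \((\PB f)^2\le \PB(f^2)\) pointwise. Integrating against \(\pi\) and using invariance then gives \(\norm{\PB f}_{L^2(\pi)}\le\norm{f}_{L^2(\pi)}\). Consequently \(\RB=\PB^2\) is a well-defined bounded operator on all of \(L^2(\pi)\). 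Since it is the composition of two Markov kernels, it is also a Markov kernel with \(\pi\) invariant.

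Reversibility of \(\RB\) means self-adjointness on \(L^2(\pi)\), and this follows by applying the self-adjointness of \(\PB\) twice. For bounded \(f,g\),
\[
  \ip{f}{\PB^2 g}_{\pi}=\ip{\PB f}{\PB g}_{\pi}=\ip{\PB^2 f}{g}_{\pi}.
\]
The identity extends to all of \(L^2(\pi)\) by density of bounded functions and continuity of \(\PB\). Equivalently, in kernel form we have \(\pi(\dd x)\RB(x,\dd y)=\pi(\dd y)\RB(y,\dd x)\). This follows from the corresponding symmetry of \(\pi(\dd x)\PB(x,\dd w)\) by integrating over the intermediate point \(w\).

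Positivity comes from the same computation with \(g=f\):
\[
  \ip{f}{\RB f}_{\pi}=\ip{\PB f}{\PB f}_{\pi}=\norm{\PB f}_{L^2(\pi)}^2\ge0.
\]
As a consequence, the spectrum of \(\RB\) lies in \([0,1]\). This should be contrasted with \(\PB\) itself, which can have spectrum down to \(-1\), as \Cref{ex:bps-negative} shows. There is no real obstacle in this argument. The only points that need care are that self-adjointness in \Cref{prop:bps-reversible} was proved for bounded functions, and that the contraction property is needed to pass to all of \(L^2(\pi)\). Both are settled by the Jensen and invariance step above.
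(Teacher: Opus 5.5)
Your argument is correct and is the paper's proof: both take $\PB^*=\PB$ from \Cref{prop:bps-reversible} and read off $\RB^*=\RB$ and $\ip{f}{\RB f}_{\pi}=\norm{\PB f}_{L^2(\pi)}^2$, and your Jensen-plus-invariance contraction step spells out the extension from bounded functions that the paper leaves implicit. One side remark needs correcting: \Cref{ex:bps-negative} exhibits the eigenvalue $-1$ only for the finite-orbit root kernel on a two-point orbit, and the paper explicitly declines to claim that the position kernel $\PB$ itself has negative spectrum.
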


\begin{proof}
By \cref{prop:bps-reversible}, \(\PB^*=\PB\).  Therefore $\RB^*=(\PB^2)^*=\PB^{*2}=\PB^2=\RB,$ and $
  \ip{f}{\RB f}
  =\ip{f}{\PB^*\PB f}
  =\ip{\PB f}{\PB f}
  =\norm{\PB f}_2^2.$ This proves both self-adjointness and positive semidefiniteness.  One transition of \(\RB\) is exactly two consecutive transitions of the original NUTS-BPS chain; no self-loop or auxiliary kernel has been added.
\end{proof}

\section{Terminal-depth certificates}
\label{sec:terminal}

The subsequent selector and conductance arguments require only a terminal-depth certificate: every realization of the doubling bits returns the same orbit cardinality, and all numerical energies on every possible returned orbit remain in a fixed window.  The profile analysis will produce this certificate through a genuine U-turn before the cap.

\begin{definition}[Terminal-depth certificate]
\label{def:terminal-certificate}
Fix a power of two \(K=2^k\), an energy tolerance \(\eta>0\), and a position \(x\in\R^d\).  A momentum \(v\) is \((K,\eta)\)-certified if the following statements hold.
\begin{enumerate}[label=(\roman*)]
\item For every realization of the fair left/right doubling bits, Algorithm \ref{alg:orbit} returns an orbit of cardinality \(K\).
\item Every leapfrog point that can belong to such an orbit satisfies
\begin{align*}
\max_{|j|\le K-1}
\abs{H(\Phi_h^j(x,v))-H(x,v)}\le\eta.
\end{align*}
\end{enumerate}
The set of certified momenta is denoted by \(G_x(K,\eta)\).
\end{definition}

In the application below, all proper dyadic descendants are nonturning, while every full interval of size \(K\) has a U-turn.  The builder accepts the final doubling and terminates on the ensuing full-orbit check, strictly before the maximum depth.

\begin{proposition}[Deterministic terminal depth]
\label{prop:terminal-depth} This deterministic sign-to-depth mechanism abstracts the orbit-uniformity argument used for Gaussian NUTS in \citet[Proposition~4]{Oberdoerster2025}.
Let \(K=2^k\).  Suppose that, for a phase point \(z\), every dyadic interval of cardinality strictly less than \(K\) that can be inspected by Algorithm \ref{alg:orbit} has U-turn indicator zero.  Suppose in addition that either
\begin{enumerate}[label=(\alph*)]
\item \(K=2^{k_{\max}}\), or
\item every possible length-\(K\) interval containing the initial index has U-turn indicator one.
\end{enumerate}
Then every realization of the doubling bits returns an orbit of cardinality \(K\).  In case (b), termination occurs through the U-turn criterion strictly before the maximum-depth cap.
\end{proposition}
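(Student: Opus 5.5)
We prove the statement by induction on the doubling level \(r\) of \cref{alg:orbit}, tracking the invariant that, for every realization of the bits, after \(r\) accepted doublings the current orbit \(I_r\) is a dyadic interval of cardinality \(2^r\) containing \(0\). Here ``dyadic'' means aligned to the canonical bisection structure generated by the doubling process: \(I_r\) is a union of \(I_{r-1}\) and an adjacent block of the same size. The base case \(I_0=\{0\}\) is immediate. For the inductive step with \(r<k\), we check the two ways the loop can stop early.

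First, the loop may exit because \(u_z(I_r)=1\). If \(r\ge1\), then \(I_r\) is an inspectable interval of cardinality \(2^r<K\), so by hypothesis \(u_z(I_r)=0\). If \(r=0\), the interval is a singleton with zero displacement, so the indicator is zero automatically. The cap is not reached, since \(|I_r|=2^r<K\le 2^{k_{\max}}\).

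Second, the proposed block may be rejected. The block \(N_r\) has cardinality \(2^r<K\), and every element of \(\mathscr D(N_r)\) is a dyadic interval of cardinality at most \(2^r\) that the algorithm inspects through \eqref{eq:subuturn}. Each such interval has \(u_z=0\) by hypothesis, so \(\sigma_z(N_r)=\max_{E\in\mathscr D(N_r)}u_z(E)=0\) and the doubling is accepted. Hence \(I_{r+1}=I_r\sqcup N_r\) has cardinality \(2^{r+1}\), and the invariant propagates up to \(r=k\). Every bit sequence therefore reaches some \(I_k\) of cardinality \(K\).

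It remains to show that the builder stops at \(I_k\).
\begin{itemize}[leftmargin=2em]
\item In case (a), \(|I_k|=K=2^{k_{\max}}\), so the while-condition fails and \(I_k\) is returned.
\item In case (b), \(I_k\) is a length-\(K\) interval containing \(0\), so \(u_z(I_k)=1\) by hypothesis. The loop exits through the U-turn test before any further doubling is attempted. Because \(|I_k|=K\) and \(k<k_{\max}\) in this case (the case \(k=k_{\max}\) is already covered by (a)), the cap is not the operative stopping reason. Termination is therefore through the U-turn criterion, strictly before the maximum depth.
\end{itemize}

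The only delicate point is bookkeeping rather than analysis. One must be sure that the phrase ``every dyadic interval of cardinality strictly less than \(K\) that can be inspected'' covers exactly the intervals that the algorithm actually tests. These are the full orbits \(I_r\) with \(r<k\), which the while-test checks, and all recursive descendants of the proposed blocks \(N_r\) with \(r<k\), which \(\sigma_z\) checks. Both families are realization-dependent but are enumerated by the bit sequences; this matches the family \(\mathscr I_K\). The proof is then a direct induction on the algorithm's control flow, with no analytic estimates required.
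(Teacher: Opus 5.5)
Your proof is correct and follows the paper's argument: an induction on the doubling level showing every proposed block has \(\sigma_z(N_r)=0\), followed by termination at \(I_k\) via the cap in case (a) and via the full-orbit U-turn test in case (b). You are slightly more explicit than the paper, since you also check that the while-test \(u_z(I_r)=0\) cannot end the loop early for \(r<k\). One small correction: in case (b), the strict-before-cap conclusion needs the tacit hypothesis \(k<k_{\max}\). It is not ``covered by (a)'', because when \(k=k_{\max}\) the cap and the U-turn test fire simultaneously. The paper also simply asserts \(k<k_{\max}\) here, which holds in its application since \(k_\star<k_{\max}\).
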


\begin{proof}
At depth zero the current orbit has one point.  Assume inductively that the builder has accepted all extensions through depth \(r<k\).  The proposed new half has cardinality \(2^r<K\).  Every dyadic descendant of that half also has cardinality less than \(K\), so its U-turn indicator is zero by assumption.  Hence the recursive sub-U-turn indicator of the new half is zero, and the extension is accepted.  This proves that the builder reaches depth \(k\) for every sequence of bits.

In case (a), the orbit is returned because its cardinality equals the maximum size.  In case (b), the accepted length-\(K\) orbit has U-turn indicator one.  The next evaluation of the while-condition therefore terminates the builder and returns that orbit.  Since \(k<k_{\max}\), the cap is not reached.
\end{proof}

For \(j\in\Z\), define the fixed-index leapfrog proposal
\begin{align*}
Q_{j,x}=\Law\!\left(\Pi_x\Phi_h^j(x,V)\right),
\qquad V\sim N(0,I_d).
\end{align*}
The ideal signed-index laws associated with a certified orbit are
\begin{align}
\lambda_{\mathrm M,K}(j)=\frac{K-|j|}{K^2},
\qquad |j|\le K-1,
\label{eq:terminal-mul-law}
\end{align}
and
\begin{align}
\lambda_{\mathrm B,K}(0)=0,
\qquad
\lambda_{\mathrm B,K}(j)
=\frac{2}{K^2}\min\{|j|,K-|j|\},
\qquad 1\le|j|\le K-1.
\label{eq:terminal-bps-law}
\end{align}
Their derivation is given in \Cref{sec:selector-minorization}.

For \(a\in\{\mathrm M,\mathrm B\}\), define the positive-skeleton exponents and
kernels by
\begin{align}
  r_{\mathrm M}=1,
  \qquad
  r_{\mathrm B}=2,
  \qquad
  R_a=P_a^{r_a}.
  \label{eq:positive-skeleton-exponents}
\end{align}

Before we proceed, for finite measures \(\mu,\nu\) dominated by a common measure \(\lambda\), define
\begin{align}\label{equation:overlap}
\ov(\mu,\nu)
=
\int
\min\left\{
\frac{\dd\mu}{\dd\lambda},
\frac{\dd\nu}{\dd\lambda}
\right\}\dd\lambda.
\end{align}
For probability measures, note that we have
\begin{align*}
\ov(\mu,\nu)=1-\norm{\mu-\nu}_{\mathrm{TV}}.
\end{align*}

\begin{theorem}[Terminal-depth transfer]
\label{thm:terminal-transfer}
Fix \(a\in\{\mathrm M,\mathrm B\}\), a power of two \(K\ge8\), a measurable
set \(D\subset\R^d\), \(\eta>0\), \(\delta\in[0,1)\), \(M\ge1\), and
\(\epsilon\in(0,1/2)\). Suppose that, for every \(x\in D\), $\Pp_V\bigl(G_x(K,\eta)^c\bigr)\le\delta.$ Let \(\mathcal J\subset\{1,\ldots,K-1\}\), and assume
\begin{align}
  a_{a,K}:=\sum_{j\in\mathcal J}\lambda_{a,K}(j)>0.
  \label{eq:band-mass}
\end{align}
Suppose there are \(\Delta>0\) and \(\beta\in(0,1]\) such that
\begin{align}
  \ov(Q_{j,x},Q_{j,y})\ge\beta
  \label{eq:fixed-overlap-assumption}
\end{align}
for every \(j\in\mathcal J\) and every \(x,y\in D\) with
\(\norm{x-y}\le\Delta\). Let $c_{\mathrm M}(\eta)=e^{-2\eta},$ $c_{\mathrm B}(\eta)=e^{-4\eta},$ $\rho_a=c_a(\eta)\bigl(a_{a,K}\beta-2\delta\bigr),$ $\theta=\min\{1,\psi\Delta\},$ and assume \(\rho_a>0\). Let $s={\epsilon^2}/{128M^2}$ and suppose
\begin{align}
  \pi(D^c)\le\frac{\theta s}{64}.
  \label{eq:D-general-tail}
\end{align}
Then the positive skeleton in
\eqref{eq:positive-skeleton-exponents} satisfies
\begin{align}
  \Phi_s(R_a)\ge c\rho_a\theta
  \label{eq:general-conductance}
\end{align}
with the universal choice \(c=3/256\). Consequently, for every
\(M\)-warm probability law \(\nu\),
\begin{align}
  \norm{\nu P_a^n-\pi}_{\mathrm{TV}}\le\epsilon
  \label{eq:general-transfer-tv}
\end{align}
whenever
\begin{align}
  n\ge
  C\rho_a^{-2}\theta^{-2}
  \log\!\left(\frac{CM}{\epsilon}\right),
  \label{eq:general-transfer-time}
\end{align}
where \(C\) is universal and includes the factor \(r_a\le2\).
\end{theorem}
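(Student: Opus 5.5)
The plan has three steps: a pointwise minorization of each kernel by the band mixture $\sum_{j\in\mathcal J}\lambda_{a,K}(j)Q_{j,x}$, a conductance bound obtained from Cheeger, and the standard warm-start mixing argument for positive kernels. The proof is organized around the first step.

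\emph{Step 1 (selector minorization).} Fix $x\in D$ and a certified momentum $v\in G_x(K,\eta)$. By \Cref{def:terminal-certificate}, every realization of the doubling bits returns an orbit of cardinality $K$. For revised random doubling, the returned orbit is then $[-s:K-1-s]$, where $s$ is determined by the bit sequence and is uniform on $\{0,\dots,K-1\}$.

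For multinomial selection, the energy window gives
\[
\mu_z^I(j)\ge e^{-2\eta}/K .
\]
Averaging over the uniform offset, the index $j$ lies in the orbit with probability $(K-|j|)/K$. Hence
\[
\Pp(J=j\mid v)\ge e^{-2\eta}\lambda_{\mathrm M,K}(j).
\]

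For BPS, each accepted doubling has acceptance probability $A_r\ge e^{-2\eta}|N_r|/|I_r|$, and each within-block draw has probability at least $e^{-2\eta}/|N_r|$. The double-tent computation of \Cref{sec:selector-minorization}, run with these lower bounds, should give
\[
\Pp(C=j\mid v)\ge e^{-4\eta}\lambda_{\mathrm B,K}(j).
\]
I will cite that appendix for the combinatorics and only track the energy factors here.

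Integrating over $v$ and discarding uncertified momenta (probability at most $\delta$) yields
\[
P_a(x,\cdot)\ge c_a(\eta)\Big[\sum_{j\in\mathcal J}\lambda_{a,K}(j)\,Q_{j,x}-\mu_x^{\mathrm{bad}}\Big],
\]
where $\mu_x^{\mathrm{bad}}$ is a subprobability of mass at most $\delta$. The nuisance is that $Q_{j,x}$ is a law over all $v$, not only certified ones. I will therefore write the useful part as $\sum_j\lambda(j)\,\Law(\Pi_x\Phi_h^j(x,V);V\in G_x)$, and use $\ov(Q_{j,x}\1_{G_x},Q_{j,y}\1_{G_y})\ge\beta-2\delta$ after the $\lambda$-weighted sum.

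\emph{Step 2 (overlap and conductance).} For $x,y\in D$ with $\norm{x-y}\le\Delta$, the minorizations and the overlap assumption \eqref{eq:fixed-overlap-assumption} give
\[
\ov(P_a(x,\cdot),P_a(y,\cdot))\ge c_a(\eta)\,(a_{a,K}\beta-2\delta)=\rho_a .
\]
For BPS, $\ov(P_{\mathrm B}^2(x,\cdot),P_{\mathrm B}^2(y,\cdot))\ge\ov(P_{\mathrm B}(x,\cdot),P_{\mathrm B}(y,\cdot))$, since composing with a Markov kernel cannot decrease overlap. So the same bound holds for $R_a$.

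The standard restricted-conductance argument then applies, in the form used by \citet{ChenGatmiryJiang2026}. Take a set $A$ with $s\le\pi(A)\le1/2$. Define $A_1=\{x\in A\cap D:R_a(x,A^c)<\rho_a/2\}$, $A_2$ analogously on $A^c\cap D$, and $A_3$ as the rest. Points of $A_1$ and $A_2$ are at distance greater than $\Delta$, by the overlap bound. If $A_1$ or $A_2$ loses half of the mass of its side, the flow is at least order $\rho_a\pi(A)$. Otherwise the Cheeger inequality \eqref{eq:cheeger} gives
\[
\pi(A_3)\ge\psi\Delta\,\pi(A_1)\pi(A_2).
\]
The tail assumption \eqref{eq:D-general-tail} lets me absorb $D^c$ into $A_3$ while keeping a constant fraction, because $\pi(D^c)\le\theta s/64$ and $\pi(A)\ge s$. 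Bookkeeping of the constants yields $\Phi_s(R_a)\ge(3/256)\rho_a\theta$.

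\emph{Step 3 (mixing).} The kernel $R_a$ is reversible and positive semidefinite, by \Cref{cor:PSD} and \Cref{prop:bps-square-PSD}. The positive-kernel $s$-conductance theorem of \Cref{sec:spectral} therefore gives
\[
\norm{\nu R_a^{n'}-\pi}_{\TV}\le Ms+M e^{-n'\Phi_s^2/2}.
\]
Since $Ms\le\epsilon/2$, choosing $n'\asymp\Phi_s^{-2}\log(M/\epsilon)$ suffices. Taking $n=r_an'$ and using monotonicity of the TV distance under $P_a$ (for $n$ not divisible by $r_a$) gives \eqref{eq:general-transfer-time}.

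The main obstacle is Step 1 for BPS: obtaining the double-tent lower bound uniformly over doubling histories while carrying only an $e^{-4\eta}$ energy loss, and handling the bad momenta correctly inside the overlap.
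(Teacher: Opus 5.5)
Your proposal is correct and follows the paper's proof essentially step for step. The shared steps are: the restricted minorization \(P_a(x,\cdot)\ge c_a(\eta)\widetilde Q_{a,x}\) from \Cref{lem:mul-minorization,lem:bps-minorization}; the mixture and restriction overlap bounds giving \(\ov(P_a(x,\cdot),P_a(y,\cdot))\ge\rho_a\); total-variation contraction for \(P_{\mathrm B}^2\); the two-case Cheeger argument, where your threshold \(\rho_a/2\) in place of \(\rho_a/4\) still works and gives at least the constant \(3/256\); and \Cref{prop:PSD-mixing} applied to the positive skeleton. The BPS step you flag as the main obstacle is actually immediate, because a final replacement overwrites every earlier candidate: the last acceptance probability is at least \(e^{-2\eta}\), and the draw from the last new half gives each of its points mass at least \(2e^{-2\eta}/K\), so every history dominates \(e^{-4\eta}\operatorname{Unif}(I^{\mathrm{new}})\), which averages over the bits to \(\lambda_{\mathrm B,K}\). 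Also, \Cref{prop:PSD-mixing} gives the bound \(\tfrac12\sqrt{Me^{-n\bar\phi^2/2}+16sM^2}\) rather than the Lov\'asz--Simonovits form you quote, although it yields the same requirement \(n\asymp\Phi_s^{-2}\log(M/\epsilon)\).
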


\begin{proof}[Proof of \Cref{thm:terminal-transfer}]
Fix \(a\in\{\mathrm M,\mathrm B\}\). For \(x\in D\), define $Q_{a,x}=\sum_j\lambda_{a,K}(j)Q_{j,x}$ and let \(\widetilde Q_{a,x}\) be the subprobability measure obtained by
multiplying the common momentum integral by
\(\1_{G_x(K,\eta)}\). Because the selector weights sum to one,
\begin{align}
  Q_{a,x}(\R^d)-\widetilde Q_{a,x}(\R^d)
  =\Pp_V\bigl(G_x(K,\eta)^c\bigr)
  \le\delta.
  \label{eq:generic-restriction-mass}
\end{align}
The selector minorizations in
\Cref{lem:mul-minorization,lem:bps-minorization} give $  P_a(x,\cdot)\ge c_a(\eta)\widetilde Q_{a,x}(\cdot).$

Let \(x,y\in D\) satisfy \(\norm{x-y}\le\Delta\). By
\eqref{eq:mixture-overlap}, \eqref{eq:band-mass}, and
\eqref{eq:fixed-overlap-assumption}, we have $\ov(Q_{a,x},Q_{a,y}) \ge \sum_{j\in\mathcal J}\lambda_{a,K}(j) \ov(Q_{j,x},Q_{j,y}) \ge a_{a,K}\beta.$ Equations \eqref{eq:restriction-overlap} and
\eqref{eq:generic-restriction-mass} imply $ \ov(\widetilde Q_{a,x},\widetilde Q_{a,y})
  \ge a_{a,K}\beta-2\delta.$ If two probability measures dominate \(c\mu\) and \(c\nu\), respectively,
where \(\mu\) and \(\nu\) are finite measures, then their overlap is at
least \(c\ov(\mu,\nu)\). Hence $  \ov(P_a(x,\cdot),P_a(y,\cdot))\ge\rho_a.$ For \(a=\mathrm M\), \(R_a=P_a\). For \(a=\mathrm B\), total
variation contracts under a common Markov kernel, so
\begin{align*}
  \norm{R_a(x,\cdot)-R_a(y,\cdot)}_{\mathrm{TV}} =
  \norm{P_a^2(x,\cdot)-P_a^2(y,\cdot)}_{\mathrm{TV}} \le
  \norm{P_a(x,\cdot)-P_a(y,\cdot)}_{\mathrm{TV}}.
\end{align*}
Thus, for both variants,
\begin{align}
  \norm{R_a(x,\cdot)-R_a(y,\cdot)}_{\mathrm{TV}}
  \le1-\rho_a.
  \label{eq:generic-skeleton-overlap}
\end{align}

We now prove the conductance bound. Let \(S\) satisfy
\(s<\alpha:=\pi(S)\le1/2\), let $q=\mathcal Q_{R_a}(S,S^c),$ $ d_0=\pi(D^c),$ and define
\begin{align*}
A=\left\{x\in S\cap D:R_a(x,S^c)<\frac{\rho_a}{4}\right\},
\quad\text{and}\quad
B=\left\{y\in S^c\cap D:R_a(y,S)<\frac{\rho_a}{4}\right\}.
\end{align*}
The definition of \(q\) and Markov's inequality give
\begin{align}
  \pi(A)\ge\alpha-d_0-\frac{4q}{\rho_a}.
  \label{eq:generic-A-mass}
\end{align}
Because \(R_a\) is reversible,
\(\mathcal Q_{R_a}(S^c,S)=q\), and therefore
\begin{align}
  \pi(B)\ge1-\alpha-d_0-\frac{4q}{\rho_a}.
  \label{eq:generic-B-mass}
\end{align}
For \(x\in A\) and \(y\in B\),
\[
  R_a(x,S)>1-\frac{\rho_a}{4},
  \qquad
  R_a(y,S)<\frac{\rho_a}{4},
\]
so
\begin{align*}
  \norm{R_a(x,\cdot)-R_a(y,\cdot)}_{\mathrm{TV}}
  >1-\frac{\rho_a}{2}.
\end{align*}
Comparison with \eqref{eq:generic-skeleton-overlap} shows that $\dist(A,B)>\Delta.$

If \(q\ge\rho_a\alpha/16\), then, since \(\theta\le1\),
\begin{align}
  q\ge\frac{\rho_a\theta\alpha}{16}.
  \label{eq:generic-large-flow-case}
\end{align}
Suppose instead that \(q<\rho_a\alpha/16\). Since
\(d_0\le\theta s/64<\alpha/64\),
\eqref{eq:generic-A-mass}--\eqref{eq:generic-B-mass} imply
\begin{align*}
  \pi(A)\ge\frac\alpha2,
  \qquad
  \pi(B)\ge\frac14.
\end{align*}
Let \(C=(A\cup B)^c\). Apply \eqref{eq:cheeger} to the partition
\(A\sqcup B\sqcup C\). Since
\(\dist(A,B)>\min\{\Delta,\psi^{-1}\}\),
\begin{align*}
  \pi(C)
  \ge\theta\pi(A)\pi(B)
  \ge\frac{\theta\alpha}{8}.
\end{align*}
On the other hand,
\begin{align*}
  \pi(C)
  \le2d_0+\frac{8q}{\rho_a}
  \le\frac{\theta s}{32}+\frac{8q}{\rho_a}.
\end{align*}
Combining the last two displays and using \(s<\alpha\) gives
\[
  \frac{8q}{\rho_a}
  \ge
  \frac{\theta\alpha}{8}-\frac{\theta s}{32}
  >
  \frac{3\theta\alpha}{32},
\]
and hence
\begin{align*}
  q\ge\frac{3\rho_a\theta\alpha}{256}.
\end{align*}
Together with \eqref{eq:generic-large-flow-case}, this proves
\[
  \mathcal Q_{R_a}(S,S^c)
  \ge
  \frac{3}{256}\rho_a\theta\pi(S).
\]
Since \(\pi(S)-s\le\pi(S)\),
\eqref{eq:general-conductance} follows with \(c=3/256\).

By \Cref{cor:PSD,prop:bps-square-PSD}, \(R_a\) is reversible and positive
semidefinite. Let
\[
  N_0
  =
  C\rho_a^{-2}\theta^{-2}
  \log\!\left(\frac{CM}{\epsilon}\right)
\]
with \(C\) large enough for \Cref{prop:PSD-mixing}. Then
\(\norm{\nu R_a^m-\pi}_{\mathrm{TV}}\le\epsilon\) for every integer
\(m\ge N_0\). For an arbitrary integer \(n\), write
\(n=r_am+q_0\), where \(m=\lfloor n/r_a\rfloor\) and
\(0\le q_0<r_a\). Since
\[
  \nu P_a^n=(\nu R_a^m)P_a^{q_0}
\]
and total variation contracts under \(P_a^{q_0}\), the same error bound holds
whenever \(m\ge N_0\). Absorbing \(r_a\le2\) into the universal constant
proves \eqref{eq:general-transfer-tv}--\eqref{eq:general-transfer-time}.
\end{proof}

\section{Trajectory, energy, and fixed-time estimates}
\label{sec:analytic}

This section records the deterministic and probabilistic estimates used to turn a separated exact-flow profile into a certified leapfrog orbit.  No artificial stopping mechanism is introduced.  The short-time lemma is used only for the first few dyadic intervals, where the profile has not yet accumulated a fixed macroscopic margin.
\subsection{A deterministic simultaneous no-U-turn lemma}
\label{sec:no-uturn}

Write a leapfrog trajectory as $(q_j,p_j)=\Phi_h^j(q_0,p_0),$ and $g_j=\nabla U(q_j).$  Combining \eqref{eq:lf1}--\eqref{eq:lf3} gives
\begin{align}
  q_{j+1}
  &=
  q_j+hp_j-\frac{h^2}{2}g_j,
  \label{eq:lf-position-recursion}\\
  p_{j+1}
  &=
  p_j-\frac h2(g_j+g_{j+1}).
  \label{eq:lf-momentum-recursion}
\end{align}

\begin{lemma}[Short numerical trajectories do not turn]
\label{lem:no-uturn}
Let \(K\in\N\), \(T=Kh\), and $  G_0=\norm{\nabla U(q_0)}.
$ Assume \(\norm{p_0}>0\).  Suppose \(U\) is \(L\)-smooth and, for some \(B\ge0\),
\begin{align}
  G_0\le B\sqrt L\,\norm{p_0}.
  \label{eq:gradient-momentum-ratio}
\end{align}
If
\begin{align}
  \sqrt L\,T
  \le
  \frac{1}{64(B+1)},
  \label{eq:no-uturn-time-condition}
\end{align}
then, for every pair of integers \(-K\le i<j\le K\) satisfying
\((j-i)h\le T\),
\begin{align}
  p_i^\top(q_j-q_i)>0,
  \qquad
  p_j^\top(q_j-q_i)>0.
  \label{eq:no-uturn-conclusion}
\end{align}
Consequently, every orbit and every suborbit of physical length at most \(T\) that is inspected by \cref{alg:orbit} has neither a U-turn nor a sub-U-turn.
\end{lemma}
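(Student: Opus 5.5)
Both inequalities follow from one principle: over a time window of length at most \(2T\), with \(\sqrt L\,T\) small, the leapfrog momentum barely changes relative to \(\norm{p_0}\). Then \(q_j-q_i\approx (j-i)h\,p_0\), and both inner products are close to \((j-i)h\norm{p_0}^2>0\).

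\textbf{Step 1: a priori bounds on the window.} The window is indices \(-K\le n\le K\), so \(|n|h\le T\). Put \(P=\max_{|n|\le K}\norm{p_n}\) and \(G=\max_{|n|\le K}\norm{g_n}\). By \(L\)-smoothness, \(\norm{g_n-g_0}\le L\norm{q_n-q_0}\). From \eqref{eq:lf-position-recursion}, and from the backward recursion obtained through \eqref{eq:lf-reversible}, we get \(\norm{q_n-q_0}\le |n|h\,P+\tfrac{|n|h^2}{2}G\le T P+\tfrac{Th}{2}G\). From \eqref{eq:lf-momentum-recursion}, \(\norm{p_n-p_0}\le T G\). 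Now combine these with \(G\le G_0+L(TP+ThG/2)\) and \(G_0\le B\sqrt L\norm{p_0}\). Since \(\sqrt L T\le 1/64\) and \(h\le T\), the resulting linear system closes to
\[
P\le 2\norm{p_0},\qquad G\le 2(B+1)\sqrt L\norm{p_0}.
\]
Hence \(\norm{p_n-p_0}\le 2(B+1)\sqrt L T\,\norm{p_0}\le \norm{p_0}/32\) for every \(|n|\le K\). I expect this closure to be the main obstacle. One must make sure the constant 64 absorbs every factor coming from the two-sided window and from the \(h^2/2\) term. It is cleanest to write the estimate for sub-steps so that no circularity arises: for example, induct on \(|n|\) and show that the bounds with constant 2 propagate.

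\textbf{Step 2: displacement estimate.} Summing \eqref{eq:lf-position-recursion} from \(i\) to \(j-1\) gives
\[
q_j-q_i=(j-i)h\,p_0+\sum_{n=i}^{j-1}\Bigl[h(p_n-p_0)-\tfrac{h^2}{2}g_n\Bigr].
\]
The bracket sum has norm at most \((j-i)h\bigl[\norm{p_0}/32+\tfrac h2 G\bigr]\le (j-i)h\norm{p_0}/16\), using \(hG\le 2(B+1)\sqrt L T\norm{p_0}\le\norm{p_0}/32\). Therefore \(q_j-q_i=(j-i)h(p_0+e)\) with \(\norm e\le\norm{p_0}/16\).

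\textbf{Step 3: the signs.} For \(k\in\{i,j\}\), write \(p_k=p_0+f\) with \(\norm f\le\norm{p_0}/32\). Then
\[
p_k^\top(q_j-q_i)\ge (j-i)h\bigl(\norm{p_0}^2-\norm{p_0}(\norm e+\norm f)-\norm e\norm f\bigr)>0,
\]
because \((j-i)h>0\) and \(\norm{p_0}>0\). This proves \eqref{eq:no-uturn-conclusion}. For the final claim, note that every orbit or dyadic suborbit of physical length at most \(T\) inspected by \cref{alg:orbit} from the root has endpoints \(a<b\) in \([-K:K]\). Both of its diagnostics in \eqref{eq:uturn} are therefore strictly positive, so \(u_z=0\), and by \eqref{eq:subuturn} also \(\sigma_z=0\).
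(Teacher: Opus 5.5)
Your proof is correct and follows essentially the same route as the paper. Both arguments bound the maximal gradient over the window $|n|\le K$ self-consistently via $L$-Lipschitzness; this step is not circular, since $P$ and $G$ are finite maxima and $G\le a+bG$ with $b<1$ can be solved directly, so no induction is needed. Both then use the resulting small momentum drift to show each diagnostic is dominated by a positive multiple of $(j-i)h\|p\|^2$. The only cosmetic difference is that you anchor the displacement and both endpoint momenta at $p_0$, which treats the two diagnostics symmetrically, whereas the paper expands $q_j-q_i$ around $p_i$ with total gradient weight $r^2/2$ and obtains the right-endpoint bound by time reversal.
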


\begin{proof}
Fix integers \(i<j\), set $r=j-i,$ $\tau=rh,$ and define $G_*=\max_{|n|\le K}\norm{g_n}.$ Iterating \eqref{eq:lf-momentum-recursion} gives, for \(t\ge1\),
\begin{align}
  p_{i+t}
  =
  p_i-\frac h2g_i
  -h\sum_{s=1}^{t-1}g_{i+s}
  -\frac h2g_{i+t}.
  \label{eq:momentum-expanded}
\end{align}
For \(t=0,\ldots,r-1\), substitute
\eqref{eq:momentum-expanded} into
\eqref{eq:lf-position-recursion}:
\begin{align*}
  q_{i+t+1}-q_{i+t}
  &=
  hp_i-\frac{h^2}{2}g_i
  -h^2\sum_{s=1}^{t}g_{i+s}.
\end{align*}
Summing over \(t\) yields
\begin{align}
  q_j-q_i
  =
  rhp_i
  -
  h^2\left[
    \frac r2g_i
    +
    \sum_{s=1}^{r-1}(r-s)g_{i+s}
  \right].
  \label{eq:position-expanded}
\end{align}
Because
\[
  \frac r2+\sum_{s=1}^{r-1}(r-s)
  =
  \frac{r^2}{2},
\]
Cauchy--Schwarz gives
\begin{align}
  p_i^\top(q_j-q_i)
  \ge
  \tau\norm{p_i}^2
  -
  \frac{\tau^2}{2}\norm{p_i}G_*.
  \label{eq:left-diagnostic-lower}
\end{align}

To obtain the right-endpoint bound, start at
\((q_j,-p_j)\) and apply \(r\) forward leapfrog steps.
By \eqref{eq:lf-reversible}, the resulting point is
\((q_i,-p_i)\).  Applying \eqref{eq:left-diagnostic-lower} to
this reversed trajectory gives
\begin{align}
  p_j^\top(q_j-q_i)
  \ge
  \tau\norm{p_j}^2
  -
  \frac{\tau^2}{2}\norm{p_j}G_*.
  \label{eq:right-diagnostic-lower}
\end{align}

It remains to bound \(G_*\) and the momenta.  Applying
\eqref{eq:position-expanded} forward or backward from index \(0\)
shows that, for every \(|n|\le K\),
\begin{align*}
  \norm{q_n-q_0}
  \le
  T\norm{p_0}+\frac{T^2}{2}G_*.
\end{align*}
Since \(\nabla U\) is \(L\)-Lipschitz,
\begin{align*}
  G_*\le
  G_0+
  L\max_{|n|\le K}\norm{q_n-q_0} \le
  G_0+LT\norm{p_0}+\frac{LT^2}{2}G_*.
\end{align*}
Condition \eqref{eq:no-uturn-time-condition} implies
\(LT^2<1\), and therefore
\begin{align}
  G_*
  \le
  \frac{G_0+LT\norm{p_0}}
  {1-LT^2/2}.
  \label{eq:Gstar-bound}
\end{align}
Likewise, \eqref{eq:momentum-expanded} gives
\begin{align}
  \norm{p_n-p_0}\le TG_*,
  \qquad |n|\le K.
  \label{eq:momentum-displacement}
\end{align}

Set \(u=\sqrt L\,T\).  By
\eqref{eq:gradient-momentum-ratio} and \eqref{eq:Gstar-bound},
\begin{align*}
  \frac{TG_*}{\norm{p_0}}
  \le
  \frac{u(B+u)}{1-u^2/2}.
\end{align*}
Under \eqref{eq:no-uturn-time-condition},
\[
  u(B+u)\le u(B+1)\le\frac1{64},
  \qquad
  1-\frac{u^2}{2}\ge\frac{63}{64}.
\]
Consequently,
\begin{align}
  TG_*\le\frac1{63}\norm{p_0}.
  \label{eq:TG-small}
\end{align}
Equations \eqref{eq:momentum-displacement} and \eqref{eq:TG-small}
imply $\norm{p_n} \ge
  \frac{62}{63}\norm{p_0},$ and  $|n|\le K.$

For the chosen \(i,j\),
\[
  \frac{\tau G_*}{2}
  \le
  \frac{TG_*}{2}
  \le
  \frac{\norm{p_0}}{126}
  <
  \frac12\norm{p_i},
\]
and the same inequality holds with \(p_j\) in place of \(p_i\).
Substitution into \eqref{eq:left-diagnostic-lower} and
\eqref{eq:right-diagnostic-lower} gives
\[
  p_i^\top(q_j-q_i)
  >
  \frac{\tau}{2}\norm{p_i}^2>0,
  \qquad
  p_j^\top(q_j-q_i)
  >
  \frac{\tau}{2}\norm{p_j}^2>0.
\]
Every sub-U-turn is built recursively from endpoint U-turn tests on
subintervals, so \eqref{eq:no-uturn-conclusion} rules out all of them.
\end{proof}

\subsection{Imported leapfrog estimates}\label{sec:analytic-inputs}

We next record the two nontrivial analytic estimates imported from \cite{ChenGatmiryJiang2026}.  Their notation uses a strongly Hessian-Lipschitz coefficient; after the normalization
\eqref{eq:hessian-lip}, that coefficient is \(\gamma L^{3/2}\).
The constants below are universal.

\begin{proposition}[Single-step energy moment]
\label{prop:energy-input}
Let \(Z=(X,V)\sim\bar\pi\), let \(\ell\ge2\) be even, and define $\Delta_hH(Z)=H(\Phi_hZ)-H(Z).$ For \(h\sqrt L\) sufficiently small,
\begin{align}
  \norm{\Delta_hH}_{L^\ell(\bar\pi)}
  \le
  C_E(1+\gamma)\Big[h^3\ell^{3/2}L^{3/2}d_\ell^{1/2}+
  h^5\ell^{1/2}L^{5/2}d_\ell
  +
  h^7L^{7/2}d_\ell^{3/2}
  \Big].
  \label{eq:energy-input}
\end{align}
\end{proposition}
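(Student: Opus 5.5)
Since this estimate is imported from \citet{ChenGatmiryJiang2026}, the plan is to reconstruct its proof in the present normalization. The idea is a Taylor expansion of the one-step energy error, organized so that the leading $h^3$ term is a \emph{Gaussian polynomial in the momentum} with coefficients controlled in Frobenius norm. The remaining terms are of higher order in $h$ and absorb powers of $\norm{\nabla U}$.

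Write $w=v_{1/2}=v-\tfrac h2\nabla U(x)$ and $x'=x+hw$. A direct computation from \eqref{eq:lf1}--\eqref{eq:lf3} gives the exact decomposition
\[
  \Delta_hH
  =\Bigl[U(x')-U(x)-\tfrac h2\bigl(\nabla U(x)+\nabla U(x')\bigr)^{\top}w\Bigr]
  +\tfrac{h^2}{8}\bigl(\norm{\nabla U(x')}^2-\norm{\nabla U(x)}^2\bigr).
\]
The first bracket is the trapezoid-rule error for $g(s)=U(x+shw)$ on $[0,1]$. Its integral remainder is a weighted average of $h^3\nabla^3U(x+shw)[w,w,w]$.

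The second bracket equals $\tfrac{h^2}{8}(\nabla U(x')-\nabla U(x))^{\top}(\nabla U(x')+\nabla U(x))$. Its leading part is $\tfrac{h^3}{4}\nabla^2U(x)[w,\nabla U(x)]$, plus remainders of size $h^4L^2\norm w^2\norm{\nabla U}$ and $h^4L\norm w\cdot L h\norm w$. I will substitute $w=v-\tfrac h2\nabla U(x)$ everywhere and sort the resulting terms by powers of $h$. Terms where $\nabla U$ replaces a factor of $v$ carry an extra $h$ and an extra $\sqrt L$ per replacement after taking moments. This produces the three-level structure $h^3,h^5,h^7$ in \eqref{eq:energy-input}.

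The moment bounds come from three inputs. First, under $\bar\pi$, $x$ and $v$ are independent with $v\sim N(0,I_d)$. Second, $\norm{\nabla U(X)}_{L^\ell(\pi)}\le C\sqrt{L\,d_\ell}$ with $d_\ell\asymp d+\ell$, which follows from integration by parts $\E\norm{\nabla U}^2=\E\Delta U\le Ld$ together with Gaussian-type concentration of the Lipschitz map $\nabla U$ under the strongly log-concave $\pi$. Third, hypercontractivity of Gaussian chaoses conditional on $x$.

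The linear term $\nabla^2U(x)[v,\nabla U(x)]$ has conditional $L^\ell$ norm at most $\sqrt\ell\,L\norm{\nabla U(x)}$. This gives $h^3\ell^{1/2}L^{3/2}d_\ell^{1/2}$. The cubic term $\nabla^3U(x)[v,v,v]$ splits into a third chaos, with norm at most $C\ell^{3/2}\norm{\nabla^3U(x)}_{\mathrm F}$, and a first chaos $\sum_i\nabla^3U(x)[e_i,e_i,v]$, with norm at most $C\ell^{1/2}\norm{\nabla^3U(x)}_{\mathrm F}$ up to a $\sqrt d$ contraction factor. By \Cref{ass:hessian-lip}, $\norm{\nabla^3U(x)}_{\mathrm F}\le\sqrt d\,\gamma L^{3/2}$ as a $3$-tensor. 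This yields the $h^3\ell^{3/2}(1+\gamma)L^{3/2}d_\ell^{1/2}$ term. The cross terms with one or two factors of $\tfrac h2\nabla U$, and the $h^4$ remainders from the second bracket, are bounded crudely by $\norm{\nabla U}$ and $\norm v$ moments. This gives the $h^5\ell^{1/2}L^{5/2}d_\ell$ and $h^7L^{7/2}d_\ell^{3/2}$ contributions, with smallness of $h\sqrt L$ used to absorb intermediate orders.

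The main obstacle is that in the trapezoid remainder, $\nabla^3U$ is evaluated at $x+shw$, a point that depends on $v$. The conditional chaos argument therefore does not apply directly, and a naive operator-norm bound $\gamma L^{3/2}\norm w^3$ would cost $d^{3/2}$ instead of $d^{1/2}$. I would first try to avoid third-derivative evaluations at moving points entirely. The plan is to write the remainder as $\int_0^1 k(s)\,h^2\bigl(\nabla^2U(x+shw)-\nabla^2U(x)\bigr)[w,w]\,\dd s$ plus the frozen term $h^3\nabla^3U(x)[w,w,w]$. The difference term is then bounded using the Frobenius--Lipschitz property and Gaussian decoupling: replace one copy of $w$ by an independent Gaussian through a rotation or interpolation argument, so that a Frobenius norm rather than an operator norm controls the quadratic form. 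If decoupling fails to give the sharp power, the fallback is to accept an extra $h\sqrt L$ factor on this piece. That piece would then be absorbed into the $h^5$ term, since it is of order $h^4 L^2\norm w^2$ times $\gamma L^{1/2}h\norm w$ after one more Taylor step.
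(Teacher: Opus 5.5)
The paper does not prove this proposition. It imports it from \citet{ChenGatmiryJiang2026} (see \Cref{rem:imported}), so your reconstruction has to stand on its own, and as written it does not reach the stated right-hand side. Your exact decomposition into the trapezoid error plus \(\tfrac{h^2}{8}\bigl(\norm{\nabla U(x')}^2-\norm{\nabla U(x)}^2\bigr)\) is correct, and so are your frozen-point chaos bounds at order \(h^3\). The failure is in the bookkeeping that is supposed to produce the odd-only structure \(h^3,h^5,h^7\). A Gaussian slot contributes \(O(\sqrt\ell)\) to a chaos bound, not \(\norm{v}\asymp\sqrt d\). Replacing it by the deterministic vector \(\tfrac h2\nabla U(x)\), of norm \(\asymp h\sqrt{Ld}\), can therefore cost \(h\sqrt{Ld}\) rather than \(h\sqrt L\), and a single replacement produces an \(h^4\) term, not an \(h^5\) term. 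Concretely, the second bracket equals \(\tfrac{h^3}{4}\nabla U(x)^\top\bar Hw+\tfrac{h^4}{8}\norm{\bar Hw}^2\) with \(\bar H=\int_0^1\nabla^2U(x+shw)\dd s\succeq mI_d\). Substituting \(w=v-\tfrac h2\nabla U(x)\) into the first piece produces \(-\tfrac{h^4}{8}\nabla U(x)^\top\bar H\nabla U(x)\). Each of these terms is of size \(h^4L^2d\), and bounding them separately gives \(h^4L^2d_\ell\), which the right-hand side does not dominate. Take \(\pi=N(0,L^{-1}I_d)\), so \(\gamma=0\) and there is no moving-point issue, and \(h=L^{-1/2}d^{-1/4}\). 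Then all three terms on the right are \(\asymp d^{-1/4}\), whereas \(h^4L^2d\asymp1\). Absorbing it into the \(h^3\) term would need \(h\sqrt{Ld}\lesssim1\), not merely small \(h\sqrt L\). The terms must be combined. For that Gaussian, \(\tfrac{h^4}{8}\bigl(v^\top(\nabla^2U)^2v-\nabla U^\top\nabla^2U\nabla U\bigr)\) is a centered chaos of size only \(h^4L^2\sqrt d\). In general the \(v\)-conditional means of the \(h^4\) terms also include \(\ip{\nabla\Delta U}{\nabla U}\), the conditional mean of \(\nabla^3U(x)[\nabla U,v,v]\), of size up to \(\gamma L^2d\). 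You must show these means cancel under \(\pi\) up to centered fluctuations, e.g.\ via \(\E_\pi[\nabla U^\top\nabla^2U\nabla U]=\E_\pi[\operatorname{tr}(\nabla^2U)^2+\ip{\nabla\Delta U}{\nabla U}]\), and your sketch never does this. The same problem recurs at order \(h^6\): there \(h^6\nabla^3U(x)[\nabla U,\nabla U,\nabla U]\) is bounded crudely only by \(h^6\gamma L^3d^{3/2}\), which is \(\asymp\gamma\) at that scale.

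The second gap is the one you flag yourself, and neither remedy closes it. Decoupling a copy of \(w\) controls the off-diagonal part of \(\bigl(\nabla^2U(x+shw)-\nabla^2U(x)\bigr)[w,w]\) by a Frobenius norm, but not its diagonal part \(\Delta U(x+shw)-\Delta U(x)\). The hypothesis bounds that only by \(sh\norm{w}\norm{\nabla\Delta U}\le\sqrt d\,\gamma L^{3/2}sh\norm{w}\asymp\gamma L^{3/2}hd\). After the prefactor \(h^2\) this is \(h^3\gamma L^{3/2}d\), a factor \(\sqrt d\) above the target. Gaussian concentration in \(v\) would give the fluctuation of this trace at the right scale, but its \(v\)-conditional mean is not controlled without regularity of \(\nabla\Delta U\). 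The fallback ``one more Taylor step'' uses \(\nabla^4U\), which is not available: \(U\in C^3\), and \Cref{ass:hessian-lip} only bounds \(\nabla^3U\) without giving any modulus of continuity for it. Even granting it, a term \(h^5\gamma L^{5/2}\norm{w}^3\asymp h^5\gamma L^{5/2}d^{3/2}\) exceeds the \(h^5L^{5/2}d_\ell\) term by \(\sqrt d\) and is \(\asymp\gamma d^{1/4}\) at the scale above. What is missing is a way to treat intermediate states as approximately stationary, with independent position and momentum, which approximate-invariance arguments of the Chen--Gatmiry type are designed to supply. That must then be combined with the even-order cancellations. A smaller point concerns your moment bound for \(\nabla U\). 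Lipschitz concentration of \(\nabla U\) under \(\pi\) gives \(\norm{\norm{\nabla U}}_{L^\ell}\lesssim\sqrt{Ld}+L\sqrt{\ell/m}\), which brings \(\kappa\) into what must be a universal \(C_E\). The \(\kappa\)-free bound \(\E_\pi\norm{\nabla U}^{2\ell}\le(Ld_\ell)^\ell\) is exactly where \(d_\ell=d+2(\ell-1)\) comes from. It follows instead by iterating \(\E_\pi\norm{\nabla U}^{2k}=\E_\pi\operatorname{div}\bigl(\nabla U\norm{\nabla U}^{2k-2}\bigr)\le L(d+2k-2)\E_\pi\norm{\nabla U}^{2k-2}\).
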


\begin{proposition}[Fixed-index proposal overlap]
\label{prop:overlap-input}
For \(j\in\N\), define
\[
  Q_{j,x}
  =
  \Law\!\left(\Pi_x\Phi_h^j(x,V)\right),
  \qquad V\sim N(0,I_d).
\]
If \(jh\sqrt L\le1/4\), then
\begin{align}
  \norm{x-y}\le\frac{jh}{64}
  \quad\Longrightarrow\quad
  \norm{Q_{j,x}-Q_{j,y}}_{\TV}
  \le
  \left[
    \frac1{32}
    +
    2(jh)^6\gamma^2L^3
  \right]^{1/2}.
  \label{eq:overlap-input}
\end{align}
\end{proposition}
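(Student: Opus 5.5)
The plan is to reproduce the change-of-variables argument of \citet{ChenGatmiryJiang2026}, adapted to the normalization \eqref{eq:hessian-lip}. For fixed \(x\), write
\[
  F_x(v)=\Pi_x\Phi_h^j(x,v),
\]
so that \(Q_{j,x}=(F_x)_\#N(0,I_d)\).

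\textbf{Step 1: \(F_x\) is a near-linear diffeomorphism.} I would first show that, when \(jh\sqrt L\le 1/4\), the map \(F_x\) is a global diffeomorphism of \(\R^d\) with
\[
  DF_x(v)=jh\,(I_d+E_x(v)),
  \qquad
  \norm{E_x(v)}_{\mathrm{op}}\le c\,(jh)^2L\le c/16 .
\]
This follows from the summed recursion \eqref{eq:lf-position-recursion}--\eqref{eq:lf-momentum-recursion}, in the form \eqref{eq:position-expanded}. Differentiating it in \(v\) gives a discrete Volterra equation whose kernel has norm at most \(\sum_s (r-s)h^2L\le (jh)^2L/2\). Injectivity and surjectivity then follow from a contraction or Hadamard argument applied to \(v\mapsto F_x(v)/(jh)\).

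\textbf{Step 2: density comparison.} The density of \(Q_{j,x}\) at \(w\) is \(\varphi(F_x^{-1}w)\,\lvert\det DF_x(F_x^{-1}w)\rvert^{-1}\). For a given \(v\), define \(v'=F_y^{-1}(F_x(v))\); this is the momentum that carries \(y\) to the same endpoint as \(v\) carries \(x\). The density ratio at \(w=F_x(v)\) is then
\[
  \frac{dQ_{j,y}}{dQ_{j,x}}(w)
  =\exp\!\Bigl(\tfrac12\norm v^2-\tfrac12\norm{v'}^2\Bigr)\,
  \frac{\det DF_x(v)}{\det DF_y(v')}.
\]
I would bound the TV distance through a chi-square or Hellinger estimate of this ratio under \(v\sim N(0,I_d)\), using
\[
  \norm{\mu-\nu}_{\TV}^2\le \tfrac14\chi^2
  \quad\text{or}\quad
  \norm{\mu-\nu}_{\TV}^2\le 2\,\mathrm{KL}.
\]
Squaring the TV bound then lets the two contributions simply add, which matches the form \(\bigl[\frac1{32}+2(jh)^6\gamma^2L^3\bigr]^{1/2}\).

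\textbf{Step 3: the Gaussian term.} Linearizing \(F_x(v)=F_y(v')\) using Step 1 gives
\[
  v'-v=\frac{x-y}{jh}+O\bigl((jh)L\norm{x-y}\bigr),
  \qquad\text{so}\qquad
  \norm{v'-v}\le \frac{2\norm{x-y}}{jh}\le \frac1{32}.
\]
The Gaussian part of the log-ratio is \(-v^\top(v'-v)-\tfrac12\norm{v'-v}^2\). Its mean square is \(O(\norm{v'-v}^2)\) because \(v\) is Gaussian and \(v'-v\) has bounded gradient in \(v\). This yields the absolute \(1/32\)-type constant, independently of \(d\).

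\textbf{Step 4: the Jacobian term (main obstacle).} The difficulty is \(\log\det(I+E_x(v))-\log\det(I+E_y(v'))\). I would write it as
\[
  \operatorname{tr}\bigl(E_x(v)-E_y(v')\bigr)+O\bigl(\norm{E_x}_{\mathrm F}^2+\norm{E_y}_{\mathrm F}^2\bigr),
\]
and the trace term must not produce a factor \(d\). The two trajectories stay within distance \(O(\norm{x-y})\), hence within \(O(jh)\), of each other. Combined with Assumption~\ref{ass:hessian-lip}, this gives
\[
  \norm{E_x(v)-E_y(v')}_{\mathrm F}\lesssim (jh)^2\cdot\gamma L^{3/2}\cdot jh,
\]
and squaring produces exactly \((jh)^6\gamma^2L^3\). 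The trace itself is bounded by \(\sqrt d\) times the Frobenius norm, which is too crude. My first attempt would therefore use the chi-square formulation and exploit that the density ratio integrates to one: the trace contribution then enters only through its centered fluctuation. That fluctuation would be controlled by Gaussian Poincaré in \(v\), whose gradient involves \(\nabla^3U\) contracted in Frobenius norm. If this fails, the fallback is to symmetrize by comparing each of \(x\) and \(y\) with the midpoint \((x+y)/2\), so that the first-order trace terms cancel in expectation.
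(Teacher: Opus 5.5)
\textbf{There is a genuine gap in Steps 3--4.} The paper does not prove this proposition; it imports it from \cite{ChenGatmiryJiang2026}, so your reconstruction has to stand on its own. Steps 1--2 are sound. So is your Frobenius estimate on $E_x(v)-E_y(v')$, provided you keep it in the form $\lesssim (jh)^2\gamma L^{3/2}\norm{x-y}$. These are exactly the ingredients needed.

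The problem is that you split the log density ratio into a Gaussian part and a Jacobian part and try to make each small separately. Neither is dimension-free on its own. Put $\delta(v)=v'-v$, so that $D\delta=(I+E_y(v'))^{-1}(E_x(v)-E_y(v'))$.
\begin{itemize}
\item Gaussian integration by parts gives $\E[V^\top\delta(V)]=\E\,\Tr D\delta(V)$. So the mean of your Gaussian term $-V^\top\delta-\tfrac12\norm{\delta}^2$ is exactly minus the first-order trace term you isolate in Step 4.
\item A trace is only bounded by $\sqrt d$ times the Frobenius norm, and under \eqref{eq:hessian-lip} the vector $\nabla(\Delta U)$ can genuinely have size $\sqrt d\,\gamma L^{3/2}$. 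For example, take $\frac{L_0}{2}\norm{x}^2+\epsilon\sum_i\psi(x_i)$. So this mean can be of order $\sqrt d\,(jh)^2\gamma L^{3/2}\norm{x-y}$.
\item The fluctuation of the Gaussian term also contains $D\delta(V)^\top V$, which has size $\norm{D\delta}_{\mathrm{op}}\sqrt d$.
\end{itemize}
Hence your claim that a bounded gradient of $v'-v$ makes the mean square $O(\norm{v'-v}^2)$ is false unless $D\delta\equiv0$, as happens for Gaussian targets.

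Your proposed repairs of Step 4 also fail. Poincar\'e applied to the trace term alone produces $(\partial q_s/\partial v)^\top\nabla(\Delta U)(q_s)$, which is again only $O(\sqrt d\,\gamma L^{3/2})$. Making its difference along the two trajectories $O(\norm{x-y})$ would require a fourth-derivative bound. Midpoint symmetrization does not help either: each divergence in the triangle inequality is nonnegative and carries its own trace term.

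\textbf{The missing idea is to keep the two pieces together, where they cancel exactly.} KL is invariant under the common bijection $F_x^{-1}$. Write $\Phi=F_y^{-1}\circ F_x=\mathrm{id}+\delta$ and $V\sim N(0,I_d)$. Then, using Stein's identity in the second equality,
\[
  \mathrm{KL}(Q_{j,x}\,\|\,Q_{j,y})
  =\E\Bigl[\tfrac12\norm{V+\delta(V)}^2-\tfrac12\norm{V}^2-\log\det\bigl(I+D\delta(V)\bigr)\Bigr]
  =\E\Bigl[\tfrac12\norm{\delta}^2+\Tr D\delta-\log\det(I+D\delta)\Bigr].
\]
Since $\norm{D\delta}_{\mathrm{op}}\le1/2$, the eigenvalue bound $\abs{\lambda-\log(1+\lambda)}\le\abs{\lambda}^2$ together with Schur's inequality gives $\Tr A-\log\det(I+A)\le\norm{A}_{\mathrm F}^2$. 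Combining this with your bound $\norm{\delta}\le 2\norm{x-y}/(jh)$ and your Frobenius estimate,
\[
  \mathrm{KL}(Q_{j,x}\,\|\,Q_{j,y})
  \le\frac{2\norm{x-y}^2}{(jh)^2}+C(jh)^4\gamma^2L^3\norm{x-y}^2 ,
\]
with no dependence on $d$. Pinsker's inequality with $\norm{x-y}\le jh/64$ then gives a bound of the stated form.

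Your chi-square remark that only fluctuations matter is true only for the full log ratio. Its first-order part is the mean-zero Gaussian divergence $\Tr D\delta-V^\top\delta$, whose second moment is $\E\norm{\delta}^2+\E\Tr\bigl((D\delta)^2\bigr)$. Applied to the trace term in isolation, that remark does not close the argument.
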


\begin{remark}[Use of the imported estimates]
\label{rem:imported}
The proof below does not use the Metropolis correction, the fixed-length HMC kernel, its lazification, or its conductance theorem from~\cite{ChenGatmiryJiang2026}.  Only
\eqref{eq:energy-input} and \eqref{eq:overlap-input} are used.
\end{remark}

\subsection{Uniform energy control over the terminal orbit}
\label{sec:energy-orbit}

Let
\begin{align*}
  A_\ell(h)
  =
  C_E(1+\gamma)\left[
    h^3\ell^{3/2}L^{3/2}d_\ell^{1/2}
    +
    h^5\ell^{1/2}L^{5/2}d_\ell
    +
    h^7L^{7/2}d_\ell^{3/2}
  \right].
\end{align*}
The following lemma upgrades the stationary single-step estimate to a uniform estimate along a deterministic leapfrog orbit.  The induction is needed because leapfrog does not preserve \(\bar\pi\).

\begin{lemma}[Whole-orbit energy window]
\label{lem:whole-energy}
Let \(\eta_0>0\), \(K\in\N\), and let \(Z\sim\bar\pi\).  Then
\begin{align}
  \bar\pi\!\left(
    \max_{|j|\le K}
    \abs{H(\Phi_h^jZ)-H(Z)}
    >
    \eta_0
  \right)
  \le
  2K e^{\eta_0}
  \left(
    \frac{KA_\ell(h)}{\eta_0}
  \right)^\ell.
  \label{eq:whole-energy}
\end{align}
\end{lemma}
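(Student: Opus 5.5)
We prove \eqref{eq:whole-energy} with a stopped induction on $|j|$ that only ever applies the single-step estimate \Cref{prop:energy-input} at stationarity. By symmetry it suffices to handle the forward indices $0\le j\le K$. For $j<0$, use \eqref{eq:lf-reversible} in the form $\Phi_h^{-j}=S\Phi_h^{j}S$, together with $H\circ S=H$ and $\bar\pi\circ S=\bar\pi$. This turns the backward half-orbit into a forward half-orbit of the flipped point, which is again $\bar\pi$-distributed. Each half then contributes at most $Ke^{\eta_0}(KA_\ell/\eta_0)^\ell$, giving the factor $2K$.

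For the forward half, define the first exit index
\[
  \tau=\min\{j\le K:\ |H(\Phi_h^jZ)-H(Z)|>\eta_0\}.
\]
On $\{\tau=j\}$, the telescoping sum $\sum_{i<j}\Delta_hH(\Phi_h^iZ)$ exceeds $\eta_0$ in absolute value. Hence at least one increment satisfies
\[
  |\Delta_hH(\Phi_h^iZ)|>\eta_0/j\ge\eta_0/K
\]
for some $i<j$, while the earlier points stay in the window $|H(\Phi_h^iZ)-H(Z)|\le\eta_0$. The failure event is therefore contained in
\[
  \bigcup_{0\le i<K} E_i,
  \qquad
  E_i=\Bigl\{|H(\Phi_h^iZ)-H(Z)|\le\eta_0,\ |\Delta_hH(\Phi_h^iZ)|>\eta_0/K\Bigr\}.
\]
This is the stopping step: we only ever evaluate the one-step increment at points whose energy is still controlled.

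To bound $\bar\pi(E_i)$, change variables $W=\Phi_h^iZ$. Leapfrog is volume preserving, so the density of $W$ is $\bar Z^{-1}e^{-H(\Phi_h^{-i}W)}$. On $E_i$ we have $H(\Phi_h^{-i}W)=H(Z)\ge H(W)-\eta_0$, so this density is at most $e^{\eta_0}\bar\pi(W)$. Therefore
\[
  \bar\pi(E_i)\le e^{\eta_0}\,\bar\pi\bigl(|\Delta_hH(W)|>\eta_0/K\bigr)
  \le e^{\eta_0}\left(\frac{K\,\|\Delta_hH\|_{L^\ell(\bar\pi)}}{\eta_0}\right)^\ell,
\]
where the second inequality is Markov's inequality with \Cref{prop:energy-input}, and $\|\Delta_hH\|_{L^\ell(\bar\pi)}\le A_\ell(h)$. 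Summing over the $K$ values of $i$ gives $Ke^{\eta_0}(KA_\ell/\eta_0)^\ell$ per half-orbit.

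The main obstacle is the change-of-measure step. Leapfrog does not preserve $\bar\pi$, so the law of $\Phi_h^iZ$ can only be compared with $\bar\pi$ on the energy window. This is exactly why the increment has to be evaluated at the stopping index, where the window still holds, rather than bounding all increments globally. One point to check is the factor $e^{\eta_0}$: it requires the window inequality at the same point $\Phi_h^iZ$ where the bad increment is detected, which $E_i$ guarantees by construction. The smallness condition on $h\sqrt L$ needed for \Cref{prop:energy-input} is inherited directly from that proposition.
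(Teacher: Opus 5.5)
Your proposal is correct and follows essentially the paper's argument: the paper also evaluates each one-step increment only where the current iterate is still within $\eta_0$ of the initial energy, changes variables using volume preservation to pick up the factor $e^{\eta_0}$, applies Markov's inequality with the $\ell$-th moment bound of \Cref{prop:energy-input}, and treats backward iterates via $\Phi_h^{-1}=S\Phi_hS$. The only cosmetic difference is that the paper stops at the first increment exceeding $\eta_0/K$, which gives the disjoint decomposition $W_K^c=\bigsqcup_k(W_k^c\cap W_{k-1})$. You instead stop at the first exit from the cumulative window, pigeonhole to a large increment, and take a union bound over the overlapping events $E_i$, which yields the same estimate.
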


\begin{proof}
We first treat forward iterates.  Write $z_j=\Phi_h^jz_0,$,  $ a={\eta_0}/{K},$ and define
\begin{align*}
  W_k
  =
  \bigcap_{j=0}^{k-1}
  \left\{
    \abs{H(z_{j+1})-H(z_j)}\le a
  \right\},
  \qquad
  W_0=\mathsf Z.
\end{align*}
On \(W_{k-1}\),
\begin{align}
  \abs{H(z_{k-1})-H(z_0)}
  \le
  (k-1)a
  \le\eta_0.
  \label{eq:W-energy}
\end{align}
Let
\[
  \Delta_k(z_0)=H(z_k)-H(z_{k-1}).
\]
Using the change of variables \(y=\Phi_h^{k-1}z_0\), volume preservation, and
\eqref{eq:W-energy}, we obtain
\begin{align*}
  &\E_{\bar\pi}\!\left[
    \abs{\Delta_k(Z)}^\ell\1_{W_{k-1}}(Z)
  \right]
  \notag\\
  &\quad=
  \bar Z^{-1}
  \int
  \abs{\Delta_hH(\Phi_h^{k-1}z)}^\ell
  \1_{W_{k-1}}(z)e^{-H(z)}\dd z
  \notag\\
  &\quad\le
  e^{\eta_0}\bar Z^{-1}
  \int
  \abs{\Delta_hH(y)}^\ell e^{-H(y)}\dd y
  \notag\\
  &\quad\le
  e^{\eta_0}A_\ell(h)^\ell.
\end{align*}
Markov's inequality therefore gives
\begin{align}
  \Pp_{\bar\pi}(W_k^c\cap W_{k-1})
  \le
  e^{\eta_0}
  \left(
    \frac{KA_\ell(h)}{\eta_0}
  \right)^\ell.
  \label{eq:W-failure}
\end{align}
The disjoint decomposition
\[
  W_K^c
  =
  \bigsqcup_{k=1}^K(W_k^c\cap W_{k-1})
\]
and \eqref{eq:W-failure} imply
\begin{align*}
  \Pp_{\bar\pi}(W_K^c)
  \le
  K e^{\eta_0}
  \left(
    \frac{KA_\ell(h)}{\eta_0}
  \right)^\ell.
\end{align*}
On \(W_K\), every cumulative forward energy difference is at most
\(Ka=\eta_0\).

For backward iterates, use
\(\Phi_h^{-1}=S\Phi_hS\) and \(H\circ S=H\).
The law \(\bar\pi\) is invariant under \(S\), so the same argument gives the same bound.  A union bound over the forward and backward failures proves
\eqref{eq:whole-energy}.
\end{proof}

The lemma is used at the terminal orbit size selected by the U-turn rule.  In the proof of \Cref{prop:tangent-certificate}, the relation $Kh\le {S}/{\sqrt m}+h$ and the step-size choice \eqref{eq:adaptive-h}, together with \eqref{eq:eta-ad-explicit}, imply after reducing the universal constant \(c\) that
\begin{align*}
  \frac{K A_\ell(h)}{\log 2}\le e^{-6}.
\end{align*}
Consequently, \Cref{lem:whole-energy} gives
\begin{align*}
  \bar\pi\!\left(
    \max_{|j|\le K}
    \abs{H(\Phi_h^jZ)-H(Z)}
    >\log2
  \right)
  \le 4K e^{-6\ell}.
\end{align*}
The complete calculation, including the dependence on \(\kappa,S\), and \(\bar\gamma\), appears in \eqref{eq:adaptive-energy-leading-general}--\eqref{eq:adaptive-energy-small-general}.

\section{Stationary U-turn geometry}
\label{sec:profile-proof}

Recent analyses of NUTS for Gaussian targets identify a deterministic
time-dependent quantity around which the random endpoint U-turn
diagnostics concentrate.  For the canonical Gaussian target, this
quantity is \(d\sin t\); for a Gaussian target \(\mathcal N(0,C)\), it is $f_{\mathrm{unif}}(t)
  =
  \operatorname{tr}
  \left(
    \sin(C^{-1/2}t)\,C^{1/2}
  \right).$ These deterministic profiles are then used to show that, away from
their zeros, all relevant full-orbit and recursive subtree tests have
predictable signs and the random-doubling procedure terminates at a
common depth with high probability; see
\textcolor{black}{\cite{BouRabeeOberdoerster2024} and \cite{Oberdoerster2025}}.

This section extends that deterministic-profile mechanism beyond the
Gaussian setting.  For a general strongly log-concave target, we define
the stationary U-turn profile
\[
  u(t)
  =
  \mathbb E_{\bar\pi}
  \left[
    V_0^\top(X_t-X_0)
  \right]
  =
  \mathbb E_{\bar\pi}
  \left[
    V_t^\top(X_t-X_0)
  \right],
\]
and derive the equivalent representations
\[
  u(t)
  =
  \frac12\frac{d}{dt}
  \mathbb E_{\bar\pi}\|X_t-X_0\|^2
  =
  \mathbb E_{\bar\pi}\operatorname{tr}(J_t),
\]
where \(J_t=\partial X_t/\partial V_0\) is the Hamiltonian Jacobi
field.  Further concentration estimates, numerical comparison, terminal-depth certificate, and mixing conclusion in \Cref{thm:adaptive} are proved.  All constants are finite for every fixed \(\kappa\) and finite dimensionless horizon \(S\).

\subsection{Equilibrium identities}

Let \(\flow_t\) denote the exact Hamiltonian flow generated by \(H(x,v)=U(x)+\norm v^2/2\), and initialize \(Z_0=(X_0,V_0)\sim\bar\pi\).  Since exact Hamiltonian flow preserves \(\bar\pi\), the process \((Z_t)_{t\in\R}\) is stationary.  It is also reversible under momentum reflection:
\begin{align}
(X_0,V_0,X_t,V_t)
\overset{d}{=}
(X_t,-V_t,X_0,-V_0).
\label{eq:flow-time-reversal-law}
\end{align}

\begin{lemma}[Profile representations]
\label{lem:profile-representations}
Under \eqref{eq:strong-convexity}, the quantities in \eqref{eq:profile-def} are continuously differentiable and satisfy
\begin{align*}
u(t)=\frac12R'(t)=\E\Tr J_t,
\qquad
J_t=\frac{\partial X_t}{\partial V_0},
\end{align*}
where
\begin{align}
\ddot J_t+\nabla^2U(X_t)J_t=0,
\qquad
J_0=0,
\qquad
\dot J_0=I_d.
\label{eq:Jacobi}
\end{align}
\end{lemma}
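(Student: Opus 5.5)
The plan is to establish, in order, regularity of the flow, the Jacobi equation, the identity $u=R'/2$ together with equality of the two means, and finally the Gaussian integration by parts giving $u=\E\Tr J_t$.

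\emph{Regularity and the Jacobi equation.} Under \eqref{eq:strong-convexity}, $\nabla U$ is globally $L$-Lipschitz and $U\in C^3$. Hence the Hamiltonian flow $\flow_t$ is globally defined and $C^2$ in $(t,z)$. Differentiating $\ddot X_t=-\nabla U(X_t)$ with respect to $V_0$ gives \eqref{eq:Jacobi} with $J_0=0$ and $\dot J_0=I_d$. Gronwall yields the deterministic bounds $\norm{J_t}_{\mathrm{op}}\le \sinh(\sqrt L|t|)/\sqrt L$ and $\norm{\dot J_t}_{\mathrm{op}}\le\cosh(\sqrt L t)$. We also have $\norm{X_t-X_0}\le C_t(\norm{V_0}+\norm{\nabla U(X_0)})$, and all moments of these quantities are finite under $\bar\pi$ because $\pi$ is strongly log-concave. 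These bounds justify differentiation under the expectation by dominated convergence, so $R$ and $u$ are $C^1$.

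\emph{Equality of the means and $u=R'/2$.} Apply \eqref{eq:flow-time-reversal-law} to $V_0^\top(X_t-X_0)$. Its law equals that of $(-V_t)^\top(X_0-X_t)=V_t^\top(X_t-X_0)$, so $\E D_t^-=\E D_t^+$. Next compute
\[
R'(t)=2\E[V_t^\top(X_t-X_0)]=2u(t),
\]
where the differentiation under $\E$ is justified by the moment bounds above.

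\emph{Jacobi-field representation.} Condition on $X_0=x$ and view $g(v)=X_t(x,v)-x$ as a function of $v\sim N(0,I_d)$. Gaussian integration by parts (Stein's identity) gives
\[
\E[v^\top g(v)]=\E[\operatorname{div}_v g(v)]=\E\Tr\,\partial_v X_t=\E\Tr J_t.
\]
Stein's identity requires $g$ to be $C^1$ with $\E\norm{\nabla g}<\infty$ and $\E\norm{v}\norm{g}<\infty$; both follow from the deterministic bounds on $J_t$ and on $\norm{X_t-X_0}$. Integrating over $x\sim\pi$ then yields $u(t)=\E\Tr J_t$. Continuity of $t\mapsto\E\Tr J_t$ follows by dominated convergence from the $\sinh$ bound.

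\emph{Main obstacle.} Nothing here is deep; the only care is in the integrability bookkeeping needed to exchange $\frac{d}{dt}$ with $\E$ and to apply Stein's identity. The linear-growth Gronwall bounds on $X_t$, $V_t$ and $J_t$ handle this uniformly on compact time intervals.
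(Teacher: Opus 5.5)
Your proposal is correct and follows the same route as the paper. Time-reversal gives $\E D_t^-=\E D_t^+$ and then $R'=2\E D_t^+=2u$; Gaussian integration by parts in the initial momentum gives $u=\E\Tr J_t$; and differentiating Hamilton's equations in $V_0$ gives the Jacobi equation. Your explicit Gronwall bounds ($\sinh$/$\cosh$ for $J_t,\dot J_t$, linear growth of $X_t-X_0$) and the stated Stein-identity hypotheses make precise the integrability that the paper invokes only briefly.
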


\begin{proof}
Strong convexity implies Gaussian tails for \(\pi\), and the Hamiltonian vector field has globally Lipschitz first derivative because \(\norm{\nabla^2U}_{\mathrm{op}}\le L\).  Hence the flow and its first derivative have finite moments on every bounded time interval.  Differentiation under the expectation below is therefore justified by dominated convergence.

Under the transformation in \eqref{eq:flow-time-reversal-law}, $V_0^\top(X_t-X_0)
\longmapsto
(-V_t)^\top(X_0-X_t)
=V_t^\top(X_t-X_0).$ Thus \(\E D_t^-=\E D_t^+\).  Moreover,
\begin{align*}
R'(t)=2\E\left[(X_t-X_0)^\top\dot X_t\right]=2\E\left[(X_t-X_0)^\top V_t\right]
=2\E D_t^+.
\end{align*}
This proves \(u(t)=R'(t)/2\).

For the Jacobi representation, write $J_t={\partial X_t}/{\partial V_0}.$ Gaussian integration by parts in the initial momentum gives, coordinate by coordinate,
\begin{align*}
\E D_t^-=\sum_{i=1}^d\E\left[V_{0,i}(X_{t,i}-X_{0,i})\right]
=\sum_{i=1}^d\E\left[\frac{\partial X_{t,i}}{\partial V_{0,i}}\right]
=\E\Tr J_t.
\end{align*}
The term involving \(X_{0,i}\) vanishes because \(X_0\) and \(V_0\) are independent and centered in momentum.  Finally, differentiating
\(
\dot X_t=V_t
\)
and
\(
\dot V_t=-\nabla U(X_t)
\)
with respect to \(V_0\) gives
\[
\dot J_t=\frac{\partial V_t}{\partial V_0},
\qquad
\ddot J_t=-\nabla^2U(X_t)J_t.
\]
The initial conditions are \(J_0=0\) and \(\dot J_0=I_d\), proving \eqref{eq:Jacobi}.
\end{proof}

\subsection{Mean-square displacement and universal initial positivity}

\begin{lemma}[Mean-square displacement inequalities]
\label{lem:R-inequalities}
Under \eqref{eq:strong-convexity},
\begin{align}
2d-LR(t)\le R''(t)\le2d-mR(t).
\label{eq:R-differential}
\end{align}
In addition,
\begin{align*}
R(t)\le\frac{4d}{m},
\qquad
\abs{R'(t)}\le\frac{4d}{\sqrt m},
\qquad
\abs{u'(t)}\le2\kappa d.
\end{align*}
Finally,
\begin{align}
u(t)\ge\frac d{\sqrt L}\sin(\sqrt L\,t)>0,
\qquad
0<t\le\frac{\pi}{2\sqrt L}.
\label{eq:profile-positive}
\end{align}
\end{lemma}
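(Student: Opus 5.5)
The plan is to differentiate the mean-square displacement twice along the stationary flow and use time reversal to symmetrize the gradient term. The three moment bounds then come from a variance bound for \(\pi\), and the initial positivity from a Sturm-type comparison for the resulting second-order differential inequality.

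\emph{Step 1: the differential inequality \eqref{eq:R-differential}.} By \Cref{lem:profile-representations}, \(R'(t)=2\E[(X_t-X_0)^\top V_t]\). Differentiating once more, with the same dominated-convergence justification as in that lemma, gives
\[
R''(t)=2\E\norm{V_t}^2-2\E\bigl[(X_t-X_0)^\top\nabla U(X_t)\bigr].
\]
By stationarity \(\E\norm{V_t}^2=d\). By the time-reversal identity \eqref{eq:flow-time-reversal-law},
\[
\E[(X_t-X_0)^\top\nabla U(X_t)]=\E[(X_0-X_t)^\top\nabla U(X_0)].
\]
Averaging the two expressions yields
\[
\E[(X_t-X_0)^\top\nabla U(X_t)]=\tfrac12\E\bigl[(X_t-X_0)^\top(\nabla U(X_t)-\nabla U(X_0))\bigr].
\]
By \eqref{eq:strong-convexity} and the mean value theorem, the integrand lies between \(m\norm{X_t-X_0}^2\) and \(L\norm{X_t-X_0}^2\). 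Hence \(2d-LR\le R''\le 2d-mR\).

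\emph{Step 2: the moment bounds.} First,
\[
R(t)\le 2\E\norm{X_t-\E X}^2+2\E\norm{X_0-\E X}^2=4\Tr\operatorname{Cov}_\pi(X)\le 4d/m,
\]
using stationarity and the Brascamp--Lieb (or Poincaré) inequality for \(m\)-strongly log-concave \(\pi\). Next, Cauchy--Schwarz gives
\[
\abs{R'(t)}\le 2\sqrt{R(t)\,\E\norm{V_t}^2}\le 2\sqrt{4d^2/m}=4d/\sqrt m.
\]
For \(u'=R''/2\), Step 1 shows that \(u'=d-E_t\) with \(0\le E_t\le LR/2\le 2\kappa d\). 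Therefore \(u'\in[d-2\kappa d,\,d]\), and \(\abs{u'}\le 2\kappa d\) because \(\kappa\ge1\).

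\emph{Step 3: initial positivity \eqref{eq:profile-positive}.} Let \(y(t)=\frac{2d}{L}(1-\cos\sqrt L\,t)\), which solves \(y''=2d-Ly\) with \(y(0)=y'(0)=0\). The difference \(w=R-y\) satisfies \(w''+Lw=f\) with \(f\ge0\) by Step 1, and \(w(0)=w'(0)=0\) since \(R(0)=R'(0)=0\). Variation of constants gives
\[
w'(t)=\int_0^t\cos\bigl(\sqrt L(t-s)\bigr)f(s)\,\dd s.
\]
This is nonnegative for \(0\le t\le \pi/(2\sqrt L)\), because the cosine argument then stays in \([0,\pi/2]\). Hence \(R'\ge y'=\frac{2d}{\sqrt L}\sin(\sqrt L\,t)\), and \(u=R'/2\ge \frac d{\sqrt L}\sin(\sqrt L\,t)\). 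This lower bound is strictly positive on \((0,\pi/(2\sqrt L)]\).

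\emph{Main obstacle.} The only delicate points are the regularity needed to write \(R''\) with \(f\) continuous and the use of time reversal to symmetrize the gradient term; the former follows the moment and dominated-convergence argument of \Cref{lem:profile-representations}. The comparison step itself is elementary once the window is restricted to where the cosine kernel is nonnegative.
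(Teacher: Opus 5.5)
Your proposal is correct and follows the paper's proof essentially step for step. It uses the same time-reversal symmetrization giving $mR/2\le \E[(X_t-X_0)^\top\nabla U(X_t)]\le LR/2$, the same Cauchy--Schwarz and $u'=R''/2$ bounds, and the same variation-of-constants comparison with $\frac{2d}{L}(1-\cos\sqrt L\,t)$. The one cosmetic difference is how you bound $R$: you use $\Tr\operatorname{Cov}_\pi(X)\le d/m$ via Brascamp--Lieb or Poincaré around the mean, whereas the paper centers at the minimizer $x_\star$ and uses the integration-by-parts identity $\E[(X-x_\star)^\top\nabla U(X)]=d$. Both give $4d/m$.
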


\begin{proof}
Let \(\Delta_t=X_t-X_0\).  Since \(\dot V_t=-\nabla U(X_t)\),
\begin{align}
R''(t)=2\E\norm{V_t}^2
-2\E[\Delta_t^\top\nabla U(X_t)].
\label{eq:Rsecond-first}
\end{align}
Stationarity gives \(\E\norm{V_t}^2=d\).  Set $A(t)=\E[\Delta_t^\top\nabla U(X_t)].$ Time-reversal exchangeability of \((X_0,X_t)\) yields $\E[\Delta_t^\top\nabla U(X_0)]=-A(t),$ and therefore
\begin{align*}
2A(t)=\E\bigl[\Delta_t^\top\{\nabla U(X_t)-\nabla U(X_0)\}\bigr].
\end{align*}
Strong monotonicity and \(L\)-Lipschitz continuity of \(\nabla U\) imply
\[
\frac m2R(t)\le A(t)\le\frac L2R(t).
\]
Substitution into \eqref{eq:Rsecond-first} proves \eqref{eq:R-differential}.

Let \(x_\star\) be the minimizer of \(U\).  Integration by parts gives $\E[(X-x_\star)^\top\nabla U(X)]=d.$ Strong convexity then gives \(\E\norm{X-x_\star}^2\le d/m\), so
\[
R(t)\le2\E\norm{X_t-x_\star}^2+2\E\norm{X_0-x_\star}^2\le\frac{4d}{m}.
\]
Cauchy--Schwarz gives
\[
\abs{R'(t)}
=2\abs{\E[\Delta_t^\top V_t]}
\le2\sqrt{R(t)}\sqrt d
\le\frac{4d}{\sqrt m}.
\]
Since \(u'=R''/2=d-A(t)\), the preceding bounds imply \(\abs{u'(t)}\le2\kappa d\).

For initial positivity, define $y_L(t)=\frac{2d}{L}\{1-\cos(\sqrt L\,t)\}.$ Then \(y_L''+Ly_L=2d\) and \(y_L(0)=y_L'(0)=0\).  With \(w=R-y_L\), the left inequality in \eqref{eq:R-differential} gives
\[
w''+Lw\ge0,
\qquad w(0)=w'(0)=0.
\]
Writing \(f=w''+Lw\ge0\), variation of constants gives $w'(t)=\int_0^t\cos(\sqrt L(t-s))f(s)\dd s$. For \(0\le t\le\pi/(2\sqrt L)\), the kernel is nonnegative.  Hence
\[
R'(t)\ge y_L'(t)=\frac{2d}{\sqrt L}\sin(\sqrt L\,t).
\]
Since \(u=R'/2\), this is \eqref{eq:profile-positive}.
\end{proof}
\subsection{Concentration of the exact endpoint diagnostics}

We use the following consequence of the logarithmic Sobolev inequality for strongly log-concave measures; see, for example, \cite[Chapter~5]{BakryGentilLedoux2014}.

\begin{proposition}[An \(L^p\) gradient inequality]
\label{prop:Lp-gradient}
Let \(\mu(\dd z)\propto e^{-W(z)}\dd z\) on \(\R^n\), with \(\nabla^2W\succeq I_n\).  For every locally Lipschitz \(F\) and every \(p\ge2\),
\begin{align*}
\norm{F-\E_\mu F}_{L^p(\mu)}
\le C\sqrt p\,
\norm{\norm{\nabla F}}_{L^p(\mu)},
\end{align*}
where \(C\) is universal.
\end{proposition}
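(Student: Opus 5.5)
The plan is to derive the inequality from the logarithmic Sobolev inequality via a differential inequality in the exponent \(p\), in the style of Aida--Stroock. Since \(\nabla^2W\succeq I_n\), the Bakry--\'Emery criterion \citep[Chapter~5]{BakryGentilLedoux2014} gives the log-Sobolev inequality
\[
\operatorname{Ent}_\mu(f^2)\le 2\,\E_\mu\norm{\nabla f}^2 ,
\]
and, as a consequence, the Poincar\'e inequality \(\operatorname{Var}_\mu(f)\le\E_\mu\norm{\nabla f}^2\). I will aim for the explicit bound \(\norm{F-\E_\mu F}_{L^p}^2\le(p-1)\norm{\norm{\nabla F}}_{L^p}^2\), which gives \(C=1\).

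\textbf{Reduction.} I may assume \(\norm{\norm{\nabla F}}_{L^p(\mu)}<\infty\), since otherwise there is nothing to prove. By truncation and mollification I may further assume \(F\) is bounded and Lipschitz. The general case then follows by monotone and dominated convergence, because truncation does not increase \(\norm{\nabla F}\) pointwise almost everywhere. Set \(G=F-\E_\mu F\) and \(\phi(q)=\norm{G}_{L^q(\mu)}\) for \(q\in[2,p]\). I treat \(G\not\equiv0\), since the other case is trivial.

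\textbf{Key steps.}
\begin{enumerate}[label=(\arabic*)]
\item \emph{Derivative in the exponent.} The standard identity for the derivative of an \(L^q\) norm is
\[
\frac{\dd}{\dd q}\log\phi(q)=\frac{\operatorname{Ent}_\mu(|G|^q)}{q^2\,\E_\mu|G|^q}.
\]
\item \emph{Apply LSI.} Apply the log-Sobolev inequality to \(f=|G|^{q/2}\). Using \(\norm{\nabla f}=\tfrac q2|G|^{q/2-1}\norm{\nabla F}\) almost everywhere, I get
\[
\operatorname{Ent}_\mu(|G|^q)\le \frac{q^2}{2}\,\E_\mu\!\left[|G|^{q-2}\norm{\nabla F}^2\right].
\]
\item \emph{H\"older.} Use H\"older with exponents \(q/(q-2)\) and \(q/2\):
\[
\E_\mu\!\left[|G|^{q-2}\norm{\nabla F}^2\right]\le \phi(q)^{q-2}\,\norm{\norm{\nabla F}}_{L^q}^2 .
\]
\item \emph{Combine.} Steps (1)--(3) give
\[
\frac{\dd}{\dd q}\phi(q)^2=2\phi(q)^2\frac{\dd}{\dd q}\log\phi(q)\le \norm{\norm{\nabla F}}_{L^q}^2\le\norm{\norm{\nabla F}}_{L^p}^2 ,
\]
where the last inequality holds because \(L^q(\mu)\) norms increase in \(q\) on a probability space.
\item \emph{Integrate and close.} Integrating from \(2\) to \(p\) gives \(\phi(p)^2\le\phi(2)^2+(p-2)\norm{\norm{\nabla F}}_{L^p}^2\). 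The Poincar\'e inequality bounds \(\phi(2)^2=\operatorname{Var}_\mu F\le\norm{\norm{\nabla F}}_{L^2}^2\le\norm{\norm{\nabla F}}_{L^p}^2\). Hence \(\phi(p)\le\sqrt{p-1}\,\norm{\norm{\nabla F}}_{L^p}\), which is the claim.
\end{enumerate}

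\textbf{Main obstacle.} The only delicate point is justifying the differentiation of \(q\mapsto\phi(q)\) and applying LSI to \(|G|^{q/2}\), which is merely Lipschitz near \(\{G=0\}\) when \(q\) is close to \(2\). I would handle this with the bounded-Lipschitz reduction above. For the LSI step, I would replace \(|G|\) by \((G^2+\varepsilon)^{1/2}\), run the argument, and let \(\varepsilon\downarrow0\). If the derivative identity is inconvenient, I would instead use the equivalent integrated form via the absolute continuity of \(\log\phi\) on \([2,p]\), which holds because \(G\) is bounded.
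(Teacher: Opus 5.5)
Your proposal is correct and is essentially the paper's route. The paper gives no proof beyond citing this as a consequence of the Bakry--\'Emery log-Sobolev inequality \citep[Chapter~5]{BakryGentilLedoux2014}, and your Aida--Stroock differential inequality in the exponent is the standard derivation of exactly that consequence, with the explicit constant $\sqrt{p-1}$ (so $C=1$). Two loose ends: the final limit in your truncation step should use Fatou's lemma together with $F\in L^1(\mu)$ (implicit in the statement) rather than monotone convergence, and the mollification is unnecessary, because the log-Sobolev inequality applies directly to bounded locally Lipschitz functions with square-integrable gradient.
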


Let \(x_\star\) be the minimizer of \(U\), and introduce dimensionless variables
\begin{align}
Y=\sqrt m\,(X-x_\star),
\qquad W=V,
\qquad s=\sqrt m\,t,
\label{eq:dimensionless-variables}
\end{align}
with potential
\begin{align*}
\widetilde U(y)=U\!\left(x_\star+\frac{y}{\sqrt m}\right)-U(x_\star).
\end{align*}
Then
\begin{align}
I_d\preceq\nabla^2\widetilde U(y)\preceq\kappa I_d.
\label{eq:scaled-hessian}
\end{align}
The phase-space law
\begin{align*}
\widetilde\pi(\dd y\,\dd w)
\propto e^{-\widetilde U(y)-\norm w^2/2}\dd y\,\dd w
\end{align*}
is therefore one-strongly log-concave.

\begingroup\color{black}
Fix a deterministic verification horizon \(S>0\), put
\[
  S_\kappa^+=S+\kappa^{-1/2},
\]
and define
\begin{align}
  \mathfrak A_S(U)
  &=
  \sup_{z\in\R^{2d}}
  \sup_{|s|\le S_\kappa^+}
  \norm{D\widetilde\flow_s(z)}_{\mathrm{op}},
  \label{eq:tangent-amplification}\\
  \mathfrak C_S(U)
  &=C_0\sqrt\kappa\bigl(1+\mathfrak A_S(U)\bigr),
  \label{eq:concentration-envelope}\\
  \mathfrak N_S(U)
  &=C_0(1+S)\bigl(1+\mathfrak A_S(U)\bigr)
    \kappa^{3/2}\bigl(1+\kappa^2+\bar\gamma\bigr),
  \label{eq:numerical-envelope}
\end{align}
where \(C_0\ge1\) is universal.  These quantities are used only as one
sufficient verification of the intrinsic moduli in
\Cref{def:diagnostic-concentration-modulus,def:numerical-fidelity-modulus}.
\endgroup

\begin{proposition}[\textcolor{black}{Tangent-flow verification of the intrinsic moduli}]
\label{prop:stability-envelope}
The universal constant \(C_0\) in
\eqref{eq:concentration-envelope}--\eqref{eq:numerical-envelope} may be chosen so that
\begin{align}
  \mathfrak C_p(S;U)&\le \mathfrak C_S(U),
  \label{eq:tangent-implies-Cp}\\
  \mathfrak E_{p,h}(K,1;U)
  &\le
  C\mathfrak N_S(U)(mh^2)\frac{(d+p)^{3/2}}{d}
  \label{eq:tangent-implies-Ep-unit}
\end{align}
whenever the numerical orbit has scaled horizon at most \(S_\kappa^+\) and the unit-width
energy event holds.  Equivalently, on that event the pointwise diagnostic error is bounded by
\[
  \max_{I,\sigma}|\widehat D_I^\sigma-D_I^\sigma|
  \le
  \frac{\mathfrak N_S(U)}{\sqrt m}(mh^2)B(Z_0)^3.
\]
\end{proposition}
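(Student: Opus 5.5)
Both bounds reduce to the dimensionless variables \eqref{eq:dimensionless-variables}, where the phase-space law \(\widetilde\pi\) is one-strongly log-concave and the scaled Hessian satisfies \eqref{eq:scaled-hessian}. In these variables \(D_t^\sigma=m^{-1/2}\widetilde D_s^\sigma\), with \(\widetilde D_s^-=W_0^\top(Y_s-Y_0)\). The factor \(\sqrt m\) in \eqref{eq:intrinsic-Cp} is therefore exactly the rescaling, and it suffices to work with \(\widetilde\pi\) and \(\widetilde\flow_s\) on \(|s|\le S\le S_\kappa^+\).

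\textbf{Concentration bound \eqref{eq:tangent-implies-Cp}.}
\begin{enumerate}
\item Apply \Cref{prop:Lp-gradient} with \(n=2d\) to \(F(z)=w^\top(\Pi_y\widetilde\flow_s z-y)\). The gradient of \(F\) is \((\Pi_y\widetilde\flow_s z-y)\) in the \(w\)-direction plus \((D\widetilde\flow_s)^\top\)-transported copies of \(w\).
\item Bound it pointwise: \(\norm{\nabla F}\le (2+\mathfrak A_S)\norm{w}+\norm{\Pi_y\widetilde\flow_sz-y}\).
\item Control the displacement term. Since \(\norm{Y_s-Y_0}\le s\sup\norm{W}\), energy conservation \(\widetilde H\) gives \(\norm{W_r}\le\norm{w}+\sqrt{2\widetilde U(y)}\le \norm w+\sqrt\kappa\norm y\). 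Alternatively, \(\norm{\widetilde\flow_sz-z}\le \mathfrak A_S\cdot s\cdot\norm{\text{vector field}}\).
\item Take \(L^p\) norms under \(\widetilde\pi\). The Gaussian and strongly log-concave moment bounds give \(\norm{\norm{w}}_{L^p}+\norm{\norm y}_{L^p}\lesssim \sqrt d+\sqrt p\).
\item Multiply by \(C\sqrt p\). This yields \(C\sqrt\kappa(1+\mathfrak A_S)(\sqrt{dp}+p)\), which is \eqref{eq:tangent-implies-Cp}.
\end{enumerate}
The same argument applies to \(D^+\) with \(W_s\) in place of \(W_0\). The \(s\)-dependence is absorbed into \(C_0\) by letting the \(S\)-dependence enter through \(\mathfrak A_S\) only. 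If that fails, I would bound \(s\le S\) and accept an extra factor \(1+S\), but the envelope \eqref{eq:concentration-envelope} suggests the Lipschitz bound via \(\mathfrak A_S\) is meant to carry it.

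\textbf{Numerical bound \eqref{eq:tangent-implies-Ep-unit}.} This is a deterministic global-error estimate.
\begin{enumerate}
\item Write the leapfrog as the exact flow plus a local defect. By Taylor expansion with \(\nabla^2\widetilde U\preceq\kappa\) and the third-derivative bound \(\bar\gamma\) from \Cref{ass:hessian-lip}, the local defect is \(\norm{\Phi_{\tilde h}z-\widetilde\flow_{\tilde h}z}\lesssim \tilde h^3(\kappa^2+\bar\gamma)\,B(z)^2(1+B(z))\). Here \(\tilde h=\sqrt m h\) and \(B\) is a phase-space radius such as \(1+\norm y+\norm w\).
\item Propagate the defects with Lady Windermere's fan: the global error at step \(j\) is at most \(\sum_i \norm{D\widetilde\flow}\times\text{defect}_i\le \mathfrak A_S\, j\tilde h^3(\cdots)B^3\). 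Since \(j\tilde h\le S_\kappa^+\), this gives an \(O(\tilde h^2)(1+S)\mathfrak A_S(\kappa^2+\bar\gamma)B^3\) position and momentum error.
\item Use the unit-width energy event to keep the numerical points in the region where \(B\) is comparable to \(B(Z_0)\). This is what controls the radius along the fan sum.
\item Insert the error into the bilinear diagnostics: \(|\widehat D-D|\le(\text{error})\cdot(\norm{V}+\norm{X_b-X_a})\), which contributes another factor of \(B\), up to \(\kappa^{1/2}\).
\item Scale back by \(m^{-1/2}\). The \(\kappa^{3/2}\) collects the \(\sqrt\kappa\) factors from converting \(\norm y\) into energy.
\item Take \(L^p\) norms: \(\norm{B^3}_{L^p}\lesssim (d+p)^{3/2}\), and dividing by \(d\) as in \eqref{eq:intrinsic-Ep} gives the stated bound.
\end{enumerate}

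\textbf{Main obstacle.} I expect the hard step to be the fan argument. The exact flow must be started from the numerical point, so \(\mathfrak A_S\) has to be a global supremum, which is why it is defined over all \(z\). The defect bound must also be stated with a polynomial radius that is uniform along the orbit. The energy event supplies this uniformity: it gives \(\widetilde H(\widehat Z_j)\le\widetilde H(Z_0)+1\), so all radii are bounded by \(\sqrt{2\widetilde H(Z_0)+2}\lesssim B(Z_0)\).
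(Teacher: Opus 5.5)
Your architecture is the paper's: the $L^p$ gradient inequality applied to $F_s^\pm$ in scaled variables, and for the numerical part a telescoping sum of local defects. Each defect is propagated by the exact flow at cost $\mathfrak A_S(U)$, the energy event controls the radii, and the bilinear pairing finishes. But the exponent bookkeeping in the numerical part does not close.

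\textbf{Numerical part.} Your local defect $\bar h^3(\kappa^2+\bar\gamma)B^2(1+B)$ is cubic in the radius. So the propagated phase error is $O(B^3)$, and the pairing in your step~4 adds a fourth factor. You end with $B(Z_0)^4$, which gives $(d+p)^2/d$ rather than $(d+p)^{3/2}/d$ after taking $L^p$ norms; your step~6 silently reverts to $B^3$.

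The $\kappa$-count fails as well. On the energy event the numerical states only satisfy $B(\widehat z_j)\le C\sqrt\kappa\,B(Z_0)$, since $\widetilde H(y,w)\le\frac\kappa2\norm{y}^2+\frac12\norm{w}^2$. They are not $\lesssim B(Z_0)$ as your last paragraph says. A cubic defect therefore costs $\kappa^{3/2}$, and the pairing raises this to $\kappa^2$, exceeding the $\kappa^{3/2}$ built into $\mathfrak N_S(U)$.

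The missing ingredient is that the leapfrog local error is only quadratic in $B$; this is \Cref{lem:local-defect}, valid for $\bar h\le c/(1+\sqrt\kappa)$.
\begin{itemize}
\item The position defect $\int_0^{\bar h}(\bar h-s)\{\nabla\widetilde U(Y_s)-\nabla\widetilde U(y)\}\dd s$ is $O(\kappa^2\bar h^3B)$, i.e.\ linear in $B$.
\item The momentum defect is, up to $\frac{\kappa\bar h}{2}$ times the position defect, the trapezoidal remainder $\frac12\int_0^{\bar h}t(\bar h-t)f''(t)\dd t$. Here $f''=\nabla^3\widetilde U(Y_t)[W_t,W_t]-\nabla^2\widetilde U(Y_t)\nabla\widetilde U(Y_t)$ has degree two in the phase variables and is bounded by $C(\kappa^2+\bar\gamma)B^2$ via \eqref{eq:scaled-third-vector}. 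One integration by parts handles the case $\kappa\bar h>1$.
\end{itemize}
With this, each defect at a numerical state is at most $C(1+\kappa^2+\bar\gamma)\kappa\bar h^3B(Z_0)^2$. The fan sum gives $E=C(1+S)\mathfrak A_S(U)\kappa(1+\kappa^2+\bar\gamma)\bar h^2B(Z_0)^2$. Pairing with $R=C\sqrt\kappa\,B(Z_0)$ gives $4RE\le\mathfrak N_S(U)\bar h^2B(Z_0)^3$, which is exactly the claim.

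\textbf{Concentration part.} There is a smaller gap here. The factor $s$ in $\norm{Y_s-y}\le s\sup_r\norm{W_r}$ cannot be ``absorbed'' into the universal $C_0$ or into $\mathfrak A_S(U)$. The latter need not grow with $S$: for Gaussians $\mathfrak A_S(U)\le2\sqrt\kappa$ for every $S$. Your fallback factor $1+S$ would not yield $\mathfrak C_S(U)$ as defined either.

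Instead, apply the energy conservation you already invoked to the position: $\frac12\norm{Y_s}^2\le\widetilde U(Y_s)\le\widetilde H(y,w)\le\frac\kappa2\norm y^2+\frac12\norm w^2$. Hence $\norm{Y_s-y}+\norm{W_s}\le C\sqrt\kappa(\norm y+\norm w)$ uniformly for $|s|\le S_\kappa^+$, with no factor of $s$. This gives $\norm{\nabla F_s^\pm}\le C\sqrt\kappa(1+\mathfrak A_S(U))(\norm y+\norm w)$ for both diagnostics. Then \Cref{prop:Lp-gradient} yields \eqref{eq:tangent-implies-Cp} with no $S$-dependence beyond $\mathfrak A_S(U)$.
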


\begin{proof}
Energy conservation and \eqref{eq:scaled-hessian} give
\begin{align}
\norm{Y_s}+\norm{W_s}
\le C\sqrt\kappa\,(\norm y+\norm w),
\qquad |s|\le S_\kappa^+.
\label{eq:stability-index-path-bound}
\end{align}
By definition, every block of the exact variational matrix is bounded by
\(\mathfrak A_S(U)\).  Direct differentiation of either endpoint diagnostic gives
\begin{align*}
\norm{\nabla F_s^\pm(y,w)}
\le
C\sqrt\kappa\bigl(1+\mathfrak A_S(U)\bigr)
(\norm y+\norm w).
\end{align*}
The logarithmic-Sobolev moment inequality therefore proves part (i) with
\eqref{eq:concentration-envelope}.

For the numerical comparison, \Cref{lem:local-defect} may be sharpened to
\begin{align*}
\norm{\widehat\Phi_{\bar h}(z)-\widetilde\flow_{\bar h}(z)}
\le
C\bigl(1+\kappa^2+\bar\gamma\bigr)
\bar h^3B(z)^2.
\end{align*}
On the whole-orbit energy event, every numerical state has phase norm at most
\(C\sqrt\kappa B(z_0)\).  The telescoping exact-flow propagation argument written out in the proof of \Cref{lem:global-LF-error} propagates each local defect
by exact flow for at most \(S_\kappa^+\) units of scaled time.  Each propagation costs
at most \(\mathfrak A_S(U)\), rather than a product of one-step Gronwall factors.
Summing at most \((S_\kappa^+)/\bar h\) defects gives
\begin{align}
\max_{|n|\bar h\le S_\kappa^+}
\norm{\widehat\Phi_{\bar h}^{\,n}z-
      \widetilde\flow_{n\bar h}z}
\le
C(1+S)\mathfrak A_S(U)\,
\kappa\bigl(1+\kappa^2+\bar\gamma\bigr)
\bar h^2B(z)^2.
\label{eq:stability-index-global-summary}
\end{align}
The endpoint diagnostics are bilinear in one momentum and one displacement.  Set
\[
  E
  =
  C(1+S)\mathfrak A_S(U)\,
  \kappa\bigl(1+\kappa^2+\bar\gamma\bigr)
  \bar h^2B(z_0)^2,
  \qquad
  R=C\sqrt\kappa\,B(z_0).
\]
By \eqref{eq:stability-index-global-summary}, each numerical endpoint differs from its
exact-flow counterpart by at most \(E\), and by
\eqref{eq:stability-index-path-bound} and the whole-orbit energy bound, every exact or
numerical endpoint has phase norm at most \(R\), after increasing the universal constant
in \(R\).  For example, writing the left endpoint diagnostic as a momentum paired with
the endpoint displacement and adding and subtracting the mixed exact--numerical pairing
gives
\[
  \bigl|\widehat F^-_{a,b}-F^-_{a,b}\bigr|
  \le
  E(2R)+R(2E)
  =4RE.
\]
The identical calculation with the right endpoint momentum gives the same bound for the
right diagnostic.  Consequently,
\[
  \max_{I,\sigma}
  \bigl|\widehat F_I^\sigma-F_I^\sigma\bigr|
  \le
  C(1+S)\mathfrak A_S(U)\,
  \kappa^{3/2}\bigl(1+\kappa^2+\bar\gamma\bigr)
  \bar h^2B(z_0)^3.
\]
Increasing \(C_0\) and replacing \(\mathfrak A_S(U)\) by
\(1+\mathfrak A_S(U)\) proves part~(ii).  
\end{proof}

\begin{corollary}[Selected-horizon curvature fallback]
\label{cor:curvature-only-fallback}
Under \eqref{eq:scaled-hessian},
\begin{align}
\mathfrak A_S(U)
\le
\sqrt\kappa\exp\!\left(\frac12\sqrt\kappa\,S_\kappa^+\right)
\le
e^{1/2}\sqrt\kappa\exp\!\left(\frac12S\sqrt\kappa\right).
\label{eq:curvature-only-A-bound}
\end{align}
Consequently, curvature alone yields polynomial tangent amplification whenever
\(S\sqrt\kappa=O(\log\kappa)\), and in particular whenever
\(S=O(\kappa^{-1/2})\).
\end{corollary}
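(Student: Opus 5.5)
We bound the tangent map by a Grönwall estimate for the scaled variational (Jacobi) equation in an energy-weighted norm. This keeps the exponent at $\tfrac12\sqrt\kappa$ rather than $\sqrt\kappa$, and the initial prefactor $\sqrt\kappa$ comes from comparing the weighted norm with the Euclidean one.

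Fix $z\in\R^{2d}$ and a unit tangent vector $(\xi_0,\zeta_0)$. Propagate $(\xi_s,\zeta_s)=D\widetilde\flow_s(z)(\xi_0,\zeta_0)$. It solves
\[
  \dot\xi_s=\zeta_s,
  \qquad
  \dot\zeta_s=-\nabla^2\widetilde U(Y_s)\xi_s .
\]
By \eqref{eq:scaled-hessian}, the Hessian $A_s=\nabla^2\widetilde U(Y_s)$ satisfies $I_d\preceq A_s\preceq\kappa I_d$ along the exact trajectory. Introduce the weighted quadratic form
\[
  \mathcal E_s=\|\zeta_s\|^2+\lambda\|\xi_s\|^2,
  \qquad
  \lambda=\sqrt\kappa .
\]
The geometric mean of the curvature bounds $1$ and $\kappa$ balances the two sides of the estimate.

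Differentiating gives
\[
  \dot{\mathcal E}_s=2\,\zeta_s^\top(\lambda I-A_s)\xi_s .
\]
Since $\lambda I-A_s$ has spectrum in $[\lambda-\kappa,\lambda-1]$, its operator norm is at most $\max\{\kappa-\sqrt\kappa,\sqrt\kappa-1\}\le\kappa$. A Cauchy--Schwarz/AM--GM step gives
\[
  2\kappa\|\zeta\|\|\xi\|\le\frac{\kappa}{\sqrt\lambda}\bigl(\|\zeta\|^2+\lambda\|\xi\|^2\bigr),
\]
so $|\dot{\mathcal E}_s|\le(\kappa/\kappa^{1/4})\,\mathcal E_s$.

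This crude weight gives rate $\kappa^{3/4}$, which is too weak. The first choice to change is the weight. I would take $\lambda=\kappa$ (the full energy norm for the upper curvature): then $\|\kappa I-A_s\|\le\kappa-1$ and
\[
  |\dot{\mathcal E}|\le\frac{2(\kappa-1)}{\sqrt\kappa}\,\|\zeta\|\sqrt\kappa\|\xi\|\le\sqrt\kappa\,\mathcal E .
\]
So $\mathcal E_s\le e^{\sqrt\kappa|s|}\mathcal E_0$, and $\sqrt{\mathcal E}$ grows at rate $\tfrac12\sqrt\kappa$.

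Converting back, $\min\{1,\kappa\}\|(\xi,\zeta)\|^2\le\mathcal E\le\kappa\|(\xi,\zeta)\|^2$ with $\kappa\ge1$. Hence
\[
  \|(\xi_s,\zeta_s)\|\le\sqrt{\mathcal E_s}\le e^{\sqrt\kappa|s|/2}\sqrt{\kappa}\,\|(\xi_0,\zeta_0)\| .
\]
Taking the supremum over $z$ and $|s|\le S_\kappa^+$ gives the first inequality of \eqref{eq:curvature-only-A-bound}. Backward times follow from the same estimate, since the bound on $\dot{\mathcal E}$ is symmetric in time.

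The second inequality is $\sqrt\kappa\,S_\kappa^+=\sqrt\kappa\,S+1$, which yields the factor $e^{1/2}$.

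For the consequence: if $S\sqrt\kappa\le c\log\kappa$, the bound is at most $e^{1/2}\kappa^{1/2+c/2}$, which is polynomial in $\kappa$. The case $S=O(\kappa^{-1/2})$ is the special case where $S\sqrt\kappa=O(1)$.

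The only delicate point is the choice of weight. Bounding $\lambda I-A$ by its operator norm and then optimizing $\lambda$ separately loses a power of $\kappa$. The working choice $\lambda=\kappa$ must be combined with the AM--GM split $\|\zeta\|\cdot\sqrt\kappa\|\xi\|$ rather than with a generic Cauchy--Schwarz bound; this is where the $\tfrac12$ in the exponent comes from.
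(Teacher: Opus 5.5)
Your final argument is correct and is essentially the paper's proof: the weighted energy $\kappa\|\xi_s\|^2+\|\zeta_s\|^2$, the identity $\dot{\mathcal E}_s=2\zeta_s^\top(\kappa I_d-A_s)\xi_s$ bounded by $\sqrt\kappa\,\mathcal E_s$ via AM--GM, Gr\"onwall, and the norm comparison that produces the prefactor $\sqrt\kappa$, with $\sqrt\kappa\,S_\kappa^+=\sqrt\kappa\,S+1$ for the second inequality. You can drop the $\lambda=\sqrt\kappa$ detour, and your two-sided bound $|\dot{\mathcal E}_s|\le\sqrt\kappa\,\mathcal E_s$ covers negative $s$ a little more explicitly than the paper's one-sided inequality.
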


\begin{proof}
For a variational solution
\(
\dot\xi=\eta,
\dot\eta=-H_s\xi
\)
with \(I_d\preceq H_s\preceq\kappa I_d\), put
\(
\mathcal E_s=\kappa\norm\xi^2+\norm\eta^2
\).
Then
\[
\mathcal E_s'
=2\ip{\eta}{(\kappa I_d-H_s)\xi}
\le\sqrt\kappa\,\mathcal E_s.
\]
Gronwall and comparison of the weighted and Euclidean norms prove the first inequality in
\eqref{eq:curvature-only-A-bound}.  The second follows from
\(\sqrt\kappa S_\kappa^+=\sqrt\kappa S+1\).
\end{proof}

\begin{lemma}[Moment concentration of endpoint diagnostics]
\label{lem:diagnostic-concentration}
Fix \(S>0\).  For every \(p\ge2\) and \(0\le t\le S/\sqrt m\),
\begin{align}
\norm{D_t^\pm-u(t)}_{L^p(\bar\pi)}
\le
\frac{\mathfrak C_S(U)}{\sqrt m}
\left(\sqrt{dp}+p\right).
\label{eq:diagnostic-Lp}
\end{align}
The same estimate holds for the endpoint diagnostics of any exact-flow interval \([a,b]\) with \(0\le b-a\le S/\sqrt m\).
\end{lemma}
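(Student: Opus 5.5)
The plan is to work in the dimensionless variables \eqref{eq:dimensionless-variables} and view each diagnostic as a function of the initial phase point. Under the rescaling \(y=\sqrt m(x-x_\star)\), \(s=\sqrt m\,t\), we have
\[
D_t^-=V_0^\top(X_t-X_0)=m^{-1/2}\,F_s^-(Y_0,W_0),
\qquad
F_s^-(y,w)=w^\top\bigl(\widetilde Y_s(y,w)-y\bigr),
\]
and similarly \(D_t^+=m^{-1/2}F_s^+\) with \(F_s^+(y,w)=\widetilde W_s^\top(\widetilde Y_s-y)\). Here \((\widetilde Y_s,\widetilde W_s)=\widetilde\flow_s(y,w)\), and \(0\le s\le S\le S_\kappa^+\). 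The law of \((Y_0,W_0)\) is \(\widetilde\pi\), which is one-strongly log-concave by \eqref{eq:scaled-hessian}. Also \(\E F_s^\pm=\sqrt m\,u(t)\) by \Cref{lem:profile-representations}. It therefore suffices to show
\[
\norm{F_s^\pm-\E F_s^\pm}_{L^p(\widetilde\pi)}\le \mathfrak C_S(U)\bigl(\sqrt{dp}+p\bigr).
\]

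The first step is the gradient bound. For \(F^-\), the gradient is \((\widetilde Y_s-y,0)+D\widetilde\flow_s^\top(\text{position block})^\top w-(w,0)\). For \(F^+\) the momentum factor also varies, and its derivative is a block of \(D\widetilde\flow_s\). Every block of the variational matrix has norm at most \(\mathfrak A_S(U)\). Energy conservation with \(I\preceq\nabla^2\widetilde U\preceq\kappa I\), in the form of \eqref{eq:stability-index-path-bound}, bounds \(\norm{\widetilde Y_s}+\norm{\widetilde W_s}\) by \(C\sqrt\kappa(\norm y+\norm w)\). Together these give
\[
\norm{\nabla F_s^\pm}\le C\sqrt\kappa\,(1+\mathfrak A_S)(\norm y+\norm w).
\]

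The second step applies \Cref{prop:Lp-gradient} with \(n=2d\). This gives
\[
\norm{F-\E F}_p\le C\sqrt p\,\sqrt\kappa(1+\mathfrak A_S)\,\bigl\|\norm{(Y,W)}\bigr\|_{L^p(\widetilde\pi)}.
\]
It remains to bound \(\bigl\|\norm{(Y,W)}\bigr\|_p\le C(\sqrt d+\sqrt p)\). The function \(\norm{(y,w)}\) is 1-Lipschitz, so Gaussian-type concentration under one-strong log-concavity gives this, using \(\E\norm Y^2\le d\) from the integration-by-parts argument in \Cref{lem:R-inequalities} (scaled) and \(\E\norm W^2=d\). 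Since \(\sqrt p(\sqrt d+\sqrt p)=\sqrt{dp}+p\), absorbing constants into \(C_0\) yields \eqref{eq:diagnostic-Lp}.

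For a general exact-flow interval \([a,b]\), stationarity of the exact flow under \(\bar\pi\) gives \((Z_a,Z_b)\overset{d}{=}(Z_0,Z_{b-a})\). The diagnostics therefore have the same laws as \(D_{b-a}^\pm\), and the same bound applies.

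The main obstacle is the gradient computation for \(F^+\). There the product of two flow-dependent factors must be controlled by a single factor \(\mathfrak A_S\) times one growth factor \(\sqrt\kappa\) and a linear norm, not a quadratic one. I would handle this by pairing each derivative of the flow with the undifferentiated factor, which is bounded linearly via \eqref{eq:stability-index-path-bound}, so that each term is at most \(\mathfrak A_S\cdot C\sqrt\kappa(\norm y+\norm w)\).
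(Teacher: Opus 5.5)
Your proposal is correct and follows the paper's proof essentially step by step. You rescale to the one-strongly log-concave law $\widetilde\pi$. You bound $\norm{\nabla F_s^\pm}$ by $C\sqrt\kappa\,(1+\mathfrak A_S(U))(\norm y+\norm w)$ using block bounds on $D\widetilde\flow_s$ and energy conservation. You then apply \Cref{prop:Lp-gradient} together with $\bigl\|\,\norm{(Y,W)}\,\bigr\|_{L^p}\le C(\sqrt d+\sqrt p)$, and handle general intervals by stationarity.

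The only slip is cosmetic and does not affect the norm bound. In your gradient of $F_s^-$, the term $\widetilde Y_s-y$ belongs to the $w$-component, $\nabla_wF_s^-=\widetilde Y_s-y+(D_w\widetilde Y_s)^\top w$, not to the $y$-component.
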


\begin{proof}
We work in the variables \eqref{eq:dimensionless-variables}.  Let \(\widetilde\flow_s(y,w)=(Y_s,W_s)\).  The derivative matrix solves
\begin{align*}
\frac{\dd}{\dd s}D\widetilde\flow_s
=
\begin{pmatrix}
0&I_d\\
-\nabla^2\widetilde U(Y_s)&0
\end{pmatrix}
D\widetilde\flow_s.
\end{align*}
By the finite-time stability definition \eqref{eq:tangent-amplification},
\begin{align}
\norm{D\widetilde\flow_s}_{\mathrm{op}}
\le\mathfrak A_S(U),
\qquad 0\le s\le S.
\label{eq:flow-derivative-bound}
\end{align}
Energy conservation and \eqref{eq:scaled-hessian} imply
\begin{align*}
\frac12\norm{Y_s}^2+\frac12\norm{W_s}^2
\le \widetilde U(Y_s)+\frac12\norm{W_s}^2
=\widetilde U(Y_0)+\frac12\norm{W_0}^2 \le\frac\kappa2\norm{Y_0}^2+\frac12\norm{W_0}^2.
\end{align*}
Thus
\begin{align}
\norm{Y_s}+\norm{W_s}
\le C_\kappa(\norm{Y_0}+\norm{W_0}).
\label{eq:scaled-path-norm}
\end{align}
Define
\begin{align*}
F_s^-(y,w)=w^\top(Y_s-y),
\qquad\text{and}\qquad
F_s^+(y,w)=W_s^\top(Y_s-y).
\end{align*}
For the first function,
\begin{align*}
\nabla_yF_s^-
=(D_yY_s-I_d)^\top w,
\quad\text{and}\quad
\nabla_wF_s^-
=Y_s-y+(D_wY_s)^\top w.
\end{align*}
Equations \eqref{eq:flow-derivative-bound} and \eqref{eq:scaled-path-norm} yield
\begin{align}
\norm{\nabla F_s^-}
\le \mathfrak C_S(U)(\norm y+\norm w).
\label{eq:grad-Fminus-bound}
\end{align}
For \(F_s^+\), differentiating the product gives
\begin{align*}
\nabla_yF_s^+
&=(D_yW_s)^\top(Y_s-y)+(D_yY_s-I_d)^\top W_s,
\\
\nabla_wF_s^+
&=(D_wW_s)^\top(Y_s-y)+(D_wY_s)^\top W_s.
\end{align*}
The same two bounds imply
\begin{align}
\norm{\nabla F_s^+}
\le \mathfrak C_S(U)(\norm y+\norm w).
\label{eq:grad-Fplus-bound}
\end{align}

We next control the moments on the right.  Integration by parts and \(\nabla^2\widetilde U\succeq I_d\) give
\begin{align*}
\E\norm Y^2
\le\E[Y^\top\nabla\widetilde U(Y)]=d.
\end{align*}
Apply \Cref{prop:Lp-gradient} to the one-Lipschitz function \(y\mapsto\norm y\).  Since \(\E\norm Y\le\sqrt d\),
\begin{align}
\norm{\norm Y}_{L^p}\le\sqrt d+C\sqrt p.
\label{eq:Y-norm-Lp}
\end{align}
The same estimate holds for the Gaussian momentum \(W\).  Hence
\begin{align}
\norm{\norm Y+\norm W}_{L^p}
\le C(\sqrt d+\sqrt p).
\label{eq:phase-norm-Lp}
\end{align}
Applying \Cref{prop:Lp-gradient} to \(F_s^\pm\) and using \eqref{eq:grad-Fminus-bound}, \eqref{eq:grad-Fplus-bound}, and \eqref{eq:phase-norm-Lp},
\begin{align*}
\norm{F_s^\pm-\E F_s^\pm}_{L^p}
\le \mathfrak C_S(U)\sqrt p(\sqrt d+\sqrt p)
\le \mathfrak C_S(U)(\sqrt{dp}+p).
\end{align*}
Since
\(
D_t^\pm=m^{-1/2}F_{\sqrt mt}^\pm
\), this proves \eqref{eq:diagnostic-Lp}.

For an interval \([a,b]\), stationarity of exact Hamiltonian flow implies
\[
(X_a,V_a,X_b,V_b)
\overset d=
(X_0,V_0,X_{b-a},V_{b-a}).
\]
Its two endpoint diagnostics therefore have the same distributions as \(D_{b-a}^-\) and \(D_{b-a}^+\), proving the final statement.
\end{proof}

\begin{corollary}[Uniform concentration over a finite interval family]
\label{cor:uniform-diagnostic-concentration}
Let \(\mathscr I\) be a finite family of exact-flow intervals, each of duration at most \(S/\sqrt m\), and let \(N=|\mathscr I|\).  For \(\delta\in(0,1)\), put
\begin{align}
p=\max\left\{2,\log\frac{4N}{\delta}\right\}.
\label{eq:p-union}
\end{align}
Then, with probability at least \(1-\delta\),
\begin{align*}
\max_{I\in\mathscr I}
\max_{\sigma\in\{-,+\}}
\abs{D_I^\sigma-u(\ell(I))}
\le
\frac{e\mathfrak C_S(U)}{\sqrt m}
\left(\sqrt{dp}+p\right),
\end{align*}
where, for a continuous-time interval \(I=[a,b]\), \(\ell(I)=b-a\) denotes its physical duration.
\end{corollary}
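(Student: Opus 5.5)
The plan is a union bound over the $2N$ random variables $D_I^\sigma-u(\ell(I))$, $I\in\mathscr I$, $\sigma\in\{-,+\}$, with each tail controlled by Markov's inequality applied to the $p$-th moment bound of \Cref{lem:diagnostic-concentration}. First, for every $I=[a,b]\in\mathscr I$ the duration satisfies $\ell(I)=b-a\le S/\sqrt m$. The final statement of \Cref{lem:diagnostic-concentration}, which rests on the stationarity identity $(X_a,V_a,X_b,V_b)\overset d=(X_0,V_0,X_{b-a},V_{b-a})$, then gives
\[
\norm{D_I^\sigma-u(\ell(I))}_{L^p(\bar\pi)}\le\frac{\mathfrak C_S(U)}{\sqrt m}\bigl(\sqrt{dp}+p\bigr)=:\beta_p
\]
for each $\sigma$ and every $p\ge2$. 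Note that the centering is exactly $u(\ell(I))$, because $\E D_I^\pm=\E D^\pm_{\ell(I)}=u(\ell(I))$ by stationarity and \Cref{lem:profile-representations}.

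Second, for a fixed pair $(I,\sigma)$, Markov's inequality at level $e\beta_p$ gives
\[
\Pp\bigl(|D_I^\sigma-u(\ell(I))|>e\beta_p\bigr)\le\frac{\E|D_I^\sigma-u(\ell(I))|^p}{(e\beta_p)^p}\le e^{-p}.
\]
A union bound over the $2N$ pairs bounds the failure probability by $2Ne^{-p}$.

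Third, take $p=\max\{2,\log(4N/\delta)\}$ as in \eqref{eq:p-union}. This gives $e^{-p}\le\delta/(4N)$, so the total failure probability is at most $2N\cdot\delta/(4N)=\delta/2\le\delta$. The slack is harmless, and the case $p=2$ is covered because the moment bound holds for every $p\ge2$.

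The only point needing care is the first step: making sure the lemma applies uniformly to intervals of every duration in $[0,S/\sqrt m]$, including degenerate ones where $\ell(I)=0$ and both sides vanish. The constant $\mathfrak C_S(U)$ must be independent of the duration. This holds because $\mathfrak C_S(U)$ is defined through a supremum over $|s|\le S_\kappa^+$, and $S_\kappa^+\ge S$ covers every admissible duration. Everything else is routine.
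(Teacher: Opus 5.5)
Your proposal is correct and is essentially the paper's proof. The paper applies Markov's inequality at level $e$ times the $L^p$ bound from \Cref{lem:diagnostic-concentration}, which gives tail $e^{-p}$ for each centered diagnostic, and then takes a union bound over the $2N$ diagnostics, where the choice \eqref{eq:p-union} yields total failure probability at most $2Ne^{-p}\le\delta/2<\delta$.
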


\begin{proof}
For each centered diagnostic \(Z\), \Cref{lem:diagnostic-concentration} gives \(\norm Z_{L^p}\le A_p\), where
\[
A_p=\frac{\mathfrak C_S(U)}{\sqrt m}(\sqrt{dp}+p).
\]
Markov's inequality yields
\(
\Pp(|Z|>eA_p)\le e^{-p}
\).
There are \(2N\) diagnostics.  By \eqref{eq:p-union}, \(2Ne^{-p}\le\delta/2<\delta\), so a union bound proves the claim.
\end{proof}

\subsection{A deterministic leapfrog-to-flow comparison}
\label{sec:lf-flow-comparison}

The profile identities concern exact Hamiltonian flow, whereas the kernels in \Cref{alg:mul,alg:bps} use leapfrog.  We next compare every numerical endpoint diagnostic in a bounded orbit with its exact-flow counterpart.  The proof is carried out in the dimensionless variables \eqref{eq:dimensionless-variables}.  Let $\bar h=\sqrt m\,h,$ and $\bar\gamma=\gamma\kappa^{3/2}$. Then \eqref{eq:hessian-lip} becomes $\norm{\nabla^2\widetilde U(y)-\nabla^2\widetilde U(y')}_{\mathrm F}
\le \bar\gamma\norm{y-y'}.$ In particular, for all vectors \(a,b\),
\begin{align}
\norm{\nabla^3\widetilde U(y)[a,b]}
\le \bar\gamma\norm a\norm b.
\label{eq:scaled-third-vector}
\end{align}
Indeed, for every unit vector \(c\),
\[
\abs{\nabla^3\widetilde U(y)[a,b,c]}
\le
\norm{\nabla^3\widetilde U(y)[a,\cdot,\cdot]}_{\mathrm F}
\norm b\norm c.
\]

Let \(\widehat\Phi_{\bar h}\) be one leapfrog step for the scaled Hamiltonian $\widetilde H(y,w)=\widetilde U(y)+\frac12\norm w^2,$ and let \(\widetilde\flow_s\) be its exact flow.  Set $B(y,w)=1+\norm y+\norm w.$

\begin{lemma}[Local defect]
\label{lem:local-defect}
There are universal constants \(c,C>0\) such that, whenever
\(0<\bar h\le c/(1+\sqrt\kappa)\),
\begin{align}
\norm{\widehat\Phi_{\bar h}(z)-\widetilde\flow_{\bar h}(z)}
\le
C\bigl(1+\kappa^2+\bar\gamma\bigr)
\bar h^3B(z)^2,
\qquad z=(y,w).
\label{eq:local-defect}
\end{align}
\end{lemma}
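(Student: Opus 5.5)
Write $z=(y,w)$ and $g(y)=\nabla\widetilde U(y)$. The plan is to write both the leapfrog step and the exact flow over time $\bar h$ as Taylor expansions in $\bar h$, show they agree through second order, and bound the third-order remainders using \eqref{eq:scaled-hessian} and \eqref{eq:scaled-third-vector}.

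\emph{Leapfrog side.} Since $\widetilde U$ has its minimizer at the origin, $g(0)=0$ and $\norm{g(y)}\le\kappa\norm y\le\kappa B$. The explicit formula gives
\[
\widehat y=y+\bar h w-\tfrac{\bar h^2}{2}g(y),
\qquad
\widehat w=w-\tfrac{\bar h}{2}\bigl(g(y)+g(\widehat y)\bigr).
\]
So the position component is already exactly second order. For the momentum, expand $g(\widehat y)=g(y)+\nabla^2\widetilde U(y)(\widehat y-y)+r$. The remainder satisfies $\norm r\le\tfrac12\bar\gamma\norm{\widehat y-y}^2$ by \eqref{eq:scaled-third-vector}. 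With $\norm{\widehat y-y}\le\bar h\norm w+\bar h^2\kappa\norm y\le C\bar hB$ (using $\bar h\sqrt\kappa\le c$), this gives
\[
\widehat w=w-\bar h g(y)-\tfrac{\bar h^2}{2}\nabla^2\widetilde U(y)w+O\bigl(\bar h^3(\kappa^2+\bar\gamma)B^2\bigr).
\]
The $\kappa^2$ comes from $\nabla^2\widetilde U\cdot\bar h^2 g$.

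\emph{Exact-flow side.} First, energy conservation gives $\norm{Y_s}+\norm{W_s}\le C\sqrt\kappa B$ for $|s|\le\bar h$. This factor is dangerous, so I instead use the direct a priori bound: $\norm{W_s-w}\le s\kappa\sup\norm{Y}$ and $\norm{Y_s-y}\le s\sup\norm W$. A Gronwall argument over the short window $s\le\bar h\le c/\sqrt\kappa$ then gives $\sup_{s\le\bar h}(\norm{Y_s}+\norm{W_s})\le CB$ with no $\sqrt\kappa$ loss.

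Next, use the integral (Taylor-with-remainder) forms
\[
Y_{\bar h}=y+\bar h w-\int_0^{\bar h}(\bar h-s)g(Y_s)\dd s,
\qquad
W_{\bar h}=w-\int_0^{\bar h}g(Y_s)\dd s.
\]
In the position equation, replacing $g(Y_s)$ by $g(y)$ costs $\int(\bar h-s)\kappa\norm{Y_s-y}\le C\kappa\bar h^3B$.

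In the momentum equation, expand $g(Y_s)=g(y)+\nabla^2\widetilde U(y)(Y_s-y)+O(\bar\gamma\norm{Y_s-y}^2)$. Then substitute $Y_s-y=sw+O(s^2\kappa B)$. This yields
\[
W_{\bar h}=w-\bar hg(y)-\tfrac{\bar h^2}{2}\nabla^2\widetilde U(y)w+O\bigl(\bar h^3(\kappa^2+\bar\gamma)B^2\bigr).
\]

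\emph{Comparison.} Subtracting the two expansions, the zeroth-, first- and second-order terms cancel in each component. The position difference is then $O(\kappa\bar h^3B)$, and $\kappa\le 1+\kappa^2$. The momentum difference is $O(\bar h^3(\kappa^2+\bar\gamma)B^2)$. Since $B\ge1$, a single $B$ can be bounded by $B^2$, giving \eqref{eq:local-defect}.

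The main obstacle is bookkeeping of the $\kappa$ powers. We must avoid the $\sqrt\kappa$ amplification that energy conservation would introduce, and make sure every cross term is of the form $\kappa^2\bar h^3B$ or $\bar\gamma\bar h^3B^2$. The step-size restriction $\bar h\sqrt\kappa\le c$ is used precisely to absorb higher-order terms such as $\kappa^2\bar h^4 B$ into $\kappa^{3/2}\bar h^3 B\le\kappa^2\bar h^3B$.
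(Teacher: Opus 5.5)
\textbf{Gap: the a priori bound your bookkeeping rests on is false.} The comparison strategy (expand both maps to second order, cancel, bound cubic remainders) can work, but it breaks exactly where the $\kappa$ and $\bar\gamma$ dependence is decided.

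The hypothesis $\bar h\le c/(1+\sqrt\kappa)$ controls $\bar h\sqrt\kappa$, not $\bar h\kappa$, and $\bar h\kappa$ can be as large as $c\sqrt\kappa$. The correct short-window bounds are $\sup_{s\le\bar h}\norm{Y_s}\le 2B$ but only $\sup_{s\le\bar h}\norm{W_s}\le(1+2\kappa\bar h)B$, and this momentum growth is real. For $\widetilde U(y)=\kappa y^2/2$ in one dimension with $(y,w)=(1,0)$, one has $\abs{W_{\bar h}}=\sqrt\kappa\sin(\sqrt\kappa\bar h)\approx\sqrt\kappa\sin c$. So your claim $\sup_{s\le\bar h}(\norm{Y_s}+\norm{W_s})\le CB$ ``with no $\sqrt\kappa$ loss'' is false.

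The same issue affects the leapfrog side. The estimate $\norm{\widehat y-y}\le\bar h\norm w+\tfrac{\bar h^2}{2}\kappa\norm y$ is only $O\bigl(\bar h(1+\kappa\bar h)B\bigr)$, not $O(\bar hB)$. With the correct bounds, your remainders $\tfrac{\bar h}{2}\cdot\tfrac{\bar\gamma}{2}\norm{\widehat y-y}^2$ and $\int_0^{\bar h}\tfrac{\bar\gamma}{2}\norm{Y_s-y}^2\dd s$ are of order $\bar\gamma(1+\kappa^2\bar h^2)\bar h^3B^2$, which can reach $\bar\gamma\kappa\bar h^3B^2$. That is not dominated by $C(1+\kappa^2+\bar\gamma)\bar h^3B^2$ when $\bar\gamma\gg\kappa$, and this regime is allowed because $\gamma\ge0$ is free.

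\textbf{The fix: split the displacement.} Apply the Hessian-Lipschitz Taylor bound only to the free-streaming part $sw$, and apply the $\kappa$-Lipschitz bound of $g$ to the gradient-driven part. Write $Y_s-y=sw+e_s$ with $\norm{e_s}\le\tfrac{s^2}{2}\sup_{r\le s}\norm{g(Y_r)}\le\kappa s^2B$. Then
\[
\norm{g(Y_s)-g(y)-s\nabla^2\widetilde U(y)w}\le\tfrac{\bar\gamma}{2}s^2\norm w^2+\kappa\norm{e_s}\le\tfrac{\bar\gamma}{2}s^2B^2+\kappa^2s^2B .
\]
Do the same for $g(\widehat y)$ with $\widehat y-y=\bar hw-\tfrac{\bar h^2}{2}g(y)$. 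After integration both expansions agree up to $C(\kappa^2+\bar\gamma)\bar h^3B^2$. Your position estimate survives, with $\kappa+\kappa^2\bar h$ in place of $\kappa$, which is still within the target.

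\textbf{How the paper handles it.} It avoids the problem differently. It writes the momentum defect as the trapezoidal-rule error for $f(t)=g(Y_t)$ and splits on the size of $\kappa\bar h$. If $\kappa\bar h\le1$, the momentum stays below $3B$, and the second-derivative form of the error gives $(\kappa^2+\bar\gamma)\bar h^3B^2$. If $\kappa\bar h>1$, the first-derivative form gives $\kappa\bar h^2(1+2\kappa\bar h)B\le3\kappa^2\bar h^3B$, with no $\bar\gamma$ at all.
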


\begin{proof}
Write \(g=\nabla\widetilde U\), let
\((Y_t,W_t)=\widetilde\flow_t(y,w)\), and abbreviate
\(h=\bar h\) and \(B=B(y,w)\). Since the minimizer of
\(\widetilde U\) is the origin, \eqref{eq:scaled-hessian} gives $  \norm{g(x)}\le\kappa\norm x.$ Reduce the universal constant \(c\) in the step-size assumption so that
\begin{align}
  h\le1,
  \qquad
  \kappa h^2\le\frac12.
  \label{eq:local-defect-small-step}
\end{align}
The exact position satisfies $Y_t=y+tw-\int_0^t(t-s)g(Y_s)\dd s.$ If \(M_h=\sup_{0\le t\le h}\norm{Y_t}\), then
\[
  M_h
  \le
  \norm y+h\norm w+\frac{\kappa h^2}{2}M_h
  \le
  B+\frac12M_h,
\]
so
\begin{align}
  \sup_{0\le t\le h}\norm{Y_t}\le2B.
  \label{eq:local-position-bound}
\end{align}
Consequently,
\begin{align}
  \norm{W_t}
  \le
  \norm w+\int_0^t\norm{g(Y_s)}\dd s
  \le
  B+2\kappa hB,
  \qquad 0\le t\le h,
  \label{eq:local-momentum-bound}
\end{align}
and
\begin{align}
  \norm{Y_t-y}
  \le
  tB+\kappa t^2B
  \le
  t(1+\kappa h)B.
  \label{eq:local-displacement-bound}
\end{align}

The leapfrog position is
\[
  \widehat y=y+hw-\frac{h^2}{2}g(y),
\]
whereas the exact integral equation gives $  Y_h=y+hw-\int_0^h(h-s)g(Y_s)\dd s.$ Using \eqref{eq:local-displacement-bound} and the
\(\kappa\)-Lipschitz property of \(g\),
\begin{align}
  \norm{\widehat y-Y_h}\le
  \kappa\int_0^h(h-s)\norm{Y_s-y}\dd s \le
  \frac{\kappa h^3}{6}(1+\kappa h)B
  \le
  C\kappa^2h^3B^2.
  \label{eq:position-local-defect}
\end{align}

For the momentum, $  \widehat w=w-\frac h2\{g(y)+g(\widehat y)\},$ whereas $  W_h=w-\int_0^hg(Y_t)\dd t.$ Set \(f(t)=g(Y_t)\). Adding and subtracting \(g(Y_h)\) gives
\begin{align}
  \norm{\widehat w-W_h}
  \le
  \frac{\kappa h}{2}\norm{\widehat y-Y_h} +
  \left\|
    \frac h2\{f(0)+f(h)\}-\int_0^hf(t)\dd t
  \right\|.
  \label{eq:momentum-defect-decomposition}
\end{align}

First suppose that \(\kappa h\le1\). Then
\eqref{eq:local-momentum-bound} gives \(\norm{W_t}\le3B\), while
\eqref{eq:local-position-bound} gives
\(\norm{g(Y_t)}\le2\kappa B\). Since
\begin{align*}
  f''(t)
  =
  \nabla^3\widetilde U(Y_t)[W_t,W_t]
  -\nabla^2\widetilde U(Y_t)g(Y_t),
\end{align*}
we obtain from \eqref{eq:scaled-third-vector}, $  \sup_{0\le t\le h}\norm{f''(t)}
  \le
  C(\kappa^2+\bar\gamma)B^2.$ The exact trapezoidal identity
\begin{align*}
  \frac h2\{f(0)+f(h)\}-\int_0^hf(t)\dd t
  =
  \frac12\int_0^h t(h-t)f''(t)\dd t
\end{align*}
therefore yields
\begin{align}
  \left\|
    \frac h2\{f(0)+f(h)\}-\int_0^hf(t)\dd t
  \right\|
  \le
  C(\kappa^2+\bar\gamma)h^3B^2.
  \label{eq:trapezoidal-small-kappah}
\end{align}

Now suppose that \(\kappa h>1\). Integration by parts gives
\[
  \frac h2\{f(0)+f(h)\}-\int_0^hf(t)\dd t
  =
  \int_0^h\left(t-\frac h2\right)f'(t)\dd t.
\]
Since \(f'(t)=\nabla^2\widetilde U(Y_t)W_t\),
\eqref{eq:local-momentum-bound} implies $\norm{f'(t)}
  \le
  \kappa B(1+2\kappa h).$ Hence
\begin{align}
  \left\|
    \frac h2\{f(0)+f(h)\}-\int_0^hf(t)\dd t
  \right\|
  \le
  \frac{\kappa h^2}{4}(1+2\kappa h)B
  \le
  C\kappa^2h^3B^2,
  \label{eq:trapezoidal-large-kappah}
\end{align}
where the last step uses \(\kappa h>1\) and \(B\ge1\).

Finally, the sharper first bound in
\eqref{eq:position-local-defect} and
\eqref{eq:local-defect-small-step} give
\begin{align*}
  \frac{\kappa h}{2}\norm{\widehat y-Y_h}
  \le
  C\kappa^2h^4(1+\kappa h)B
  \le
  C\kappa^2h^3B^2,
\end{align*}
because \(h(1+\kappa h)=h+\kappa h^2\le3/2\). Combining
\eqref{eq:momentum-defect-decomposition},
\eqref{eq:trapezoidal-small-kappah}, and
\eqref{eq:trapezoidal-large-kappah} proves
\begin{align*}
  \norm{\widehat w-W_h}
  \le
  C(1+\kappa^2+\bar\gamma)h^3B^2.
\end{align*}
Together with \eqref{eq:position-local-defect}, this is
\eqref{eq:local-defect}.
\end{proof}

\begin{lemma}[Energy-conditioned global phase and diagnostic error]
\label{lem:global-LF-error}
Fix \(S>0\).  Let \(0<\bar h\le c/(1+\sqrt\kappa)\), let
\(N\bar h\le S_\kappa^+\), and suppose that
\begin{align}
\max_{|j|\le N}
\abs{\widetilde H(\widehat\Phi_{\bar h}^{\,j}z_0)-\widetilde H(z_0)}
\le1.
\label{eq:LF-energy-window-assumption}
\end{align}
Then
\begin{align}
\max_{|n|\le N}
\norm{\widehat\Phi_{\bar h}^{\,n}(z_0)-
      \widetilde\flow_{n\bar h}(z_0)}
\le
C(1+S)\mathfrak A_S(U)\,
\kappa\bigl(1+\kappa^2+\bar\gamma\bigr)
\bar h^2B(z_0)^2.
\label{eq:global-phase-error}
\end{align}
Moreover, for every pair \(-N\le a<b\le N\),
\begin{align}
\max_{\sigma\in\{-,+\}}
\abs{\widehat F_{a,b}^{\sigma}-F_{a,b}^{\sigma}}
\le
\mathfrak N_S(U)\bar h^2B(z_0)^3.
\label{eq:global-diagnostic-error-scaled}
\end{align}
In the original variables,
\begin{align}
\max_{\sigma\in\{-,+\}}
\abs{\widehat D_{a,b}^{\sigma}-D_{a,b}^{\sigma}}
\le
\frac{\mathfrak N_S(U)}{\sqrt m}
(mh^2)B(z_0)^3.
\label{eq:global-diagnostic-error}
\end{align}
\end{lemma}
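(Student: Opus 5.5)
The plan is the standard telescoping (Lady Windermere's fan) argument, carried out in the scaled variables. For \(0\le n\le N\), write \(\widehat z_k=\widehat\Phi_{\bar h}^{\,k}z_0\). The global error decomposes as
\[
  \widehat z_n-\widetilde\flow_{n\bar h}z_0
  =\sum_{k=0}^{n-1}\Bigl[\widetilde\flow_{(n-k-1)\bar h}\bigl(\widehat\Phi_{\bar h}\widehat z_k\bigr)-\widetilde\flow_{(n-k-1)\bar h}\bigl(\widetilde\flow_{\bar h}\widehat z_k\bigr)\Bigr],
\]
which follows from the flow property. Each summand is a local defect propagated by exact flow for scaled time at most \(S_\kappa^+\). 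By the mean value theorem along the segment and the definition \eqref{eq:tangent-amplification}, each summand is at most \(\mathfrak A_S(U)\,\norm{\widehat\Phi_{\bar h}\widehat z_k-\widetilde\flow_{\bar h}\widehat z_k}\). Here the supremum in \eqref{eq:tangent-amplification} is over all of phase space, so convexity of segments causes no issue. Backward indices are handled identically by reversibility \eqref{eq:lf-reversible} and \(\widetilde\flow_{-s}=S\widetilde\flow_sS\).

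Next I bound the local defects using \Cref{lem:local-defect}, which gives \(C(1+\kappa^2+\bar\gamma)\bar h^3B(\widehat z_k)^2\). To replace \(B(\widehat z_k)\) by \(B(z_0)\), I use the energy window \eqref{eq:LF-energy-window-assumption}. From \eqref{eq:scaled-hessian}, \(\widetilde H(z)\ge\frac12(\norm y^2+\norm w^2)\) and \(\widetilde H(z_0)\le\frac\kappa2\norm{y_0}^2+\frac12\norm{w_0}^2\). Hence \(\norm{\widehat z_k}^2\le\kappa\norm{y_0}^2+\norm{w_0}^2+2\), and so \(B(\widehat z_k)\le C\sqrt\kappa\,B(z_0)\). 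Squaring gives \(B(\widehat z_k)^2\le C\kappa B(z_0)^2\), which accounts for the extra factor \(\kappa\) in \eqref{eq:global-phase-error}. Summing at most \(N\le S_\kappa^+/\bar h\) defects gives a factor \(S_\kappa^+/\bar h\le (S+1)/\bar h\), using \(\kappa\ge1\). This yields \eqref{eq:global-phase-error}.

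For the diagnostics, I follow the bilinear argument already sketched in \Cref{prop:stability-envelope}. Let \(E\) denote the right side of \eqref{eq:global-phase-error}. Let \(R=C\sqrt\kappa B(z_0)\) bound the phase norms of all numerical states (by the energy window) and all exact states (by energy conservation, as in \eqref{eq:scaled-path-norm}). Writing \(\widehat F^-_{a,b}-F^-_{a,b}=(\widehat w_a-w_a)^\top(\widehat y_b-\widehat y_a)+w_a^\top[(\widehat y_b-y_b)-(\widehat y_a-y_a)]\) gives a bound of \(E\cdot2R+R\cdot2E=4RE\). The same bound holds for \(F^+\). This is \(C(1+S)\mathfrak A_S\kappa^{3/2}(1+\kappa^2+\bar\gamma)\bar h^2B(z_0)^3\), which is at most \(\mathfrak N_S(U)\bar h^2B(z_0)^3\) after enlarging \(C_0\); this proves \eqref{eq:global-diagnostic-error-scaled}.

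Finally, \eqref{eq:global-diagnostic-error} follows by undoing the scaling. Diagnostics in original variables satisfy \(D=m^{-1/2}F\) since \(Y=\sqrt m(X-x_\star)\) and \(W=V\), and \(\bar h^2=mh^2\). The only delicate point is ensuring the exact-flow comparison trajectory from each \(\widehat z_k\) stays within the domain where \(\mathfrak A_S\) applies. This holds because the propagation times are at most \(S_\kappa^+\) and the supremum in \eqref{eq:tangent-amplification} is global.
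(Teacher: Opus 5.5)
Your proposal is correct and follows the paper's proof essentially step for step. Both use the telescoping sum $G_j=\widetilde\flow_{(n-j)\bar h}(\widehat z_j)$ with each propagated local defect controlled by the global bound $\mathfrak A_S(U)$, then \Cref{lem:local-defect} together with the energy-window estimate $B(\widehat z_k)\le C\sqrt\kappa\,B(z_0)$ (which supplies the extra factor $\kappa$), summation over at most $S_\kappa^+/\bar h\le(S+1)/\bar h$ steps, the bilinear $4RE$ bound for the diagnostics, and finally rescaling by $\sqrt m$ with $\bar h^2=mh^2$.
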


\begin{proof}
The energy window and strong convexity imply
\begin{align}
\max_{|j|\le N}B(\widehat\Phi_{\bar h}^{\,j}z_0)
\le C\sqrt\kappa\,B(z_0).
\label{eq:numerical-B-path}
\end{align}
Indeed, \(\widetilde U(y)\ge\norm y^2/2\), while
\(\widetilde U(y_0)\le\kappa\norm{y_0}^2/2\), and the additive energy tolerance is
absorbed by the constant in \(B\).

Fix \(n\in\{1,\ldots,N\}\), put
\(\widehat z_j=\widehat\Phi_{\bar h}^{\,j}z_0\), and define
\[
G_j=\widetilde\flow_{(n-j)\bar h}(\widehat z_j),
\qquad 0\le j\le n.
\]
Then \(G_0=\widetilde\flow_{n\bar h}(z_0)\), \(G_n=\widehat z_n\), and
\begin{align*}
G_{j+1}-G_j
=
\widetilde\flow_{(n-j-1)\bar h}
   (\widehat\Phi_{\bar h}(\widehat z_j))
-
\widetilde\flow_{(n-j-1)\bar h}
   (\widetilde\flow_{\bar h}(\widehat z_j)).
\end{align*}
The mean-value theorem and \eqref{eq:tangent-amplification} give
\[
\norm{G_{j+1}-G_j}
\le
\mathfrak A_S(U)
\norm{\widehat\Phi_{\bar h}(\widehat z_j)-
      \widetilde\flow_{\bar h}(\widehat z_j)}.
\]
By \Cref{lem:local-defect} and \eqref{eq:numerical-B-path}, the right side is at most
\[
C\mathfrak A_S(U)\,
\kappa\bigl(1+\kappa^2+\bar\gamma\bigr)
\bar h^3B(z_0)^2.
\]
Summing over \(j\), using \(n\bar h\le S_\kappa^+\le S+1\), and applying
reversibility for negative \(n\), proves \eqref{eq:global-phase-error}.

Set $E_N=C(1+S)\mathfrak A_S(U)\, \kappa\bigl(1+\kappa^2+\bar\gamma\bigr) \bar h^2B(z_0)^2,$ and $R_N=C\sqrt\kappa\,B(z_0).$ Equation \eqref{eq:global-phase-error} bounds the error at each endpoint by \(E_N\). Exact energy conservation and \eqref{eq:numerical-B-path} bound every exact and numerical
endpoint phase norm by \(R_N\), after increasing the universal constant.  For the left
endpoint diagnostic, add and subtract the pairing of the exact left momentum with the
numerical displacement.  Then
\begin{align*}
  \bigl|\widehat F^-_{a,b}-F^-_{a,b}\bigr|
  &\le
  \norm{\widehat W_a-W_a}\,
  \norm{\widehat Y_b-\widehat Y_a}
  +
  \norm{W_a}\,
  \norm{(\widehat Y_b-Y_b)-(\widehat Y_a-Y_a)}
  \\
  &\le
  E_N(2R_N)+R_N(2E_N)
  =4R_NE_N.
\end{align*}
Replacing the left momentum by the right momentum gives the same estimate for
\(\sigma=+\).  Hence
\[
  \max_{\sigma\in\{-,+\}}
  \bigl|\widehat F_{a,b}^{\sigma}-F_{a,b}^{\sigma}\bigr|
  \le
  C(1+S)\mathfrak A_S(U)\,
  \kappa^{3/2}\bigl(1+\kappa^2+\bar\gamma\bigr)
  \bar h^2B(z_0)^3
  \le
  \mathfrak N_S(U)\bar h^2B(z_0)^3,
\]
after increasing the universal constant \(C_0\) in
\eqref{eq:numerical-envelope}.  This proves
\eqref{eq:global-diagnostic-error-scaled}.  Finally,
\(\bar h^2=mh^2\), and each scaled endpoint diagnostic is \(\sqrt m\) times its
original-coordinate counterpart.  Dividing the preceding inequality by \(\sqrt m\)
proves \eqref{eq:global-diagnostic-error}.

\end{proof}

\subsection{Short checked intervals}

For a maximum orbit size \(K\), let \(\mathscr I_K\) be the family of all integer intervals that can occur as a completed orbit or a recursively inspected dyadic suborbit under some realization of the doubling bits, with terminal size at most \(K\).  Every endpoint lies in \([-K+1:K-1]\), so
\begin{align*}
|\mathscr I_K|\le(2K-1)^2\le4K^2.
\end{align*}

\begin{lemma}[Typical numerical endpoints on the energy event]
\label{lem:typical-numerical-endpoints}
Fix finite \(\kappa,S\).  There are constants \(b_0,B_0,c_0,C_0>0\),
polynomial in \(\kappa\), \(S\), \(\mathfrak A_S(U)\), and \(\bar\gamma\), such that the following
holds.  Let \(K\bar h\le S_\kappa^+\), let $p\ge\max\{2,\log(8K/\delta)\},$ assume \(d\ge C_0p\), and suppose
\begin{align}
\mathfrak N_S(U)\bar h^2(d+p)
\le c_0\sqrt d.
\label{eq:endpoint-error-small}
\end{align}
If \(Z_0\sim\widetilde\pi\), then the probability that both the energy event
\eqref{eq:LF-energy-window-assumption} holds and some numerical endpoint
\((\widehat Y_n,\widehat W_n)\), \(|n|\le K\), violates
\begin{align}
\norm{\widehat W_n}\ge b_0\sqrt d,
\qquad
\norm{\nabla\widetilde U(\widehat Y_n)}\le B_0\sqrt d
\label{eq:typical-numerical-endpoints}
\end{align}
is at most \(\delta\).
\end{lemma}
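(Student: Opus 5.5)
The plan is to compare each numerical endpoint with the exact-flow endpoint at the same index. For the exact flow we use stationarity to obtain per-index concentration, and then take a union bound over the at most \(2K+1\) indices \(|n|\le K\). The energy event is needed only to invoke \Cref{lem:global-LF-error}. That lemma, in the form \eqref{eq:global-phase-error}, bounds the phase error at every index by
\[
E:=C(1+S)\mathfrak A_S(U)\kappa(1+\kappa^2+\bar\gamma)\bar h^2B(z_0)^2 .
\]
The argument then has three steps.

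\emph{Step 1 (typical initial norm).} By \eqref{eq:phase-norm-Lp} and Markov's inequality at exponent \(p\), with probability at least \(1-\delta/4\) we have \(B(Z_0)\le C(\sqrt d+\sqrt p)\le C'\sqrt d\); the second inequality uses \(d\ge C_0p\). On this event, \(E\le C(1+S)\mathfrak A_S\kappa(1+\kappa^2+\bar\gamma)\bar h^2(d+p)\). Since \(\mathfrak N_S\) contains this prefactor, hypothesis \eqref{eq:endpoint-error-small} gives \(E\le c_0\sqrt d\) (possibly after enlarging \(C_0\) in \(\mathfrak N_S\)). Choosing \(c_0\) small relative to \(b_0\) and \(B_0/\kappa\) makes the numerical endpoints within \(c_0\sqrt d\) of the exact ones.

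\emph{Step 2 (exact endpoints at fixed index).} By stationarity of exact flow, \((Y_{n\bar h},W_{n\bar h})\sim\widetilde\pi\) for each \(n\). The momentum \(W\) is standard Gaussian, and \(\norm W\) is \(1\)-Lipschitz with \(\E\norm W^2=d\). Hence \(\E\norm W\ge\sqrt{d}-C\) by Gaussian concentration, and \(\Pp(\norm W<\sqrt d/2)\le e^{-cd}\le e^{-cC_0p}\). For the gradient, \(\norm{\nabla\widetilde U(Y)}\le\kappa\norm Y\). By \eqref{eq:Y-norm-Lp} and Markov, \(\Pp(\norm Y>e(\sqrt d+C\sqrt p))\le e^{-p}\), so \(\norm{\nabla\widetilde U(Y)}\le 2e\kappa\sqrt d\) off that event, again using \(d\ge C_0 p\). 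Taking the union over \(2K+1\) indices gives failure probability at most \(2(2K+1)e^{-p}\le\delta/2\), because \(p\ge\log(8K/\delta)\) and \(C_0\) can be taken large enough that \(e^{-cC_0p}\le e^{-p}\).

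\emph{Step 3 (combine).} On the intersection of the energy event with the good events of Steps 1--2, every \(|n|\le K\) satisfies
\[
\norm{\widehat W_n}\ge \sqrt d/2-c_0\sqrt d\ge \sqrt d/4=:b_0\sqrt d,
\]
and, by \(\kappa\)-Lipschitzness of \(\nabla\widetilde U\),
\[
\norm{\nabla\widetilde U(\widehat Y_n)}\le 2e\kappa\sqrt d+\kappa c_0\sqrt d=:B_0\sqrt d .
\]
The bad probability is at most \(\delta\), and the constants are polynomial in \(\kappa\), as required.

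The main obstacle is Step 1, in the form of bookkeeping between \(\mathfrak N_S\) and the global phase-error prefactor. The factor \(\kappa^{3/2}\) in \(\mathfrak N_S\) dominates the \(\kappa\) appearing in \(E\), so \eqref{eq:endpoint-error-small} genuinely controls \(E/\sqrt d\). The remaining care is to check that the lemma's hypotheses \(\bar h\le c/(1+\sqrt\kappa)\) and \(N\bar h\le S_\kappa^+\) hold. Both follow from \(K\bar h\le S_\kappa^+\), and, if needed, from \eqref{eq:endpoint-error-small} with \(d\ge1\), which forces \(\bar h\) to be small.
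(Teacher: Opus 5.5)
Your proposal is correct and matches the paper's proof step for step. Like the paper, you get exact-endpoint typicality from stationarity and a union bound over the $2K+1$ indices, bound $B(Z_0)$ typically, and apply \Cref{lem:global-LF-error} on the energy event to move to the numerical endpoints via the triangle inequality and $\kappa$-Lipschitzness of $\nabla\widetilde U$. You also check that \eqref{eq:endpoint-error-small} forces $\bar h\le c/(1+\sqrt\kappa)$, a detail the paper leaves implicit.

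One minor arithmetic slip: with $e^{-p}\le\delta/(8K)$, the Step~2 union bound gives $2(2K+1)e^{-p}\le(2K+1)\delta/(4K)\le3\delta/4$, not $\delta/2$. Since Step~1 fails with probability at most $e^{-p}\le\delta/8$, the total is still below $\delta$, so the conclusion is unaffected.
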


\begin{proof}
For each fixed \(n\), stationarity of exact Hamiltonian flow gives
\((Y_{n\bar h},W_{n\bar h})\sim\widetilde\pi\).  Gaussian concentration yields
universal \(b_1,c_1>0\) such that
\[
\Pp(\norm W<b_1\sqrt d)\le e^{-c_1d}.
\]
By \eqref{eq:Y-norm-Lp} and Markov's inequality with moment order \(p\),
\[
\Pp(\norm Y>C(\sqrt d+\sqrt p))\le e^{-p}.
\]
Since \(\norm{\nabla\widetilde U(Y)}\le\kappa\norm Y\), a union bound over the
\(2K+1\) exact endpoint times gives, with probability at least \(1-\delta/2\),
\begin{align}
\min_{|n|\le K}\norm{W_{n\bar h}}\ge2b_0\sqrt d,
\qquad
\max_{|n|\le K}\norm{\nabla\widetilde U(Y_{n\bar h})}
\le\frac{B_0}{2}\sqrt d.
\label{eq:exact-endpoint-typicality}
\end{align}
The dimension condition makes the Gaussian lower-tail contribution smaller than
\(\delta/4\).

The event \(B(Z_0)\le C(\sqrt d+\sqrt p)\) fails with probability at most
\(\delta/2\).  On its intersection with the numerical energy event,
\Cref{lem:global-LF-error} gives
\[
\max_{|n|\le K}
\norm{(\widehat Y_n,\widehat W_n)-(Y_{n\bar h},W_{n\bar h})}
\le
\mathfrak N_S(U)\bar h^2(d+p).
\]
Under \eqref{eq:endpoint-error-small}, this is at most
\(\min\{b_0,B_0/(2\kappa)\}\sqrt d\) after reducing \(c_0\).  The momentum
conclusion follows from \eqref{eq:exact-endpoint-typicality}; the gradient conclusion
follows from \(\kappa\)-Lipschitz continuity of \(\nabla\widetilde U\).
\end{proof}

Choose
\begin{align}
S_{\mathrm{sm}}
=
\frac{1}{128(1+B_0/b_0)},
\qquad
T_{\mathrm{sm}}=\frac{S_{\mathrm{sm}}}{\sqrt L}.
\label{eq:small-time-cutoff-general}
\end{align}

\begin{lemma}[Small checked intervals cannot turn]
\label{lem:adaptive-small-intervals}
On the event \eqref{eq:typical-numerical-endpoints}, every nontrivial checked leapfrog interval with physical duration at most \(T_{\mathrm{sm}}\) has both endpoint diagnostics strictly positive.  Singleton intervals have zero displacement and hence U-turn indicator zero.
\end{lemma}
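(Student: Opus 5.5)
The plan is to reduce the lemma to the deterministic no-U-turn estimate of \Cref{lem:no-uturn}, applied in the scaled variables \eqref{eq:dimensionless-variables}. The event \eqref{eq:typical-numerical-endpoints} gives lower bounds on every numerical momentum and upper bounds on every numerical gradient along the orbit. So any checked interval can be re-rooted at its left endpoint and treated as a fresh short trajectory.

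Fix a nontrivial checked interval $I=[a:b]\in\mathscr I_K$ with $\ell_h(I)\le T_{\mathrm{sm}}$. The steps are as follows.
\begin{enumerate}
\item Pass to scaled coordinates. There the smoothness constant is $\kappa$ and $\sqrt\kappa\,\bar h(b-a)=\sqrt L\,\ell_h(I)\le S_{\mathrm{sm}}$. The U-turn diagnostics scale by the positive factor $\sqrt m$, so their signs are unchanged.
\item Take the leapfrog trajectory started at $(q_0,p_0)=(\widehat Y_a,\widehat W_a)$. Its iterates are exactly $(\widehat Y_{a+j},\widehat W_{a+j})$, since leapfrog orbits are shift invariant.
\item Choose the ratio constant $B=B_0/b_0$. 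By \eqref{eq:typical-numerical-endpoints},
\[
\norm{\nabla\widetilde U(q_0)}\le B_0\sqrt d\le B\sqrt\kappa\,\norm{p_0}\qquad(\kappa\ge1).
\]
Hence \eqref{eq:gradient-momentum-ratio} holds with constant $B$ at smoothness $\kappa$. Also $\norm{p_0}\ge b_0\sqrt d>0$.
\item Take $T=(b-a)\bar h$, i.e.\ $K'=b-a$ steps. The time condition \eqref{eq:no-uturn-time-condition} then reads $\sqrt\kappa\,T\le 1/(64(B+1))$.
\item Check this condition. It follows from
\[
\sqrt\kappa\,T\le S_{\mathrm{sm}}=\frac{1}{128(1+B_0/b_0)}\le\frac{1}{64(B+1)},
\]
which is exactly the choice \eqref{eq:small-time-cutoff-general}.
\item Apply \Cref{lem:no-uturn} with $i=0$, $j=b-a$. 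It gives $p_0^\top(q_{b-a}-q_0)>0$ and $p_{b-a}^\top(q_{b-a}-q_0)>0$, which are both diagnostics of $I$ and so strictly positive.
\end{enumerate}

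One technical point needs care. \Cref{lem:no-uturn} bounds $G_*$ over indices $|n|\le K'$ around the root, including backward iterates. Those backward points $\widehat Z_{a-n}$ can fall outside the index range covered by \eqref{eq:typical-numerical-endpoints}. I expect this bookkeeping to be the main (if mild) obstacle. The proof of \Cref{lem:no-uturn} only uses $G_0$ together with the self-consistent bound \eqref{eq:Gstar-bound}, and that bound is obtained by Lipschitz propagation from the root. So no typicality at the extra points is needed, only the leapfrog recursion, which is defined everywhere. Alternatively, one restricts the no-U-turn argument to forward indices $0\le n\le b-a$, for which the same self-consistent bound holds verbatim.

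The singleton statement is immediate. When $\#I=1$ we have $x_b=x_a$, so both diagnostics vanish and the strict inequalities in \eqref{eq:uturn} fail; the indicator is therefore zero. Since the typicality event is uniform over all $|n|\le K$, the conclusion holds simultaneously for every checked short interval.
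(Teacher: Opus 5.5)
Your proposal is correct and follows essentially the paper's argument: re-root the deterministic no-U-turn lemma at the left endpoint of the checked interval, verify the gradient--momentum ratio from the typical-endpoint event, and check the time condition against the choice of $S_{\mathrm{sm}}$. The differences are cosmetic. You work in scaled coordinates with $B=B_0/b_0$, whereas the paper works in physical coordinates and gets the sharper $B=B_0/(b_0\sqrt\kappa)\le B_0/b_0$. Your backward-iterate concern is indeed moot, because that lemma imposes its hypotheses only at the root.
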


\begin{proof}
Use the left endpoint of the checked interval as the initial state in \Cref{lem:no-uturn}.  In physical variables,
\[
\norm{\nabla U(x)}
=\sqrt m\norm{\nabla\widetilde U(y)}
\le\sqrt m B_0\sqrt d,
\]
whereas
\[
\sqrt L\norm v
=\sqrt{m\kappa}\norm w
\ge\sqrt{m\kappa}\,b_0\sqrt d.
\]
Thus the gradient--momentum ratio in \Cref{lem:no-uturn} holds with \(B=B_0/(b_0\sqrt\kappa)\le B_0/b_0\).  The definition \eqref{eq:small-time-cutoff-general} gives
\[
\sqrt L\,T_{\mathrm{sm}}
\le\frac{1}{64(B+1)}.
\]
The shifted deterministic lemma proves strict positivity of both endpoint diagnostics.
\end{proof}

\subsection{Intrinsic terminal-depth certificate and proof of the main theorem}
\label{sec:intrinsic-master-proof}

\begin{proposition}[Intrinsic terminal-depth certificate]
\label{prop:intrinsic-certificate}
Assume the trajectory hypotheses of \Cref{thm:adaptive}, including
\eqref{eq:intrinsic-margin-condition}.  Let \(\mathcal C_\star\) be the phase-space event on which
\begin{enumerate}[label=(\roman*)]
\item every checked exact diagnostic differs from its profile value by at most
\[
e\,\mathfrak C_p(S;U)\frac{\sqrt{dp}+p}{\sqrt m};
\]
\item the energy window \(\mathcal H_{h,K_\star}(\log2)\) holds;
\item every checked numerical diagnostic differs from its exact counterpart by at most
\[
e\,\mathfrak E_{p,h}(K_\star,\log2;U)\frac d{\sqrt m};
\]
\item every nontrivial checked interval shorter than \(T_{\mathrm{sm}}\) has both numerical endpoint
 diagnostics strictly positive; singleton intervals are handled separately by their identically zero U-turn indicator.
\end{enumerate}
Then
\begin{align}
\bar\pi(\mathcal C_\star^c)\le\zeta_{p,h}(K_\star).
\label{eq:intrinsic-certificate-failure}
\end{align}
On \(\mathcal C_\star\), every nontrivial proper checked dyadic interval below depth \(k_\star\)
 has both numerical endpoint diagnostics strictly positive, while every singleton has U-turn indicator zero, every possible full interval at
 depth \(k_\star\) has both numerical endpoint diagnostics strictly negative, and every
 realization of the doubling bits returns cardinality \(K_\star\) through the U-turn rule
 before the cap.
\end{proposition}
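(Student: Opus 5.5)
The proof has two parts: a failure-probability bound \eqref{eq:intrinsic-certificate-failure} obtained by a union bound over the four defining events, and a deterministic sign analysis on \(\mathcal C_\star\) that feeds \Cref{prop:terminal-depth}(b).

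\emph{Failure probability.} Events (ii) and (iv) fail with probabilities \(\delta_H(h,K_\star,\log2)\) and \(\delta_{\mathrm{sm}}(h,K_\star,T_{\mathrm{sm}})\) by definition \eqref{eq:intrinsic-delta-H}--\eqref{eq:intrinsic-delta-sm}.

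For (i), consider each of the \(N_{K_\star}\) intervals and each sign \(\sigma\). Stationarity of the exact flow (as in the last step of \Cref{lem:diagnostic-concentration}) shows that \(D_I^\sigma\) has the law of \(D^\sigma_{\ell_h(I)}\). Since \(\ell_h(I)\le T_\star\le S/\sqrt m\), \Cref{def:diagnostic-concentration-modulus} gives
\[
\|D_I^\sigma-u(\ell_h(I))\|_{L^p}\le \mathfrak C_p(S;U)\frac{\sqrt{dp}+p}{\sqrt m}.
\]
Markov's inequality at level \(e\) times this bound gives failure probability \(e^{-p}\) per diagnostic, so at most \(2N_{K_\star}e^{-p}\) in total.

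For (iii), apply Markov to the single random variable
\[
\max_{I,\sigma}|\widehat D_I^\sigma-D_I^\sigma|\,\1_{\mathcal H_{h,K_\star}(\log2)},
\]
whose \(L^p\) norm is \(\mathfrak E_{p,h}(K_\star,\log2;U)\,d/\sqrt m\). This costs \(e^{-p}\). Off \(\mathcal H\) the failure is already counted in (ii). Summing the four contributions gives \((2N_{K_\star}+1)e^{-p}+\delta_H+\delta_{\mathrm{sm}}=\zeta_{p,h}(K_\star)\).

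\emph{Signs on \(\mathcal C_\star\).} Fix a nontrivial checked interval \(I\) and write \(t=\ell_h(I)\). Combining (i) and (iii),
\[
|\widehat D_I^\sigma-u(t)|\le \frac{d}{\sqrt m}\Big[e\,\mathfrak C_p(S;U)\Big(\sqrt{\tfrac pd}+\tfrac pd\Big)+e\,\mathfrak E_{p,h}\Big]=\frac{d}{\sqrt m}\,\Xi_{p,h}(S,K_\star)\le\frac{\rho_0 d}{4\sqrt m},
\]
by \eqref{eq:intrinsic-margin-condition}. The key observation is that every checked dyadic interval of cardinality \(2^r\) has physical duration exactly \(\tau_r\). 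Three cases then cover all intervals.
\begin{itemize}
\item If \(r<k_\star\) and \(\tau_r\ge T_{\mathrm{sm}}\), then \eqref{eq:profile-preterminal-margin} gives \(\widehat D_I^\sigma\ge \tfrac34\rho_0 d/\sqrt m>0\).
\item If \(r<k_\star\) and \(\tau_r<T_{\mathrm{sm}}\), event (iv) gives strict positivity.
\item If \(r=k_\star\), then \eqref{eq:profile-terminal-margin} gives \(\widehat D_I^\sigma\le-\tfrac34\rho_0 d/\sqrt m<0\).
\end{itemize}
Singleton intervals have zero displacement, so their indicator is zero.

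Thus all inspectable proper dyadic intervals have \(u_z=0\), and every length-\(K_\star\) interval containing \(0\) has \(u_z=1\). Since \(k_\star<k_{\max}\), \Cref{prop:terminal-depth}(b) yields the common cardinality \(K_\star\), termination through the U-turn criterion, and an inactive cap.

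\emph{Main obstacle.} The delicate step is the bookkeeping that matches the family \(\mathscr I_{K_\star}\) used in the moduli to the intervals actually inspected, in particular that each inspected interval has duration in the dyadic grid \(\{\tau_r\}\). It also requires checking that the recursive subtrees of a rejected or terminal block, and proposed blocks at every depth \(\le k_\star\), all lie in \(\mathscr I_{K_\star}\), so that the union bound over \(N_{K_\star}\) covers them. I would verify this directly from \cref{alg:orbit}: every interval it touches has endpoints in \([-K_\star+1:K_\star-1]\) and dyadic cardinality.
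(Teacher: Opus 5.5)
Your proposal is correct and follows essentially the same route as the paper. Both arguments bound the failure probability the same way: Markov's inequality at level $e$ times the $L^p$ moduli, a union bound over the $2N_{K_\star}$ exact diagnostics, one further $e^{-p}$ for the energy-restricted numerical error, and $\delta_H,\delta_{\mathrm{sm}}$ for the remaining two events. Both then use the $\Xi_{p,h}\le\rho_0/4$ sign transfer, split by depth and by the cutoff $T_{\mathrm{sm}}$, and conclude with \Cref{prop:terminal-depth}(b). Your explicit check that $\ell_h(I)\le T_\star\le S/\sqrt m$, so that $\mathfrak C_p(S;U)$ applies to every checked interval, is a detail the paper leaves implicit.
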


\begin{proof}
For a fixed \(I\in\mathscr I_{K_\star}\), stationarity and time translation of exact
 Hamiltonian flow imply that \(D_I^\sigma\) has the same distribution as
 \(D_{\ell_h(I)}^\sigma\).  By \eqref{eq:intrinsic-Cp} and Markov's inequality,
\[
\bar\pi\left(
 \abs{D_I^\sigma-u(\ell_h(I))}>
 e\,\mathfrak C_p(S;U)\frac{\sqrt{dp}+p}{\sqrt m}
\right)\le e^{-p}.
\]
A union bound over the two endpoint diagnostics and the at most \(N_{K_\star}\) checked
 intervals gives failure probability at most \(2N_{K_\star}e^{-p}\).  By
 \eqref{eq:intrinsic-Ep} and Markov's inequality, the numerical-fidelity bound in (iii)
 fails on the energy event with probability at most \(e^{-p}\).  The remaining two
 failures have probabilities \(\delta_H(h,K_\star,\log2)\) and
 \(\delta_{\mathrm{sm}}(h,K_\star,T_{\mathrm{sm}})\), respectively.  This proves
 \eqref{eq:intrinsic-certificate-failure}.

On \(\mathcal C_\star\), \eqref{eq:intrinsic-Xi} and
 \eqref{eq:intrinsic-margin-condition} imply
\begin{align}
\max_{I\in\mathscr I_{K_\star}}
\max_{\sigma\in\{-,+\}}
\abs{\widehat D_I^\sigma-u(\ell_h(I))}
\le\frac{\rho_0}{4}\frac d{\sqrt m}
\label{eq:intrinsic-sign-transfer}
\end{align}
for every checked interval of duration at least \(T_{\mathrm{sm}}\).  If such an interval
 has depth \(r<k_\star\), then \eqref{eq:profile-preterminal-margin} and
 \eqref{eq:intrinsic-sign-transfer} give
\[
\widehat D_I^\sigma\ge\frac{3\rho_0}{4}\frac d{\sqrt m}>0.
\]
Nontrivial intervals shorter than \(T_{\mathrm{sm}}\) are positive by (iv), while singleton indicators are zero by definition.  At depth
 \(k_\star\), \eqref{eq:profile-terminal-margin} and
 \eqref{eq:intrinsic-sign-transfer} give
\[
\widehat D_I^\sigma\le-\frac{3\rho_0}{4}\frac d{\sqrt m}<0
\]
for every possible full interval.  The conclusion follows from
 \Cref{prop:terminal-depth}.
\end{proof}

\begin{proof}[Proof of \Cref{thm:adaptive}]
Let \(\mathcal C_\star\) be the event in \Cref{prop:intrinsic-certificate}, and put
\[
s=\frac{\epsilon^2}{128M^2}.
\]
For selector \(a\in\{\mathrm M,\mathrm B\}\), choose
\begin{align}
\delta_a=\frac{3\underline a_a}{32}
\label{eq:intrinsic-delta-a}
\end{align}
and define the conditional phase-space failure
\[
r_a(x)=\Pp_V(\mathcal C_\star^c\mid X_0=x),
\qquad
D_a=\{x:r_a(x)\le\delta_a\},
\qquad
G_{a,x}=\{v:(x,v)\in\mathcal C_\star\}.
\]
Fubini's theorem, Markov's inequality, and \eqref{eq:intrinsic-certificate-failure} yield
\begin{align*}
\sup_{x\in D_a}\Pp_V(G_{a,x}^c)\le\delta_a,
\qquad
\pi(D_a^c)\le\frac{\zeta_{p,h}(K_\star)}{\delta_a}.
\end{align*}
After fixing the universal constant \(c_0\) in
 \eqref{eq:intrinsic-failure-condition} sufficiently small, the second inequality implies
\begin{align}
\pi(D_a^c)\le\frac{\theta_\star s}{64}.
\label{eq:intrinsic-D-tail}
\end{align}
By \Cref{prop:intrinsic-certificate}, every momentum in \(G_{a,x}\) is
 \((K_\star,\log2)\)-certified and termination is caused by a genuine U-turn.

Since \(k_\star\ge2\),
\begin{align}
T_\star<K_\star h\le\frac43T_\star.
\label{eq:intrinsic-Kh-T}
\end{align}
The terminal profile is negative, while \eqref{eq:profile-positive} is positive on
 \((0,\pi/(2\sqrt L)]\).  Hence
\begin{align}
T_\star>\frac{\pi}{2\sqrt L}.
\label{eq:intrinsic-terminal-lower}
\end{align}
After decreasing \(c_{\mathrm{ov}}\), equations
 \eqref{eq:intrinsic-overlap-time}, \eqref{eq:intrinsic-Kh-T}, and
 \eqref{eq:intrinsic-terminal-lower} give \(0<r_\star\le1/4\).  The resolution
 condition \eqref{eq:intrinsic-resolution} gives \(r_\star K_\star\ge8\).  Define
\[
\mathcal J_\star
=\left\{j:\left\lceil\frac{r_\star K_\star}{2}\right\rceil
\le j\le\left\lfloor r_\star K_\star\right\rfloor\right\}.
\]
By \Cref{lem:short-band-mass}, its multinomial and BPS masses are at least
 \(\underline a_{\mathrm M}\) and \(\underline a_{\mathrm B}\), respectively.
Moreover, every \(j\in\mathcal J_\star\) satisfies
\[
\frac{t_{\mathrm{ov}}}{2}\le jh\le t_{\mathrm{ov}}.
\]
The fixed-index overlap estimate \Cref{prop:overlap-input}, with
 \(c_{\mathrm{ov}}\) sufficiently small, therefore gives
\begin{align*}
\ov(Q_{j,x},Q_{j,y})\ge\frac34
\end{align*}
whenever \(x,y\in D_a\) and \(\norm{x-y}\le t_{\mathrm{ov}}/128\).

Apply \Cref{thm:terminal-transfer} with
\[
K=K_\star,
\quad
\eta=\log2,
\quad
\Delta=\frac{t_{\mathrm{ov}}}{128},
\quad
\beta=\frac34,
\quad
\mathcal J=\mathcal J_\star.
\]
The choice \eqref{eq:intrinsic-delta-a} gives
\[
a_{a,K_\star}\beta-2\delta_a
\ge\frac{9}{16}\underline a_a,
\]
so \(\rho_a\ge c\underline a_a\).  Together with
 \eqref{eq:intrinsic-D-tail}, this gives
\begin{align}
\Phi_s(R_a)\ge c\underline a_a\theta_\star.
\label{eq:intrinsic-conductance}
\end{align}
By \eqref{eq:intrinsic-Kh-T},
\begin{align*}
r_\star
\ge\frac{c}{a_\star\sqrt\kappa(1+\gamma)^{1/3}},
\qquad
\theta_\star
\ge\frac{c}{\sqrt\kappa(1+\gamma)^{1/3}}.
\end{align*}
For multinomial selection, \(\underline a_{\mathrm M}\ge cr_\star\); for BPS,
 \(\underline a_{\mathrm B}\ge cr_\star^2\).  Hence
 \[
   \underline a_{\mathrm M}\theta_\star
   \ge
   \frac{c}{a_\star\kappa(1+\gamma)^{2/3}},
   \qquad
   \underline a_{\mathrm B}\theta_\star
   \ge
   \frac{c}{a_\star^2\kappa^{3/2}(1+\gamma)}.
 \]
 Therefore \(\Phi_s(R_{\mathrm M})^{-2}\) is bounded by a constant multiple of
 \(a_\star^2\kappa^2(1+\gamma)^{4/3}\), while
 \(\Phi_s(R_{\mathrm B})^{-2}\) is bounded by a constant multiple of
 \(a_\star^4\kappa^3(1+\gamma)^2\).  Substitution into
 \eqref{eq:intrinsic-conductance}, followed by \Cref{prop:PSD-mixing}, proves
 \eqref{eq:intrinsic-transitions-M}--
 \eqref{eq:intrinsic-tv}. On the certification event, the returned orbit contains exactly
 \(K_\star\) leapfrog states, and \eqref{eq:intrinsic-K} is the identity
 \(K_\star=1+T_\star/h\).  The deterministic cap-aware work bound
 \eqref{eq:intrinsic-work-cap} and the refined expected and
 high-probability bounds follow from \Cref{prop:exceptional-work}.
\end{proof}

\section{Non-Gaussian specializations}
\label{sec:non-gaussian-specializations}

The main theorem is formulated in terms of profile separation and
intrinsic diagnostic stability.  This section gives several non-Gaussian
settings in which the population profile or the intrinsic stability
conditions can be verified.  The results serve different purposes.
The spectral criterion below establishes that the stationary profile
must eventually become negative.  The product and near-isotropic
results provide concrete profile-separated classes.  The tangent-flow
and metric-adaptation results give sufficient conditions for controlling
the diagnostic concentration and numerical-fidelity terms appearing in
\Cref{thm:adaptive}.  However, we emphasize that none of these conditions is asserted to be
necessary.

\subsection{A spectral criterion for profile crossing}
\label{sec:non-gaussian-spectral-crossing}

Let \(\bar\pi=\pi\otimes N(0,I_d)\), let \(\flow_t\) denote exact
Hamiltonian flow, and recall the stationary profile
\[
  u(t)
  =
  \E_{\bar\pi}\!\left[
    V_0^\top(X_t-X_0)
  \right]
  =
  \frac12\frac{\dd}{\dd t}
  \E_{\bar\pi}\norm{X_t-X_0}^2.
\]
The following theorem represents \(u\) through the spectral measure of
the position observables under the Hamiltonian Koopman group.

\begin{theorem}[Koopman representation and positive-frequency crossing]
\label{thm:koopman-crossing}
There is a finite positive measure \(\nu_U\) on \([0,\infty)\) such
that
\begin{align}
  \sum_{i=1}^d
  \E\!\left[
    (X_{t,i}-\E X_i)(X_{0,i}-\E X_i)
  \right]
  =
  \int_{[0,\infty)}
  \cos(\omega t)\,\nu_U(\dd\omega),
  \label{eq:koopman-correlation}
\end{align}
and
\begin{align}
  u(t)
  =
  \int_{(0,\infty)}
  \omega\sin(\omega t)\,\nu_U(\dd\omega),
  \qquad
  \int_{[0,\infty)}
  \omega^2\,\nu_U(\dd\omega)
  =
  d.
  \label{eq:koopman-profile}
\end{align}
If, for some \(\omega_0>0\),
\begin{align}
  \supp\nu_U
  \subset
  \{0\}\cup[\omega_0,\infty),
  \label{eq:position-spectral-gap}
\end{align}
then $  u(t)<0$ for at least one $  0<t<\frac{2\pi}{\omega_0}.$
\end{theorem}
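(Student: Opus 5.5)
The plan is to realize the exact Hamiltonian flow as a strongly continuous one-parameter unitary group on $L^2(\bar\pi)$ and then read off both identities and the crossing from the spectral theorem.

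\textbf{Step 1: Koopman group and spectral measure.} Since $\flow_t$ preserves $\bar\pi$ and is invertible, $(\mathcal U_t f)(z)=f(\flow_t z)$ defines a strongly continuous unitary group on $L^2(\bar\pi)$. Continuity follows from dominated convergence, because strong convexity gives Gaussian tails. Fix $f_i(z)=x_i-\E X_i$; these lie in $L^2$. By Stone's theorem and Bochner's theorem, $t\mapsto\ip{f_i}{\mathcal U_tf_i}$ is the Fourier transform of a finite positive measure $\mu_i$ on $\R$.

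To fold $\mu_i$ onto $[0,\infty)$, I will use time reversal. The map $S$ is $\bar\pi$-invariant, $f_i\circ S=f_i$, and $S\flow_tS=\flow_{-t}$. Together these give $\ip{f_i}{\mathcal U_tf_i}=\ip{f_i}{\mathcal U_{-t}f_i}$, so the correlation is real and even and $\mu_i$ is symmetric. Pushing $\sum_i\mu_i$ forward under $\omega\mapsto|\omega|$ defines $\nu_U$ and yields \eqref{eq:koopman-correlation}. Call the common left-hand side $c(t)$.

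\textbf{Step 2: profile identity.} Stationarity gives $R(t)=\E\norm{X_t-X_0}^2=2c(0)-2c(t)$. By \Cref{lem:profile-representations}, $u=R'/2=-c'(t)$. Finiteness of the second moment comes from the generator: $f_i$ lies in the domain of the generator with $\mathcal Lf_i=v_i\in L^2$, so $\int\omega^2\,\mu_i(\dd\omega)=\norm{v_i}_{L^2}^2=1$. Summing over $i$ gives $\int\omega^2\,\nu_U(\dd\omega)=d$. This justifies differentiating under the integral, so $u(t)=\int\omega\sin(\omega t)\,\nu_U(\dd\omega)$. The atom at $0$ contributes nothing, which is why the integral is over $(0,\infty)$. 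As a sanity check, $u'(0)=d$, which matches \Cref{lem:R-inequalities}.

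\textbf{Step 3: crossing.} I expect this step to be the main obstacle. It needs a test function that is positive against every $\omega\sin(\omega t)$ with $\omega\ge\omega_0$ yet is supported in $(0,2\pi/\omega_0)$.

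Suppose, for contradiction, that $u\ge0$ on $(0,2\pi/\omega_0)$. Set $T=2\pi/\omega_0$, and integrate $u$ against the weight $g(t)=1-\cos(\omega_0 t)\ge0$ on $[0,T]$. For each $\omega>0$,
\[
\int_0^T\omega\sin(\omega t)\bigl(1-\cos\omega_0t\bigr)\dd t
=\bigl(1-\cos\omega T\bigr)\Bigl(1-\frac{\omega^2}{\omega^2-\omega_0^2}\Bigr)
=-\frac{\omega_0^2(1-\cos\omega T)}{\omega^2-\omega_0^2}.
\]
This is $\le0$ for $\omega>\omega_0$ and equals $0$ at $\omega=\omega_0$, where the limit is $0$ because $1-\cos\omega T$ vanishes to second order. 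Fubini is justified by the finiteness of $\int\omega\,\nu_U(\dd\omega)$. Under \eqref{eq:position-spectral-gap} this gives $\int_0^T u\,g\,\dd t\le0$. Since $u\ge0$ and $g>0$ on the interior, it follows that $u\equiv0$ on $(0,T)$. But $u(t)\ge\frac{d}{\sqrt L}\sin(\sqrt L t)>0$ for small $t$ by \eqref{eq:profile-positive}, which is a contradiction.

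Strictly speaking, this yields $u<0$ at some $t\in(0,T]$. To place the point in the open interval, I will use continuity of $u$ together with the fact that at $t=T$ the same argument works with $T$ replaced by a slightly smaller value. An alternative is to observe that $u$ cannot be $\ge0$ on $(0,T)$ unless it vanishes identically there.
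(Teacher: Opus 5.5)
Your proposal is correct and follows the paper's proof essentially step for step. The shared ingredients are the Koopman unitary group folded onto $[0,\infty)$ by momentum reversal, the generator identity $\mathcal L f_i=v_i$ giving $\int\omega^2\,\nu_U(\dd\omega)=d$ and justifying differentiation, and the weight $1-\cos(\omega_0 t)$ with the same integral formula; the only difference is that the paper gets positivity of $u$ near $0$ from $u'(0)=\int\omega^2\,\nu_U(\dd\omega)>0$ rather than from \eqref{eq:profile-positive}.

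Your closing worry is unfounded. Because you assumed $u\ge0$ only on the open interval $(0,T)$, the contradiction already yields a point of $(0,T)$ with $u<0$, so no repair is needed. The proposed repair of shrinking $T$ would in fact destroy the sign identity, which relies on $\omega_0T=2\pi$.
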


Theorem~\ref{thm:koopman-crossing} concerns the population profile.  It
does not by itself certify a practical NUTS tree: one must additionally
show that the dyadic grid encounters a negative profile value with a
positive margin and that the exact and numerical diagnostics remain
within that margin.  The proof is given in
\Cref{app:koopman-proof}.

\subsection{Independent product targets}
\label{sec:non-gaussian-products}

The spectral condition in
\eqref{eq:position-spectral-gap} can be verified for every
one-dimensional strongly convex oscillator.  This gives a nonlinear
product class with arbitrary finite condition number.

\begin{corollary}[Profile and exact-diagnostic verification for nonlinear independent products]
\label{cor:product-arbitrary-kappa}
Let \(W\in C^3(\R)\) satisfy
\begin{align}
  m\le W''(q)\le L,
  \qquad
  q\in\R,
  \label{eq:one-dimensional-curvature}
\end{align}
and suppose that \(W''\) is Lipschitz.  For the one-dimensional
equilibrium Hamiltonian process, the profile \(u_W\) is negative at
some time before \(2\pi/\sqrt m\).

For
\begin{align}
  U_d(x)
  =
  \sum_{i=1}^d W(x_i),
  \label{eq:iid-product-potential}
\end{align}
the stationary profile satisfies $  u_d(t)=d\,u_W(t).$ Moreover, for every fixed dimensionless horizon \(S<\infty\), there is
a constant \(C_{W,S}<\infty\), independent of \(d\), such that
\begin{align}
  \mathfrak C_p(S;U_d)
  \le
  C_{W,S},
  \qquad
  p\ge2.
  \label{eq:product-Cp}
\end{align}

If the zero set of \(u_W\) has empty interior before its first negative
component---in particular, if \(u_W\) is real analytic and not
identically zero---then a positive-Lebesgue-measure nonresonant set of
sufficiently small step sizes admits a profile-separated depth with
target-dependent constants independent of \(d\).

For every such dyadic grid, \Cref{thm:adaptive} applies to either
selector once the scalar numerical-fidelity condition
\eqref{eq:intrinsic-margin-condition} and the whole-orbit energy condition
\eqref{eq:intrinsic-failure-condition} are verified.  Thus exact-flow
diagnostic concentration for the product class is dimension free.  No
uniform curvature-only numerical-fidelity claim is made.
\end{corollary}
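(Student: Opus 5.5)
The plan is to prove the four parts of \Cref{cor:product-arbitrary-kappa} in order: profile crossing, tensorization of the profile, dimension-free concentration, and existence of separated grids.

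\emph{Profile crossing.} For the one-dimensional oscillator, the phase space $\R^2$ is foliated by closed energy curves. Each orbit with energy $E$ is periodic with period $P(E)$. Writing $\omega(E)=2\pi/P(E)$, the comparison $m\le W''\le L$ and Sturm-type arguments give $\sqrt m\le\omega(E)\le\sqrt L$. More precisely, the half-period between turning points is bounded by that of the harmonic oscillator with frequency $\sqrt m$. Expanding $q(t)$ in a Fourier series along each orbit shows that the position spectral measure $\nu_W$ is supported on $\{0\}\cup\{k\omega(E):k\ge1\}\subset\{0\}\cup[\sqrt m,\infty)$. Condition \eqref{eq:position-spectral-gap} therefore holds with $\omega_0=\sqrt m$. \Cref{thm:koopman-crossing} then gives negativity of $u_W$ somewhere on $(0,2\pi/\sqrt m)$. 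The Lipschitz assumption on $W''$ is only needed so that the flow is $C^2$ and the Fourier and Koopman manipulations are justified.

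\emph{Product identity.} Under $\bar\pi$, the coordinate pairs $(X_{0,i},V_{0,i})$ are i.i.d. For $U_d$ the Hamiltonian flow decouples coordinatewise, so $V_0^\top(X_t-X_0)=\sum_i V_{0,i}(X_{t,i}-X_{0,i})$. Taking expectations gives $u_d=d\,u_W$.

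\emph{Dimension-free concentration.} Each $D_t^\pm$ is a sum of $d$ i.i.d. terms $\xi_i(t)$ with mean $u_W(t)$. I would bound their centered moments using Rosenthal's inequality, $\|\sum(\xi_i-\E\xi_i)\|_p\le C(\sqrt{p}\,\sqrt d\,\|\xi\|_2+p\,\|\xi\|_p)$. It then suffices to show $\|\xi(t)\|_p\le C_{W,S}\,p/\sqrt m$ uniformly in $0\le t\le S/\sqrt m$, which is sub-exponential behaviour of a single coordinate. One-dimensional energy conservation gives $|X_t-X_0|\le C\sqrt\kappa(|X_0-x_\star|+|V_0|)$. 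Hence $|\xi|\le C\sqrt\kappa\,(|V_0|^2+|X_0-x_\star|^2)$ up to scaling, and the one-dimensional marginals are sub-Gaussian with variance of order $1/m$. Dividing by $\sqrt{dp}+p$ yields \eqref{eq:product-Cp} with a constant depending on $\kappa$ and $S$ but not on $d$. This avoids the tangent envelope entirely, which matters because $\mathfrak A_S$ could grow exponentially in $S\sqrt\kappa$.

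\emph{Nonresonant step sizes.} This is the main obstacle. Let $t_-$ be a time with $u_W(t_-)<0$, chosen in the interior of the first negative component. Take $t_0$ to be the infimum of that component, so $u_W>0$ on a union of intervals before it apart from a zero set with empty interior. By \eqref{eq:profile-positive}, $u_W\ge(1/\sqrt L)\sin(\sqrt L t)>0$ on $(0,\pi/(2\sqrt L)]$.

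Choose a compact $F\subset(0,t_0)$ with $u_W\ge2\rho_1$ on $F$, and an interval $N$ around $t_-$ with $u_W\le-2\rho_1$ on $N$. For $k\ge2$, consider the set of $h$ with $h(2^k-1)\in N$ and $h(2^r-1)\in F\cup(0,T_{\mathrm{sm}})$ for all $r<k$. Its measure should be bounded below as follows. The preterminal times $h(2^r-1)\approx t_-2^{r-k}$ form a geometric grid anchored at $t_-$. The requirement is that each of the finitely many relevant points, those at or above $T_{\mathrm{sm}}$, avoids the small-positivity set $(0,t_0)\setminus F$.

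Because the zero set has empty interior, $F$ can be chosen so that $(0,t_0)\setminus F$ has arbitrarily small measure near these finitely many scales. A scaling argument, varying $h$ in an interval of relative width comparable to $|N|/t_-$, then leaves positive Lebesgue measure of valid $h$ for every large $k$. The one point to handle carefully is that as $k\to\infty$ the number of relevant preterminal points stays bounded, since only those with $\tau_r\ge T_{\mathrm{sm}}$ count. The excluded measure is therefore a bounded sum of dilates of a small set.

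This gives a depth that is profile separated in the sense of \Cref{def:profile-separated}, with $\rho_0=\rho_1\sqrt m$ (normalized) and $S=\sqrt m\,t_-+1$, both independent of $d$. The final paragraph of the corollary is then immediate from \Cref{thm:adaptive}, since its remaining hypotheses are assumed.
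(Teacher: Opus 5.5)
Your profile-crossing step and the tensorization $u_d=d\,u_W$ follow the paper's argument exactly: Sturm comparison gives $\omega(E)\ge\sqrt m$, you expand in a Fourier series on each energy level, and then apply \Cref{thm:koopman-crossing}. The gaps are in the last two steps.

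\emph{Concentration.} Rosenthal's inequality cannot give a bound that is uniform in $p$. Its second term is $p\,(\sum_i\E|\xi_i-\E\xi_i|^p)^{1/p}=p\,d^{1/p}\norm{\xi-\E\xi}_{L^p}$. The form you wrote, without $d^{1/p}$, is false in general; rare Bernoulli summands are a counterexample. A single centered coordinate has $\norm{\xi-\E\xi}_{L^p}$ in general of order $p/\sqrt m$, already in the Gaussian case, where $V_0^2$ terms appear. So you get $\sqrt m\,\norm{D_t^\sigma-u_d(t)}_{L^p}\lesssim\sqrt{dp}+p^2d^{1/p}$. After dividing by $\sqrt{dp}+p$, a factor of order $p$ survives once $p\gtrsim d$. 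Your argument therefore yields only $\mathfrak C_p(S;U_d)\le C_{W,S}\,p$, which is free of $d$ but is not the $p$-uniform bound \eqref{eq:product-Cp}. Rosenthal sees only the $p$-th moment of each summand, so it is not sharp for sub-exponential summands. The repair is to keep the whole tail. Your energy-conservation bound already shows that each centered coordinate contribution has $\psi_1$-norm at most $C_{W,S}/\sqrt m$. The paper phrases this as $\abs{V_{0,i}(X_{t,i}-X_{0,i})}+\abs{V_{t,i}(X_{t,i}-X_{0,i})}\le CE_i/\sqrt m$ together with an exponential moment of $E_i$. The moment form of Bernstein's inequality, $\norm{\sum_iZ_i}_{L^p}\le C[\sqrt p(\sum_i\norm{Z_i}_{\psi_1}^2)^{1/2}+p\max_i\norm{Z_i}_{\psi_1}]$, then gives $C_{W,S}(\sqrt{dp}+p)/\sqrt m$ directly.

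\emph{Nonresonant step sizes.} There are two problems here.

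First, you never impose $t_-<2t_0$. The last preterminal time is $c_{k-1}T$ with $c_{k-1}=(2^{k-1}-1)/(2^k-1)\uparrow1/2$, and it must lie in $(0,t_0)$. If $t_-$ sits deep inside a long first negative component, your admissible set of $h$ is empty for every large $k$. The paper avoids this by taking terminal times in a compact $I\Subset(a,\min\{b,2a\})$.

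Second, the measure-theoretic count fails under the stated hypothesis. A closed set with empty interior can have positive Lebesgue measure, for example a fat Cantor set. Since $F\subset\{u_W\ge2\rho_1\}$, the set $(0,t_0)\setminus F$ must contain the entire zero set of $u_W$ on $(0,t_0)$, so it cannot be made small in measure. Your subadditivity bound over finitely many dilates then proves nothing; it works only when the zero set is, say, finite, as in the analytic case.

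The right tool is category rather than measure. Each $Z_r=\{T\in I:c_rT\in Z\}$ is the preimage under a homeomorphism of a relatively closed set with empty interior, so it is nowhere dense in $I$. The finite union is nowhere dense, and its complement contains an open interval. On a compact subinterval $I'$, continuity gives uniform positive preterminal and negative terminal margins. The step-size set $I'/(2^k-1)$ then has positive measure automatically.
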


The proof is given in \Cref{app:product-proof}.

\subsection{Near-isotropic coupled targets}
\label{sec:near-isotropic-specialization}

The next result is a nonlinear perturbation of the canonical Gaussian
profile.  It shows that a globally near-isotropic Hessian preserves the
sine-profile mechanism used in Gaussian analyses of NUTS.

\begin{theorem}[Near-isotropic nonlinear targets]
\label{thm:near-isotropic}
Suppose that, for some \(\omega>0\) and \(\eta\in[0,1)\),
\begin{align*}
  \norm{
    \omega^{-2}\nabla^2U(x)-I_d
  }_{\mathrm{op}}
  \le
  \eta,
  \qquad
  x\in\R^d.
\end{align*}
Then, for \(0\le\omega t\le2\pi\),
\begin{align}
  \abs{
    \omega u(t)-d\sin(\omega t)
  }
  \le
  C_0\eta d,
  \label{eq:near-isotropic-profile}
\end{align}
where one may take \(C_0=2\pi e^{2\pi}\).

Consequently, suppose that a dyadic time grid has a positive reference sine margin
\(\rho_{\sin}\) at every preterminal duration above the short-time cutoff and has
reference sine value at most \(-\rho_{\sin}\) at its candidate terminal duration.  If
\(\rho_{\sin}>C_0\eta\), then the grid is profile separated with the normalized margin
\[
  \rho_0=\sqrt{1-\eta}\,\bigl(\rho_{\sin}-C_0\eta\bigr),
\]
because \(m=(1-\eta)\omega^2\) in \Cref{def:profile-separated}.
\end{theorem}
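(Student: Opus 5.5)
The plan is to compare the nonlinear profile with the isotropic harmonic profile through the Jacobi-field representation of \Cref{lem:profile-representations}, namely \(u(t)=\E\Tr J_t\) with \(\ddot J_t+\nabla^2U(X_t)J_t=0\), \(J_0=0\), \(\dot J_0=I_d\). The reference Jacobi field for the Hessian \(\omega^2I_d\) is \(J^{(0)}_t=\omega^{-1}\sin(\omega t)I_d\), whose trace is \(d\omega^{-1}\sin(\omega t)\). The key point is that this comparison is deterministic and pathwise: along every trajectory the Hessian \(H_t=\nabla^2U(X_t)\) satisfies \(\norm{H_t-\omega^2I_d}_{\mathrm{op}}\le\eta\omega^2\). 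Hence it suffices to bound \(\norm{J_t-J^{(0)}_t}_{\mathrm{op}}\le C\eta/\omega\) uniformly on \(0\le\omega t\le2\pi\), and then use \(\abs{\Tr A}\le d\norm A_{\mathrm{op}}\) and take expectations.

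First I rescale time, \(s=\omega t\), and set \(\widetilde J_s=\omega J_{s/\omega}\). This gives \(\widetilde J''_s+(I_d+E_s)\widetilde J_s=0\) with \(\norm{E_s}_{\mathrm{op}}\le\eta\), \(\widetilde J_0=0\), \(\widetilde J'_0=I_d\), and reference solution \(\sin(s)I_d\). The difference \(\Delta_s=\widetilde J_s-\sin(s)I_d\) satisfies \(\Delta''+\Delta=-E_s\widetilde J_s\) with zero initial data. Variation of constants gives
\[
\Delta_s=-\int_0^s\sin(s-r)E_r\widetilde J_r\,\dd r .
\]
Since \(\norm{\widetilde J_r}\le1+\norm{\Delta_r}\), I get \(\norm{\Delta_s}\le\eta s+\eta\int_0^s\norm{\Delta_r}\dd r\). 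Gronwall then yields \(\norm{\Delta_s}\le\eta s\,e^{\eta s}\le2\pi e^{2\pi}\eta\) on \([0,2\pi]\), using \(\eta<1\). Undoing the scaling, \(\abs{\omega\Tr J_t-d\sin(\omega t)}\le d\norm{\Delta_{\omega t}}\le C_0\eta d\) pathwise, with \(C_0=2\pi e^{2\pi}\). Taking expectations proves \eqref{eq:near-isotropic-profile}. Differentiation under the expectation is already justified in \Cref{lem:profile-representations}.

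For the consequence, the hypothesis gives \((1-\eta)\omega^2I_d\preceq\nabla^2U\), so I take \(m=(1-\eta)\omega^2\), which means \(\omega=\sqrt m/\sqrt{1-\eta}\). At a preterminal duration \(\tau_r\ge T_{\mathrm{sm}}\) (still within \(\omega\tau\le2\pi\), which the grid hypothesis is taken to include), I have \(\sin(\omega\tau_r)\ge\rho_{\sin}\). Then
\[
u(\tau_r)\ge\frac{d}{\omega}\bigl(\rho_{\sin}-C_0\eta\bigr)=\sqrt{1-\eta}\,\bigl(\rho_{\sin}-C_0\eta\bigr)\frac{d}{\sqrt m}.
\]
The symmetric bound at the terminal duration gives \(u(\tau_{k_\star})\le-\rho_0 d/\sqrt m\). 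This verifies \eqref{eq:profile-terminal-margin}--\eqref{eq:profile-preterminal-margin}, with \(S\) set to \(2\pi\sqrt{1-\eta}\) or the grid's stated horizon.

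The main obstacle is keeping the Gronwall constant free of \(d\) and \(\kappa\). Working in operator norm for the matrix Jacobi field, rather than a Frobenius norm, and bounding the trace only at the end achieves this. The only remaining care is that the variation-of-constants kernel \(\sin(s-r)\) is bounded by \(1\), so no sign restriction on the time interval is needed, unlike in \Cref{lem:R-inequalities}.
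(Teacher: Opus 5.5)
Your proposal is correct and follows the paper's proof: the same rescaling $s=\omega t$ of the Jacobi field from \Cref{lem:profile-representations}, the Duhamel formula with kernel $\sin(s-r)$, an operator-norm Gronwall bound on $[0,2\pi]$ using $\eta<1$, and $\abs{\Tr A}\le d\norm{A}_{\mathrm{op}}$ before taking expectations, followed by $d/\omega=\sqrt{1-\eta}\,d/\sqrt m$ for the normalized margin. The only cosmetic difference is that you apply Gronwall to $\Delta_s=\widetilde J_s-\sin(s)I_d$ rather than to $\sup_{r\le s}\norm{\widetilde J_r}_{\mathrm{op}}$; this gives the same constant $2\pi e^{2\pi}$. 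You also correctly make explicit the requirement, left implicit in the paper, that the grid durations satisfy $\omega\tau\le2\pi$.
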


The theorem verifies the population profile condition.  Application of
\Cref{thm:adaptive} additionally requires the intrinsic concentration,
leapfrog-fidelity, energy-window, and short-interval conditions stated
there.  The proof is given in
\Cref{app:near-isotropic-proof}.

\subsection{Polynomial diagnostic stability for coupled targets}
\label{sec:polynomial-diagnostic-stability}

We next record a sufficient route from finite-time stability of the
Hamiltonian flow to the intrinsic quantities in
\Cref{thm:adaptive}.  This route is useful for coupled nonlinear
targets, but it is deliberately separated from the main theorem:
the theorem itself requires only the scalar diagnostic moduli.

Recall the selected-horizon quantities
\(S_\kappa^+\), \(\mathfrak A_S(U)\), \(\mathfrak C_S(U)\), and
\(\mathfrak N_S(U)\) from
\eqref{eq:tangent-amplification}--\eqref{eq:numerical-envelope}.  By
\Cref{prop:stability-envelope},
\[
  \mathfrak C_p(S;U)\le\mathfrak C_S(U),
\]
and, on the \(\log2\) energy event, the same proof with a fixed energy width
changes only the universal constant and gives
\begin{align}
  \mathfrak E_{p,h}(K,\log2;U)
  \le
  C\mathfrak N_S(U)(mh^2)\frac{(d+p)^{3/2}}{d}.
  \label{eq:tangent-implies-Ep}
\end{align}

For a profile margin \(\rho_0\), put
\begin{align}
  \eta_{\mathrm{ad}}
  =
  \min\left\{
    1,\,
    \frac{
      \rho_0\kappa(1+\bar\gamma)
    }{
      64\mathfrak N_S(U)
    },\,
    \frac{
      1
    }{
      64(1+S\sqrt\kappa)
    }
  \right\}.
  \label{eq:eta-ad-explicit}
\end{align}

\begin{corollary}[Global tangent-flow verification of the intrinsic certificate]
\label{thm:tangent-adaptive}
Suppose \Cref{ass:convex-smooth,ass:hessian-lip} hold, fix a
verification horizon \(S>0\) and a profile margin \(\rho_0>0\), let \(\nu\) be
\(M\)-warm, and let \(\epsilon\in(0,1/2)\). Choose an even integer
\(\ell\) satisfying
\begin{align}
  \ell
  \ge
  C\left[
    1+\log(ed)
    +\log\!\left(\frac{eM}{\epsilon}\right)
    +\log\!\left(
      e+\mathfrak C_S(U)+\mathfrak N_S(U)
      +\rho_0^{-1}
      +\eta_{\mathrm{ad}}^{-1}
      +\bar\gamma
    \right)
  \right],
  \label{eq:adaptive-ell}
\end{align}
set $d_\ell=d+2(\ell-1),$ and assume
\begin{align}
  d
  \ge
  C
  \left(
    \frac{\mathfrak C_S(U)}{\rho_0}
  \right)^2
  \ell.
  \label{eq:adaptive-dimension}
\end{align}
Choose
\begin{align}
  h^2
  =
  \frac{
    c\,\eta_{\mathrm{ad}}
  }{
    L(1+\bar\gamma)d_\ell^{1/2}\ell^{3/2}
  },
  \label{eq:adaptive-h}
\end{align}
where \(c>0\) is a sufficiently small universal constant.  Let
\(T_{\mathrm{sm}}\) be the cutoff in
\eqref{eq:small-time-cutoff-general}.  Suppose that some
\(k_\star<k_{\max}\) is
\((\rho_0,S,T_{\mathrm{sm}})\)-profile separated, and set $T_\star=\tau_{k_\star},$ and $a_\star=\sqrt m\,T_\star\in(0,S].$ All quantities in \eqref{eq:adaptive-ell}--\eqref{eq:adaptive-h}
use the fixed horizon \(S\).  For a specific candidate, one may set
\(S=a_\star\) only if profile separation and all stability, step-size, and
terminal-depth conditions are verified jointly for \((h,k_\star)\).  Thus
\(S=a_\star\) is a simultaneous feasibility condition, not a sequential definition.

Then the hypotheses of \Cref{thm:adaptive} hold after increasing the
logarithmic constant in \eqref{eq:adaptive-ell}.  In particular, on the certification event, both
kernels terminate through the U-turn rule before the cap, and the unconditional transition bounds satisfy
\begin{align}
  n_{\mathrm M}
  &\le
  C\left[
    1+a_\star^2\kappa^2(1+\gamma)^{4/3}
  \right]
  \log\!\left(\frac{CM}{\epsilon}\right),
  \label{eq:adaptive-transitions-M}\\
  n_{\mathrm B}
  &\le
  C\left[
    1+a_\star^4\kappa^3(1+\gamma)^2
  \right]
  \log\!\left(\frac{CM}{\epsilon}\right),
  \label{eq:adaptive-transitions-B}
\end{align}
with total-variation error at most \(\epsilon\).  Moreover,
\begin{align}
  K_\star
  \le
  C a_\star\sqrt\kappa
  \left(
    \frac{1+\bar\gamma}{\eta_{\mathrm{ad}}}
  \right)^{1/2}
  d_\ell^{1/4}\ell^{3/4},
  \label{eq:adaptive-K-explicit}
\end{align}
Let
\[
  K_{\mathrm{cap}}=2^{k_{\max}},
  \qquad
  \zeta_\star=\zeta_{p,h}(K_\star).
\]
For either selector and \(n=n_a\), \Cref{prop:exceptional-work} gives
\begin{align}
  \E_\nu\mathsf W_{n_a}^{(a)}
  \le
  C n_a
  \left[
    K_\star+(K_{\mathrm{cap}}-K_\star)
    \min\{1,M\zeta_\star\}
  \right].
  \label{eq:adaptive-expected-work}
\end{align}
If, additionally,
\(K_{\mathrm{cap}}\le C_{\mathrm{cap}}K_\star\), then the following
deterministic bounds hold:
\begin{align}
  N_{\nabla U}^{(\mathrm M)}
  &\le
  C C_{\mathrm{cap}} a_\star\sqrt\kappa
  \left(
    \frac{1+\bar\gamma}{\eta_{\mathrm{ad}}}
  \right)^{1/2}
  \left[
    1+a_\star^2\kappa^2(1+\gamma)^{4/3}
  \right]
  d_\ell^{1/4}\ell^{3/4}
  \log\!\left(\frac{CM}{\epsilon}\right),
  \label{eq:adaptive-complexity-M}\\
  N_{\nabla U}^{(\mathrm B)}
  &\le
  C C_{\mathrm{cap}} a_\star\sqrt\kappa
  \left(
    \frac{1+\bar\gamma}{\eta_{\mathrm{ad}}}
  \right)^{1/2}
  \left[
    1+a_\star^4\kappa^3(1+\gamma)^2
  \right]
  d_\ell^{1/4}\ell^{3/4}
  \log\!\left(\frac{CM}{\epsilon}\right).
  \label{eq:adaptive-complexity-B}
\end{align}
Without cap comparability, the corresponding high-probability bound is
\eqref{eq:high-probability-work-exceptional}.
\end{corollary}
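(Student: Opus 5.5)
The plan is to verify the hypotheses of \Cref{thm:adaptive} one by one for the step size \eqref{eq:adaptive-h}. After that, the transition bounds \eqref{eq:adaptive-transitions-M}--\eqref{eq:adaptive-transitions-B} are copied from that theorem, and the work bounds follow from \Cref{prop:exceptional-work} combined with an explicit bound on $K_\star$. The relevant hypotheses are: the margin condition \eqref{eq:intrinsic-margin-condition}, the resolution condition \eqref{eq:intrinsic-resolution}, and the failure condition \eqref{eq:intrinsic-failure-condition}, where $\zeta_{p,h}$ is assembled from four pieces. Throughout I take $p=\ell$, up to a universal factor.

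\emph{Margin condition.} By \Cref{prop:stability-envelope}, $\mathfrak C_p(S;U)\le\mathfrak C_S(U)$. The concentration part of $\Xi$ is then at most $e\,\mathfrak C_S(U)(\sqrt{\ell/d}+\ell/d)$, which is $\le\rho_0/8$ by the dimension condition \eqref{eq:adaptive-dimension}. For the numerical part I use \eqref{eq:tangent-implies-Ep} together with $mh^2=c\,\eta_{\mathrm{ad}}/(\kappa(1+\bar\gamma)d_\ell^{1/2}\ell^{3/2})$. This gives
\[
e\,\mathfrak E\le C c\,\frac{\mathfrak N_S(U)\eta_{\mathrm{ad}}}{\kappa(1+\bar\gamma)}\cdot\frac{(d+\ell)^{3/2}}{d\,d_\ell^{1/2}\ell^{3/2}}.
\]
The last ratio is at most $C\ell^{-3/2}(1+\ell/d)^{3/2}\le C$. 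The definition \eqref{eq:eta-ad-explicit} gives $\mathfrak N_S\eta_{\mathrm{ad}}/(\kappa(1+\bar\gamma))\le\rho_0/64$. Hence this part is $\le\rho_0/8$ once $c$ is small.

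\emph{Failure probabilities.} I bound the four pieces of $\zeta$ separately.
\begin{itemize}
\item \emph{Union-bound term.} $(2N_{K_\star}+1)e^{-\ell}\le 9K_\star^2e^{-\ell}$.
\item \emph{Energy window.} \Cref{lem:whole-energy} with $\eta_0=\log2$ gives $\delta_H\le 4K_\star e^{-6\ell}$, via the bound $K_\star A_\ell(h)/\log2\le e^{-6}$. To prove that bound, use $K_\star h\le S/\sqrt m+h$ and substitute \eqref{eq:adaptive-h} into each of the three terms of $A_\ell$. The leading term is $K_\star h^3\ell^{3/2}L^{3/2}d_\ell^{1/2}(1+\gamma)\approx (K_\star h\sqrt L)\cdot c\,\eta_{\mathrm{ad}}(1+\gamma)/(1+\bar\gamma)$. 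Since $K_\star h\sqrt L\lesssim S\sqrt\kappa+1$ and $\eta_{\mathrm{ad}}\le 1/(64(1+S\sqrt\kappa))$, this is bounded by $c$. The higher-order terms are smaller, because $h^2L d_\ell\ell^{-1}$ is small.
\item \emph{Short intervals.} For $\delta_{\mathrm{sm}}$ I use \Cref{lem:typical-numerical-endpoints} and \Cref{lem:adaptive-small-intervals} with $\delta=e^{-\ell}$. Their condition \eqref{eq:endpoint-error-small} follows from the same $mh^2$ computation, since $\mathfrak N_S\bar h^2(d+\ell)\lesssim c\,\rho_0\sqrt d$ up to powers of $\ell$ that the definition of $\eta_{\mathrm{ad}}$ absorbs. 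The requirement $d\ge C_0\ell$ follows from \eqref{eq:adaptive-dimension}.
\end{itemize}
It remains to show that all four pieces together are below $c_0\underline a_a\theta_\star\epsilon^2/M^2$. I will show that $K_\star$ and $\underline a_a^{-1}\theta_\star^{-1}$ are polynomial in $d$, $\kappa$, $\rho_0^{-1}$, $\eta_{\mathrm{ad}}^{-1}$, $\bar\gamma$ and $\mathfrak C_S,\mathfrak N_S$. Then the logarithmic lower bound \eqref{eq:adaptive-ell} on $\ell$, enlarged as the statement permits, makes $e^{-\ell}$ beat these factors. The resolution condition $t_{\mathrm{ov}}/h\ge8$ also follows from smallness of $h\sqrt L$, after using $(1+\gamma)^{1/3}\le(1+\bar\gamma)^{1/2}$-type comparisons and enlarging $\ell$.

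\emph{Size of $K_\star$ and the work bounds.} Since $K_\star\le\tfrac43 T_\star/h+1$ and $T_\star=a_\star/\sqrt m$, substituting $h$ gives \eqref{eq:adaptive-K-explicit}. The expected-work bound \eqref{eq:adaptive-expected-work} is \Cref{prop:exceptional-work}. Under cap comparability, multiplying \eqref{eq:adaptive-K-explicit} by the transition bounds gives \eqref{eq:adaptive-complexity-M}--\eqref{eq:adaptive-complexity-B}.

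\emph{Main obstacle.} I expect the hardest part to be the bookkeeping that makes the energy-window estimate and the endpoint-error condition hold at the same time as the margin condition, with the single $\eta_{\mathrm{ad}}$. This needs matching the powers of $\ell$ and $d_\ell$ in the three terms of $A_\ell$, and checking that the $(1+\gamma)$ versus $(1+\bar\gamma)$ factors cancel in the right direction.
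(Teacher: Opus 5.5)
Your proposal is correct and is essentially the paper's argument. The paper packages the same estimates (the envelopes $\mathfrak C_S,\mathfrak N_S$, the $A_3/A_5/A_7$ energy bookkeeping against $\eta_{\mathrm{ad}}$, the typical-endpoint and short-interval lemmas, and $p\le C\ell$) into \Cref{prop:tangent-certificate} and feeds that directly into \Cref{thm:terminal-transfer}, whereas you verify the hypotheses of \Cref{thm:adaptive}; that is only a repackaging. One arithmetic slip needs fixing: the ratio controlling the higher-order energy terms is $A_5/A_3=h^2Ld_\ell^{1/2}/\ell=c\,\eta_{\mathrm{ad}}/\bigl((1+\bar\gamma)\ell^{5/2}\bigr)$, not $h^2Ld_\ell/\ell$, which grows like $\sqrt d$ and is not small.
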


\begin{corollary}[Polynomially stable profile-separated targets]
\label{cor:polynomial-stability}
In addition to the hypotheses of
\Cref{thm:tangent-adaptive}, suppose that, for some \(A_0\ge1\) and
\(q,r\ge0\),
\begin{align}
  \mathfrak A_S(U)
  \le
  A_0(1+\kappa)^q(1+S)^r.
  \label{eq:polynomial-tangent-stability}
\end{align}
Then
\begin{align}
  \mathfrak C_S(U)
  &\le
  C A_0
  (1+\kappa)^{q+1/2}
  (1+S)^r,
  \label{eq:polynomial-concentration-envelope}\\
  \mathfrak N_S(U)
  &\le
  C A_0
  (1+\kappa)^{q+3/2}
  (1+S)^{r+1}
  \bigl(1+\kappa^2+\bar\gamma\bigr).
  \label{eq:polynomial-numerical-envelope}
\end{align}
Every dimension, step-size, transition, and certified-orbit-size condition in
\Cref{thm:tangent-adaptive} is therefore polynomial in
\(\kappa\), \(S\), \(\rho_0^{-1}\), \(A_0\), and
\(1+\bar\gamma\).  The same is true of the deterministic work bound under
cap comparability, and of the expected or high-probability work bounds
whenever the cap ratio and exceptional-work term in
\Cref{prop:exceptional-work} are polynomially controlled.  In particular,
\begin{align*}
  d
  \ge
  C A_0^2
  (1+\kappa)^{2q+1}
  (1+S)^{2r}
  \rho_0^{-2}\ell
\end{align*}
is sufficient for exact sign concentration, and
\begin{align*}
  \eta_{\mathrm{ad}}^{-1}
  \le
  C\left[
    1+S\sqrt\kappa
    +
    \frac{
      A_0(1+\kappa)^{q+1/2}
      (1+S)^{r+1}
      (1+\kappa^2+\bar\gamma)
    }{
      \rho_0(1+\bar\gamma)
    }
  \right].
\end{align*}
\end{corollary}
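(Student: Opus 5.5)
The plan is to substitute the polynomial bound \eqref{eq:polynomial-tangent-stability} into the definitions \eqref{eq:concentration-envelope}--\eqref{eq:numerical-envelope}, and then re-read each condition of \Cref{thm:tangent-adaptive} as an explicit polynomial. No new analysis is needed. The work is in tracking which quantities enter only through logarithms and which enter polynomially.

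\emph{Envelopes.} Since \(A_0\ge1\), we have
\[
1+\mathfrak A_S(U)\le 2A_0(1+\kappa)^q(1+S)^r.
\]
Combined with \(\sqrt\kappa\le(1+\kappa)^{1/2}\), this gives \eqref{eq:polynomial-concentration-envelope} directly from \eqref{eq:concentration-envelope}. Similarly, \(\kappa^{3/2}\le(1+\kappa)^{3/2}\) and \((1+S)(1+S)^r=(1+S)^{r+1}\) give \eqref{eq:polynomial-numerical-envelope} from \eqref{eq:numerical-envelope}.

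\emph{Dimension condition.} Plugging \eqref{eq:polynomial-concentration-envelope} into \eqref{eq:adaptive-dimension} yields the stated sufficient condition
\[
d\ge CA_0^2(1+\kappa)^{2q+1}(1+S)^{2r}\rho_0^{-2}\ell .
\]

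\emph{Bound on \(\eta_{\mathrm{ad}}^{-1}\).} By \eqref{eq:eta-ad-explicit}, \(\eta_{\mathrm{ad}}^{-1}\) is at most the sum of the three reciprocals:
\[
1,\qquad 64(1+S\sqrt\kappa),\qquad \frac{64\,\mathfrak N_S(U)}{\rho_0\kappa(1+\bar\gamma)} .
\]
Inserting \eqref{eq:polynomial-numerical-envelope} into the third term and cancelling one factor of \(\kappa\) against \((1+\kappa)\) lowers the exponent from \(q+3/2\) to \(q+1/2\). This cancellation is valid when \(\kappa\ge1\), which always holds because \(\kappa=L/m\ge1\). The result is the displayed bound on \(\eta_{\mathrm{ad}}^{-1}\).

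\emph{Remaining quantities.} The moment order \(\ell\) in \eqref{eq:adaptive-ell} depends only logarithmically on \(\mathfrak C_S,\mathfrak N_S,\rho_0^{-1},\eta_{\mathrm{ad}}^{-1},\bar\gamma\). It is therefore polylogarithmic in the listed parameters, and \(d_\ell=d+2(\ell-1)\) is polynomial as well. The quantities \(h^{-2}\) from \eqref{eq:adaptive-h}, the orbit size \(K_\star\) from \eqref{eq:adaptive-K-explicit}, and the transition bounds \eqref{eq:adaptive-transitions-M}--\eqref{eq:adaptive-transitions-B} are products of powers of \(\kappa\), \(a_\star\le S\), \(1+\bar\gamma\), \(\eta_{\mathrm{ad}}^{-1}\), \(d_\ell\) and \(\ell\). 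Note that \(1+\gamma\le 1+\bar\gamma\) because \(\bar\gamma=\gamma\kappa^{3/2}\ge\gamma\). Hence each of these is polynomial.

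\emph{Work bounds.} The deterministic bounds \eqref{eq:adaptive-complexity-M}--\eqref{eq:adaptive-complexity-B} multiply polynomial factors with \(C_{\mathrm{cap}}\). The expected and high-probability bounds from \Cref{prop:exceptional-work} are polynomial exactly when \((K_{\mathrm{cap}}/K_\star)q_\star\) is polynomially controlled, and that is the stated assumption.

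\emph{Main obstacle.} The only delicate point is bookkeeping. The conditions are coupled: \(\ell\) depends on \(\eta_{\mathrm{ad}}\), which depends on \(\mathfrak N_S\), which in turn feeds into \(h\) and \(K_\star\). One must check that there is no circular dependence that turns a polynomial into an exponential. I would handle this by first fixing the polynomial bound on \(\eta_{\mathrm{ad}}^{-1}\), which is independent of \(\ell\). Then \(\ell\) is defined as \(C\) times a logarithm of polynomial quantities, and every downstream bound is expressed in terms of these. This ordering avoids any circularity.
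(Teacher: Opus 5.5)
Your proposal is correct and takes the same approach as the paper. The paper's proof is exactly this direct substitution: \eqref{eq:polynomial-tangent-stability} goes into the two envelopes, and the resulting bounds go into \eqref{eq:adaptive-dimension}, \eqref{eq:eta-ad-explicit}, and \eqref{eq:adaptive-h}. Your additional bookkeeping only spells out what the paper leaves implicit: using \(A_0\ge1\) to absorb the additive one, cancelling \(\kappa\) against \((1+\kappa)\) via \(\kappa\ge1\), noting \(1+\gamma\le1+\bar\gamma\), and observing that \(\eta_{\mathrm{ad}}\) does not depend on \(\ell\), so there is no circularity.
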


\begin{proposition}[Adiabatic relative log-Hessian variation]
\label{prop:adiabatic-stability}
Along a scaled exact trajectory, set $H_s=\nabla^2\widetilde U(Y_s),$ and define
\begin{align*}
  \mathcal V_S(U)
  =
  \sup_{z\in\R^{2d}}
  \sup_{|t|\le S_\kappa^+}
  \int_0^{|t|}
  \left\|
    H_{\operatorname{sgn}(t)s}^{-1/2}
    \frac{\dd}{\dd s}
    H_{\operatorname{sgn}(t)s}
    H_{\operatorname{sgn}(t)s}^{-1/2}
  \right\|_{\mathrm{op}}
  \dd s.
\end{align*}
If, for some \(A_0\ge0\),
\begin{align}
  \mathcal V_S(U)
  \le
  A_0\log\kappa,
  \label{eq:adiabatic-condition}
\end{align}
then
\begin{align}
  \mathfrak A_S(U)
  \le
  \kappa^{(A_0+1)/2}.
  \label{eq:adiabatic-A}
\end{align}
Consequently, \Cref{cor:polynomial-stability} applies with
\[
  q=\frac{A_0+1}{2},
  \qquad
  r=0.
\]
\end{proposition}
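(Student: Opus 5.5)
Write the scaled variational equation along a trajectory as \(\dot\xi=\eta\), \(\dot\eta=-H_s\xi\), and control \(\norm{D\widetilde\flow_s}_{\mathrm{op}}\) with a time-dependent adapted energy rather than the fixed weight \(\kappa\) of \Cref{cor:curvature-only-fallback}. Concretely, set
\[
  \mathcal E_s=\xi^\top H_s\xi+\norm{\eta}^2 ,
\]
which is the natural instantaneous-frequency norm. The steps are, in order: differentiate \(\mathcal E_s\), integrate the resulting logarithmic bound, and convert back to the Euclidean operator norm.

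For the first step, the cross terms cancel exactly:
\[
  \mathcal E_s'=\xi^\top\dot H_s\xi+2\xi^\top H_s\eta-2\eta^\top H_s\xi=\xi^\top\dot H_s\xi .
\]
Writing \(\xi^\top\dot H_s\xi=(H_s^{1/2}\xi)^\top\bigl(H_s^{-1/2}\dot H_sH_s^{-1/2}\bigr)(H_s^{1/2}\xi)\), we get \(\abs{\mathcal E_s'}\le \norm{H_s^{-1/2}\dot H_sH_s^{-1/2}}_{\mathrm{op}}\,\mathcal E_s\). Gronwall over \(|t|\le S_\kappa^+\), using the same argument backward in time for negative \(t\), then gives \(\mathcal E_t\le \mathcal E_0\exp(\mathcal V_S(U))\le\kappa^{A_0}\mathcal E_0\) by \eqref{eq:adiabatic-condition}. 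Strictly, \(\dot H_s=\nabla^3\widetilde U(Y_s)[W_s]\) exists because \(U\in C^3\).

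For the conversion, the bounds \(I\preceq H_s\preceq\kappa I\) from \eqref{eq:scaled-hessian} give \(\norm{\xi}^2+\norm\eta^2\le\mathcal E_s\le\kappa(\norm\xi^2+\norm\eta^2)\). Combining,
\[
  \norm{(\xi_t,\eta_t)}^2\le\mathcal E_t\le\kappa^{A_0}\mathcal E_0\le\kappa^{A_0+1}\norm{(\xi_0,\eta_0)}^2 .
\]
Taking square roots and the supremum over \(z\) and \(|t|\le S_\kappa^+\) yields \(\mathfrak A_S(U)\le\kappa^{(A_0+1)/2}\), which is \eqref{eq:adiabatic-A}. The consequence then follows by inserting this bound into \eqref{eq:polynomial-tangent-stability} with \(A_0\) replaced by \(1\), \(q=(A_0+1)/2\), \(r=0\), since \(\kappa^{q}\le(1+\kappa)^q\).

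The main obstacle is only bookkeeping of the time direction in the definition of \(\mathcal V_S\). The supremum is over trajectories started at \(z\), with the integral taken along \(s\mapsto\operatorname{sgn}(t)s\). For negative times I will apply the forward argument to the reversed trajectory: by momentum flip, the variational equation is unchanged, and \(H\) is evaluated at the same positions. So the bound for negative \(t\) is the same Gronwall estimate integrated over \([t,0]\).
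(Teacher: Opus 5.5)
Your proposal is correct and follows the paper's proof essentially step for step: the adapted energy $\mathcal E_s=\norm{\eta_s}^2+\xi_s^\top H_s\xi_s$, cancellation of the cross terms, Gronwall against the relative log-Hessian variation to get $\mathcal E_t\le\kappa^{A_0}\mathcal E_0$, and then $I_d\preceq H_s\preceq\kappa I_d$ to return to the Euclidean norm. Your handling of negative times and your choice of prefactor $1$ in \eqref{eq:polynomial-tangent-stability}, which keeps it separate from the variation constant $A_0$, spell out points the paper leaves implicit.
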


\begin{corollary}[Curvature-only control for fast selected orbits]
\label{cor:fast-selected-stability}
Under the curvature assumptions alone,
\begin{align}
  \mathfrak A_S(U)
  \le
  e^{1/2}\sqrt\kappa
  \exp\!\left(
    \frac12S\sqrt\kappa
  \right).
  \label{eq:selected-horizon-curvature-fallback}
\end{align}
Consequently, if \(a_\star\le b/\sqrt\kappa\) and the
candidate-specific assumptions are verified simultaneously with \(S=a_\star\), then
\begin{align}
  \mathfrak A_S(U)
  \le
  e^{(b+1)/2}\sqrt\kappa.
  \label{eq:fast-selected-polynomial-A}
\end{align}
Thus the curvature-only route is polynomial for a fast selected orbit.
For a critical slow-scale orbit,
\eqref{eq:selected-horizon-curvature-fallback} is only a fallback and
is not used in the Gaussian or product verification results below.
\end{corollary}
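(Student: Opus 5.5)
The first bound \eqref{eq:selected-horizon-curvature-fallback} is exactly \Cref{cor:curvature-only-fallback}, so nothing new is needed there. For completeness, the argument runs as follows. Take the variational system $\dot\xi=\eta$, $\dot\eta=-H_s\xi$ with $I_d\preceq H_s\preceq\kappa I_d$ along any scaled exact trajectory. Use the weighted energy $\mathcal E_s=\kappa\norm{\xi}^2+\norm{\eta}^2$. Its derivative is $2\ip{\eta}{(\kappa I_d-H_s)\xi}$, and Young's inequality bounds this by $\sqrt\kappa\,\mathcal E_s$, since $\norm{\kappa I_d-H_s}_{\mathrm{op}}\le\kappa$ and $2\kappa\norm\eta\norm\xi\le\sqrt\kappa(\kappa\norm\xi^2+\norm\eta^2)$. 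Gronwall then gives $\mathcal E_s\le e^{\sqrt\kappa|s|}\mathcal E_0$. Comparing the weighted norm with the Euclidean norm costs a factor $\sqrt\kappa$, and taking square roots halves the exponent. Evaluating at $|s|\le S_\kappa^+=S+\kappa^{-1/2}$ uses $\sqrt\kappa S_\kappa^+=\sqrt\kappa S+1$, which yields the prefactor $e^{1/2}\sqrt\kappa$. Negative times are handled by time reversal, and the same estimate applies.

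For \eqref{eq:fast-selected-polynomial-A}, I would specialize to the candidate-specific horizon $S=a_\star$. The statement stresses that this is a simultaneous feasibility choice. It is legitimate here only because the hypothesis says all candidate-specific assumptions (profile separation, step size, terminal depth) are verified jointly with $S=a_\star$, so $\mathfrak A_S(U)$ is evaluated at that horizon. The hypothesis $a_\star\le b/\sqrt\kappa$ gives $\tfrac12 S\sqrt\kappa\le b/2$. Substituting into \eqref{eq:selected-horizon-curvature-fallback} gives $e^{1/2}e^{b/2}\sqrt\kappa=e^{(b+1)/2}\sqrt\kappa$.

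The closing qualitative claim follows by feeding this bound into \Cref{cor:polynomial-stability}. It matches \eqref{eq:polynomial-tangent-stability} with $A_0=e^{(b+1)/2}$, $q=1/2$ and $r=0$, so all downstream envelopes $\mathfrak C_S,\mathfrak N_S$ and conditions are polynomial in $\kappa$ for a fast selected orbit. The remark about critical slow-scale orbits is interpretive: when $a_\star\asymp1$, the exponent $\tfrac12 S\sqrt\kappa$ is of order $\sqrt\kappa$, and the bound becomes exponential in $\sqrt\kappa$. There is no real obstacle in this proof. The only delicate point is logical rather than technical: one must make sure that setting $S=a_\star$ is not circular, because the step size and $\eta_{\mathrm{ad}}$ in \Cref{thm:tangent-adaptive} depend on $S$. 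The plan is to rely on the stated joint-verification hypothesis instead of deriving $S$ sequentially.
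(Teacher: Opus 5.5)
Your proposal is correct and follows the paper's proof essentially line for line. Both use the weighted energy $\kappa\norm{\xi}^2+\norm{\eta}^2$, the bound $\mathcal E_s'\le\sqrt\kappa\,\mathcal E_s$, Gronwall, the $\sqrt\kappa$ norm-comparison factor together with $\sqrt\kappa S_\kappa^+=\sqrt\kappa S+1$, and then the substitution $S=a_\star\le b/\sqrt\kappa$. Your extra identification $A_0=e^{(b+1)/2}$, $q=1/2$, $r=0$ in \Cref{cor:polynomial-stability} is a consistent elaboration of the closing sentence, which the paper leaves implicit.
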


The proofs of the preceding results, together with a parametric
amplification example explaining why curvature bounds alone cannot
provide a uniform polynomial tangent estimate on slow-scale horizons,
are given in \Cref{app:tangent-stability-proof}.

\subsection{Metric adaptation and relative-Hessian stability}
\label{sec:metric-adaptation}

A fixed post-warmup metric can remove linear anisotropy before the profile and diagnostic-stability conditions are evaluated. Throughout this subsection, the metric NUTS algorithm uses the Hamiltonian
\[
  H_G(x,p)=U(x)+\frac12p^\top Gp,
  \qquad p\sim N(0,G^{-1}),
\]
its leapfrog update
\[
  p_{1/2}=p-\frac h2\nabla U(x),\qquad
  x'=x+hGp_{1/2},\qquad
  p'=p_{1/2}-\frac h2\nabla U(x'),
\]
and the canonical-momentum endpoint test
\[
  p_a^\top(x_b-x_a)<0
  \quad\text{or}\quad
  p_b^\top(x_b-x_a)<0,
\]
with the same recursive subtree rule as in \eqref{eq:uturn}--\eqref{eq:subuturn}.  Under the
canonical transformation
\(x=b+G^{1/2}z\), \(p=G^{-1/2}v\), one has
\[
  v_a^\top(z_b-z_a)=p_a^\top(x_b-x_a),
  \qquad
  v_b^\top(z_b-z_a)=p_b^\top(x_b-x_a),
\]
and the metric leapfrog map is conjugate to the identity-kinetic leapfrog map
for the transformed potential.  Hence the orbit builder and every endpoint and
recursive U-turn decision are exactly transformed into the algorithm analyzed
above.  The result below does not cover alternative mass-matrix stopping rules
based directly on the velocity \(Gp\) unless they are shown separately to be
equivalent.

\begin{theorem}[Relative-Hessian stability after metric adaptation]
\label{thm:relative-hessian-stability}
Let \(G\) be a fixed positive-definite kinetic metric and consider
\begin{align*}
  H_G(x,p)
  =
  U(x)+\frac12p^\top Gp,
  \qquad
  p\sim N(0,G^{-1}).
\end{align*}
Suppose that, for some \(\delta\in[0,1)\),
\begin{align}
  \norm{
    G^{1/2}\nabla^2U(x)G^{1/2}-I_d
  }_{\mathrm{op}}
  \le
  \delta,
  \qquad
  x\in\R^d.
  \label{eq:relative-Hessian-condition}
\end{align}
After the canonical affine transformation to identity kinetic energy, the curvature
constants are
\begin{align}
  m_G=1-\delta,
  \qquad
  L_G=1+\delta,
  \qquad
  \kappa_G=\frac{1+\delta}{1-\delta},
  \label{eq:relative-Hessian-kappa}
\end{align}
and the selected-horizon tangent amplification satisfies
\begin{align}
  \mathfrak A_S(U_G)
  \le
  \frac{1}{\sqrt{1-\delta}}
  \exp\!\left\{
    \frac{\delta(S+1)}{2\sqrt{1-\delta}}
  \right\}.
  \label{eq:relative-Hessian-A}
\end{align}
Moreover, for \(0\le t\le2\pi\), the transformed stationary profile obeys
\begin{align}
  \abs{u_G(t)-d\sin t}
  \le
  C_0\delta d.
  \label{eq:relative-Hessian-profile}
\end{align}

Consequently, a dyadic grid with reference sine margin \(\rho_{\sin}\) is profile
separated whenever \(\rho_{\sin}>C_0\delta\), with the
normalized margin
\[
  \rho_0=\sqrt{1-\delta}\,\bigl(\rho_{\sin}-C_0\delta\bigr).
\]  For every fixed \(\delta_0<1\), if
\(\delta\le\delta_0\) and this reduced margin is bounded below by a positive constant,
then \Cref{thm:tangent-adaptive}, and hence \Cref{thm:adaptive}, holds with constants
depending on \(\delta_0\), the reduced profile margin, the selected horizon, and the
transformed Hessian regularity, but not on the condition number of \(G\) or on the raw
Euclidean condition number of \(\nabla^2U\).
\end{theorem}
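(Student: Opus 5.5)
The plan is to reduce everything to the identity-kinetic setting by the canonical affine change of variables, and then reuse the near-isotropic and tangent-flow results already proved.

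\emph{Step 1: conjugation.} Set \(x=b+G^{1/2}z\) and \(p=G^{-1/2}v\), with \(b\) the minimizer of \(U\). Put \(U_G(z)=U(b+G^{1/2}z)\). Then \(\nabla^2U_G(z)=G^{1/2}\nabla^2U(x)G^{1/2}\), and \eqref{eq:relative-Hessian-condition} gives \((1-\delta)I_d\preceq\nabla^2U_G\preceq(1+\delta)I_d\). This is exactly \eqref{eq:relative-Hessian-kappa}. The map is symplectic (its Jacobian is \(\mathrm{diag}(G^{1/2},G^{-1/2})\)), and \(H_G(x,p)=U_G(z)+\frac12\norm v^2\). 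The momentum law \(N(0,G^{-1})\) maps to \(N(0,I_d)\). The metric leapfrog map conjugates to the identity-kinetic leapfrog map for \(U_G\). The endpoint diagnostics coincide, as noted before the statement. Hence the whole orbit builder, both selectors, and the position chain (pushed forward by an affine bijection) are conjugate to the algorithm for \(U_G\). Total variation is invariant under the bijection, and so is the \(M\)-warm condition.

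\emph{Step 2: profile bound.} Apply \Cref{thm:near-isotropic} to \(U_G\) with \(\omega=1\) and \(\eta=\delta\). This gives \eqref{eq:relative-Hessian-profile} directly, with the same \(C_0=2\pi e^{2\pi}\). The profile-separation consequence is the second half of that theorem. It uses \(m_G=1-\delta\), which is where the margin \(\rho_0=\sqrt{1-\delta}(\rho_{\sin}-C_0\delta)\) comes from.

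\emph{Step 3: tangent amplification, the main estimate.} The curvature-only bound of \Cref{cor:curvature-only-fallback} is too crude here, since it does not see that \(\delta\) is small. Instead I will work in the scaled variables of \eqref{eq:dimensionless-variables}, where \(\widetilde H_s=\nabla^2\widetilde U_G/(1-\delta)\) has spectrum in \([1,\kappa_G]\). Write the variational equation as \(\dot\xi=\eta\), \(\dot\eta=-\widetilde H_s\xi\), and use the reference energy \(\mathcal E=\mu\norm\xi^2+\norm\eta^2\) with the midpoint \(\mu=1/(1-\delta)\). Then
\[
\mathcal E'=2\ip{\eta}{(\mu I-\widetilde H_s)\xi},
\qquad
\norm{\mu I-\widetilde H_s}_{\mathrm{op}}\le\frac{\delta}{1-\delta}.
\]
Combined with \(2\norm\eta\norm\xi\le\mathcal E/\sqrt\mu\), this yields the growth rate
\[
\frac{\mathcal E'}{\mathcal E}\le\frac{\delta}{1-\delta}\sqrt{1-\delta}=\frac{\delta}{\sqrt{1-\delta}}.
\]
Integrating over \(|s|\le S_\kappa^+\le S+1\) (since \(\kappa_G\ge1\)) and taking square roots gives \(\exp\{\delta(S+1)/(2\sqrt{1-\delta})\}\). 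Converting the weighted norm to the Euclidean norm costs a factor \(\sqrt{\max(\mu,1)/\min(\mu,1)}=(1-\delta)^{-1/2}\). This gives \eqref{eq:relative-Hessian-A}.

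The centering point I chose for \(\mu\) is where the bookkeeping is most delicate. If I have the midpoint slightly off, I will instead center at \(\mu=1\) in the unscaled variables and carry the \(\sqrt{1-\delta}\) time rescaling separately. Either choice should reproduce the stated constants up to harmless adjustments.

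\emph{Step 4: conclusion.} With \(\kappa_G\le(1+\delta_0)/(1-\delta_0)\) and \(\mathfrak A_S(U_G)\) bounded by a constant depending only on \(\delta_0\) and \(S\), the quantities \(\mathfrak C_S(U_G)\), \(\mathfrak N_S(U_G)\) and \(\eta_{\mathrm{ad}}\) are all controlled by \(\delta_0\), \(S\), \(\rho_0\) and \(\bar\gamma_G\). Therefore \Cref{thm:tangent-adaptive}, and hence \Cref{thm:adaptive}, applies to \(U_G\). The transferred chain has the same mixing and work bounds. None of the constants involves the condition number of \(G\) or the raw \(\kappa\); only the transformed Hessian regularity \(\bar\gamma_G\) enters.
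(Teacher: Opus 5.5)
Your proposal is correct and follows the paper's proof: the same canonical affine conjugation, \Cref{thm:near-isotropic} with $\omega=1$ and $\eta=\delta$ for the profile bound, an energy-growth estimate for the variational equation, and substitution into \Cref{thm:tangent-adaptive}. Your weighted energy $\mu\norm{\xi}^2+\norm{\eta}^2$ with $\mu=1/(1-\delta)$ in scaled variables is exactly the paper's Euclidean energy $\norm{\xi}^2+\norm{\eta}^2$ in the unscaled transformed variables, pulled back through $T_G=\operatorname{diag}(\sqrt{m_G}I_d,I_d)$ and the time change $t=s/\sqrt{1-\delta}$; your constants are therefore right, and the hedge about the centering point is unnecessary.
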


We remark that the algorithm is not given \(k_\star\), \(a_\star\), \(\rho_0\), or
any intrinsic diagnostic modulus.  It runs the ordinary endpoint and
recursive sub-U-turn tests.  The theorem proves that those tests select
\(k_\star\) on the certification event.  For transition mixing, the maximum depth needs only
to exceed the selected depth.  A deterministic order-\(K_\star n_a\)
work guarantee additionally requires a comparable cap; otherwise the
cap-aware expected and high-probability bounds of
\Cref{prop:exceptional-work} apply.  The role of the fixed post-warmup metric
is to alter the transformed profile and intrinsic stability quantities;
\Cref{thm:relative-hessian-stability} gives one condition under which
these quantities are independent of raw Euclidean anisotropy. The proof is given in \Cref{app:metric-adaptation-proof}.

\section{Gaussian specialization and mixing regimes}
\label{sec:gaussian-specialization}

For Gaussian targets, the stationary profile, diagnostic fluctuations,
and leapfrog dynamics can be analyzed directly.  This yields
polynomial bounds without invoking a global tangent-flow envelope.  It
also makes the relationship with the anisotropic Gaussian analysis of
\citet{Oberdoerster2025} explicit.

\subsection{The Gaussian stationary profile}
\label{sec:gaussian-profile}

\begin{theorem}[Gaussian profile and separated step sizes]
\label{thm:gaussian-arbitrary-kappa}
Let \(\pi=N(0,C)\), set \(\Lambda=C^{-1}\), and assume
\begin{align}
  mI_d
  \preceq
  \Lambda
  \preceq
  LI_d.
  \label{eq:gaussian-precision-bounds}
\end{align}
Then
\begin{align}
  u(t)=
  \Tr\!\left(
    \Lambda^{-1/2}
    \sin(\Lambda^{1/2}t)
  \right)=
  \Tr\!\left(
    \sin(C^{-1/2}t)C^{1/2}
  \right)
  =
  \sum_{i=1}^d
  \frac{\sin(\omega_i t)}{\omega_i}.
  \label{eq:gaussian-profile}
\end{align}
where $\omega_i=\sqrt{\lambda_i(\Lambda)}$. The middle expression is exactly the deterministic uniform term in
\citet{Oberdoerster2025}.  Thus the Gaussian specialization of the
stationary profile used here is identical to the concentration mean
used in that analysis.

For every positive-definite \(C\), \(u(t)<0\) for some
\(t<2\pi/\sqrt m\).  For every fixed \(\kappa=L/m\), there is
\(c_{\mathrm G}(\kappa)>0\) such that every covariance satisfying
\eqref{eq:gaussian-precision-bounds} has an interval in
\((0,2\pi/\sqrt m)\) on which
\begin{align}
  u(t)
  \le
  -c_{\mathrm G}(\kappa)
  \frac{d}{\sqrt m}.
  \label{eq:gaussian-uniform-negative}
\end{align}
For each fixed covariance, there is a positive-Lebesgue-measure set of
sufficiently small step sizes for which the first negative dyadic
profile value is profile separated with a target-dependent positive
margin.  No sharper uniform lower bound on that margin is claimed.

The exact scalar diagnostics satisfy
\begin{align*}
  \mathfrak C_p(S;U)
  \le
  C,
  \qquad
  p\ge2,\quad S>0,
\end{align*}
where \(C\) is universal and independent of \(\kappa\).  The practical
leapfrog fidelity and energy conditions are verified directly in
\Cref{thm:gaussian-leapfrog-stability}, without a bound on the full
Hamiltonian Jacobian.

For comparison, in the scaled identity-mass coordinates the exact
Gaussian tangent amplification satisfies
\begin{align}
  \mathfrak A_S(U)
  \le
  2\sqrt\kappa,
  \qquad
  S>0,
  \label{eq:gaussian-tangent-amplification}
\end{align}
while under the whitening metric it equals one.
\end{theorem}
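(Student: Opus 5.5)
We first diagonalize. Write \(\Lambda=Q\operatorname{diag}(\omega_i^2)Q^\top\) and set \(y=Q^\top x\), \(w=Q^\top v\). Orthogonal changes of variables preserve \(N(0,I_d)\) momentum, the flow, and the inner products in \(D_t^\pm\). The dynamics then decouple into independent oscillators,
\[
  y_{t,i}=y_{0,i}\cos(\omega_it)+w_{0,i}\sin(\omega_it)/\omega_i,
  \qquad y_{0,i}\sim N(0,\omega_i^{-2}),\quad w_{0,i}\sim N(0,1).
\]
By \Cref{lem:profile-representations}, \(u(t)=\E\Tr J_t\), and here \(J_t=\Lambda^{-1/2}\sin(\Lambda^{1/2}t)\) deterministically. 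This gives all three forms of \eqref{eq:gaussian-profile}, using \(\Lambda^{-1/2}=C^{1/2}\) and the fact that commuting functions of \(C\) can be multiplied in either order.

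For negativity, set \(f(t)=\sum_i\omega_i^{-1}\sin(\omega_it)\). Then \(\int_0^{2\pi/\sqrt m}\) is the wrong tool. Instead we use \Cref{thm:koopman-crossing}, whose position spectral measure is atomic at the \(\omega_i\ge\sqrt m\), so some \(t<2\pi/\sqrt m\) has \(u(t)<0\). For the uniform quantitative bound \eqref{eq:gaussian-uniform-negative}, we normalize \(s=\sqrt m\,t\) and \(\alpha_i=\omega_i/\sqrt m\in[1,\sqrt\kappa]\). Then \(\sqrt m\,u/d=\int g_\alpha(s)\,\mu(\dd\alpha)\), where \(g_\alpha(s)=\sin(\alpha s)/\alpha\) and \(\mu\) is the empirical measure of the \(\alpha_i\), a probability measure on the compact set \([1,\sqrt\kappa]\).

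Define
\[
  \Psi(\mu)=\min_{s\in[0,2\pi]}\int g_\alpha(s)\,\mu(\dd\alpha).
\]
This is continuous in \(\mu\) under the weak topology, since \(g\) is jointly continuous. It is strictly negative for every \(\mu\): averaging \(\int g_\alpha\,\dd\mu\) against the weight \((1-\cos s)\) on \([0,2\pi]\), or a similar positive test function, yields a negative number. The Koopman argument applies equally to any \(\mu\). Compactness of the set of probability measures on \([1,\sqrt\kappa]\) then gives \(\sup_\mu\Psi(\mu)=-2c_G(\kappa)<0\). Uniform equicontinuity in \(s\), from \(|\partial_sg|\le1\), supplies an interval of fixed length \(c_G(\kappa)\) on which the value is at most \(-c_G(\kappa)\).

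For separated step sizes, \(u\) is real analytic and not identically zero, so its zeros are isolated. The first negative component contains an open interval \(J\) on which \(u\le-\rho\,d/\sqrt m\). By \Cref{lem:R-inequalities}, \(u(t)\ge d\sin(\sqrt Lt)/\sqrt L>0\) for \(0<t\le\pi/(2\sqrt L)\). Before \(J\), \(u\) is positive except at finitely many isolated zeros. We need every dyadic time \(h(2^r-1)\) above \(T_{\mathrm{sm}}\) to avoid small neighborhoods of those zeros, and the first dyadic time to enter \(\{u<0\}\) to land in \(J\). For fixed small \(h\) the set of admissible \(h\) is a finite union of intervals. Its measure is controlled by a scaling \(h\mapsto\lambda h\) with \(\lambda\in[1,2)\): the dyadic times sweep a full octave. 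This is the main obstacle, namely handling resonance carefully. I would argue that the bad set has measure proportional to the neighborhood width, so it is small relative to the octave.

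For concentration, each diagnostic is a quadratic form in the Gaussian vector \((\sqrt\Lambda y_0,w_0)\) with matrix blocks \(\cos\), \(\sin\), \(1-\cos\) of \(\Lambda^{1/2}t\), all bounded by \(2\) in operator norm after scaling by \(\sqrt m\). Hanson–Wright then gives \(\|D^\sigma_t-u\|_p\le C(\sqrt p\,\|A\|_F+p\|A\|_{op})\) with \(\|A\|_F\le C\sqrt d/\sqrt m\), uniformly in \(\kappa\). Finally, the tangent map in scaled coordinates is block-diagonal with entries \(\cos\), \(\alpha\sin\), and \(\sin/\alpha\), with \(\alpha\le\sqrt\kappa\). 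This gives \(\mathfrak A_S\le2\sqrt\kappa\), and whitening yields the orthogonal rotation of norm one.
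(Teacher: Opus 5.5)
Everything except the dyadic step-size claim is correct and follows the paper. This covers the eigenbasis computation of the profile, the crossing via \Cref{lem:frequency-crossing}, the weak-compactness argument over $\mathcal P([1,\sqrt\kappa])$ with $|\partial_s F_\mu|\le 1$, the Hanson--Wright bound giving $\mathfrak C_p\le C$, and the blockwise bound $\mathfrak A_S\le 2\sqrt\kappa$. Your Jacobi-field shortcut $J_t=\Lambda^{-1/2}\sin(\Lambda^{1/2}t)$ is an equivalent alternative to the paper's direct computation of $\E[V_{i,0}(X_{i,t}-X_{i,0})]$.

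One small correction: the $(1-\cos s)$-weighted average of $F_\mu$ over $[0,2\pi]$ is only nonpositive, not negative. It vanishes whenever every normalized frequency is an integer, for example in the isotropic case. Strict negativity of $\min_s F_\mu$ comes from combining this with $F_\mu>0$ near $s=0$.

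The genuine gap is the separated-step-size claim. Let $(a,b)$ be the first negative component. Your assertion that before $J$ the profile is positive except at isolated zeros is false as written, because $u<0$ on $(a,\inf J)$. So your requirement that the first dyadic time in $\{u<0\}$ land in $J$ is a real constraint, and you give no mechanism enforcing it on a positive-measure set of $h$. The measure estimate you call the main obstacle is also only asserted, and the octave-sweeping heuristic does not supply it.

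The missing idea is to confine the terminal time to a compact $I\subset(a,\min\{b,2a\})$. Since $c_{r,k}=(2^r-1)/(2^k-1)<1/2$ for $r<k$, every preterminal time $c_{r,k}T$ with $T\in I$ then lies in $(0,a)$, where $u\ge0$.
\begin{itemize}
\item For fixed $k$, each map $T\mapsto u(c_{r,k}T)$ is real analytic and not identically zero on $I$. So only finitely many $T\in I$ place a relevant preterminal time (one with $c_{r,k}T\ge T_{\mathrm{sm}}$) on a zero.
\item Delete those points and take a compact subinterval $I_k$. Continuity then gives positive preterminal and terminal margins.
\item The step sizes $h\in I_k/(2^k-1)$ form a positive-measure set, and letting $k\to\infty$ makes them arbitrarily small.
\end{itemize}
No measure estimate is needed.

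Your $\epsilon$-neighbourhood route can also be completed once $I\subset(a,2a)$ is imposed:
\begin{itemize}
\item Only indices with $c_{r,k}>T_{\mathrm{sm}}/(2a)$ matter, and there are boundedly many, independent of $k$.
\item Each bad preimage $\{T:|c_{r,k}T-z|<\epsilon\}$ has length at most $4a\epsilon/T_{\mathrm{sm}}$.
\item The bad set is therefore $O(\epsilon)$ uniformly in $k$.
\end{itemize}
Completed this way, your route would even give a margin independent of $k$, which is more than the paper claims.
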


\begin{proposition}[Finite-dimensional Gaussian profile certificate]
\label{prop:gaussian-margin-certificate}
Under the assumptions of
\Cref{thm:gaussian-arbitrary-kappa}, let \(\mathscr T\) be a finite
family of \(N\) exact-flow intervals along a stationary Gaussian
Hamiltonian trajectory.  For \(I=[a,b]\), write \(\ell(I)=b-a\).  For every \(r\ge1\),
\begin{align}
  \Pp\!\left[
    \max_{I\in\mathscr T}
    \max_{\sigma\in\{-,+\}}
    \abs{
      D_I^\sigma-u(\ell(I))
    }
    >
    \frac{C}{\sqrt m}
    \bigl(\sqrt{dr}+r\bigr)
  \right]
  \le
  4Ne^{-r}.
  \label{eq:gaussian-margin-concentration}
\end{align}
Consequently, with probability at least \(1-\delta\),
\begin{align}
  \max_{I\in\mathscr T}
  \max_{\sigma\in\{-,+\}}
  \abs{
    D_I^\sigma-u(\ell(I))
  }
  \le
  \frac{C}{\sqrt m}
  \left[
    \sqrt{
      d\log\!\frac{4N}{\delta}
    }
    +
    \log\!\frac{4N}{\delta}
  \right].
  \label{eq:gaussian-margin-concentration-delta}
\end{align}
If the normalized population margin on the family is at least
\(\rho_0\in(0,1]\) and
\begin{align}
  d
  \ge
  C\rho_0^{-2}
  \log\!\frac{4N}{\delta},
  \label{eq:gaussian-margin-dimension}
\end{align}
then every exact-flow diagnostic in \(\mathscr T\) has the sign
prescribed by the profile.

In particular, a margin of order
\(\rho_0\asymp\kappa^{-1/2}\) requires the sign-certification threshold
\[
  d
  \gtrsim
  \kappa\log(4N/\delta).
\]
This threshold is distinct from the mixing penalty determined by the
selected physical time.
\end{proposition}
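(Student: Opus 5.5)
Diagonalize the problem first. Write $\Lambda=Q\operatorname{diag}(\omega_i^2)Q^\top$ and pass to eigencoordinates. The exact flow then decouples into independent harmonic oscillators, and the stationary initial state $(X_0,V_0)$ is a standard Gaussian vector after whitening: $\xi_i=\omega_i X_{0,i}$ and $\zeta_i=V_{0,i}$ are i.i.d. $N(0,1)$.

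For an interval of duration $t=\ell(I)$, stationarity lets us take $a=0$. Each diagnostic is then an explicit quadratic form. For example,
\[
D_t^-=\sum_i \zeta_i\Bigl[\xi_i\,\frac{\cos(\omega_i t)-1}{\omega_i}+\zeta_i\,\frac{\sin(\omega_i t)}{\omega_i}\Bigr],
\]
and $D_t^+$ has the same form with the rotated momentum. So $D_I^\sigma=g^\top A_I^\sigma g$ for $g=(\xi,\zeta)\in\R^{2d}$ with a block-diagonal matrix $A_I^\sigma$ made of $2\times2$ blocks. The mean is $\Tr A_I^\sigma=u(t)$, consistent with \eqref{eq:gaussian-profile}. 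Every block entry is bounded by $2/\omega_i\le 2/\sqrt m$, because $|\cos-1|\le2$, $|\sin|\le1$, and $\omega_i\ge\sqrt m$.

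The key estimate is Hanson--Wright for this Gaussian quadratic form. We have $\norm{A_I^\sigma}_{\mathrm{op}}\le C/\sqrt m$ from the block bound, and $\norm{A_I^\sigma}_{\mathrm F}^2\le\sum_i C/\omega_i^2\le Cd/m$. Hanson--Wright then gives
\[
\Pp\bigl(|g^\top Ag-\Tr A|>C(\norm A_{\mathrm F}\sqrt r+\norm A_{\mathrm{op}}r)\bigr)\le 2e^{-r}.
\]
For Gaussian chaos this follows with universal constants, for instance from the diagonalization $g^\top A g=\sum\lambda_j(\tilde g_j^2-1)+\Tr A$ and a Laurent--Massart type bound. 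This yields a threshold $\frac{C}{\sqrt m}(\sqrt{dr}+r)$ for each single diagnostic, with failure probability $2e^{-r}$. A union bound over the $N$ intervals and the two signs gives \eqref{eq:gaussian-margin-concentration} with bound $4Ne^{-r}$. Importantly, no $\kappa$ enters, because only the lower frequency bound $\omega_i\ge\sqrt m$ was used. Alternatively, \Cref{lem:diagnostic-concentration} could be invoked, but it carries the factor $\sqrt\kappa$ in $\mathfrak C_S$, so the direct chaos bound is what makes $C$ universal.

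Next, set $r=\log(4N/\delta)\ge\log 4>1$. This turns \eqref{eq:gaussian-margin-concentration} directly into \eqref{eq:gaussian-margin-concentration-delta}.

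For the sign statement, margin $\rho_0$ means $|u(\ell(I))|\ge\rho_0 d/\sqrt m$ with the prescribed sign on the family. It therefore suffices that $C(\sqrt{dr}+r)<\rho_0 d$. Under \eqref{eq:gaussian-margin-dimension} with a large constant, we get $\sqrt{dr}\le\rho_0 d/(4C)$. Also $r\le d\rho_0^2/C\le\rho_0 d/C$ because $\rho_0\le1$, which gives the strict inequality.

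The final remark follows by substituting $\rho_0\asymp\kappa^{-1/2}$, so that $\rho_0^{-2}\asymp\kappa$. The only delicate step is keeping the Hanson--Wright constants universal and verifying that $\norm{A}_{\mathrm{op}}$ stays $O(m^{-1/2})$ for the $D^+$ form as well. For $D^+$, the momentum $V_t$ is an orthogonal rotation of $(\xi,\zeta)$ within each block, so the same bound holds.
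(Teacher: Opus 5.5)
Your proposal is correct and follows essentially the same route as the paper. You diagonalize and whiten so each endpoint diagnostic becomes a centered block-diagonal Gaussian quadratic form with $\norm{A}_{\mathrm{op}}\le C/\sqrt m$ and $\norm{A}_{\mathrm F}\le C\sqrt{d/m}$; you apply Gaussian Hanson--Wright to each interval via stationarity, take a union bound over the $2N$ diagnostics, set $r=\log(4N/\delta)$, and absorb the deviation into the margin $\rho_0 d/\sqrt m$ under the dimension condition. This is also why the paper argues directly with the Gaussian chaos bound rather than through \Cref{lem:diagnostic-concentration}: as you note, only the direct bound keeps the constant free of $\kappa$.
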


\subsection{Practical leapfrog certification}
\label{sec:gaussian-practical-certification}

\begin{theorem}[Intrinsic Gaussian leapfrog stability]
\label{thm:gaussian-leapfrog-stability}
Under the assumptions of
\Cref{thm:gaussian-arbitrary-kappa}, let \(K\ge8\) be a power of two,
assume
\begin{align*}
  Kh
  \le
  \frac{S+1}{\sqrt m},
  \qquad
  h\sqrt L
  \le
  1,
\end{align*}
and let \(p\ge2\) satisfy
\begin{align}
  p
  \ge
  \log(16N_K).
  \label{eq:gaussian-lf-p}
\end{align}
There is a universal constant \(C\) such that
\begin{align}
  \mathfrak C_p(S;U)
  &\le
  C,
  \label{eq:gaussian-Cp-direct}\\
  \mathfrak E_{p,h}(K,\log2;U)
  &\le
  C(1+S)Lh^2
  \left(
    1+\sqrt{\frac pd}+\frac pd
  \right).
  \label{eq:gaussian-Ep-direct}
\end{align}

Moreover, if
\begin{align}
  C\left\{
    Lh^2(\sqrt{dp}+p)
    +
    L^2h^4d
  \right\}
  \le
  \log2,
  \label{eq:gaussian-energy-condition}
\end{align}
then
\begin{align}
  \delta_H(h,K,\log2)
  \le
  2e^{-p}.
  \label{eq:gaussian-energy-tail}
\end{align}
There are universal constants
\(c_{\mathrm{sm}},c,C_0>0\) such that, with
\begin{align*}
  T_{\mathrm{sm}}
  =
  \frac{c_{\mathrm{sm}}}{\sqrt L},
\end{align*}
if \(d\ge C_0p\) and \(Lh^2\le c\), then
\begin{align}
  \delta_{\mathrm{sm}}(h,K,T_{\mathrm{sm}})
  \le
  2e^{-p}.
  \label{eq:gaussian-short-tail}
\end{align}
All bounds are uniform over the covariance spectrum and contain no
exponential factor in \(\kappa\).
\end{theorem}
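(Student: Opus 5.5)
We diagonalize and treat each eigenmode separately. Write $x=Q^\top\tilde x$ with $\Lambda=Q\,\mathrm{diag}(\omega_i^2)Q^\top$. Both the exact flow and leapfrog then act as $d$ independent two-dimensional linear maps. Leapfrog on mode $i$ is the rotation-like matrix $M_i(h)=\begin{pmatrix}\cos\theta_i&\sin\theta_i/\tilde\omega_i\\-\tilde\omega_i\sin\theta_i&\cos\theta_i\end{pmatrix}$, where $\cos\theta_i=1-h^2\omega_i^2/2$ and $\tilde\omega_i=\omega_i\sqrt{1-h^2\omega_i^2/4}$. This is valid because $h\sqrt L\le1$. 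It follows that $|\theta_i/h-\omega_i|\le C h^2\omega_i^3$ and $|\tilde\omega_i/\omega_i-1|\le Ch^2\omega_i^2$.

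Every diagnostic, exact or numerical, is a quadratic form $Z_0^\top A Z_0$ in the Gaussian vector $Z_0=(\Lambda^{1/2}X_0,V_0)$. The matrix $A$ is block diagonal, with $2\times2$ blocks whose entries are bounded by $C/\omega_i$ for position displacements paired with momenta. Consequently $\mathfrak C_p$ follows from Hanson--Wright. The Frobenius norm of the block matrix is at most $C(\sum_i\omega_i^{-2})^{1/2}\le C\sqrt d/\sqrt m$, and the operator norm is at most $C/\sqrt m$. This gives the bound $\|D_t^\sigma-u(t)\|_{L^p}\le C(\sqrt{dp}+p)/\sqrt m$ with universal $C$, which is \eqref{eq:gaussian-Cp-direct}. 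The exact mean identity $u(t)=\sum_i\sin(\omega_it)/\omega_i$ is already established in \Cref{thm:gaussian-arbitrary-kappa}.

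For fidelity, we write $\widehat D_I^\sigma-D_I^\sigma=Z_0^\top(\widehat A_I-A_I)Z_0$. Mode by mode, the block difference for an index span $n$ with $nh\le(S+1)/\sqrt m$ involves the phase error $n|\theta_i-h\omega_i|\le C nh^3\omega_i^3$ together with the amplitude error $Ch^2\omega_i^2$. Since the entries carry a factor $1/\omega_i$, each block has norm at most $Ch^2\omega_i^2(1+nh\omega_i)/\omega_i$.

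The crucial point is to avoid paying $nh\omega_i\le(S+1)\sqrt\kappa$. Instead, we use $\omega_i\cdot nh\omega_i\cdot h^2/\omega_i\le h^2\omega_i^2\cdot(S+1)/\sqrt m\cdot$ factors. Concretely, the block norm is at most $C(1+S)Lh^2/\sqrt m$. The phase term contributes $nh^3\omega_i^2\le(S+1)h^2L/\sqrt m$, and the amplitude term contributes $h^2\omega_i\le h^2L/\sqrt m$ since $\omega_i\le\sqrt L$ and $\omega_i\ge\sqrt m$. The trace of the difference is therefore at most $C(1+S)Lh^2d/\sqrt m$, the Frobenius norm at most $C(1+S)Lh^2\sqrt d/\sqrt m$, and the operator norm at most $C(1+S)Lh^2/\sqrt m$. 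Applying Hanson--Wright and taking a union over the $2N_K$ diagnostics at level $p\ge\log(16N_K)$ then gives \eqref{eq:gaussian-Ep-direct}. We take this route even though the energy indicator is present: we drop the indicator, which only decreases the norm, and absorb $\log N_K$ via the choice of $p$ through an $L^{p}$ max bound of the form $\|\max\|_{L^p}\le N^{1/p}\max\|\cdot\|_{L^p}\le e\max$.

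For energy, leapfrog exactly preserves the modified quadratic Hamiltonian $\widetilde H=\sum_i\frac12(\tilde\omega_i^2\tilde x_i^2+v_i^2)$, with $|\omega_i^2-\tilde\omega_i^2|\le h^2\omega_i^4/4$. This gives $|H(\Phi_h^jZ_0)-H(Z_0)|\le2\sup_j|H-\widetilde H|(\Phi_h^jZ_0)\le C\sum_i h^2\omega_i^2\cdot\omega_i^2(x_{i}^{(j)})^2$. The latter is bounded uniformly in $j$ by $CLh^2\sum_i\omega_i^2\tilde x_i^2+\ldots$, because the modified energy of each mode is conserved. Its quadratic-form fluctuation is controlled by Hanson--Wright with trace $\le CLh^2d$ and Frobenius norm $\le CLh^2\sqrt d$, while the $L^2h^4d$ term covers the second-order correction to the ratio of the two Hamiltonians. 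Condition \eqref{eq:gaussian-energy-condition} then gives \eqref{eq:gaussian-energy-tail}. No $j$-union is needed, since the bound is uniform in $j$ through mode-wise invariance.

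For short intervals, we follow the route of \Cref{lem:typical-numerical-endpoints} and \Cref{lem:adaptive-small-intervals}, with the tangent envelope replaced by the direct per-mode bounds. By the energy conservation just established, every numerical state has mode amplitudes comparable to the initial ones. Hence $\|\nabla U(\widehat x_n)\|\le C\sqrt L\,(\sum\omega_i^2\tilde x_i^2)^{1/2}\le C\sqrt{L}\sqrt d$ holds with probability $1-e^{-p}$. Similarly, $\|\widehat v_n\|\ge c\sqrt d$ holds for all $n$, using $d\ge C_0p$ and per-mode conservation, which forces $\|\widehat v_n\|^2+\sum\tilde\omega_i^2\widehat{\tilde x}_{i,n}^2$ to be constant. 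We still need a momentum lower bound uniform in $n$, and this is the main obstacle. To obtain it, we use \eqref{eq:momentum-displacement} from \Cref{lem:no-uturn}: over a short window of length $T_{\mathrm{sm}}$ the momentum moves by at most $T_{\mathrm{sm}}G_*$. It therefore suffices to lower-bound the momentum at a grid of $O(K)$ anchor points, each distributed exactly as a leapfrog iterate. Each anchor is controlled via Hanson--Wright applied to the quadratic form $\|\widehat v_n\|^2$, whose mean is at least $d/2$ by per-mode averaging of the rotation when $Lh^2\le c$, and a union bound handles the $N_K$ anchors. Applying \Cref{lem:no-uturn} with $B$ universal then gives \eqref{eq:gaussian-short-tail}.
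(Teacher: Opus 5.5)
The gap is in your energy-tail step: the triangle inequality you use leaves an $O(Lh^2d)$ mean that condition \eqref{eq:gaussian-energy-condition} does not control. The rest is sound. Your arguments for \eqref{eq:gaussian-Cp-direct} and \eqref{eq:gaussian-Ep-direct} are the paper's: the exact per-mode leapfrog matrix, block errors $C(1+S)Lh^2/\sqrt m$, the Gaussian quadratic-form moment bound, and $N^{1/p}\le e$.

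Your short-interval route is a valid alternative to the paper's. You feed uniform typical-endpoint bounds into \Cref{lem:no-uturn}; the paper instead computes $\E\widehat D_{I,i}^\pm\ge c_1t$ directly and applies a moment bound of order $cd$. Your anchor device is unnecessary, though. Since $\E\widehat V_{i,n}^2=\alpha_i^2\sin^2(n\theta_i)+\cos^2(n\theta_i)\ge 3/4$, with $\alpha_i=\tilde\omega_i/\omega_i$, Hanson--Wright plus a union bound over the $2K+1$ indices already gives the momentum lower bound.

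The failure in \eqref{eq:gaussian-energy-tail} is this. You bound $\abs{H(\Phi_h^jZ_0)-H(Z_0)}$ by $\abs{H-\widetilde H}(\Phi_h^jZ_0)+\abs{H-\widetilde H}(Z_0)$. Per mode, each term is $\frac{h^2\omega_i^2}{8}Y_i^2$ evaluated at the respective state, with $Y_i=\omega_i\tilde x_i$. This is a nonnegative quadratic form with mean about $\frac{h^2}{8}\Tr\Lambda$, which is $Lh^2d/8$ in the isotropic case; you note yourself that its trace is at most $CLh^2d$. Condition \eqref{eq:gaussian-energy-condition} controls only $Lh^2(\sqrt{dp}+p)$ and $L^2h^4d$. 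At the intended scale $h^2\asymp 1/(L\sqrt{dp})$ you have $Lh^2d\asymp\sqrt{d/p}\to\infty$, so your bound cannot give $\delta_H\le 2e^{-p}$. The triangle inequality discards exactly the cancellation that keeps the energy error $O(1)$ at the $d^{-1/4}$ step size.

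The missing idea is to keep the signed difference. For each fixed $n$,
\[
  H(\Phi_h^nZ_0)-H(Z_0)=\sum_{i=1}^d\frac{h^2\omega_i^2}{8}\bigl(\widehat Y_{i,n}^2-Y_i^2\bigr)
\]
is a single Gaussian quadratic form in $(Y,V)$ with the following norms:
\begin{itemize}
\item Its trace is $\sum_i\frac{h^2\omega_i^2}{8}\sin^2(n\theta_i)(\alpha_i^{-2}-1)=O(L^2h^4d)$, because leapfrog nearly preserves each mode's Gaussian law.
\item Its operator norm is at most $CLh^2$.
\item Its Frobenius norm is at most $CLh^2\sqrt d$.
\end{itemize}
Apply the moment inequality at fixed $n$. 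Then use $\norm{\max_{|n|\le K}\abs{\,\cdot\,}}_{L^p}\le(2K+1)^{1/p}\max_n\norm{\cdot}_{L^p}$, which is at most $e$ times the max for $p\ge\log(16N_K)$. This gives $C\{Lh^2(\sqrt{dp}+p)+L^2h^4d\}$, and Markov's inequality finishes.

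So the $L^2h^4d$ term is the mean of the signed error, not a correction to the ratio of the two Hamiltonians. Your remark that no $j$-union is needed is a symptom of the loss: your bound is uniform in $j$ only because it discarded the $n$-dependent cancellation, and the correct argument does need the maximum over $n$.
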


\begin{corollary}[Gaussian profile separation for practical leapfrog NUTS]
\label{cor:gaussian-practical}
Assume the setting of
\Cref{thm:gaussian-leapfrog-stability}, and suppose that
\(k_\star<k_{\max}\) is
\((\rho_0,S,T_{\mathrm{sm}})\)-profile separated.  Choose \(p\) so that
\begin{align}
  p
  \ge
  C\log\!\left(
    \frac{
      eN_{K_\star}M^2
    }{
      \epsilon^2\underline a_{\mathrm B}\theta_\star
    }
  \right),
  \label{eq:gaussian-practical-p}
\end{align}
assume
\begin{align}
  d
  \ge
  C\rho_0^{-2}p,
  \qquad
  C(1+S)Lh^2
  \le
  \rho_0,
  \label{eq:gaussian-practical-sign}
\end{align}
and impose
\eqref{eq:intrinsic-resolution} and
\eqref{eq:gaussian-energy-condition}.  Then the hypotheses of
\Cref{thm:adaptive} hold simultaneously for multinomial and
biased-progressive NUTS.  In particular, on the certification event, both practical leapfrog trees
stop at depth \(k_\star\) through the U-turn criterion before the cap.
Unconditionally, they satisfy the transition and total-variation bounds
\eqref{eq:intrinsic-transitions-M}--\eqref{eq:intrinsic-tv}; their total
work is governed by \Cref{prop:exceptional-work}.

A sufficient energy-scale choice is
\begin{align}
  h^2
  =
  \frac{
    c_h
  }{
    L(\sqrt{dp}+p)
  },
  \label{eq:gaussian-practical-h}
\end{align}
with \(c_h>0\) sufficiently small.  When \(p\) is logarithmic and
\(d\gtrsim p\),
\begin{align}
  h
  =
  \widetilde\Theta(
    L^{-1/2}d^{-1/4}
  ),
  \qquad
  K_\star
  =
  \widetilde O(
    a_\star\sqrt\kappa\,d^{1/4}
  ).
  \label{eq:gaussian-practical-cost}
\end{align}
This is the cost of a certified transition.  If
\(K_{\mathrm{cap}}\le C_{\mathrm{cap}}K_\star\), it yields deterministic
total work \(\widetilde O(C_{\mathrm{cap}}K_\star n_a)\); otherwise
\eqref{eq:expected-work-exceptional} and
\eqref{eq:high-probability-work-exceptional} give the corresponding
cap-aware bounds.
\end{corollary}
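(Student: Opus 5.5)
The plan is to verify the hypotheses of \Cref{thm:adaptive} one at a time, using the Gaussian estimates of \Cref{thm:gaussian-leapfrog-stability} in place of the generic tangent-flow envelope. The condition to establish is the margin inequality \eqref{eq:intrinsic-margin-condition}. By \eqref{eq:gaussian-Cp-direct}, the concentration part of \(\Xi_{p,h}\) is at most \(eC(\sqrt{p/d}+p/d)\). The first half of \eqref{eq:gaussian-practical-sign}, \(d\ge C\rho_0^{-2}p\), makes this at most \(\rho_0/8\) once \(C\) is increased, since \(\rho_0\le1\) gives \(p/d\le\sqrt{p/d}\). By \eqref{eq:gaussian-Ep-direct}, the fidelity part is at most \(eC(1+S)Lh^2(1+\sqrt{p/d}+p/d)\le 3eC(1+S)Lh^2\), and the second half of \eqref{eq:gaussian-practical-sign} makes this at most \(\rho_0/8\). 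Together these give \(\Xi_{p,h}\le\rho_0/4\). For this I need \(p\ge\log(16N_{K_\star})\) from \eqref{eq:gaussian-lf-p}; it follows from \eqref{eq:gaussian-practical-p} because \(\underline a_{\mathrm B}\theta_\star\le1\) and \(\epsilon<1/2\).

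Next comes the failure condition \eqref{eq:intrinsic-failure-condition}. Under \eqref{eq:gaussian-energy-condition}, \eqref{eq:gaussian-energy-tail} gives \(\delta_H\le2e^{-p}\). Since \eqref{eq:gaussian-practical-sign} gives \(d\ge C_0p\) and \(Lh^2\le c\) (as \(\rho_0\le1\) and \(C\) is large), \eqref{eq:gaussian-short-tail} gives \(\delta_{\mathrm{sm}}\le2e^{-p}\), where \(T_{\mathrm{sm}}\) is the Gaussian cutoff used in the profile-separation hypothesis. Hence \(\zeta_{p,h}(K_\star)\le(2N_{K_\star}+5)e^{-p}\le 7N_{K_\star}e^{-p}\). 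With the logarithmic choice \eqref{eq:gaussian-practical-p} this is at most \(c_0\underline a_{\mathrm B}\theta_\star\epsilon^2/M^2\). Because \(\underline a_{\mathrm B}=c_2r_\star^2\le (c_2/c_1)\underline a_{\mathrm M}\) (using \(r_\star\le1/4\), after adjusting constants), the same bound holds for the multinomial selector. Both selectors are therefore covered simultaneously.

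Checking the remaining hypotheses: the resolution condition \eqref{eq:intrinsic-resolution} is imposed directly, and \(k_\star<k_{\max}\) with profile separation is assumed. \(K_\star\ge8\) follows from \(T_\star>\pi/(2\sqrt L)\) (as in the proof of \Cref{thm:adaptive}) together with \(h\sqrt L\) small; if needed, resolution also gives it. The horizon condition \(K_\star h\le (S+1)/\sqrt m\) follows from \(\tau_{k_\star}\le S/\sqrt m\) and \(h\sqrt m\le1\). \Cref{thm:adaptive} then delivers termination at depth \(k_\star\) on the certification event, the transition and TV bounds, and the work accounting through \Cref{prop:exceptional-work}.

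For the cost statement I substitute \eqref{eq:gaussian-practical-h} into \eqref{eq:gaussian-energy-condition}. The first term is \(Cc_h\), and the second is \(C c_h^2 d/(\sqrt{dp}+p)^2\le Cc_h^2/p\). Both are at most \(\log 2\) for small \(c_h\). The sign condition \(C(1+S)Lh^2\le\rho_0\) becomes \(C(1+S)c_h/(\sqrt{dp}+p)\le\rho_0\), which follows from \(d\ge C\rho_0^{-2}p\) given bounded \(S\). When \(d\gtrsim p\), we have \(h=\widetilde\Theta(L^{-1/2}d^{-1/4})\), and then \(K_\star\le T_\star/h+1=\widetilde O(a_\star\sqrt\kappa\,d^{1/4})\).

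The main obstacle is the circularity in choosing \(p\). The threshold \eqref{eq:gaussian-practical-p} involves \(N_{K_\star}\le4K_\star^2\), \(\underline a_{\mathrm B}\) and \(\theta_\star\), and these depend on \(h\) through \(K_\star\) and \(r_\star\), while \(h\) depends on \(p\) via \eqref{eq:gaussian-practical-h}. I would treat the statement as a joint feasibility condition, as in \Cref{thm:tangent-adaptive}. Since every such dependence is polynomial in \(d,\kappa,a_\star,(1+\gamma)\), a logarithmic \(p\) satisfying it exists after enlarging \(C\).
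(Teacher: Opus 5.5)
Your proposal is correct and follows the paper's proof essentially step for step. You bound $\Xi_{p,h}$ through \eqref{eq:gaussian-Cp-direct}--\eqref{eq:gaussian-Ep-direct} and the two conditions in \eqref{eq:gaussian-practical-sign}, bound $\zeta_{p,h}(K_\star)\lesssim N_{K_\star}e^{-p}$ through the energy and short-interval tails, invoke \Cref{thm:adaptive}, and then check \eqref{eq:gaussian-energy-condition} and $K_\star=1+a_\star\sqrt\kappa/(h\sqrt L)$ under the step-size choice \eqref{eq:gaussian-practical-h}; you also make explicit several points the paper leaves implicit, namely $K_\star\ge8$, the horizon condition, $\underline a_{\mathrm B}\lesssim\underline a_{\mathrm M}$ for the multinomial case, and the use of $\rho_0\le1$ in the Gaussian case.
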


\subsection{The two-scale phase diagram}
\label{sec:gaussian-two-scale}

Consider
\begin{align}
  \pi_{2\mathrm S}
  =
  N(0,m_1^{-1}I_{d_1})
  \otimes
  N(0,m_2^{-1}I_{d_2}),
  \qquad
  0<m_1\le m_2,
  \label{eq:two-scale-target}
\end{align}
and put
\begin{align*}
  \kappa
  =
  \frac{m_2}{m_1},
  \qquad
  q
  =
  \frac{d_2}{d_1},
  \qquad
  s
  =
  \sqrt{m_2}\,t.
\end{align*}
Define
\begin{align}
  g_{\kappa,q}(s)
  =
  \sin(s/\sqrt\kappa)
  +
  \frac{q}{\sqrt\kappa}\sin s.
  \label{eq:g-kappa-q}
\end{align}
Then
\begin{align}
  u(t)
  =
  \frac{d_1}{\sqrt{m_1}}
  g_{\kappa,q}(\sqrt{m_2}t),
  \qquad
  \frac{\sqrt{m_1}}{d_1+d_2}u(t)
  =
  \frac{g_{\kappa,q}(s)}{1+q}.
  \label{eq:two-scale-profile-identity}
\end{align}
This is the two-scale uniform term of
\citet{Oberdoerster2025} in normalized coordinates.

Let
\[
  \bar h=\sqrt{m_2}h
\]
and
\begin{align*}
  s_k(\bar h)
  =
  \bar h(2^k-1).
\end{align*}
For a candidate terminal depth, define
\begin{align}
  \mathfrak r_{\kappa,q}(\bar h,k)
  =
  \frac1{1+q}
  \min\left\{
    -g_{\kappa,q}(s_k),
    \min_{\substack{
      1\le r<k\\
      s_r\ge\sqrt{m_2}T_{\mathrm{sm}}
    }}
    g_{\kappa,q}(s_r)
  \right\}.
  \label{eq:two-scale-margin}
\end{align}
By \eqref{eq:two-scale-profile-identity},
\Cref{def:profile-separated} holds with \(m=m_1\) and margin \(\rho_0\)
exactly when the horizon condition holds and $\mathfrak r_{\kappa,q}(\bar h,k)\ge \rho_0.$

Following \citet{Oberdoerster2025}, define the accelerated phase
\begin{align*}
  \mathcal A
  =
  \left\{
    (\kappa,q)\in[1,\infty)\times(0,\infty):
    g_{\kappa,q}(s)\ge0
    \text{ for every }s\in(0,2\pi)
  \right\},
\end{align*}
and the fast negative set $\mathcal J_{\kappa,q}=  \left\{
    s\in(0,2\pi):
    g_{\kappa,q}(s)<0
  \right\}.$

\begin{theorem}[Profile separation recovers the two-scale dichotomy]
\label{thm:two-scale-reconciliation}
For the target \eqref{eq:two-scale-target}, the following statements
hold.
\begin{enumerate}[label=(\roman*)]
\item
\((\kappa,q)\in\mathcal A\) if and only if
\(\mathcal J_{\kappa,q}=\varnothing\).  Thus membership in the
accelerated phase is exactly the absence of a negative stationary
profile value during the first fast period.

\item
If \((\kappa,q)\notin\mathcal A\), then
\(\mathcal J_{\kappa,q}\) is a nonempty open set and
\begin{align}
  \mathcal H_{\mathrm{trap}}
  =
  \bigcup_{k\ge1}
  \frac{
    \mathcal J_{\kappa,q}
  }{
    \sqrt{m_2}(2^k-1)
  }
  \label{eq:trap-step-set}
\end{align}
is exactly the deterministic-profile version of the trapped
step-size set of \citet{Oberdoerster2025}.  For sufficiently small step sizes in the
interior of this set, away from dyadic preimages of profile zeros, satisfying
\eqref{eq:intrinsic-resolution}, and whose first negative depth is at least two, the first negative dyadic value is profile separated at a fast depth
with
\begin{align*}
  a_\star
  =
  \sqrt{m_1}T_\star
  =
  \Theta(\kappa^{-1/2}).
\end{align*}
Any later critical candidate fails the preterminal positivity
requirement because the grid has already encountered a negative
fast-profile value.  At the boundary of
\(\mathcal H_{\mathrm{trap}}\), the deterministic margin
\eqref{eq:two-scale-margin} vanishes.

\item
If \((\kappa,q)\in\mathcal A\), the first negative continuous profile
value satisfies
\begin{align}
  \frac{\pi}{2\sqrt{m_1}}
  \le
  \inf\{t>0:u(t)<0\}
  <
  \frac{2\pi}{\sqrt{m_1}}.
  \label{eq:phase-A-critical-window}
\end{align}
There is a positive-measure set of sufficiently small step sizes for
which the first negative dyadic value is profile separated at a
critical depth, with \(a_\star=\Theta(1)\).

\item
If a compact set of step sizes stays a positive distance from all
relevant dyadic preimages of zeros of \(g_{\kappa,q}\), then the
corresponding profile margins are bounded below uniformly.  Profile
separation therefore provides a finite-dimensional quantitative
version of the asymptotic nonresonance condition in
\citet{Oberdoerster2025}.
\end{enumerate}
\end{theorem}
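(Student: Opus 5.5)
The plan is to work entirely with the explicit two-scale profile \eqref{eq:two-scale-profile-identity}. Every statement then reduces to elementary facts about the trigonometric function $g_{\kappa,q}$ together with the normalized margin \eqref{eq:two-scale-margin}. By \eqref{eq:gaussian-profile} applied to the block-diagonal precision, $u(t)=d_1\sin(\sqrt{m_1}t)/\sqrt{m_1}+d_2\sin(\sqrt{m_2}t)/\sqrt{m_2}$. Factoring out $d_1/\sqrt{m_1}$ and writing $s=\sqrt{m_2}t$ gives $g_{\kappa,q}(s)$. The normalized identity follows because $m=m_1$ and $d=d_1(1+q)$.

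Part (i) is immediate from the definitions: $\mathcal A$ is the set where $g\ge0$ on $(0,2\pi)$, which is exactly $\mathcal J_{\kappa,q}=\varnothing$.

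For (ii), proceed in four steps.
\begin{enumerate}
\item Openness of $\mathcal J_{\kappa,q}$ follows from continuity of $g$.
\item A step size $\bar h$ lies in $\bigcup_k \mathcal J/(2^k-1)$ exactly when some dyadic duration $s_k(\bar h)$ falls in the fast negative set. This is the deterministic analogue of Oberdörster's trapped set after replacing the random diagnostics by their means.
\item For $\bar h$ in the interior, let $k_\star$ be the first depth with $g(s_{k_\star})<0$. If $k_\star\ge2$, every preterminal $s_r$ with $r<k_\star$ has $g(s_r)\ge0$. Because $\bar h$ is away from dyadic preimages of zeros, each such value is in fact strictly positive. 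The minimum in \eqref{eq:two-scale-margin} is over finitely many values, so the margin is positive.
\item Since $s_{k_\star}<2\pi$, we get $T_\star<2\pi/\sqrt{m_2}$, hence $a_\star=\sqrt{m_1}T_\star\le 2\pi\kappa^{-1/2}$. For the lower bound, use \eqref{eq:intrinsic-terminal-lower}: $T_\star>\pi/(2\sqrt L)$ with $L=m_2$, which gives $a_\star\ge(\pi/2)\kappa^{-1/2}$.
\end{enumerate}
Any later candidate depth fails \eqref{eq:profile-preterminal-margin}, because $g(s_{k_\star})<0$ then sits at a preterminal index. At the boundary of $\mathcal H_{\mathrm{trap}}$, some $s_r$ is a zero of $g$ (the boundary of the open set $\mathcal J$ consists of zeros of $g$), so the margin vanishes.

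For (iii), the lower bound in \eqref{eq:phase-A-critical-window} comes from the universal positivity \eqref{eq:profile-positive}, applied with the slow frequency. For $t\le\pi/(2\sqrt{m_1})$, i.e. $s\le \pi\sqrt\kappa/2$, the term $\sin(s/\sqrt\kappa)$ is at least $(2/\pi)(s/\sqrt\kappa)$. Combined with $g\ge0$ on the first fast period, positivity should extend; the cleanest route is to check $g>0$ directly on $(0,\pi\sqrt\kappa/2]$.

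This first direct check is the delicate point. Near fast-period zeros the first term must dominate $(q/\sqrt\kappa)|\sin s|$. My plan is to use membership in $\mathcal A$ at $s\in(\pi,2\pi)$, together with monotonicity of $\sin(s/\sqrt\kappa)$ on the slow quarter period, to propagate nonnegativity periodically. This works because the fast term is $2\pi$-periodic, while the slow term only increases until $s=\pi\sqrt\kappa/2$.

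For the upper bound in (iii), evaluate at $s=3\pi\sqrt\kappa/2$ shifted to a point where $\sin s\le 0$. There $g<0$, so the first negative time is below $2\pi/\sqrt{m_1}$.

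Positive measure of good step sizes then follows as in \Cref{thm:gaussian-arbitrary-kappa}:
\begin{enumerate}
\item Choose an open time interval on which $u\le -c\,d/\sqrt{m}$.
\item Note that real analyticity of $g$ makes its zeros isolated.
\item Observe that for small $\bar h$ the set of $\bar h$ whose first negative dyadic time lands in that interval, avoiding zero preimages, has positive measure.
\end{enumerate}
In that case $a_\star=\Theta(1)$ by the critical window.

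Part (iv) follows from a compactness argument. The margin $\mathfrak r_{\kappa,q}(\bar h,k)$ is a minimum of finitely many continuous functions of $\bar h$, nonvanishing on the compact set, and the relevant depth $k$ is locally constant there. Hence it is bounded below by a positive constant.

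I expect the main obstacle to be the lower bound in (iii), i.e. showing $\mathcal A$ forces positivity throughout the first slow quarter period. I would first try the periodic-comparison argument just described.
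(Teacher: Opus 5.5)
Your proposal is correct and follows the paper's proof nearly step for step. Part (i) is definitional. Part (ii) goes through the first negative dyadic depth, strict positivity of the finitely many preterminal values away from zero preimages, and the fast-scale bound \eqref{eq:profile-positive} with $L=m_2$ for $a_\star\ge(\pi/2)\kappa^{-1/2}$ (which you make explicit and the paper leaves implicit). Part (iii) propagates nonnegativity from the first fast period and reuses the dyadic construction of \Cref{thm:gaussian-arbitrary-kappa}. Part (iv) is compactness.

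The only real divergence is in (iii). For the lower bound you write $s=2\pi n+y$ with $y\in[0,2\pi)$ and use \emph{monotonicity} of $\sin$ on $[0,\pi/2]$. For every $s\le\pi\sqrt\kappa/2$ this gives $\sin(s/\sqrt\kappa)\ge\sin(y/\sqrt\kappa)$, hence $g_{\kappa,q}(s)\ge g_{\kappa,q}(y)\ge0$, uniformly in $\kappa$. The paper instead uses the \emph{symmetry} of $\sin$ about $\pi/2$ to reach $s\le\pi\sqrt\kappa-2\pi$. That requires $\kappa\ge16$, forces a separate case $\kappa\le16$, and buys a longer nonnegativity window that \eqref{eq:phase-A-critical-window} does not need. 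Your version is the more economical one.

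For the upper bound you evaluate $g_{\kappa,q}$ near $3\pi\sqrt\kappa/2$ rather than invoking the spectral crossing lemma through \Cref{thm:gaussian-arbitrary-kappa}. This is valid once you note that the nearest point with $\sin s\le0$ is within $\pi/2$. Then, for $\kappa>1$, its slow phase stays in $(\pi,2\pi)$ and the point stays below $2\pi\sqrt\kappa$. The case $\kappa=1$ is not in $\mathcal A$, because $g_{1,q}(s)=(1+q)\sin s$.

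A few points to tighten:
\begin{enumerate}
\item Aim for $g_{\kappa,q}\ge0$, not $g_{\kappa,q}>0$, on $(0,\pi\sqrt\kappa/2]$. On the boundary of $\mathcal A$ the function can vanish inside $(\pi,2\pi)$, and nonnegativity is all the infimum needs. Your opening remark that \eqref{eq:profile-positive} applies ``with the slow frequency'' is not right, since that bound only reaches $t=\pi/(2\sqrt{m_2})$; you do not rely on it.
\item In the positive-measure step, an arbitrary interval on which $u\le-c\,d/\sqrt m$ is not enough. It must lie in $(a,\min\{b,2a\})$ for the \emph{first} negative component $(a,b)$, so that every preterminal time $c_{r,k}s<s/2<a$ lands where $g_{\kappa,q}\ge0$. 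This is also what places the terminal time at the critical scale.
\item Your justification of the boundary claim in (ii), that $\partial\mathcal J_{\kappa,q}$ consists of zeros of $g_{\kappa,q}$, fails at $s=2\pi$ when $1<\kappa<4$. There $g_{\kappa,q}(2\pi)=\sin(2\pi/\sqrt\kappa)<0$. At the boundary step size $\bar h=2\pi/(2^k-1)$, $k\ge2$, every preterminal time lies in $(0,\pi)$, where $g_{\kappa,q}>0$, so the margin is strictly positive. The paper asserts the same boundary statement without argument, so this is a shared overstatement rather than a flaw specific to your route.
\end{enumerate}
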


\begin{remark}[Trapped orbits and profile separation]
A trapped step size need not make profile separation fail altogether.
It can produce a well-separated \emph{fast} terminal depth.  What
fails is profile separation at a later slow, critical depth, because
an earlier preterminal dyadic time is already negative.  Conversely,
near a boundary of the trapped set the first negative value has little
or no margin, so the finite-dimensional certificate becomes
ineffective.  This is the precise sense in which the two descriptions
coincide.
\end{remark}

\subsection{Selected-time interpolation}
\label{sec:gaussian-selected-time}

For a deterministic terminal cardinality \(K\), exact Hamiltonian flow
and equal orbit energies make the selected signed index independent of
the current state.  Let
\(\mathsf P_{\mathrm M}^{K,h}\) and
\(\mathsf P_{\mathrm B}^{K,h}\) denote the resulting Gaussian position
kernels with signed-index laws
\eqref{eq:terminal-mul-law} and
\eqref{eq:terminal-bps-law}. The physical span of the
\(K\)-point orbit is \(T_K=(K-1)h\).  For the continuous selector approximation we
instead use
\[
  \widetilde T=Kh,
  \qquad
  \widetilde a=\sqrt m\,\widetilde T.
\]
For \(K\ge4\), \(T_K<\widetilde T\le\frac43T_K\), so the two time scales are
universally comparable but not identical.  For a self-adjoint Markov operator \(P\), let
\[
  L_0^2(\pi)=\{f\in L^2(\pi):\pi(f)=0\}.
\]
Define the absolute \(L^2(\pi)\) spectral gap by
\begin{align*}
  \gap_{\mathrm{abs}}(P)
  =
  1-\left\|P\big|_{L_0^2(\pi)}\right\|_{2\to2}
  =
  1-
  \sup\left\{
    |\lambda|:
    \lambda\in\sigma\!\left(P\big|_{L_0^2(\pi)}\right)
  \right\}.
\end{align*}

\begin{theorem}[Gaussian selected-time interpolation]
\label{thm:gaussian-time-interpolation}
There are universal constants \(c_\sigma,C_\sigma>0\),
\(\sigma\in\{\mathrm M,\mathrm B\}\), such that, for every Gaussian
precision matrix satisfying
\eqref{eq:gaussian-precision-bounds},
\begin{align}
  \gap_{\mathrm{abs}}
  \left(
    \mathsf P_\sigma^{K,h}
  \right)
  \ge
  c_\sigma\min\{m\widetilde T^2,1\}
  -
  C_\sigma\frac{\sqrt L\,\widetilde T}{K}.
  \label{eq:gaussian-discrete-gap}
\end{align}
If
\begin{align}
  K
  \ge
  \frac{
    2C_\sigma\sqrt L\,\widetilde T
  }{
    c_\sigma\min\{m\widetilde T^2,1\}
  },
  \label{eq:gaussian-K-gap-condition}
\end{align}
then an \(M\)-warm start reaches total-variation accuracy
\(\epsilon\) after
\begin{align}
  n_\sigma
  \le
  C_\sigma\min\{\widetilde a^2,1\}^{-1}
  \log\!\left(
    \frac{CM}{\epsilon}
  \right)
  \label{eq:gaussian-interpolation-transitions}
\end{align}
transitions.  The orbit work satisfies
\begin{align}
  Kn_\sigma
  \le
  C_\sigma\frac{\widetilde T}{h}
  \min\{\widetilde a^2,1\}^{-1}
  \log\!\left(
    \frac{CM}{\epsilon}
  \right).
  \label{eq:gaussian-interpolation-work}
\end{align}
On the Gaussian leapfrog scale
\[
  h
  =
  \widetilde\Theta(
    L^{-1/2}d^{-1/4}
  ),
\]
provided \eqref{eq:gaussian-K-gap-condition} holds,
\begin{align}
  Kn_\sigma
  =
  \widetilde O\!\left(
    \sqrt\kappa\,d^{1/4}
    \frac{
      \widetilde a
    }{
      \min\{\widetilde a^2,1\}
    }
    \log\!\frac{M}{\epsilon}
  \right).
  \label{eq:gaussian-smooth-interpolation}
\end{align}
At this basic step-size scale,
\eqref{eq:gaussian-K-gap-condition} is equivalent, up to universal
and logarithmic factors, to
\[
  d^{-1/4}
  \lesssim
  \min\{\widetilde a^2,1\}.
\]
\end{theorem}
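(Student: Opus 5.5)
The plan is to diagonalize: for a Gaussian target with exact flow and equal orbit energies, the kernel \(\mathsf P_\sigma^{K,h}\) does not depend on the current state. It is the mixture \(\sum_j\lambda_{\sigma,K}(j)Q_{j}\), where \(Q_j\) is the exact-flow fixed-time Gaussian proposal at time \(jh\).

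Rotate to the eigenbasis of \(\Lambda\) and work mode by mode. With \(\omega_i=\sqrt{\lambda_i(\Lambda)}\), the exact flow from \((x,V)\) gives the position \(\cos(\omega_i jh)x_i+\omega_i^{-1}\sin(\omega_i jh)V_i\). Hence \(Q_j\) is the autoregressive Gaussian kernel \(x\mapsto N(\cos(\Omega jh)x,\,C\sin^2(\Omega jh))\), and its action on Hermite polynomials of degree \(n\) in coordinate \(i\) multiplies them by \(\cos(\omega_i jh)^n\). The mixture is therefore diagonal in the tensor Hermite basis, with eigenvalues
\[
\mu(\alpha)=\sum_{|j|\le K-1}\lambda_{\sigma,K}(j)\prod_i\cos(\omega_i jh)^{\alpha_i}.
\]
Since \(|\prod_i\cos(\cdot)^{\alpha_i}|\le|\cos(\omega_{i_0}jh)|\) for any \(i_0\) with \(\alpha_{i_0}\ge1\), it suffices to bound the one-dimensional quantity
\[
\sup_{\omega\in[\sqrt m,\sqrt L]}\Bigl|\sum_j\lambda_{\sigma,K}(j)\,|\cos(\omega jh)|\Bigr|
\]
away from \(1\). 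Negative eigenvalues also need care: the bound \(\mu(\alpha)\ge-\sum_j\lambda|\cos|\) suffices, since we bound the absolute value.

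For the key estimate, I replace the discrete law by its continuous profile. Write \(jh=\widetilde T\,s\). The triangular law \(\lambda_{\mathrm M,K}\) approximates the density \((1-|s|)\,ds\) on \([-1,1]\), and the double tent \(\lambda_{\mathrm B,K}\) approximates \(2\min\{|s|,1-|s|\}\,ds\). The Riemann-sum error for the \(\omega\sqrt L\)-Lipschitz integrand \(|\cos(\omega\widetilde T s)|\) is \(O(\sqrt L\,\widetilde T/K)\), which produces the subtracted term in \eqref{eq:gaussian-discrete-gap}.

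For the continuous part, I would prove
\[
\int\rho_\sigma(s)\,|\cos(\omega\widetilde T s)|\,ds\le1-c_\sigma\min\{\omega^2\widetilde T^2,1\},
\]
uniformly in \(\omega\ge\sqrt m\), which gives \(1-c\min\{m\widetilde T^2,1\}\). There are two cases:
\begin{itemize}
\item \emph{Small \(\omega\widetilde T\).} Use \(|\cos x|\le1-x^2/4\) for \(|x|\le1\), together with the fact that \(\rho_\sigma\) puts constant mass on \(|s|\in[1/4,3/4]\).
\item \emph{Large \(\omega\widetilde T\).} The substitution averages \(|\cos|\) over at least a fixed fraction of a period, weighted by a density bounded above. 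This is the main obstacle: I must show uniformly that \(\rho_\sigma\) cannot concentrate on the near-zeros of \(1-|\cos|\) (the points \(s\in\pi\mathbb Z/(\omega\widetilde T)\)). For intermediate \(\omega\widetilde T\), I would use that \(\rho_\sigma\) is bounded by \(2\). Consequently the set \(\{s:|\cos(\omega\widetilde T s)|>1-\epsilon_0\}\) has measure at most \(O(\sqrt{\epsilon_0})\) relative to the period, and a compactness argument covers the bounded range of \(\omega\widetilde T\) in between.
\end{itemize}

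The remaining steps are routine. Under \eqref{eq:gaussian-K-gap-condition}, the gap is at least \(\tfrac{c_\sigma}{2}\min\{\widetilde a^2,1\}\). For a reversible kernel with absolute gap \(g\), the standard bound \(\|\nu P^n-\pi\|_{\mathrm{TV}}\le\tfrac12\sqrt{M-1}\,(1-g)^n\) (from the \(\chi^2\) bound under \(M\)-warmness) yields \eqref{eq:gaussian-interpolation-transitions}. Multiplying by \(K=\widetilde T/h\) gives \eqref{eq:gaussian-interpolation-work}. Substituting \(h=\widetilde\Theta(L^{-1/2}d^{-1/4})\) and \(\widetilde T=\widetilde a/\sqrt m\) gives \eqref{eq:gaussian-smooth-interpolation}. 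Finally, \eqref{eq:gaussian-K-gap-condition} reads \(\widetilde T/h\gtrsim\sqrt L\,\widetilde T/\min\{\widetilde a^2,1\}\), i.e. \(d^{-1/4}\lesssim\min\{\widetilde a^2,1\}\).
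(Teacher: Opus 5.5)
Your proposal is correct and follows the paper's proof: Hermite (Mehler) diagonalization, reduction of each nonzero eigenvalue to \(\E\abs{\cos(\omega_i\widetilde T\,U_{\sigma,K})}\) for one active mode, an \(O(\sqrt L\,\widetilde T/K)\) comparison of the discrete selector law with its continuous triangular or double-tent limit, the continuous bound \(1-c_\sigma\min\{z^2,1\}\), and \(L^2\) contraction under the absolute gap. The differences are only technical, and there are two of them.

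\emph{Discrete-to-continuous comparison.} The paper uses an explicit \(W_1\) coupling (uniform jitters of the uniform indices, error \(2z/K\)). This automatically avoids an extra \(O(1/K)\) density-discretization term that a literal Riemann-sum estimate of \(\rho_\sigma(s)\abs{\cos(zs)}\) would carry. In your route you remove that term by integrating \(\abs{\cos}-1\) instead, using that both laws have unit mass.

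\emph{Continuous bound.} The paper proves it via Jensen, \(\E\abs{\cos(zU_\sigma)}\le(\E\cos^2(zU_\sigma))^{1/2}\), together with the closed-form characteristic functions, rather than your sublevel-set and compactness argument.
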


\begin{corollary}[Critical and trapped Gaussian regimes]
\label{cor:gaussian-recovered-rates}
Under the resolution condition of
\Cref{thm:gaussian-time-interpolation},
\[
  \widetilde a=\Theta(1)
  \quad\Longrightarrow\quad
  Kn_\sigma
  =
  \widetilde O\!\left(
    \sqrt\kappa\,d^{1/4}
    \log\!\frac{M}{\epsilon}
  \right),
\]
whereas
\[
  \widetilde a=\Theta(\kappa^{-1/2})
  \quad\Longrightarrow\quad
  Kn_\sigma
  =
  \widetilde O\!\left(
    \kappa\,d^{1/4}
    \log\!\frac{M}{\epsilon}
  \right).
\]
Thus the selector-time formula continuously interpolates between the
critical-orbit and trapped fast-orbit regimes.
\end{corollary}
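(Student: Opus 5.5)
The corollary is a direct substitution into \Cref{thm:gaussian-time-interpolation}, specifically into \eqref{eq:gaussian-smooth-interpolation}. That bound holds under the resolution condition \eqref{eq:gaussian-K-gap-condition} at the step-size scale \(h=\widetilde\Theta(L^{-1/2}d^{-1/4})\). The whole argument therefore reduces to evaluating the factor
\[
  \frac{\widetilde a}{\min\{\widetilde a^2,1\}}
\]
in the two regimes and checking that the hypotheses of the theorem remain in force.

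\textbf{Critical regime.} Suppose \(\widetilde a=\Theta(1)\), say \(c\le\widetilde a\le C\). Then \(\min\{\widetilde a^2,1\}\ge\min\{c^2,1\}\), so the factor is \(O(1)\). Substituting into \eqref{eq:gaussian-smooth-interpolation} gives
\[
  Kn_\sigma=\widetilde O\bigl(\sqrt\kappa\,d^{1/4}\log(M/\epsilon)\bigr).
\]

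\textbf{Trapped regime.} Suppose \(\widetilde a=\Theta(\kappa^{-1/2})\). Since \(\kappa\ge1\), we have \(\widetilde a\lesssim1\), so \(\min\{\widetilde a^2,1\}\asymp\widetilde a^2\). The factor is then of order \(\widetilde a^{-1}\asymp\sqrt\kappa\). Multiplying by the prefactor \(\sqrt\kappa\,d^{1/4}\) gives
\[
  Kn_\sigma=\widetilde O\bigl(\kappa\,d^{1/4}\log(M/\epsilon)\bigr).
\]

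\textbf{Hypotheses and interpolation.} The only points needing care are consistency checks. First, "under the resolution condition" means that \eqref{eq:gaussian-K-gap-condition} is assumed. At the basic step-size scale, the theorem's closing remark says this amounts to \(d^{-1/4}\lesssim\min\{\widetilde a^2,1\}\), which in the trapped case reads \(d^{-1/4}\lesssim\kappa^{-1}\). This is part of the hypothesis, not something to prove. Second, the constants hidden in the two \(\Theta(\cdot)\) statements only affect universal factors absorbed into \(\widetilde O\). Finally, the interpolation claim is immediate: the map \(\widetilde a\mapsto\widetilde a/\min\{\widetilde a^2,1\}\) is continuous and monotone on \((0,1]\), equal to \(1/\widetilde a\) there, so the bound varies continuously between the two endpoint rates.

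There is no real obstacle. The one step to handle carefully is the trapped case: one must use \(\kappa\ge1\) to conclude \(\widetilde a\lesssim1\), so that the minimum equals \(\widetilde a^2\) up to constants and not \(1\).
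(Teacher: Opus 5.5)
Your proposal is correct and is essentially the paper's argument. The paper gives no separate proof; the corollary is read off from \eqref{eq:gaussian-smooth-interpolation} by evaluating $\widetilde a/\min\{\widetilde a^2,1\}$ as $O(1)$ when $\widetilde a=\Theta(1)$ and as $\Theta(\sqrt\kappa)$ when $\widetilde a=\Theta(\kappa^{-1/2})$, treating the resolution condition \eqref{eq:gaussian-K-gap-condition} and the step-size scale $h=\widetilde\Theta(L^{-1/2}d^{-1/4})$ as hypotheses, exactly as you do.
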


\begin{remark}[Scope of the sharp interpolation]
\Cref{thm:gaussian-time-interpolation} is an exact-flow, equal-energy
statement for the Gaussian profile kernels induced by the two selector
laws.  The leapfrog analysis of \citet{Oberdoerster2025} proves the
corresponding practical multinomial dichotomy under Gaussian
concentration and energy-control conditions.  The general theorem in
this paper also covers practical leapfrog BPS, but its conductance
constants do not presently yield the sharp interpolation
\eqref{eq:gaussian-smooth-interpolation} for anisotropic BPS.
\end{remark}

\subsection*{Acknowledgements}

KB is supported in part by National Science Foundation (NSF) grant DMS-2413426 and wishes to thank Nawaf Bou-Rabee for various discussions about NUTS at the International Conference on Monte Carlo Methods and Applications (MCM), 2025.

\subsection*{Statement of AI Use}

ChatGPT-5.6 Sol Pro was used during manuscript preparation for brainstorming, drafting, editing, and formatting assistance, including preliminary drafts of proof arguments. The LLM-generated material was not used without author review. All proof arguments were independently checked, corrected, and finalized by the author, who takes full responsibility for the correctness, originality, and presentation of the paper.

\bibliographystyle{abbrvnat}
\bibliography{ref}

\appendix

\section{Positive-semidefinite conductance implies warm-start mixing}
\label{sec:PSD-mixing}

Recalling the definition in~\eqref{equation:overlap}, note that if \((a_j)\) are common nonnegative weights, then
\begin{align}
\ov\!\left(\sum_j a_j\mu_j,\sum_j a_j\nu_j\right)
\ge\sum_j a_j\ov(\mu_j,\nu_j).
\label{eq:mixture-overlap}
\end{align}
Indeed, for nonnegative densities \(f_j,g_j\),
\(
\min\{\sum_j a_jf_j,\sum_j a_jg_j\}
\ge\sum_j a_j\min\{f_j,g_j\}
\).
If \(\widetilde\mu\le\mu\) and \(\widetilde\nu\le\nu\), then
\begin{align}
\ov(\widetilde\mu,\widetilde\nu)
\ge
\ov(\mu,\nu)
-[\mu(\R^d)-\widetilde\mu(\R^d)]
-[\nu(\R^d)-\widetilde\nu(\R^d)].
\label{eq:restriction-overlap}
\end{align}
This follows by writing the restricted densities as \(f-r\) and \(g-s\), with \(r,s\ge0\), and using
\(
\min\{f-r,g-s\}\ge\min\{f,g\}-r-s
\).

For a reversible kernel \(R\), define the stationary flow
\begin{align*}
\mathcal Q_R(A,B)=\int_A R(x,B)\pi(\dd x)
\end{align*}
and, for \(s\in(0,1/2)\), its restricted conductance
\begin{align}
\Phi_s(R)
=
\inf_{s<\pi(S)\le1/2}
\frac{\mathcal Q_R(S,S^c)}{\pi(S)-s}.
\label{eq:s-conductance}
\end{align}

The corresponding lazy-kernel conductance bound is used by
\citet[Lemma~1]{ChenGatmiryJiang2026}, who invoke the classical
Lovasz--Simonovits theorem.  Here we use positive semidefiniteness
in place of laziness in order to obtain a mixing bound for the
original NUTS kernel, rather than for an artificially lazified
version.

For a reversible Markov kernel \(R\), define
\begin{align*}
  \mathcal E_R(f,f)
  =
  \frac12\iint[f(x)-f(y)]^2\pi(\dd x)R(x,\dd y)
  =\ip{f}{(I-R)f}_{L^2(\pi)}.
\end{align*}

\begin{proposition}[Approximate Poincar\'e inequality]
\label{prop:approx-poincare}
Let \(R\) be reversible, and let \(\Phi_s(R)\) be defined by \eqref{eq:s-conductance}.  For every bounded \(f\in L^2(\pi)\) with \(\pi(f)=0\),
\begin{align}
  \mathcal E_R(f,f)
  \ge
  \frac{\Phi_s(R)^2}{2}
  \left[\norm{f}_2^2-16s\norm{f}_\infty^2\right].
  \label{eq:approx-poincare}
\end{align}
\end{proposition}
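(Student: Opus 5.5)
We prove \eqref{eq:approx-poincare} with the Lovász--Simonovits level-set route, specialized to the Dirichlet form. Reversibility is the only structural property needed here, and we do not use positivity. By homogeneity we normalize \(\norm{f}_\infty\le1\).

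\textbf{Step 1: reduce to a median-centered function.} Let \(c\) be a median of \(f\) under \(\pi\). Since \(\pi(f)=0\),
\[
\norm{f}_2^2\le\norm{f-c}_2^2 ,
\]
while \(\mathcal E_R(f,f)=\mathcal E_R(f-c,f-c)\). Set \(g=f-c\) and split it as \(g=g_+-g_-\). Each part vanishes on a set of \(\pi\)-mass at least \(1/2\). The pointwise inequality
\[
[g(x)-g(y)]^2\ge[g_+(x)-g_+(y)]^2+[g_-(x)-g_-(y)]^2
\]
gives
\[
\mathcal E_R(g,g)\ge\mathcal E_R(g_+,g_+)+\mathcal E_R(g_-,g_-).
\]
It therefore suffices to show, for a nonnegative \(\phi\) with \(\pi(\phi>0)\le1/2\) and \(\norm{\phi}_\infty\le2\),
\[
\mathcal E_R(\phi,\phi)\ge\frac{\Phi_s^2}{2}\bigl[\norm{\phi}_2^2-8s\norm{\phi}_\infty^2\bigr].
\]
Summing the two halves then yields the constant \(16s\).

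\textbf{Step 2: co-area bound on level sets.} Put \(\psi=\phi^2\) and \(A_t=\{\psi>t\}\), so that \(\pi(A_t)\le1/2\). By reversibility,
\[
\frac12\iint|\psi(x)-\psi(y)|\,\pi(\dd x)R(x,\dd y)=\int_0^\infty\mathcal Q_R(A_t,A_t^c)\,\dd t .
\]
For every \(t\) we have \(\mathcal Q_R(A_t,A_t^c)\ge\Phi_s\,(\pi(A_t)-s)\). When \(\pi(A_t)\le s\) this lower bound is nonpositive and holds trivially. Integrating over \(t\in[0,\norm{\psi}_\infty]\) gives
\[
\frac12\iint|\psi(x)-\psi(y)|\,\pi R\ \ge\ \Phi_s\Bigl(\norm{\phi}_2^2-s\norm{\phi}_\infty^2\Bigr).
\]

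\textbf{Step 3: Cauchy--Schwarz back to the quadratic form.} Write \(|\phi^2(x)-\phi^2(y)|=|\phi(x)-\phi(y)|\,(\phi(x)+\phi(y))\). Cauchy--Schwarz gives
\[
\frac12\iint|\psi(x)-\psi(y)|\,\pi R\le\sqrt{\mathcal E_R(\phi,\phi)}\sqrt{\tfrac12\iint(\phi(x)+\phi(y))^2\,\pi R}\le\sqrt{\mathcal E_R(\phi,\phi)}\sqrt{2\norm{\phi}_2^2}.
\]
The last inequality uses reversibility and stationarity. Squaring, we get
\[
\mathcal E_R(\phi,\phi)\ge\frac{\Phi_s^2}{2}\,\frac{(\norm{\phi}_2^2-s\norm{\phi}_\infty^2)_+^2}{\norm{\phi}_2^2}.
\]
Next use \((a-b)_+^2/a\ge a-2b\), valid for \(a>0\) and \(b\ge0\). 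This produces \(\frac{\Phi_s^2}{2}(\norm{\phi}_2^2-2s\norm{\phi}_\infty^2)\). Since \(\norm{\phi}_\infty\le2\norm{f}_\infty\) after median centering, the bound becomes \(8s\norm{f}_\infty^2\) per half, and Step 1 gives the stated \(16s\).

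\textbf{Main obstacle.} The delicate point is constant bookkeeping in Step 1. The median shift can double the sup norm, and the restricted conductance must still apply to level sets of mass below \(s\). The \((\cdot)_+\) handling in Step 2 takes care of the second issue. For the first, the factor \(4\) from \(\norm{g}_\infty^2\le4\norm{f}_\infty^2\), combined with the factor \(2\) in \((a-b)_+^2/a\ge a-2b\), must stay within the claimed \(16s\). I would check this carefully. If the count does not close, I would fall back to bounding \(\norm{g_\pm}_\infty\le2\norm{f}_\infty\) directly and absorbing any slack into \(s\).
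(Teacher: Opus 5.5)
Your proposal is correct and follows the paper's proof essentially step for step. The shared steps are the level-set identity giving $\iint|g(x)^2-g(y)^2|\,\mathcal Q_R\ge 2\Phi_s(R)\,[\|g\|_2^2-s\|g\|_\infty^2]_+$, the Cauchy--Schwarz bound of that integral by $\sqrt{8\,\mathcal E_R(g,g)\,\|g\|_2^2}$, the inequality $(a-b)_+^2/a\ge a-2b$, and the median split into $(f-c)_\pm$. The only blemish is your Step~1 reduction, where $8s\|\phi\|_\infty^2$ should read $2s\|\phi\|_\infty^2\le 8s\|f\|_\infty^2$; your Step~3 proves exactly this, so the constant closes as $2\cdot4\cdot2=16$ with no fallback needed.
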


\begin{proof}
We first prove a one-sided inequality.  Let \(g\ge0\) be bounded with \(\pi(\supp g)\le1/2\), and define
\[
  S_t=\{x:g(x)^2>t\},
  \qquad t\ge0.
\]
Let \(\mathcal Q_R(\dd x,\dd y)=\pi(\dd x)R(x,\dd y)\).  Reversibility makes \(\mathcal Q_R\) symmetric.  The identity
\[
  |a-b|
  =\int_0^\infty
  \left|\1_{\{a>t\}}-\1_{\{b>t\}}\right|\dd t,
  \qquad a,b\ge0,
\]
and Tonelli's theorem give
\begin{align*}
  I_g
  &:=
  \iint|g(x)^2-g(y)^2|\mathcal Q_R(\dd x,\dd y)
  \notag\\
  &=2\int_0^\infty\mathcal Q_R(S_t,S_t^c)\dd t.
\end{align*}
Every \(S_t\) has measure at most \(1/2\).  By the definition of \(\Phi_s(R)\),
\[
  \mathcal Q_R(S_t,S_t^c)
  \ge
  \Phi_s(R)[\pi(S_t)-s]_+.
\]
Therefore
\begin{align}
  I_g
  &\ge
  2\Phi_s(R)
  \int_0^\infty[\pi(S_t)-s]_+\dd t
  \notag\\
  &\ge
  2\Phi_s(R)
  \left[\norm{g}_2^2-s\norm{g}_\infty^2\right]_+.
  \label{eq:Ig-lower}
\end{align}
For the second inequality, use \([a-s]_+\ge a-s\) and note that \(S_t=\varnothing\) for \(t\ge\norm{g}_\infty^2\).

Cauchy--Schwarz yields
\begin{align*}
  I_g^2
  &\le
  \left[\iint(g(x)-g(y))^2\dd\mathcal Q_R\right]
  \left[\iint(g(x)+g(y))^2\dd\mathcal Q_R\right].
\end{align*}
The first factor is \(2\mathcal E_R(g,g)\).  Since
\[
  (g(x)+g(y))^2\le2g(x)^2+2g(y)^2
\]
and both marginals of \(\mathcal Q_R\) are \(\pi\), the second factor is at most \(4\norm{g}_2^2\).  Hence
\begin{align}
  I_g^2\le8\mathcal E_R(g,g)\norm{g}_2^2.
  \label{eq:Ig-upper}
\end{align}
Set \(a=\norm{g}_2^2\) and \(b=s\norm{g}_\infty^2\).  If \(a\le b\), then \(a-2b\le0\), and the next inequality follows from nonnegativity of the Dirichlet form.  If \(a>b\), combine \eqref{eq:Ig-lower} and \eqref{eq:Ig-upper}:
\[
  \mathcal E_R(g,g)
  \ge
  \frac{\Phi_s(R)^2}{2}\frac{(a-b)^2}{a}.
\]
Since
\[
  \frac{(a-b)^2}{a}
  =a-2b+\frac{b^2}{a}
  \ge a-2b,
\]
both cases give
\begin{align}
  \mathcal E_R(g,g)
  \ge
  \frac{\Phi_s(R)^2}{2}
  \left[\norm{g}_2^2-2s\norm{g}_\infty^2\right].
  \label{eq:one-sided-poincare}
\end{align}

Now let \(f\) be bounded with \(\pi(f)=0\), and let \(c\) be a median of \(f\).  Define
\[
  g_+=(f-c)_+,
  \qquad
  g_-=(c-f)_+.
\]
The supports of \(g_+\) and \(g_-\) each have \(\pi\)-measure at most \(1/2\).  For real \(a,b\),
\begin{align}
  (a-b)^2
  \ge
  (a_+-b_+)^2
  +((-a)_+-(-b)_+)^2.
  \label{eq:positive-negative-split}
\end{align}
To verify \eqref{eq:positive-negative-split}, consider the cases in which \(a,b\) have the same sign or opposite signs.  If both are nonnegative, the first term equals \((a-b)^2\) and the second is zero.  If both are nonpositive, the second equals \((a-b)^2\) and the first is zero.  If \(a\ge0\ge b\), the right side is \(a^2+b^2\), while the left side is \((a-b)^2=a^2+b^2-2ab\ge a^2+b^2\); the remaining opposite-sign case is symmetric.

Apply \eqref{eq:positive-negative-split} with \(a=f(x)-c\) and \(b=f(y)-c\), then integrate:
\begin{align}
  \mathcal E_R(f,f)
  \ge
  \mathcal E_R(g_+,g_+)+\mathcal E_R(g_-,g_-).
  \label{eq:E-split}
\end{align}
Since \(\pi(f)=0\),
\begin{align*}
  \norm{g_+}_2^2+\norm{g_-}_2^2
  &=\norm{f-c}_2^2
  =\norm{f}_2^2+c^2
  \ge\norm{f}_2^2.
\end{align*}
A median may be chosen in the essential range of \(f\), so \(|c|\le\norm{f}_\infty\).  Consequently,
\[
  \norm{g_+}_\infty\le2\norm{f}_\infty,
  \qquad
  \norm{g_-}_\infty\le2\norm{f}_\infty,
\]
and
\begin{align}
  \norm{g_+}_\infty^2+\norm{g_-}_\infty^2
  \le8\norm{f}_\infty^2.
  \label{eq:Linf-split}
\end{align}
Apply \eqref{eq:one-sided-poincare} to \(g_+\) and \(g_-\), and then use \eqref{eq:E-split}--\eqref{eq:Linf-split}.  This proves \eqref{eq:approx-poincare}.
\end{proof}

\begin{proposition}[Warm-start mixing for a positive kernel]
\label{prop:PSD-mixing}
Let \(R\) be reversible and positive semidefinite, let \(\nu\) be \(M\)-warm with
\(M\ge1\), and let \(\epsilon\in(0,1)\).  Put
\[
  s=\frac{\epsilon^2}{128M^2},
  \qquad
  \bar\phi=\min\{\phi,1\}.
\]
If \(\Phi_s(R)\ge\phi>0\), then
\begin{align}
  \norm{\nu R^n-\pi}_{\TV}\le\epsilon
  \label{eq:PSD-mixing-conclusion}
\end{align}
whenever
\begin{align}
  n\ge C\bar\phi^{-2}\log\!\left(\frac{CM}{\epsilon}\right)
  \label{eq:PSD-mixing-time}
\end{align}
for a universal constant \(C\).
\end{proposition}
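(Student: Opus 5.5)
We prove \Cref{prop:PSD-mixing} by the standard $L^2$ route, with positive semidefiniteness taking the place of laziness. Let $f=\dd\nu/\dd\pi-1$, so that $\pi(f)=0$ and $-1\le f\le M-1$. The density of $\nu R^n$ with respect to $\pi$ is then $1+R^nf$, because $R$ is reversible, and
\[
  2\norm{\nu R^n-\pi}_{\TV}=\norm{R^nf}_{L^1(\pi)}.
\]
A direct $L^2$ bound on $R^nf$ is not enough, since $\norm f_2^2\le M$ and the approximate Poincar\'e inequality carries an additive $\norm{\cdot}_\infty^2$ error. We therefore follow the Lov\'asz--Simonovits style argument and track the sequence $f_k=R^kf$. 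Each $f_k$ is bounded with $\norm{f_k}_\infty\le M$, since $R$ is Markov, and each has mean zero.

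The key step is a contraction estimate per step. Because $R$ is self-adjoint with spectrum in $[0,1]$, we have $R^2\preceq R$, and hence
\[
  \norm{f_{k+1}}_2^2=\ip{f_k}{R^2f_k}\le\ip{f_k}{Rf_k}=\norm{f_k}_2^2-\mathcal E_R(f_k,f_k).
\]
This is exactly the point where positivity replaces laziness; without it one only gets $I-R^2$, which can vanish near eigenvalue $-1$. Combining with \Cref{prop:approx-poincare} and $\Phi_s(R)\ge\bar\phi$ (restricted conductance is at most one, so this costs nothing) gives
\[
  \norm{f_{k+1}}_2^2\le\Bigl(1-\tfrac{\bar\phi^2}{2}\Bigr)\norm{f_k}_2^2+8\bar\phi^2 sM^2 .
\]
Iterating this recursion and summing the geometric series yields
\[
  \norm{f_n}_2^2\le e^{-n\bar\phi^2/2}M+16sM^2.
\]

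To finish, convert to total variation with Cauchy--Schwarz, $\norm{f_n}_{L^1}\le\norm{f_n}_2$. The additive term contributes $\sqrt{16sM^2}=\epsilon/\sqrt8$ to $\norm{f_n}_2$, and hence $\epsilon/(2\sqrt8)$ to the total variation. This is the reason for the choice $s=\epsilon^2/(128M^2)$. The exponential term is at most $\epsilon^2/4$ once $n\ge 2\bar\phi^{-2}\log(4M/\epsilon^2)$, which is of the form $C\bar\phi^{-2}\log(CM/\epsilon)$ after adjusting constants. Adding the two contributions gives total variation at most $\epsilon$.

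The main obstacle is that the additive error $16s\norm{f}_\infty^2$ uses the sup norm, which is not contracted by $R$; it is only uniformly bounded by $M$. We handle this by carrying it as a constant per step and summing the geometric series, as above, rather than trying to contract it. We also must make sure that boundedness of $f_k$ is preserved, which holds because $R$ is Markov, and that \Cref{prop:approx-poincare} applies at every step, which holds because each $f_k$ is bounded and mean zero.
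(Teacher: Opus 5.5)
Your proposal is correct and matches the paper's proof essentially step for step: the recursion $\|f_{k+1}\|_2^2\le\|f_k\|_2^2-\mathcal E_R(f_k,f_k)$ from $R^2\preceq R$, the approximate Poincar\'e inequality with the uniform bound $\|f_k\|_\infty\le M$, the geometric-series bound $e^{-n\bar\phi^2/2}M+16sM^2$, and Cauchy--Schwarz to pass to total variation. One minor point: your parenthetical claim that restricted conductance is at most one is false in general, because the denominator is $\pi(S)-s$ rather than $\pi(S)$; nothing depends on it, since $\Phi_s(R)\ge\phi\ge\bar\phi$ holds trivially and $\bar\phi\le1$ is needed only to keep the multiplier $1-\bar\phi^2/2$ in $[1/2,1)$.
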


\begin{proof}
Let
\[
  r_0=\frac{\dd\nu}{\dd\pi},
  \qquad
  f_0=r_0-1,
  \qquad
  f_n=R^nf_0.
\]
Since \(\nu\) is \(M\)-warm, \(0\le r_0\le M\) almost everywhere.  Hence
\begin{align*}
  \norm{f_0}_\infty\le M.
\end{align*}
A Markov operator is an \(L^\infty\) contraction, so
\begin{align}
  \norm{f_n}_\infty\le M,
  \qquad n\ge0.
  \label{eq:fn-Linf}
\end{align}
Moreover,
\begin{align}
  \norm{f_0}_2^2
  &=\int(r_0-1)^2\dd\pi
  =\int r_0^2\dd\pi-1
  \notag\\
  &\le M\int r_0\dd\pi-1
  =M-1
  \le M.
  \label{eq:f0-L2}
\end{align}

Because \(R\) is self-adjoint, positive semidefinite, and contractive, its spectrum lies
in \([0,1]\).  The scalar inequality \(\lambda^2\le\lambda\) on \([0,1]\), together
with the spectral theorem, gives
\begin{align*}
  R^2\preceq R.
\end{align*}
Let \(a_n=\norm{f_n}_2^2\).  Then
\begin{align*}
  a_{n+1}
  &=\ip{f_n}{R^2f_n}
  \le\ip{f_n}{Rf_n}
  \notag\\
  &=a_n-\mathcal E_R(f_n,f_n).
\end{align*}
Since \(\Phi_s(R)\ge\phi\ge\bar\phi\),
\Cref{prop:approx-poincare} may be applied with \(\bar\phi\).  Its right-hand side is
allowed to be nonpositive; when that happens the inequality follows from
\(\mathcal E_R(f_n,f_n)\ge0\).  Using \(\pi(f_n)=0\) and
\eqref{eq:fn-Linf}, we obtain
\begin{align*}
  a_{n+1}
  \le
  \left(1-\frac{\bar\phi^2}{2}\right)a_n
  +8\bar\phi^2sM^2.
\end{align*}
Because \(0<\bar\phi\le1\), the multiplier belongs to \([1/2,1)\).  Iterating the
recursion and using \eqref{eq:f0-L2} gives
\begin{align}
  a_n
  &\le
  \left(1-\frac{\bar\phi^2}{2}\right)^nM
  +8\bar\phi^2sM^2
  \sum_{j=0}^{n-1}
  \left(1-\frac{\bar\phi^2}{2}\right)^j
  \notag\\
  &\le
  M e^{-n\bar\phi^2/2}+16sM^2.
  \label{eq:an-solution}
\end{align}
Finally,
\begin{align}
  \norm{\nu R^n-\pi}_{\TV}
  =\frac12\norm{f_n}_1
  \le\frac12\norm{f_n}_2
  =\frac12\sqrt{a_n}.
  \label{eq:TV-L2}
\end{align}
With \(s=\epsilon^2/(128M^2)\), the second term in
\eqref{eq:an-solution} equals \(\epsilon^2/8\).  If
\[
  n
  \ge
  \frac{2}{\bar\phi^2}
  \log\!\left(\frac{8M}{\epsilon^2}\right),
\]
then the first term is also at most \(\epsilon^2/8\).  Thus
\(a_n\le\epsilon^2/4\), and \eqref{eq:TV-L2} is at most
\(\epsilon/4\), which is stronger than \eqref{eq:PSD-mixing-conclusion}.  Enlarging a
universal constant converts the displayed lower bound on \(n\) into
\eqref{eq:PSD-mixing-time}.
\end{proof}

\section{State-selection laws on a certified orbit}
\label{sec:selector-minorization}

Fix a power of two \(K\ge2\), an energy tolerance \(\eta>0\), and a position set \(D\).  Suppose that the terminal-depth certificate of \Cref{def:terminal-certificate} holds on momentum sets \(G_x=G_x(K,\eta)\), \(x\in D\).  For \(j\in\Z\), write
\begin{align*}
Q_{j,x}=\Law\!\left(\Pi_x\Phi_h^j(x,V)\right),
\qquad V\sim N(0,I_d).
\end{align*}
Leapfrog reversibility gives \(Q_{-j,x}=Q_{j,x}\).

\subsection{Multinomial selection}

At level \(r\in\{0,\ldots,\log_2K-1\}\), let \(B_r=1\) if the newly added block is placed to the left and \(B_r=0\) otherwise.  Since every bit sequence reaches size \(K\), the number of final orbit points to the left of the initial root is
\begin{align*}
A=\sum_{r=0}^{\log_2K-1}2^rB_r
\sim\operatorname{Unif}\{0,\ldots,K-1\}.
\end{align*}
The completed orbit is
\begin{align*}
I_A=\{-A,-A+1,\ldots,K-1-A\}.
\end{align*}
On \(G_x\), every orbit weight lies between \(e^{-H(x,v)-\eta}\) and \(e^{-H(x,v)+\eta}\).  Hence each multinomial probability is at least \(e^{-2\eta}/K\), and the conditional selector law has the decomposition
\begin{align}
\mu_{(x,v)}^{I_A}
=e^{-2\eta}\operatorname{Unif}(I_A)
+(1-e^{-2\eta})R_{x,v,A}
\label{eq:mul-decomposition}
\end{align}
for a probability law \(R_{x,v,A}\).

Let \(C\) be independent and uniform on \(\{0,\ldots,K-1\}\).  Under the uniform component, the signed selected index is \(J=C-A\).  Therefore
\begin{align}
\lambda_{\mathrm M,K}(j)
=\Pp(J=j)
=\frac{K-|j|}{K^2},
\qquad |j|\le K-1.
\label{eq:mul-time-law}
\end{align}
Define
\begin{align*}
Q_{\mathrm M,x}
=\sum_{j=-(K-1)}^{K-1}\lambda_{\mathrm M,K}(j)Q_{j,x},
\end{align*}
\begin{align*}
\widetilde Q_{\mathrm M,x}(B)
=\E\!\left[
\1_{G_x}(V)
\sum_j\lambda_{\mathrm M,K}(j)
\1_B\!\left(\Pi_x\Phi_h^j(x,V)\right)
\right].
\end{align*}

\begin{lemma}[Multinomial minorization]
\label{lem:mul-minorization}
For every \(x\in D\) and measurable \(B\subseteq\R^d\),
\begin{align*}
\PM(x,B)\ge e^{-2\eta}\widetilde Q_{\mathrm M,x}(B).
\end{align*}
\end{lemma}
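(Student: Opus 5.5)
The plan is to write $\PM(x,B)$ as an expectation over the refreshed momentum, the doubling bits and the multinomial draw, and then discard everything outside the certified event. Concretely,
\[
\PM(x,B)=\E_V\Bigl[\sum_{I}O_{(x,V)}(I)\sum_{j\in I}\mu_{(x,V)}^{I}(j)\,\1_B\bigl(\Pi_x\Phi_h^j(x,V)\bigr)\Bigr].
\]
All terms are nonnegative, so restricting to $\{V\in G_x\}$ gives a lower bound.

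The first step is to fix $v\in G_x=G_x(K,\eta)$. By \Cref{def:terminal-certificate}(i) every bit realization returns an orbit of cardinality $K$. I then use the bookkeeping already recorded above: with $B_r=1$ when the new block is added on the left, the returned orbit is $I_A=\{-A,\dots,K-1-A\}$ with $A=\sum_r 2^rB_r$. The map from bit strings to $A$ is a bijection onto $\{0,\dots,K-1\}$, so $A$ is uniform. Hence
\[
O_{(x,v)}(I_A)=1/K \quad\text{for each } A.
\]

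The second step uses \Cref{def:terminal-certificate}(ii). Every index in $I_A$ has $|j|\le K-1$, so every Boltzmann weight lies in $[e^{-H-\eta},e^{-H+\eta}]$. Therefore
\[
\mu^{I_A}_{(x,v)}(j)\ge \frac{e^{-H-\eta}}{K e^{-H+\eta}}=\frac{e^{-2\eta}}{K},
\]
which is the decomposition \eqref{eq:mul-decomposition}. Dropping the nonnegative remainder term $R_{x,v,A}$, the conditional probability of landing in $B$ given $v$ is at least
\[
e^{-2\eta}\sum_{A=0}^{K-1}\frac1K\sum_{c=0}^{K-1}\frac1K\,\1_B\bigl(\Pi_x\Phi_h^{c-A}(x,v)\bigr).
\]
Reindexing by $j=c-A$ gives multiplicity $K-|j|$ for each $|j|\le K-1$. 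The inner double sum therefore equals
\[
\sum_j\lambda_{\mathrm M,K}(j)\,\1_B\bigl(\Pi_x\Phi_h^j(x,v)\bigr),
\]
by \eqref{eq:mul-time-law}.

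The final step multiplies by $\1_{G_x}(v)$ and integrates over $v\sim N(0,I_d)$. This yields $e^{-2\eta}\widetilde Q_{\mathrm M,x}(B)$ directly from its definition.

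There is no serious obstacle, but one point needs care. The uniformity of $A$ has to be applied to the full bit string of length $\log_2K$, and this is valid only because the certificate guarantees that no bit sequence terminates early. That is exactly why the bound is proved for $v\in G_x$ rather than for general $v$. Measurability of $G_x$ and of the integrand is inherited from the finitely many leapfrog maps involved.
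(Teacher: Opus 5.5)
Your proposal is correct and follows the paper's proof essentially step for step. You fix $v\in G_x$, use the certificate to get a uniform left-offset $A$ and the pointwise bound $\mu^{I_A}_{(x,v)}(j)\ge e^{-2\eta}/K$, average over the doubling bits to obtain the triangular law $\lambda_{\mathrm M,K}$, and then restrict to $G_x$ and integrate over the momentum; you just make explicit the counting and nonnegativity steps the paper leaves implicit.
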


\begin{proof}
Fix \(v\in G_x\).  Equation \eqref{eq:mul-decomposition} assigns mass \(e^{-2\eta}\) to uniform selection on \(I_A\).  Averaging that component over the independent fair bits gives \eqref{eq:mul-time-law}.  Restricting to \(G_x\) and integrating over the Gaussian momentum proves the claim.
\end{proof}

\subsection{Biased progressive sampling}

At the final accepted doubling, the old orbit and the new half each contain \(K/2\) points.  On \(G_x\),
\begin{align*}
\frac{W_z(I^{\mathrm{new}})}{W_z(I^{\mathrm{old}})}\ge e^{-2\eta},
\end{align*}
so the final replacement probability is at least \(e^{-2\eta}\).  Conditional on replacement, every point in the new half has probability at least \(e^{-2\eta}/(K/2)\).  Thus the final BPS output law, conditional on the completed orbit, dominates
\begin{align}
e^{-4\eta}\operatorname{Unif}(I^{\mathrm{new}}).
\label{eq:bps-final-uniform}
\end{align}
This conclusion is independent of all previous candidate updates, because a final replacement overwrites the previous candidate.

Set \(n=K/2\).  Before the last doubling, the number \(A\) of old points to the left of zero is uniform on \(\{0,\ldots,n-1\}\).  If the last extension is to the right and \(C\) is uniform on \(\{0,\ldots,n-1\}\), the selected index under the uniform component is $J=n-A+C.$ For \(j\in\{1,\ldots,K-1\}\), the number of pairs \((a,c)\) producing \(j\) equals \(\min\{j,K-j\}\).  Accounting for the probability \(1/2\) of a right extension and reflecting the left-extension case gives
\begin{align}
\lambda_{\mathrm B,K}(0)=0,
\qquad
\lambda_{\mathrm B,K}(j)
=\frac{2}{K^2}\min\{|j|,K-|j|\},
\quad 1\le|j|\le K-1.
\label{eq:bps-time-law}
\end{align}
The positive side has total mass one half because
\(
\sum_{j=1}^{K-1}\min\{j,K-j\}=K^2/4
\).

Define
\begin{align*}
Q_{\mathrm B,x}
=\sum_{j=-(K-1)}^{K-1}\lambda_{\mathrm B,K}(j)Q_{j,x},
\end{align*}
\begin{align*}
\widetilde Q_{\mathrm B,x}(B)
=\E\!\left[
\1_{G_x}(V)
\sum_j\lambda_{\mathrm B,K}(j)
\1_B\!\left(\Pi_x\Phi_h^j(x,V)\right)
\right].
\end{align*}

\begin{lemma}[BPS final-update minorization]
\label{lem:bps-minorization}
For every \(x\in D\) and measurable \(B\subseteq\R^d\),
\begin{align*}
\PB(x,B)\ge e^{-4\eta}\widetilde Q_{\mathrm B,x}(B).
\end{align*}
\end{lemma}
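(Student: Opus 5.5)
The plan mirrors the multinomial argument (\Cref{lem:mul-minorization}). It has three steps: isolate the final replacement event, identify its uniform component, and average over the doubling bits.

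\emph{Step 1: the final doubling.} Fix \(x\in D\) and \(v\in G_x=G_x(K,\eta)\), and let \(z=(x,v)\). By item (i) of \Cref{def:terminal-certificate}, every realization of the doubling bits returns an orbit of cardinality \(K\). Hence the accepted history always has \(R=\log_2K\) doublings. Its final step is a pair \((I^{\mathrm{old}},I^{\mathrm{new}})=(I_{R-1},N_{R-1})\), each containing \(n=K/2\) points. Item (ii) of the certificate places every Boltzmann weight on the orbit in \([e^{-H(z)-\eta},e^{-H(z)+\eta}]\). Therefore \(W_z(N_{R-1})/W_z(I_{R-1})\ge e^{-2\eta}\), and the acceptance \(A_{R-1}\) in \eqref{eq:bps-acceptance} is at least \(e^{-2\eta}\). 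On replacement, \(C_R\sim\mu_z^{N_{R-1}}\), and each point of the new half then has probability at least \(e^{-2\eta}/n\).

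\emph{Step 2: the uniform component.} The final replacement overwrites \(C_{R-1}\), whatever the earlier recursion produced. So, conditional on the bits and on \(v\), the law of \(C_R\) dominates \(e^{-4\eta}\operatorname{Unif}(I^{\mathrm{new}})\), as in \eqref{eq:bps-final-uniform}. This domination is a measure inequality between subprobability laws on \(I\). It holds pointwise in the bits, and the remaining mass is absorbed into a nonnegative remainder. This is the point needing the most care: the bound must not depend on the earlier random selections, and the overwrite property guarantees exactly that.

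\emph{Step 3: averaging over the bits.} Let \(A=\sum_{r<R-1}2^rB_r\) count the old points to the left of the root. By \Cref{prop:terminal-depth}, every bit sequence is accepted, so \(A\) is uniform on \(\{0,\dots,n-1\}\) and independent of the final bit. Write \(C'\) for the uniform position inside the new half.
\begin{itemize}
\item A right extension gives the signed index \(J=n-A+C'\).
\item A left extension gives the reflected index \(-J\).
\end{itemize}
Counting pairs \((A,C')\) reproduces \eqref{eq:bps-time-law}, which is \(\lambda_{\mathrm B,K}\). Consequently, for each fixed \(v\in G_x\),
\[
\Pp\bigl(\Pi_x\Phi_h^{C_R}z\in B\bigm| V=v\bigr)\ge e^{-4\eta}\sum_j\lambda_{\mathrm B,K}(j)\1_B\bigl(\Pi_x\Phi_h^j(x,v)\bigr).
\]
Multiplying by \(\1_{G_x}(v)\), discarding the nonnegative contribution from \(v\notin G_x\), and integrating against \(V\sim N(0,I_d)\) gives \(\PB(x,B)\ge e^{-4\eta}\widetilde Q_{\mathrm B,x}(B)\).
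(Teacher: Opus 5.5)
Your proposal is correct and follows the paper's argument essentially step for step. The bounds $A_{R-1}\ge e^{-2\eta}$ and $\mu_z^{N_{R-1}}(j)\ge e^{-2\eta}/n$, together with the overwrite property, give the $e^{-4\eta}\operatorname{Unif}(I^{\mathrm{new}})$ component, and averaging $J=n-A+C'$ (with reflection) over the fair bits yields $\lambda_{\mathrm B,K}$ before you restrict to $G_x$ and integrate over the momentum. One small citation slip: the fact that every bit sequence is accepted (so $A$ is uniform and independent of the last bit) follows directly from item (i) of \Cref{def:terminal-certificate}, not from \Cref{prop:terminal-depth}.
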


\begin{proof}
Fix \(v\in G_x\) and condition on the doubling bits.  Equation \eqref{eq:bps-final-uniform} supplies an \(e^{-4\eta}\)-mass uniform component on the last new half.  Averaging that component over the fair bits gives \eqref{eq:bps-time-law}.  Restrict to \(G_x\) and integrate over the momentum.
\end{proof}

\subsection{Mass in useful time bands}

The following estimates quantify selector mass in fixed-fraction and fixed-physical-time portions of a certified orbit.

\begin{lemma}[Mid-orbit mass]
\label{lem:mid-orbit-mass}
If \(K\ge8\) and
\begin{align*}
\mathcal J_{\mathrm{mid}}
=\{K/4,K/4+1,\ldots,K/2-1\},
\end{align*}
then, for \(a\in\{\mathrm M,\mathrm B\}\),
\begin{align*}
\sum_{j\in\mathcal J_{\mathrm{mid}}}\lambda_{a,K}(j)\ge\frac18.
\end{align*}
\end{lemma}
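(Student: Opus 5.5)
This is a direct summation over the explicit laws \eqref{eq:terminal-mul-law} and \eqref{eq:terminal-bps-law}. No probabilistic input is needed. Write $n=K/4$, which is an integer at least $2$ because $K\ge8$ is a power of two. Then $\mathcal J_{\mathrm{mid}}=\{n,\ldots,2n-1\}$ has exactly $n$ elements, and every $j$ in it satisfies $1\le j\le K/2-1$.

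For the multinomial selector, $\lambda_{\mathrm M,K}(j)=(K-j)/K^2$ is decreasing in $j$. On the band it is therefore at least
\[
  \frac{K-(2n-1)}{K^2}\ge\frac{K/2}{K^2}=\frac{1}{2K}.
\]
Summing over the $n=K/4$ indices gives mass at least $1/8$. If we want to be exact instead, an arithmetic-series computation gives
\[
  \frac{n(K-n)-n(n-1)/2}{K^2},
\]
which is about $5/32$, so the $1/8$ bound has room to spare.

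For BPS, every $j$ in the band satisfies $j\le K/2-1<K-j$. Hence $\min\{j,K-j\}=j$ and $\lambda_{\mathrm B,K}(j)=2j/K^2$. The band sum is
\[
  \frac{2}{K^2}\sum_{j=n}^{2n-1}j=\frac{2}{K^2}\cdot\frac{n(3n-1)}{2}=\frac{n(3n-1)}{16n^2}=\frac{3n-1}{16n}.
\]
For $n\ge2$ this is at least $5/32\ge 1/8$. The crude bound $j\ge K/4$ only yields $n\cdot 2(K/4)/K^2=1/8$ directly, so that route also works, with equality as the worst case.

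The only step needing care is confirming that the band lies in the increasing half of the double tent, that is, $j<K/2$, so that the minimum is $j$. That holds by construction of $\mathcal J_{\mathrm{mid}}$. The role of the hypothesis $K\ge8$ is just to make $K/4$ an integer at least $2$, so the band is nonempty and the endpoint bounds apply.
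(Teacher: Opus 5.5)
Your proof is correct and matches the paper's argument: bound each weight on the band below by $1/(2K)$, using $K-j\ge K/2$ for the triangular law and $\min\{j,K-j\}=j\ge K/4$ for the double tent, then multiply by the $K/4$ indices in the band. Your exact band sums, $(5n+1)/(32n)$ for the multinomial law and $(3n-1)/(16n)$ for BPS with $n=K/4$, are a correct refinement showing the bound $1/8$ is not tight.
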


\begin{proof}
For every \(j\in\mathcal J_{\mathrm{mid}}\),
\[
\lambda_{\mathrm M,K}(j)=\frac{K-j}{K^2}\ge\frac1{2K},
\qquad
\lambda_{\mathrm B,K}(j)=\frac{2j}{K^2}\ge\frac1{2K}.
\]
The set has \(K/4\) elements, so the sum is at least \(1/8\).
\end{proof}

\begin{lemma}[Short-band mass inside a longer orbit]
\label{lem:short-band-mass}
Let \(0<r\le1/4\), assume \(rK\ge8\), and define
\begin{align*}
\mathcal J(r)
=\{j\in\N:\lceil rK/2\rceil\le j\le\lfloor rK\rfloor\}.
\end{align*}
There are universal constants \(c_1,c_2>0\) such that
\begin{align}
\sum_{j\in\mathcal J(r)}\lambda_{\mathrm M,K}(j)\ge c_1r,
\qquad
\sum_{j\in\mathcal J(r)}\lambda_{\mathrm B,K}(j)\ge c_2r^2.
\label{eq:short-band-masses}
\end{align}
\end{lemma}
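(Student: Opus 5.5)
The plan is to bound each sum from below by a count of band elements times a uniform pointwise lower bound on the selector weight over the band. I will first record two facts about the index set \(\mathcal J(r)\) that both estimates use.

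First, the size of the band. Put \(a=\lceil rK/2\rceil\) and \(b=\lfloor rK\rfloor\). Then
\[
  |\mathcal J(r)|=b-a+1\ge (rK-1)-(rK/2+1)+1=rK/2-1.
\]
Since \(rK\ge8\), we have \(rK/2-1\ge rK/4\). So \(|\mathcal J(r)|\ge rK/4\).

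Second, the location of the band. Every \(j\in\mathcal J(r)\) satisfies \(rK/2\le j\le rK\le K/4\). In particular \(1\le j\le K-1\), so both selector laws are given by their generic formulas there.

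For the multinomial law, each \(j\in\mathcal J(r)\) has
\[
  \lambda_{\mathrm M,K}(j)=\frac{K-j}{K^2}\ge\frac{3K/4}{K^2}=\frac{3}{4K}.
\]
Multiplying by the band size gives a total mass of at least \((rK/4)\cdot 3/(4K)=3r/16\). This proves the first estimate with \(c_1=3/16\).

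For the BPS law, \(j\le K/4<K/2\) gives \(\min\{j,K-j\}=j\ge rK/2\). Hence
\[
  \lambda_{\mathrm B,K}(j)=\frac{2j}{K^2}\ge\frac{r}{K}.
\]
Summing over the band gives a total mass of at least \((rK/4)\cdot(r/K)=r^2/4\), so \(c_2=1/4\).

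The only delicate point is the floor/ceiling arithmetic. The hypothesis \(rK\ge8\) is exactly what makes the band nonempty and of size comparable to \(rK\), and there is no other obstacle.
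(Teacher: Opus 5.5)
Your proposal is correct and follows the paper's proof essentially verbatim: band size at least \(rK/4\), pointwise bounds \(\lambda_{\mathrm M,K}(j)\ge 3/(4K)\) and \(\lambda_{\mathrm B,K}(j)=2j/K^2\ge r/K\) from \(rK/2\le j\le rK\le K/4\), giving \(c_1=3/16\) and \(c_2=1/4\). The paper only asserts the count \(|\mathcal J(r)|\ge rK/4\), and your floor/ceiling calculation supplies it.
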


\begin{proof}
The interval \(\mathcal J(r)\) contains at least \(rK/4\) integers.  Since \(j\le rK\le K/4\),
\[
\lambda_{\mathrm M,K}(j)\ge\frac{3}{4K},
\qquad
\lambda_{\mathrm B,K}(j)=\frac{2j}{K^2}\ge\frac r{K}
\]
for every \(j\in\mathcal J(r)\).  Summation gives \eqref{eq:short-band-masses}, for example with \(c_1=3/16\) and \(c_2=1/4\).
\end{proof}

\section{Proofs for the non-Gaussian specializations}
\label{app:non-gaussian-proofs}

\subsection{Koopman spectral representation and profile crossing}
\label{app:koopman-proof}

Define the Koopman group
\begin{align*}
  (\mathcal U_t f)(z)
  =
  f(\flow_tz),
  \qquad
  f\in L^2(\bar\pi).
\end{align*}
Hamiltonian volume preservation and invariance of \(H\) make
\((\mathcal U_t)_{t\in\R}\) a strongly continuous unitary group.  Let $  f_i(x,v)=x_i-\E_\pi X_i.$

\begin{lemma}[Position spectral measure]
\label{lem:position-spectral-measure}
There is a finite positive measure \(\nu_U\) on \([0,\infty)\) for
which
\eqref{eq:koopman-correlation}--\eqref{eq:koopman-profile} hold.
\end{lemma}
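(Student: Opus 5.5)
The plan is to apply the spectral theorem to the unitary Koopman group. By Stone's theorem there is a projection-valued measure $E$ on $\R$ with $\mathcal U_t=\int e^{i\lambda t}E(\dd\lambda)$. Each $f_i$ lies in $L^2(\bar\pi)$ because strong log-concavity gives Gaussian tails, so $f_i$ has a finite positive scalar spectral measure $\mu_i(\dd\lambda)=\ip{f_i}{E(\dd\lambda)f_i}$. Stationarity then gives
\[
  \E[f_i(Z_t)f_i(Z_0)]=\ip{\mathcal U_tf_i}{f_i}=\int e^{i\lambda t}\mu_i(\dd\lambda).
\]
Next I would show that each $\mu_i$ is symmetric under $\lambda\mapsto-\lambda$. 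The momentum flip $S$ preserves $\bar\pi$, fixes $f_i$, and satisfies $\mathcal U_t S=S\mathcal U_{-t}$; this is the time reversal \eqref{eq:flow-time-reversal-law}. Hence the correlation is even in $t$ and real, so $\mu_i$ is symmetric. Set $\mu=\sum_i\mu_i$ and push it forward under $\lambda\mapsto|\lambda|$ to get $\nu_U$ on $[0,\infty)$. Symmetry turns the Fourier transform into a cosine transform, which gives \eqref{eq:koopman-correlation}.

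For \eqref{eq:koopman-profile}, I would use $R(t)=\E\norm{X_t-X_0}^2=2\sum_i\mathrm{Var}(X_i)-2\sum_i\E[f_i(Z_t)f_i(Z_0)]$, which yields
\[
  R(t)=2\int(1-\cos\omega t)\,\nu_U(\dd\omega).
\]
To differentiate, I need the second spectral moment. Since $f_i$ is smooth along the flow with $\frac{\dd}{\dd t}f_i(Z_t)=V_{t,i}$, the function $f_i$ lies in the domain of the generator $A$ with $Af_i=v_i$. Then $\int\lambda^2\mu_i(\dd\lambda)=\norm{Af_i}^2=\E V_i^2=1$, so summing gives $\int\omega^2\,\nu_U(\dd\omega)=d$. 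The main technical point is justifying this domain membership. I would get it from the fact that $(\mathcal U_tf_i-f_i)/t\to v_i$ in $L^2$, which follows by dominated convergence from $\abs{X_{t,i}-X_{0,i}}\le\int_0^t\abs{V_{s,i}}\dd s$ and stationarity. With the second moment finite, differentiating under the integral is legitimate, and \Cref{lem:profile-representations} gives $u=R'/2=\int\omega\sin(\omega t)\,\nu_U(\dd\omega)$. The atom at $0$ contributes nothing, so the integral is over $(0,\infty)$.

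For the crossing claim, I argue by contradiction: suppose $u\ge0$ on $(0,2\pi/\omega_0)$. Integrate $u$ against a nonnegative test weight on that interval whose sine transform is negative on all of $[\omega_0,\infty)$. The natural choice is a kernel tied to the Fejér/Jackson construction. Concretely, set $T=2\pi/\omega_0$ and consider
\[
  \int_0^T u(t)\,\sin(\omega_0t/2)\,\dd t=\int\omega\,\Psi(\omega)\,\nu_U(\dd\omega),
\]
where $\Psi(\omega)=\int_0^T\sin(\omega t)\sin(\omega_0t/2)\,\dd t$. The obstacle is that this particular $\Psi$ changes sign, so the weight has to be tuned. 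I would first try $w(t)=t(T-t)$ or $w(t)=1-\cos(\omega_0t)$ on $[0,T]$; the latter gives the explicit formula
\[
  \int_0^T(1-\cos\omega_0t)\sin\omega t\,\dd t=\frac{1-\cos\omega T}{\omega}-\frac{\omega(1-\cos\omega T)}{\omega^2-\omega_0^2},
\]
which equals $-(1-\cos\omega T)\,\omega_0^2/\bigl(\omega(\omega^2-\omega_0^2)\bigr)$. This is $\le0$ for $\omega>\omega_0$ and vanishes at $\omega=\omega_0$ by continuity. Because the weight is nonnegative and $u\ge0$ on the interval, the left side is $\ge0$, while the right side is $\le0$; so equality is forced. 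Equality then forces $\nu_U$ to be supported on the set $\{\omega: \omega T\in2\pi\Z\}\cup\{\omega_0\}$.

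I would close the argument with a second weight, for instance $w(t)=1-\cos(2\omega_0t)$ on a shorter window, to exclude these lattice frequencies. Then $\nu_U$ would vanish on $(0,\infty)$, so $\int\omega^2\,\nu_U(\dd\omega)=0$, contradicting the moment identity $\int\omega^2\,\nu_U(\dd\omega)=d>0$. This equality-case analysis is the step I expect to be delicate.
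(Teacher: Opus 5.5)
Your argument for the lemma itself is correct and is essentially the paper's proof: scalar spectral measures of the $f_i$ under the Koopman group, symmetrization via the momentum flip, $\nu_U=\sum_i\nu_i$ pushed to $[0,\infty)$, $Af_i=v_i$ giving $\int\omega^2\,\nu_U(\dd\omega)=d$, and differentiation of $R(t)=2\int(1-\cos\omega t)\,\nu_U(\dd\omega)$ combined with $u=R'/2$ (your $L^2$ difference-quotient check of $f_i\in D(A)$ supplies a detail the paper only asserts). The crossing argument lies outside this lemma, since the paper proves it separately in \Cref{lem:frequency-crossing}, and the equality-case analysis you flag as delicate is unnecessary there: $u(0)=0$ and $u'(0)=\int\omega^2\,\nu_U(\dd\omega)=d>0$ make $u$ strictly positive near $0$, so $u\ge0$ on $(0,T)$ would force $\int_0^T(1-\cos\omega_0t)\,u(t)\,\dd t>0$, contradicting the nonpositive spectral side at once (your suggested second weight $1-\cos(2\omega_0t)$ on $[0,\pi/\omega_0]$ has mixed signs on the frequencies $n\omega_0$ and would not close that route by itself).
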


\begin{proof}
By the spectral theorem for strongly continuous unitary groups
\cite{ReedSimon1980}, for each \(f_i\) there is a finite positive
measure \(\mu_i\) on \(\R\) such that
\[
  \ip{
    f_i
  }{
    \mathcal U_tf_i
  }_{L^2(\bar\pi)}
  =
  \int_\R
  e^{\mathrm i\omega t}
  \mu_i(\dd\omega).
\]
Momentum reversal \(S(x,v)=(x,-v)\) satisfies $  S\flow_tS=\flow_{-t},$ while \(f_i\circ S=f_i\).  Hence the correlation is real and even.
Replacing \(\mu_i\) by its symmetrization and pushing the positive and
negative halves to \([0,\infty)\) gives a positive measure \(\nu_i\)
such that
\[
  \ip{
    f_i
  }{
    \mathcal U_tf_i
  }
  =
  \int_{[0,\infty)}
  \cos(\omega t)\,
  \nu_i(\dd\omega).
\]
Set $  \nu_U=\sum_{i=1}^d\nu_i.$ This proves \eqref{eq:koopman-correlation}.

The generator \(A\) of the Koopman group is the Hamiltonian transport
operator
\[
  Af
  =
  v^\top\nabla_xf
  -
  \nabla U(x)^\top\nabla_vf.
\]
For \(f_i\), we have $  Af_i=v_i.$ Thus \(f_i\) lies in the generator domain and
\[
  \int\omega^2\nu_i(\dd\omega)
  =
  \norm{
    Af_i
  }_{L^2(\bar\pi)}^2
  =
  \E V_i^2
  =
  1.
\]
Summing gives the second identity in
\eqref{eq:koopman-profile}.

Differentiation of \eqref{eq:koopman-correlation} is justified by
Cauchy--Schwarz and the finite second spectral moment.  Since
\[
  R(t)
  =
  2\sum_{i=1}^d\norm{f_i}_2^2
  -
  2\sum_{i=1}^d
  \ip{
    f_i
  }{
    \mathcal U_tf_i
  },
\]
we obtain
\[
  \frac12R'(t)
  =
  \int_{(0,\infty)}
  \omega\sin(\omega t)\,
  \nu_U(\dd\omega).
\]
The identity \(u=R'/2\) proves
\eqref{eq:koopman-profile}.
\end{proof}

\begin{lemma}[Positive-frequency crossing]
\label{lem:frequency-crossing}
Let \(a>0\), and let \(\nu\) be a nonzero finite positive measure on
\([a,\infty)\) satisfying $\int_a^\infty
  \omega^2\nu(\dd\omega)
  <
  \infty.$ Define 
  \begin{align*}
  g(t)
  =
  \int_a^\infty
  \omega\sin(\omega t)\,
  \nu(\dd\omega).
\end{align*}
Then \(g(t)<0\) for at least one $t\in(0,2\pi/a).$
\end{lemma}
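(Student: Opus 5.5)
We argue by contradiction, testing \(g\) against a nonnegative kernel supported in \((0,2\pi/a)\) whose sine transform is nonpositive at every frequency \(\omega\ge a\), and strictly negative on a set of positive \(\nu\)-mass. Suppose \(g(t)\ge0\) for every \(t\in(0,2\pi/a)\). Set \(T=2\pi/a\) and use the test function \(\psi(t)=\sin(at/2)\ge0\) on \([0,T]\). By Fubini (justified since \(\int\omega\,\nu(\dd\omega)\le a^{-1}\int\omega^2\nu(\dd\omega)<\infty\) and \(\psi\) is bounded on a bounded interval),
\[
0\le\int_0^T g(t)\psi(t)\,\dd t=\int_a^\infty \omega\,\Psi(\omega)\,\nu(\dd\omega),
\qquad
\Psi(\omega)=\int_0^T \sin(\omega t)\sin(at/2)\,\dd t.
\]

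The key step is to evaluate \(\Psi\) explicitly. The product-to-sum formula gives
\[
\Psi(\omega)=\frac12\int_0^T\bigl[\cos((\omega-a/2)t)-\cos((\omega+a/2)t)\bigr]\dd t
=\frac12\left[\frac{\sin((\omega-a/2)T)}{\omega-a/2}-\frac{\sin((\omega+a/2)T)}{\omega+a/2}\right].
\]
Because \(aT/2=\pi\), both numerators equal \(-\sin(\omega T)\). Hence
\[
\Psi(\omega)=-\frac{\sin(\omega T)}{2}\cdot\frac{a}{\omega^2-a^2/4}.
\]
Since \(\omega\ge a>a/2\), the factor \(a/(\omega^2-a^2/4)\) is positive. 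So the sign of \(\Psi\) is the sign of \(-\sin(2\pi\omega/a)\), which is not uniformly nonpositive on \([a,\infty)\). This test function therefore fails, and we must choose a kernel better adapted to the constraint \(\omega\ge a\).

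\textbf{Main step.} The task is to find \(\psi\ge0\) on \((0,T)\) whose sine transform \(\int_0^T\psi(t)\sin(\omega t)\,\dd t\) is \(\le0\) for all \(\omega\ge a\) and not identically zero on \(\supp\nu\). A natural candidate is the Fejér-type kernel \(\psi(t)=t\bigl(1+\cos(at)\bigr)\), or similar trigonometric polynomials in \(at\) multiplied by weights vanishing at the endpoints. For each candidate we compute the transform in closed form and check the sign for \(\omega\ge a\). An alternative that avoids guessing is to use the average \(u\)-type identity \(\int_0^T g(t)\,\dd t=\int\bigl(1-\cos(\omega T)\bigr)\nu(\dd\omega)\ge0\), which has the wrong sign, together with the derivative representation \(g'(0)>0\), and argue that \(\int_0^T(T-t)g(t)\,\dd t\) equals \(\int\omega^{-1}\bigl(\omega T-\sin(\omega T)\bigr)\nu(\dd\omega)\). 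That quantity is also positive, so the choice of kernel genuinely matters.

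\textbf{Fallback.} If no closed-form kernel works, we would instead prove the contrapositive by a Laplace/Bernstein argument: a positive combination of sines \(\sin(\omega t)\) with all frequencies \(\omega\ge a\) cannot stay nonnegative on an interval of length \(2\pi/a\). Step one is to use the half-period shift identity \(g(t)+g(t+\pi/\omega)\) for single frequencies. Step two is a Sturm-type comparison: a function whose spectrum lies in \([a,\infty)\) satisfies \(\int|g'|^2\ge a^2\int|g|^2\) in a suitable averaged sense, and hence must have a zero followed by a sign change within every interval of length \(\pi/a\) beyond its first zero. Combining this with \(g>0\) near \(0^+\) forces a strictly negative value before \(2\pi/a\). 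Establishing this step rigorously is the main obstacle.
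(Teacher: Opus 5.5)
Your proposal has a genuine gap: the step you label the main step, exhibiting a kernel $\psi\ge0$ on $(0,T)$, $T=2\pi/a$, whose sine transform is nonpositive for every $\omega\ge a$, is the entire content of the lemma, and you leave it open. The one concrete candidate you name, $\psi(t)=t(1+\cos(at))$, fails. At $\omega=3a/2$ we have $\cos(\omega T)=-1$ and $\sin(\omega T)=0$, and a direct computation gives
\[
\int_0^T t(1+\cos(at))\sin(\omega t)\dd t=T\Bigl(\tfrac{2}{3a}+\tfrac{6}{5a}\Bigr)>0 .
\]
The Sturm/spectral-gap fallback is also not a proof: the oscillation property it relies on is asserted, not derived.

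The missing idea is to square your first test function. Take $w(t)=2\sin^2(at/2)=1-\cos(at)$, which is strictly positive on $(0,T)$. Redo your product-to-sum computation, now with frequency shifts $\pm a$. Because $aT=2\pi$, both $\cos((\omega\pm a)T)$ collapse to $\cos(\omega T)$, and the terms combine to
\[
\int_0^T(1-\cos(at))\,\omega\sin(\omega t)\dd t=-\frac{a^2\bigl(1-\cos(\omega T)\bigr)}{\omega^2-a^2}\le0,\qquad \omega>a,
\]
with value $0$ at $\omega=a$. Your Fubini justification then gives $\int_0^T w\,g\le0$.

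Your stated requirement that the transform be strictly negative on a set of positive $\nu$-mass is the wrong place to seek strictness. It fails for this very kernel whenever $\nu$ is carried by integer multiples of $a$, where the transform vanishes. Strictness should come from the other side, an ingredient you already mention. Since $g(0)=0$ and $g'(0)=\int\omega^2\,\nu(\dd\omega)>0$, we get $g>0$ on some $(0,\varepsilon)$. So if $g\ge0$ on $(0,T)$, then $\int_0^T w\,g>0$, which is a contradiction. This is precisely the paper's proof.
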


\begin{proof}
Put
\[
  T=\frac{2\pi}{a},
  \qquad
  w(t)=1-\cos(at).
\]
For \(\omega>a\), direct integration gives
\begin{align*}
  \int_0^T
  (1-\cos(at))
  \omega\sin(\omega t)
  \dd t
  =
  -
  \frac{
    a^2\{1-\cos(\omega T)\}
  }{
    \omega^2-a^2
  }
  \le0.
\end{align*}
At \(\omega=a\), the integral is zero, agreeing with the continuous
extension of the right-hand side.  The finite second moment permits
Fubini, so
\begin{align}
  \int_0^T
  w(t)g(t)\dd t
  \le0.
  \label{eq:weighted-profile-negative}
\end{align}
On the other hand,
\[
  g(0)=0,
  \qquad
  g'(0)
  =
  \int\omega^2\nu(\dd\omega)
  >
  0.
\]
Thus \(g(t)>0\) for all sufficiently small positive \(t\).  If
\(g\ge0\) throughout \([0,T]\), then \(w>0\) on \((0,T)\) would make
the left-hand side of
\eqref{eq:weighted-profile-negative} strictly positive, a
contradiction.
\end{proof}

\begin{proof}[Proof of \Cref{thm:koopman-crossing}]
The representation is
\Cref{lem:position-spectral-measure}.  Under
\eqref{eq:position-spectral-gap}, discard the atom at zero, which does
not contribute to \(u\), and apply
\Cref{lem:frequency-crossing} with \(a=\omega_0\).  The remaining
measure is nonzero because its second moment equals \(d\).
\end{proof}

\subsection{One-dimensional and independent-product targets}
\label{app:product-proof}

\begin{lemma}[Frequency gap for a one-dimensional strongly convex oscillator]
\label{lem:one-dimensional-gap}
Let \(W\) satisfy
\eqref{eq:one-dimensional-curvature}.  The Koopman spectral measure of
the one-dimensional position observable is supported in
\begin{align}
  \{0\}\cup[\sqrt m,\infty).
  \label{eq:one-dimensional-spectral-gap}
\end{align}
\end{lemma}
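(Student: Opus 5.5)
The plan is to exploit the fact that a one-dimensional Hamiltonian system is integrable. Every orbit of the equilibrium Hamiltonian flow other than the fixed point \((x_\star,0)\) is periodic, so the position observable along an orbit has a Fourier series with frequencies \(2\pi k/P(E)\). It then suffices to show that the period satisfies \(P(E)\le 2\pi/\sqrt m\) on every energy shell. Here \(x_\star\) is the minimizer of \(W\), and \(P(E)\) is the period of the orbit with energy \(E>W(x_\star)\).

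\emph{Step 1: the period bound.} Along an orbit put \(y_t=q_t-x_\star\). Since \(W'(x_\star)=0\), we can write \(W'(q_t)=a(t)y_t\) with \(a(t)=\int_0^1W''(x_\star+sy_t)\dd s\in[m,L]\), so that \(\ddot y_t+a(t)y_t=0\). Energy conservation and strong convexity show that every nontrivial orbit is a closed curve winding once around \((x_\star,0)\). Consequently \(y\) changes sign exactly twice per period. The Sturm comparison theorem against \(\ddot w+mw=0\) then shows that consecutive zeros of \(y\) are at most \(\pi/\sqrt m\) apart. Summing the two half-oscillations gives \(P(E)\le2\pi/\sqrt m\). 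This step is also how I would verify the existence of zeros, directly from the closed-orbit picture.

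\emph{Step 2: disintegration of the correlation.} On the complement of the \(\bar\pi\)-null fixed point, I would introduce measurable energy--angle coordinates \(z\leftrightarrow(E,\phi)\), where \(\phi\in\R/\Z\) is the normalized time along the orbit. In these coordinates the flow acts as \(\phi\mapsto\phi+t/P(E)\). Liouville's theorem (the standard coarea identity \(\dd x\,\dd v=P(E)\,\dd E\,\dd\phi\)) gives \(\bar\pi(\dd z)=\rho(\dd E)\,\dd\phi\) for a finite measure \(\rho\). Put \(f=x-\E X\), which lies in \(L^2(\bar\pi)\), and expand it on each shell as \(f(E,\phi)=\sum_{k\in\Z}c_k(E)e^{2\pi\mathrm i k\phi}\). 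Parseval then gives
\[
  \ip{f}{\mathcal U_tf}_{L^2(\bar\pi)}
  =
  \int\sum_{k\in\Z}|c_k(E)|^2e^{2\pi\mathrm i kt/P(E)}\,\rho(\dd E).
\]
Dominated convergence is justified because \(\sum_k|c_k|^2\) is integrable against \(\rho\).

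\emph{Step 3: identification of the spectral measure.} By uniqueness of the Fourier--Stieltjes transform, the spectral measure of \(f\) is the pushforward of \(\sum_k|c_k(E)|^2\delta_{2\pi k/P(E)}\,\rho(\dd E)\). After the symmetrization used in \Cref{lem:position-spectral-measure}, \(\nu_U\) is carried by \(\{0\}\cup\{2\pi|k|/P(E):k\ne0\}\). By Step 1 the second set lies in \([\sqrt m,\infty)\), which proves \eqref{eq:one-dimensional-spectral-gap}.

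The main technical obstacle is Step 2, namely making the energy--angle change of variables and the fibrewise Fourier expansion rigorous. This requires \(P(E)\) to be finite and measurable, and in fact continuous, in \(E\); I would derive this from smooth dependence of the flow on initial data and transversality of the orbit to the axis \(\{v=0\}\) at its turning points. An alternative that avoids coordinates altogether is to note that \(t\mapsto f(\flow_tz)\) is \(P(E(z))\)-periodic, so \(\mathcal U_t\) commutes with multiplication by functions of \(H\). One can then disintegrate the unitary group over the spectrum of \(H\) and apply, on each fibre, the fact that a periodic unitary group of period \(P\) has spectrum contained in \((2\pi/P)\Z\).
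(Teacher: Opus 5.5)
Your proposal is correct and follows essentially the same route as the paper: a Sturm comparison bound \(P(E)\le 2\pi/\sqrt m\) on each periodic energy shell, a Fourier expansion of the position along each orbit, and disintegration of \(\bar\pi\) over energy levels. The differences are only in execution. You apply Sturm comparison to the displacement \(q_t-x_\star\), via the secant coefficient \(a(t)\in[m,L]\), between crossings of the minimizer; the paper applies it to the velocity \(\dot x\), which solves the Jacobi equation, between turning points. You also identify the spectral measure directly from the autocorrelation by Fourier--Stieltjes uniqueness rather than through the conditional profile \(u_E\), which is, if anything, the more careful version of the disintegration step.
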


\begin{proof}
Fix a nonconstant energy level \(E\).  The corresponding orbit is
periodic.  Let \(P(E)\) be its period and
\[
  \omega(E)=\frac{2\pi}{P(E)}.
\]
Choose a turning point as time zero.  Between consecutive turning
points,
\[
  y(t)=\dot x(t)
\]
has no interior zero and satisfies
\begin{align*}
  y''(t)
  +
  W''(x(t))y(t)
  =
  0.
\end{align*}
Sturm comparison with
\[
  z(t)=\sin(\sqrt m\,t)
\]
shows that the next zero of \(y\) occurs no later than
\(\pi/\sqrt m\).  Indeed, otherwise the Wronskian
\[
  y'(t)z(t)-y(t)z'(t)
\]
would be nonincreasing from zero and strictly positive at
\(\pi/\sqrt m\), a contradiction.  Thus
\[
  \frac{P(E)}2
  \le
  \frac{\pi}{\sqrt m},
\]
and therefore
\begin{align*}
  \omega(E)
  \ge
  \sqrt m.
\end{align*}

Conditional on \(E\), the stationary phase is uniform on the orbit.
Write the Fourier series
\begin{align*}
  x_E(s)
  =
  \bar x_E
  +
  \sum_{n\ne0}
  c_{n,E}
  e^{\mathrm i n\omega(E)s}.
\end{align*}
The conditional profile is
\begin{align*}
  u_E(t)
  &=
  \frac1{P(E)}
  \int_0^{P(E)}
  \dot x_E(s)
  \{x_E(s+t)-x_E(s)\}
  \dd s
  \notag\\
  &=
  2
  \sum_{n=1}^\infty
  n\omega(E)
  |c_{n,E}|^2
  \sin(n\omega(E)t).
\end{align*}
Every coefficient is nonnegative, and every nonzero frequency
\(n\omega(E)\) is at least \(\sqrt m\).  Disintegrating the canonical
measure over energy levels produces a positive spectral measure with
support \eqref{eq:one-dimensional-spectral-gap}.  The possible
zero-frequency component is the energy-dependent orbit mean
\(\bar x_E\), which disappears after differentiation and does not
contribute to the profile.
\end{proof}

\begin{proof}[Proof of \Cref{cor:product-arbitrary-kappa}]
Apply
\Cref{thm:koopman-crossing,lem:one-dimensional-gap} to obtain
\(t_W<2\pi/\sqrt m\) such that
\[
  u_W(t_W)<0.
\]
Continuity gives constants \(\rho_W,w_W>0\) such that
\[
  u_W(t)
  \le
  -\frac{\rho_W}{\sqrt m}
  \qquad
  \text{whenever}
  \qquad
  |t-t_W|
  \le
  \frac{w_W}{\sqrt m}.
\]

For the product potential
\eqref{eq:iid-product-potential}, the Hamiltonian coordinates and
stationary law factorize, so
\[
  u_d(t)
  =
  \sum_{i=1}^d u_W(t)
  =
  d\,u_W(t).
\]

We now prove \eqref{eq:product-Cp}.  Normalize
\(W(x_\star)=0\), and let
\[
  E_i
  =
  W(X_{0,i})
  +
  \frac12V_{0,i}^2
\]
be the conserved one-coordinate energy.  Strong convexity and energy
conservation give, uniformly in time,
\[
  |V_{0,i}(X_{t,i}-X_{0,i})|
  +
  |V_{t,i}(X_{t,i}-X_{0,i})|
  \le
  \frac{CE_i}{\sqrt m}.
\]
Under the canonical one-coordinate law, \(E_i\) has an exponential
moment in a fixed neighborhood of zero.  Indeed, for every
\(\lambda\in(0,1)\),
\[
  \E e^{\lambda E_i}
  =
  \frac{
    \displaystyle
    \int_{\R^2}
    \exp\left\{
      -(1-\lambda)
      \left(
        W(x)+\frac12v^2
      \right)
    \right\}
    \dd x\,\dd v
  }{
    \displaystyle
    \int_{\R^2}
    \exp\left\{
      -
      \left(
        W(x)+\frac12v^2
      \right)
    \right\}
    \dd x\,\dd v
  }
  <
  \infty.
\]
Consequently, the centered one-coordinate contribution to either
endpoint diagnostic has subexponential norm at most
\(C_{W,S}/\sqrt m\).  The coordinate contributions are independent. We use the standard
moment form of Bernstein's inequality: for independent centered subexponential variables
\(Z_i\),
\[
  \left\|\sum_{i=1}^d Z_i\right\|_{L^p}
  \le
  C\left[
    \sqrt p\left(\sum_{i=1}^d\|Z_i\|_{\psi_1}^2\right)^{1/2}
    +p\max_i\|Z_i\|_{\psi_1}
  \right],
  \qquad p\ge2.
\]
Applying this with \(\|Z_i\|_{\psi_1}\le C_{W,S}/\sqrt m\) yields
\[
  \norm{
    D_t^\sigma-u_d(t)
  }_{L^p}
  \le
  \frac{C_{W,S}}{\sqrt m}
  \bigl(\sqrt{dp}+p\bigr),
  \qquad
  \sigma\in\{-,+\},
\]
which is \eqref{eq:product-Cp}.

The displayed negative interval has the required \(d/\sqrt m\) margin.  Let
\((a,b)\) be the first connected component of
\(\{t>0:u_W(t)<0\}\), and choose a nondegenerate compact interval
\[
  I\Subset\bigl(a,\min\{b,2a\}\bigr).
\]
Fix a depth \(k\ge2\) and, for \(1\le r<k\), put
\[
  c_r=\frac{2^r-1}{2^k-1}.
\]
Because \(c_r\le c_{k-1}<1/2\) and every \(T\in I\) satisfies \(T<2a\), one has
\(c_rT<a\).  Let
\[
  Z=\{t\in(0,a):u_W(t)=0\},
  \qquad
  Z_r=\{T\in I:c_rT\in Z\}.
\]
By hypothesis, \(Z\) is relatively closed with empty interior in \((0,a)\).  The map
\(T\mapsto c_rT\) is a homeomorphism from \(I\) onto \(c_rI\), so each \(Z_r\) is
relatively closed with empty interior in \(I\), and hence is nowhere dense in \(I\).
The finite union \(\bigcup_{r=1}^{k-1}Z_r\) is therefore nowhere dense.  Its complement
contains a nonempty open subinterval, inside which choose a nondegenerate compact
interval \(I'\).

Since \((a,b)\) is the first negative component, \(u_W(t)\ge0\) for \(0<t<a\).
For \(T\in I'\), the exclusion of every \(Z_r\) therefore gives
\(u_W(c_rT)>0\) for all \(r<k\), while \(I'\subset(a,b)\) gives
\(u_W(T)<0\).  Continuity and compactness now yield uniform positive preterminal and
negative terminal margins on \(I'\).  The image
\[
  \left\{\frac{T}{2^k-1}:T\in I'\right\}
\]
is a positive-Lebesgue-measure interval of step sizes.  Because \(I'\subset I\) and
\(I\) is fixed, these step sizes become arbitrarily small as \(k\to\infty\).  The final
statement follows from \Cref{thm:adaptive} under its stated numerical-fidelity and energy
hypotheses.
\end{proof}

\subsection{Near-isotropic coupled targets}
\label{app:near-isotropic-proof}

\begin{proof}[Proof of \Cref{thm:near-isotropic}]
Let \(J_t\) be the Jacobi field from the profile representation, and set $s=\omega t$ and $\mathcal J_s=\omega J_{s/\omega}$. Then
\begin{align*}
  \mathcal J_s''+(I_d+E_s)\mathcal J_s=0,
  \qquad
  \mathcal J_0=0,
  \qquad
  \mathcal J_0'=I_d,
\end{align*}
where
\[
  E_s
  =
  \omega^{-2}\nabla^2U(X_{s/\omega})-I_d,
  \qquad
  \norm{E_s}_{\mathrm{op}}\le\eta.
\]
Duhamel's formula~\citep{teschl2012ordinary} gives
\begin{align}
  \mathcal J_s
  =
  \sin(s)I_d
  -
  \int_0^s\sin(s-r)E_r\mathcal J_r\dd r.
  \label{eq:J-Duhamel}
\end{align}
For \(0\le s\le2\pi\), define
\[
  M(s)=\sup_{0\le r\le s}\norm{\mathcal J_r}_{\mathrm{op}}.
\]
Taking operator norms in \eqref{eq:J-Duhamel} and using
\(|\sin(s-r)|\le1\) yields
\[
  M(s)
  \le
  1+\eta\int_0^sM(r)\dd r.
\]
Gronwall's inequality therefore gives
\[
  M(s)\le e^{\eta s}\le e^{2\pi},
\]
because \(0\le\eta<1\).  Substituting this estimate into
\eqref{eq:J-Duhamel} gives
\begin{align*}
  \norm{\mathcal J_s-\sin(s)I_d}_{\mathrm{op}}
  \le
  \eta\int_0^sM(r)\dd r \le
  \eta s e^{\eta s}
  \le
  2\pi e^{2\pi}\eta.
\end{align*}
Since $\omega u(t)=\E\Tr\mathcal J_{\omega t}$ and \(|\Tr A|\le d\norm A_{\mathrm{op}}\), taking expectations proves
\eqref{eq:near-isotropic-profile} with \(C_0=2\pi e^{2\pi}\).  At each grid time,
the transformed profile differs from the corresponding reference sine value by at most
\(C_0\eta d\).  Thus a reference margin \(\rho_{\sin}d\) with
\(\rho_{\sin}>C_0\eta\) leaves an \(\omega\)-scaled margin
\((\rho_{\sin}-C_0\eta)d\). Since
\(m=(1-\eta)\omega^2\), the normalization in
\Cref{def:profile-separated} multiplies this by \(\sqrt{1-\eta}\), giving
\(\rho_0=\sqrt{1-\eta}(\rho_{\sin}-C_0\eta)\).  This proves the final assertion.
\end{proof}

\subsection{Tangent-flow and adiabatic stability}
\label{app:tangent-stability-proof}

\begin{proposition}[Tangent-flow terminal-depth certificate]
\label{prop:tangent-certificate}
Assume the hypotheses and parameter choices of
\Cref{thm:tangent-adaptive}, and set $K_\star=2^{k_\star}.$ Fix $\delta\in(0,1/8)$, and $\vartheta\in(0,1).$ After increasing the logarithmic constant in
\eqref{eq:adaptive-ell} by a multiple of
\(1+\log(1/(\delta\vartheta))\), there are a measurable position set
\(D_{\mathrm{ad}}\) and measurable momentum sets
\(G_x^{\mathrm{ad}}\), \(x\in D_{\mathrm{ad}}\), such that
\[
  \pi(D_{\mathrm{ad}}^c)
  \le
  \vartheta s,
  \qquad
  \sup_{x\in D_{\mathrm{ad}}}
  \Pp_V\!\left[
    (G_x^{\mathrm{ad}})^c
  \right]
  \le
  \delta,
  \qquad
  s=\frac{\epsilon^2}{128M^2}.
\]
Every \(v\in G_x^{\mathrm{ad}}\) is
\((K_\star,\log2)\)-certified, and the orbit builder terminates through
the U-turn criterion before the maximum-depth cap.
\end{proposition}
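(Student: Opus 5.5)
The plan is to reduce the statement to \Cref{prop:intrinsic-certificate} by verifying its four failure events quantitatively under the parameter choices of \Cref{thm:tangent-adaptive}. Then I pass from a phase-space failure probability to position and momentum sets by Fubini and Markov, as in the proof of \Cref{thm:adaptive}. Concretely, let \(\mathcal C_\star\) be the certification event of \Cref{prop:intrinsic-certificate} with \(p=\ell\) (possibly enlarged by a constant). Define
\[
r(x)=\Pp_V(\mathcal C_\star^c\mid X_0=x),\qquad D_{\mathrm{ad}}=\{x:r(x)\le\delta\},\qquad G_x^{\mathrm{ad}}=\{v:(x,v)\in\mathcal C_\star\}.
\]
Then \(\pi(D_{\mathrm{ad}}^c)\le\zeta_{p,h}(K_\star)/\delta\), and it suffices to show \(\zeta_{p,h}(K_\star)\le\delta\vartheta s\). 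On \(\mathcal C_\star\), \Cref{prop:intrinsic-certificate} already gives the certified depth, the energy window \(\log2\), and termination through the U-turn criterion before the cap, since \(k_\star<k_{\max}\).

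The first step is the margin condition \eqref{eq:intrinsic-margin-condition}. By \Cref{prop:stability-envelope}, \(\mathfrak C_p(S;U)\le\mathfrak C_S(U)\), so the concentration part of \(\Xi\) is at most \(e\mathfrak C_S(U)(\sqrt{\ell/d}+\ell/d)\). This is at most \(\rho_0/8\) by the dimension condition \eqref{eq:adaptive-dimension}. For the fidelity part I use \eqref{eq:tangent-implies-Ep}:
\[
\mathfrak E\le C\mathfrak N_S(U)\,mh^2\,(d+\ell)^{3/2}/d .
\]
Substituting \(h^2\) from \eqref{eq:adaptive-h}, with \(mh^2=c\eta_{\mathrm{ad}}/(\kappa(1+\bar\gamma)d_\ell^{1/2}\ell^{3/2})\), and using \(d\gtrsim\ell\), gives \(\mathfrak E\lesssim c\,\mathfrak N_S\eta_{\mathrm{ad}}/(\kappa(1+\bar\gamma))\). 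By the second entry of \eqref{eq:eta-ad-explicit}, this is at most \(c\rho_0/64\), hence below \(\rho_0/8\) for small \(c\). Before doing this I must check that \Cref{lem:global-LF-error} applies. Its step-size hypothesis \(\bar h\le c/(1+\sqrt\kappa)\) follows from \(Lh^2\le c\). Its horizon hypothesis \(K_\star\bar h\le S_\kappa^+\) follows from \(\tau_{k_\star}\le S/\sqrt m\) and \(h\le\tau_{k_\star}\), which give \(K_\star h\le S/\sqrt m+h\).

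The second step bounds the three probabilities making up \(\zeta\). The union-bound term \((2N_{K_\star}+1)e^{-p}\le 9K_\star^2e^{-\ell}\) is small once \(\ell\gtrsim\log K_\star+\log(M/(\epsilon\delta\vartheta))\). This holds because \(K_\star\) is polynomial in \(d,\kappa,\mathfrak N_S,\eta_{\mathrm{ad}}^{-1},\bar\gamma\), by the estimate \(K_\star\le C a_\star\sqrt\kappa\,((1+\bar\gamma)/\eta_{\mathrm{ad}})^{1/2}d_\ell^{1/4}\ell^{3/4}\), and these are all inside the logarithm \eqref{eq:adaptive-ell}. For \(\delta_H\) I apply \Cref{lem:whole-energy} with \(\eta_0=\log2\) and get \(\delta_H\le 2K_\star\cdot 2\,(K_\star A_\ell(h)/\log2)^\ell\). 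The remaining task is to verify \(K_\star A_\ell(h)\le e^{-6}\log2\) from \eqref{eq:adaptive-h}. The leading term of \(A_\ell\) is \(C_E(1+\gamma)h^3\ell^{3/2}L^{3/2}d_\ell^{1/2}\); multiplied by \(K_\star\lesssim (S+1)/(\sqrt m h)\), it becomes
\[
(S+1)\sqrt\kappa(1+\gamma)\,Lh^2\ell^{3/2}d_\ell^{1/2}\lesssim c\,(S+1)\sqrt\kappa\,\eta_{\mathrm{ad}}(1+\gamma)/(1+\bar\gamma).
\]
This is \(O(c)\) because \(\eta_{\mathrm{ad}}\le 1/(64(1+S\sqrt\kappa))\) and \(1+\gamma\le1+\bar\gamma\). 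The higher-order \(h^5\) and \(h^7\) terms are smaller powers of the same quantity. This gives \(\delta_H\le4K_\star e^{-6\ell}\). For \(\delta_{\mathrm{sm}}\) I combine \Cref{lem:typical-numerical-endpoints} (with failure \(\delta'\)) and \Cref{lem:adaptive-small-intervals}. The hypothesis \eqref{eq:endpoint-error-small} follows from the same \(h^2\) computation, since \(\mathfrak N_S\bar h^2(d+\ell)\lesssim c\rho_0\sqrt d\). Intervals outside the energy event are already counted in \(\delta_H\). The cutoff \(T_{\mathrm{sm}}\) is the one from \eqref{eq:small-time-cutoff-general}.

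The main obstacle is the energy bookkeeping, namely showing \(K_\star A_\ell(h)\) is a small universal constant uniformly in \(\kappa,S,\bar\gamma\). This is where the factor \(1/(1+S\sqrt\kappa)\) in \(\eta_{\mathrm{ad}}\) and the factor \((1+\bar\gamma)\) in \(h^2\) are designed to cancel, and I would check each of the three terms of \(A_\ell\) separately, using \(d_\ell\ge\ell\). A secondary subtlety is that the constants \(b_0,B_0\) of \Cref{lem:typical-numerical-endpoints}, and hence \(T_{\mathrm{sm}}\), are polynomial. Its dimension requirement \(d\ge C_0p\) must therefore be absorbed into \eqref{eq:adaptive-dimension} after enlarging constants. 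Summing the failures, \(\zeta\le\delta\vartheta s\) once \(\ell\) includes \(C(1+\log(1/(\delta\vartheta)))\), which finishes the argument.
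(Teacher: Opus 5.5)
Your proposal follows essentially the paper's argument. You verify the same four failure events: exact concentration through $\mathfrak C_S(U)$, the $\log2$ whole-orbit energy window through \Cref{lem:whole-energy}, leapfrog fidelity through \Cref{lem:global-LF-error}, and short intervals through \Cref{lem:typical-numerical-endpoints,lem:adaptive-small-intervals}. You then pass to $D_{\mathrm{ad}},G_x^{\mathrm{ad}}$ by the same Fubini--Markov step. The difference is only organizational. You invoke \Cref{prop:intrinsic-certificate} after checking $\Xi_{p,h}(S,K_\star)\le\rho_0/4$ and $\zeta_{p,h}(K_\star)\le\delta\vartheta s$ with $p=\ell$. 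The paper instead redoes the sign transfer directly, with $p=\log(64|\mathscr I_{K_\star}|/\zeta)\le C\ell$, and bounds the numerical error pointwise on $\{B(Z_0)\le C(\sqrt d+\sqrt p)\}$ rather than through $\mathfrak E_{p,h}$ and Markov. Both routes work.

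One justification in the energy step, which you identify as the main obstacle, does not hold as written. You round $K_\star\le S/(\sqrt m h)+1$ up to $(S+1)/(\sqrt m h)$, which produces the factor $(S+1)\sqrt\kappa\,\eta_{\mathrm{ad}}$. That factor is not controlled by $\eta_{\mathrm{ad}}\le 1/(64(1+S\sqrt\kappa))$. For $S\asymp\kappa^{-1/2}$, the fast-orbit regime of \Cref{cor:fast-selected-stability}, this entry only gives $O(\sqrt\kappa)$. The factor $(1+\gamma)/(1+\bar\gamma)$ does not help either, since it equals $1$ when $\gamma=0$. The bound happens to survive through the second entry of \eqref{eq:eta-ad-explicit}, because $\mathfrak N_S(U)\ge(1+S)\kappa^{3/2}(1+\kappa^2+\bar\gamma)$ and $\rho_0\le2$ by \Cref{lem:R-inequalities}, but that is not the reason you gave.

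The clean fix, which is what \eqref{eq:adaptive-energy-leading-general} does, is to keep the additive term: $K_\star h\sqrt L\le S\sqrt\kappa+h\sqrt L\le 1+S\sqrt\kappa$. Then
\[
  K_\star(1+\gamma)h^3\ell^{3/2}L^{3/2}d_\ell^{1/2}
  =(1+\gamma)\,(K_\star h\sqrt L)\,Lh^2\ell^{3/2}d_\ell^{1/2}
  \le c\,(1+S\sqrt\kappa)\,\eta_{\mathrm{ad}}\,\frac{1+\gamma}{1+\bar\gamma}
  \le\frac{c}{64}.
\]
The $h^5$ and $h^7$ contributions are smaller than this by the factors $c\eta_{\mathrm{ad}}/((1+\bar\gamma)\ell^{5/2})$ and $c^2\eta_{\mathrm{ad}}^2/((1+\bar\gamma)^2\ell^{9/2})$, respectively. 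With this correction, the rest of your argument goes through.
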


\begin{proof}
Let \(\mathscr I_{K_\star}\) be the family of checked intervals and set
\[
  \zeta
  =
  \frac{\delta\vartheta s}{8},
  \qquad
  p
  =
  \max\left\{
    2,\,
    \log\frac{
      64|\mathscr I_{K_\star}|
    }{
      \zeta
    }
  \right\}.
\]
Since
\[
  K_\star h
  \le
  \frac{S}{\sqrt m}+h
\]
and \(h\sqrt L\le1\), all relevant scaled times lie in
\(S_\kappa^+\).  Moreover,
\[
  K_\star
  \le
  1+
  C S\sqrt\kappa
  \left(
    \frac{1+\bar\gamma}{\eta_{\mathrm{ad}}}
  \right)^{1/2}
  d_\ell^{1/4}\ell^{3/4}.
\]
The logarithmic condition
\eqref{eq:adaptive-ell} therefore implies \(p\le C\ell\).

The exact diagnostic concentration estimate and
\eqref{eq:tangent-implies-Cp} imply, outside an event of probability
at most \(\zeta/8\),
\[
  \max_{I\in\mathscr I_{K_\star}}
  \max_{\sigma\in\{-,+\}}
  |D_I^\sigma-u(\ell_h(I))|
  \le
  \frac{
    \mathfrak C_S(U)
  }{
    \sqrt m
  }
  \bigl(\sqrt{dp}+p\bigr).
\]
The phase-norm moment bound gives $B(Z_0)\le C(\sqrt d+\sqrt p)$, outside another event of probability at most \(\zeta/8\).

Let \(A_\ell(h)\) denote the right-hand side of the stationary
single-step energy estimate.  Since
\(K_\star h\le S/\sqrt m+h\), the leading accumulated term satisfies
\begin{align}
  K_\star(1+\gamma)
  h^3\ell^{3/2}
  L^{3/2}d_\ell^{1/2}
  \le
  Cc
  (1+S\sqrt\kappa)
  \eta_{\mathrm{ad}}
  \frac{1+\gamma}{1+\bar\gamma}
  \le
  \frac1{64}.
  \label{eq:adaptive-energy-leading-general}
\end{align}
after reducing the universal constant \(c\). If
\(A_3,A_5,A_7\) denote the accumulated \(h^3,h^5,h^7\) contributions,
respectively, then the step-size choice gives
\[
  \frac{A_5}{A_3}
  =\frac{h^2L d_\ell^{1/2}}{\ell}
  =\frac{c\eta_{\mathrm{ad}}}{(1+\bar\gamma)\ell^{5/2}},
  \qquad
  \frac{A_7}{A_3}
  =\frac{h^4L^2d_\ell}{\ell^{3/2}}
  =\frac{c^2\eta_{\mathrm{ad}}^2}{(1+\bar\gamma)^2\ell^{9/2}}.
\]
Thus both are bounded by a small universal multiple of the leading term.  Hence
\begin{align}
  \frac{
    K_\star A_\ell(h)
  }{
    \log2
  }
  \le
  e^{-6}.
  \label{eq:adaptive-energy-small-general}
\end{align}
The whole-orbit energy lemma therefore gives
\[
  \bar\pi\!\left[
    \max_{|j|\le K_\star}
    |H(\Phi_h^jZ_0)-H(Z_0)|
    >
    \log2
  \right]
  \le
  4K_\star e^{-6\ell}
  \le
  \frac{\zeta}{8}.
\]

On the energy event, the leapfrog-to-flow comparison and
\eqref{eq:tangent-implies-Ep} give
\[
  \max_{I\in\mathscr I_{K_\star}}
  \max_{\sigma\in\{-,+\}}
  |\widehat D_I^\sigma-D_I^\sigma|
  \le
  \frac{
    C\mathfrak N_S(U)
  }{
    \sqrt m
  }
  (mh^2)
  (d+p)^{3/2}.
\]
The dimension assumption
\eqref{eq:adaptive-dimension} implies
\[
  \mathfrak C_S(U)
  \frac{\sqrt{dp}+p}{d}
  \le
  \frac{\rho_0}{8}.
\]
The step-size choice
\eqref{eq:adaptive-h}, the definition
\eqref{eq:eta-ad-explicit}, and \(p\le C\ell\) imply
\[
  \mathfrak N_S(U)(mh^2)
  \frac{(d+p)^{3/2}}{d}
  \le
  \frac{\rho_0}{8}.
\]
Consequently,
\[
  \max_{I,\sigma}
  |\widehat D_I^\sigma-u(\ell_h(I))|
  \le
  \frac{\rho_0}{4}
  \frac{d}{\sqrt m}.
\]

The typical-endpoint estimate and the deterministic short-interval
lemma certify every nontrivial checked interval shorter than
\(T_{\mathrm{sm}}\); singleton intervals are nonturning by definition. For every checked dyadic interval of depth
\(r<k_\star\) and duration at least \(T_{\mathrm{sm}}\), profile
separation gives
\[
  \widehat D_I^\sigma
  \ge
  \frac{3\rho_0}{4}
  \frac{d}{\sqrt m}
  >
  0.
\]
At depth \(k_\star\),
\[
  \widehat D_I^\sigma
  \le
  -
  \frac{3\rho_0}{4}
  \frac{d}{\sqrt m}
  <
  0.
\]
The deterministic terminal-depth proposition implies that every
doubling realization returns size \(K_\star\) through the U-turn
criterion.

The union of the preceding failures has phase-space probability at
most \(\zeta\).  Let \(r_{\mathrm{ad}}(x)\) be the conditional failure
probability over the refreshed momentum, and define
\[
  D_{\mathrm{ad}}
  =
  \{x:r_{\mathrm{ad}}(x)\le\delta\},
  \qquad
  G_x^{\mathrm{ad}}
  =
  \{v:\text{all certificate conditions hold}\}.
\]
Fubini's theorem and Markov's inequality give
\[
  \sup_{x\in D_{\mathrm{ad}}}
  \Pp_V\!\left[
    (G_x^{\mathrm{ad}})^c
  \right]
  \le
  \delta,
  \qquad
  \pi(D_{\mathrm{ad}}^c)
  \le
  \frac{\zeta}{\delta}
  \le
  \vartheta s.
\]
\end{proof}

\begin{proof}[Proof of \Cref{thm:tangent-adaptive}]
Set $K_\star=2^{k_\star},$ $T_\star=(K_\star-1)h$ and $a_\star=\sqrt m\,T_\star.$ Since \(k_\star\ge2\),
\[
  T_\star
  <
  K_\star h
  \le
  \frac43T_\star.
\]
Let
\[
  t_{\mathrm{ov}}
  =
  \frac{
    c_{\mathrm{ov}}
  }{
    \sqrt L(1+\gamma)^{1/3}
  }.
\]
The terminal profile is negative, whereas the universal initial
positivity result gives \(u(t)>0\) through
\(\pi/(2\sqrt L)\).  Therefore
\[
  T_\star
  >
  \frac{\pi}{2\sqrt L},
  \qquad
  \frac{\pi}{2\sqrt\kappa}
  <
  a_\star
  \le
  S.
\]
After reducing \(c_{\mathrm{ov}}\), define
\[
  r_\star
  =
  \frac{
    t_{\mathrm{ov}}
  }{
    K_\star h
  }
  \le
  \frac14.
\]
The preceding bounds imply
\[
  r_\star
  \ge
  \frac{
    c
  }{
    a_\star\sqrt\kappa(1+\gamma)^{1/3}
  }.
\]
The step-size and dimension assumptions ensure
\[
  r_\star K_\star
  =
  \frac{t_{\mathrm{ov}}}{h}
  \ge
  8.
\]
Let
\[
  \mathcal J_\star
  =
  \left\{
    j:
    \left\lceil
      \frac{r_\star K_\star}{2}
    \right\rceil
    \le
    j
    \le
    \left\lfloor
      r_\star K_\star
    \right\rfloor
  \right\}.
\]
The selector band-mass bounds give
\[
  a_{\mathrm M,K_\star}
  \ge
  cr_\star,
  \qquad
  a_{\mathrm B,K_\star}
  \ge
  cr_\star^2.
\]
Put
\[
  \Delta_{\mathrm{ad}}
  =
  \frac{t_{\mathrm{ov}}}{128},
  \qquad
  \theta_{\mathrm{ad}}
  =
  \min\{1,\psi\Delta_{\mathrm{ad}}\}.
\]
Since
\[
  \psi=(\log2)\sqrt m,
\]
we have
\[
  \theta_{\mathrm{ad}}
  \ge
  \frac{
    c
  }{
    \sqrt\kappa(1+\gamma)^{1/3}
  }.
\]

Choose
\[
  a_{\mathrm M}=cr_\star,
  \qquad
  a_{\mathrm B}=cr_\star^2,
  \qquad
  \delta_a=\frac{3a_a}{32}.
\]
Apply
\Cref{prop:tangent-certificate} separately for the two
selectors with $\delta=\delta_a,$ and $\vartheta=\theta_{\mathrm{ad}}/64.$ This supplies the terminal-depth certificate, the \(\log2\) energy
window, and the good position set required by the terminal-depth
transfer theorem.

For every \(j\in\mathcal J_\star\),
\[
  \frac{t_{\mathrm{ov}}}{2}
  \le
  jh
  \le
  t_{\mathrm{ov}}.
\]
The fixed-index proposal-overlap estimate gives
\begin{align*}
  \ov(Q_{j,x},Q_{j,y})
  \ge
  \frac34 \quad\text{whenever}\quad
  \norm{x-y}
  \le
  \Delta_{\mathrm{ad}}.
\end{align*}
The terminal-depth transfer theorem therefore yields
\begin{align*}
\Phi_s(R_{\mathrm M})
\ge
\frac{
    c
  }{
    a_\star\kappa(1+\gamma)^{2/3}
  },
\quad\text{and}\quad
  \Phi_s(R_{\mathrm B})
  \ge
  \frac{
    c
  }{
    a_\star^2\kappa^{3/2}(1+\gamma)
  }.
\end{align*}
The positive-kernel warm-start mixing proposition gives
\eqref{eq:adaptive-transitions-M} and
\eqref{eq:adaptive-transitions-B}.

Finally,
\[
  K_\star
  \le
  1+\frac{T_\star}{h}
  \le
  C a_\star\sqrt\kappa
  \left(
    \frac{1+\bar\gamma}{\eta_{\mathrm{ad}}}
  \right)^{1/2}
  d_\ell^{1/4}\ell^{3/4},
\]
which proves
\eqref{eq:adaptive-K-explicit}.The expected and high-probability work bounds follow from
\Cref{prop:exceptional-work}.  Under
\(K_{\mathrm{cap}}\le C_{\mathrm{cap}}K_\star\), multiplying the
transition bounds by the certified-orbit estimate proves the
conditional deterministic bounds
\eqref{eq:adaptive-complexity-M} and
\eqref{eq:adaptive-complexity-B}.
\end{proof}

\begin{proof}[Proof of \Cref{cor:polynomial-stability}]
Substitute
\eqref{eq:polynomial-tangent-stability} into
\eqref{eq:concentration-envelope} and
\eqref{eq:numerical-envelope}.  This gives
\eqref{eq:polynomial-concentration-envelope} and
\eqref{eq:polynomial-numerical-envelope}.  Substitution into
\eqref{eq:adaptive-dimension},
\eqref{eq:eta-ad-explicit}, and
\eqref{eq:adaptive-h} proves the remaining claims.
\end{proof}

\begin{proof}[Proof of \Cref{prop:adiabatic-stability}]
Let \((\xi_s,\eta_s)\) solve the variational equation
\[
  \dot\xi_s=\eta_s,
  \qquad
  \dot\eta_s=-H_s\xi_s.
\]
Define
\[
  \mathcal E_s
  =
  \norm{\eta_s}^2+\xi_s^\top H_s\xi_s.
\]
The cross terms cancel, and hence
\[
  \mathcal E_s'=\xi_s^\top\dot H_s\xi_s,
  \qquad
  |\mathcal E_s'|
  \le
  \norm{H_s^{-1/2}\dot H_sH_s^{-1/2}}_{\mathrm{op}}\mathcal E_s.
\]
Integration and \eqref{eq:adiabatic-condition} give
\[
  \mathcal E_t\le\kappa^{A_0}\mathcal E_0.
\]
Because \(I_d\preceq H_s\preceq\kappa I_d\),
\[
  \norm{(\xi_t,\eta_t)}^2
  \le \mathcal E_t
  \le \kappa^{A_0+1}\norm{(\xi_0,\eta_0)}^2.
\]
Taking the supremum proves \eqref{eq:adiabatic-A}.
\end{proof}

\begin{proof}[Proof of \Cref{cor:fast-selected-stability}]
For a variational solution
\[
  \dot\xi=\eta,
  \qquad
  \dot\eta=-H_s\xi,
  \qquad
  I_d\preceq H_s\preceq\kappa I_d,
\]
let $E_s
  =
  \kappa\norm{\xi_s}^2
  +
  \norm{\eta_s}^2.$ Then $E_s'
  =
  2\ip{\eta_s}{(\kappa I_d-H_s)\xi_s}
  \le
  \sqrt\kappa\,E_s.$ Gronwall's inequality and comparison of the weighted and Euclidean
norms give
\[
  \mathfrak A_S(U)
  \le
  \sqrt\kappa
  \exp\!\left(
    \frac12\sqrt\kappa S_\kappa^+
  \right)
  =
  e^{1/2}\sqrt\kappa
  \exp\!\left(
    \frac12S\sqrt\kappa
  \right).
\]
Substituting \(S\le b/\sqrt\kappa\) proves
\eqref{eq:fast-selected-polynomial-A}.
\end{proof}

\begin{proposition}[Parametric amplification under curvature bounds]
\label{prop:parametric-amplification}
For every \(\kappa\ge4\), there is a periodic positive
piecewise-constant coefficient $a_\kappa:\R\to[1,\kappa]$ such that the fundamental matrix of
\begin{align*}
  q''(s)+a_\kappa(s)q(s)=0
\end{align*}
has norm at least $\exp(c\sqrt\kappa\,s)$ along an unbounded sequence of times, where \(c>0\) is universal.
Smooth positive coefficients with the same conclusion are obtained by
arbitrarily small smoothing.  
\end{proposition}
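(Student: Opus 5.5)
The plan is to build a two-phase piecewise-constant coefficient taking only the values \(1\) and \(\kappa\), and to show that the monodromy matrix over one period is hyperbolic. Its largest eigenvalue should satisfy \(|\lambda|\ge e^{c\sqrt\kappa\,P}\), where \(P\) is the period. Iterating the monodromy then gives \(\norm{\Psi(nP)}\ge|\lambda|^n=e^{c\sqrt\kappa\,nP}\) along \(s_n=nP\), which is the claimed unbounded sequence of times.

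For the construction, take \(a_\kappa=\kappa\) on a stiff interval of length \(\tau_1\) and \(a_\kappa=1\) on a soft interval of length \(\tau_0\), repeated periodically. Write \(\omega=\sqrt\kappa\). On each phase the transfer matrix of \(q''+aq=0\) is the rotation-type matrix
\[
M_a(\tau)=\begin{pmatrix}\cos(\sqrt a\tau)&\sin(\sqrt a\tau)/\sqrt a\\-\sqrt a\sin(\sqrt a\tau)&\cos(\sqrt a\tau)\end{pmatrix},
\]
and the monodromy is \(\mathcal M=M_1(\tau_0)M_\kappa(\tau_1)\), which has determinant one.

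The natural first choice is \(\sqrt a\,\tau=\pi/2\) in each phase, i.e. \(\tau_1=\pi/(2\omega)\) and \(\tau_0=\pi/2\). Then \(M_\kappa(\tau_1)=\begin{pmatrix}0&1/\omega\\-\omega&0\end{pmatrix}\) and \(M_1(\tau_0)=\begin{pmatrix}0&1\\-1&0\end{pmatrix}\). Their product is \(\operatorname{diag}(-\omega,-1/\omega)\), so \(|\operatorname{tr}\mathcal M|=\omega+1/\omega>2\) and the largest eigenvalue has modulus \(\omega=\sqrt\kappa\). This only gives growth \(\kappa^{n/2}\) over time \(n(\pi/2)(1+1/\omega)\approx n\pi/2\), i.e. rate \(\log\kappa\), not \(\sqrt\kappa\).

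To reach rate \(\sqrt\kappa\), the period must be of order \(1/\omega\) while the per-period growth factor stays a constant greater than one. The stiff quarter-turns have length \(\pi/(2\omega)\) and are cheap; the soft phase is the expensive part, so I will replace it by a short soft interval of length \(\tau_0=\beta/\omega\) with \(\beta\) small. Since \(\sqrt1\,\tau_0=\beta/\omega\), the soft phase is then nearly a free drift, \(M_1(\tau_0)\approx\begin{pmatrix}1&\beta/\omega\\0&1\end{pmatrix}\). The stiff quarter-turn is \(R=\begin{pmatrix}0&1/\omega\\-\omega&0\end{pmatrix}\). In the rescaled variables \((q,p/\omega)\), \(R\) is the standard rotation by \(\pi/2\), and the drift becomes the shear \(\begin{pmatrix}1&\beta\\0&1\end{pmatrix}\) up to \(O(\beta/\omega^2)\) corrections.

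A rotation by \(\pi/2\) composed with a shear of size \(\beta\) has trace \(-\beta\), which is elliptic. So I will instead use a stiff phase of angle \(\pi\) (length \(\pi/\omega\), giving \(-I\)) and alternate it with a stiff phase of a different angle. The cleanest choice is the classical parametric-resonance block: stiff rotation by angle \(\phi\), short drift, stiff rotation by \(\phi\), short drift. I will choose \(\phi\) and \(\beta\) as fixed universal numbers so that the rescaled monodromy \(N(\phi,\beta)\) satisfies \(|\operatorname{tr}N|>2+c_0\). For instance, \(\phi=\pi/2\) composed with the shears in the order rotation–shear–rotation–shear gives \(\operatorname{tr}=-2-\beta^2\). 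The rescaling conjugation is by the diagonal matrix \(\operatorname{diag}(1,\omega)\), so norms change by at most a factor \(\omega\), which is polynomial and harmless against exponential growth. The period is \(P=(2\phi+2\beta)/\omega\), so the per-time growth rate is \(\log\lambda(N)/P=c\sqrt\kappa\) with \(c\) universal.

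The main obstacle is this bookkeeping. I need to control the \(O(\beta/\omega^2)\) deviation of the true soft-phase matrix from the pure shear, uniformly for \(\kappa\ge4\), so that \(|\operatorname{tr}\mathcal M|-2\) stays above a universal constant. Since the trace is continuous in the matrix entries and the deviation is \(O(1/\kappa)\le O(1/4)\), this should hold after shrinking \(\beta\), with a short explicit estimate of the trace perturbation.

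Finally, the smoothing claim follows because the monodromy over one period depends continuously, in \(L^1\) of the coefficient, on \(a_\kappa\). Hyperbolicity, \(|\operatorname{tr}|>2\), is therefore an open condition, so a mollified coefficient still taking values in \([1,\kappa]\) keeps an eigenvalue of modulus at least \(e^{c\sqrt\kappa P/2}\).
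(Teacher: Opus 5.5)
You have the right overall strategy: a two-phase periodic coefficient with period of order $\kappa^{-1/2}$ and a hyperbolic monodromy whose trace exceeds $2$ by a universal amount. The gap is that the one step carrying the content, hyperbolicity, is never established, and your concrete instance is false.

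\textbf{Why your instance fails.} Take $R=\begin{pmatrix}0&1\\-1&0\end{pmatrix}$ and $S_\beta=\begin{pmatrix}1&\beta\\0&1\end{pmatrix}$. Then $\operatorname{tr}(RS_\beta)=-\beta$, and hence $\operatorname{tr}\bigl((RS_\beta)^2\bigr)=\beta^2-2$. This lies in $(-2,2)$ for $0<\beta<2$, so the rotation--shear--rotation--shear block is elliptic and all solutions stay bounded.
\begin{itemize}
\item Repeating a block cannot change its type: $\operatorname{tr}(N^2)=(\operatorname{tr}N)^2-2$ has modulus above $2$ if and only if $|\operatorname{tr}N|>2$. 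The ``parametric-resonance'' doubling therefore adds nothing.
\item The value $-2-\beta^2$ you quote is $\operatorname{tr}(RS_\beta RS_{-\beta})$. That needs a backward drift, but a short soft phase with positive coefficient always has $(1,2)$-entry $\sin\tau_0>0$.
\item A stiff phase of angle exactly $\pi$ composed with the soft phase gives $-M_1(\tau_0)$. Its trace $-2\cos\tau_0$ has modulus below $2$, so this is not hyperbolic either.
\end{itemize}
So the $O(\beta/\omega^2)$ bookkeeping you flag is not the real obstacle; the sign structure of the available shears is.

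\textbf{Fix within your framework.} A soft phase of length $\tau_0$ followed by a stiff phase of angle $\phi$ has exact monodromy trace
\[
2\cos\tau_0\cos\phi-(\omega+\omega^{-1})\sin\tau_0\sin\phi .
\]
For $\tau_0=\beta/\omega$ this is to leading order $2\cos\phi-\beta\sin\phi$. Hyperbolicity therefore needs one of two things:
\begin{itemize}
\item The quarter turn you discarded, but with $\beta>2$ rather than small. For $\beta=3$ the exact trace is $-(\omega+\omega^{-1})\sin(3/\omega)\le-2.34$ for all $\omega\ge2$. This gives a universal per-period growth factor over a period $(3+\pi/2)/\omega$.
\item For small $\beta$, a stiff angle in a thin window just below $\pi$.
\end{itemize}

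\textbf{The paper's route.} The paper avoids any approximation by keeping your first computation and changing only the soft value: it uses $\kappa/4$ instead of $1$, which is admissible precisely because $\kappa\ge4$. With exact quarter periods at frequencies $\sqrt\kappa/2$ and $\sqrt\kappa$, the monodromy is $Q_{\sqrt\kappa}Q_{\sqrt\kappa/2}=\operatorname{diag}(-1/2,-2)$ over the period $T_\kappa=3\pi/(2\sqrt\kappa)$. The norm after $n$ periods is then $2^n=\exp\{\tfrac{2\log2}{3\pi}\sqrt\kappa\,nT_\kappa\}$. Fixing the frequency ratio at $2$ rather than $\sqrt\kappa$ is exactly what makes both phases short.

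Your smoothing argument is fine once a hyperbolic block is in hand: $|\operatorname{tr}|>2$ is open, and mollification keeps values in $[1,\kappa]$.
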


\begin{proof}
Let $\omega_1={\sqrt\kappa}/{2},$ and $\omega_2=\sqrt\kappa.$ Over a quarter-period of the oscillator with frequency \(\omega\), the
phase vector \((q,q')\) is multiplied by
\[
  Q_\omega
  =
  \begin{pmatrix}
    0&\omega^{-1}\\
    -\omega&0
  \end{pmatrix}.
\]
Choose \(a_\kappa=\omega_1^2=\kappa/4\) for time
\(\pi/\sqrt\kappa\), followed by
\(a_\kappa=\omega_2^2=\kappa\) for time
\(\pi/(2\sqrt\kappa)\), and repeat periodically.  One period has length $T_\kappa
  =
  {3\pi}/{2\sqrt\kappa}$ and monodromy matrix
\[
  Q_{\omega_2}Q_{\omega_1}
  =
  \begin{pmatrix}
    -1/2&0\\
    0&-2
  \end{pmatrix}.
\]
After \(n\) periods its norm is \(2^n\).  Since
\[
  2^n
  =
  \exp\!\left\{
    \frac{2\log2}{3\pi}
    \sqrt\kappa\,
    nT_\kappa
  \right\},
\]
the claimed growth follows.  Fundamental matrices depend continuously
on the coefficient in \(L^1\) on compact intervals, so smoothing the
jumps preserves a fixed fraction of the Floquet growth.
\end{proof}

\subsection{Metric-adapted targets}
\label{app:metric-adaptation-proof}

\begin{proof}[Proof of \Cref{thm:relative-hessian-stability}]
Use the canonical affine change of variables
\begin{align*}
  x=b+G^{1/2}z,
  \qquad
  p=G^{-1/2}v.
\end{align*}
Then \(p^\top Gp=\norm v^2\), and $ p^\top\dd x= v^\top\dd z, $ so the symplectic one-form is preserved.  The transformed potential is $U_G(z)=U(b+G^{1/2}z)$, with Hessian
\[
  \nabla^2U_G(z)
  =
  G^{1/2}\nabla^2U(b+G^{1/2}z)G^{1/2}.
\]
Thus \eqref{eq:relative-Hessian-condition} gives
\[
  (1-\delta)I_d
  \preceq
  \nabla^2U_G(z)
  \preceq
  (1+\delta)I_d,
\]
which proves \eqref{eq:relative-Hessian-kappa}.

Let \((\xi_t,\eta_t)\) solve the variational equation along the transformed physical
flow \(\flow_t^G\):
\[
  \dot\xi_t=\eta_t,
  \qquad
  \dot\eta_t=-H_t\xi_t,
  \qquad
  H_t=\nabla^2U_G(Z_t).
\]
Set \(E_t=\norm{\xi_t}^2+\norm{\eta_t}^2\).  Since
\(\norm{I_d-H_t}_{\mathrm{op}}\le\delta\),
\begin{align*}
  |E_t'|
  &=
  \left|
    2\ip{\xi_t}{\eta_t}-2\ip{\eta_t}{H_t\xi_t}
  \right|
  =
  2\left|\ip{\eta_t}{(I_d-H_t)\xi_t}\right|
  \\
  &\le
  2\delta\norm{\eta_t}\norm{\xi_t}
  \le
  \delta E_t.
\end{align*}
Integrating \(|(\log E_t)'|\le\delta\) on the interval joining \(0\) and \(t\)
gives \(E_t\le e^{\delta|t|}E_0\) for every \(t\in\R\).  Therefore $ \norm{D\flow_t^G}_{\mathrm{op}}
  \le
  e^{\delta|t|/2}.$
  
The stability index \(\mathfrak A_S(U_G)\) is defined after lower-curvature scaling.
Let  $m_G=1-\delta$ and $T_G=\operatorname{diag}(\sqrt{m_G}I_d,I_d)$. If \(\widetilde\flow_s^G\) denotes the scaled flow, then $\widetilde\flow_s^G
  =
  T_G\flow_{s/\sqrt{m_G}}^GT_G^{-1}$
 (up to the irrelevant translation by the minimizer).  Because
\(\norm{T_G}_{\mathrm{op}}=1\) and
\(\norm{T_G^{-1}}_{\mathrm{op}}=m_G^{-1/2}\),
\begin{align}
  \norm{D\widetilde\flow_s^G}_{\mathrm{op}}
  \le
  \frac{1}{\sqrt{1-\delta}}
  \exp\!\left\{
    \frac{\delta|s|}{2\sqrt{1-\delta}}
  \right\}.
  \label{eq:relative-scaled-tangent}
\end{align}
For the selected-horizon index, $|s|
  \le
  S_{\kappa_G}^+
  =S+\kappa_G^{-1/2}
  \le S+1.$ Taking the supremum in \eqref{eq:relative-scaled-tangent} proves
\eqref{eq:relative-Hessian-A}.

The transformed Hessian also satisfies the hypotheses of
\Cref{thm:near-isotropic} with \(\omega=1\) and \(\eta=\delta\).  Therefore
\eqref{eq:relative-Hessian-profile} follows from
\eqref{eq:near-isotropic-profile}.  If the reference sine margin is
\(\rho_{\sin}>C_0\delta\), subtracting the profile perturbation leaves the
unit-frequency margin \((\rho_{\sin}-C_0\delta)d\).  Because the lower
curvature of the transformed target is \(m_G=1-\delta\), the normalized
margin in \Cref{def:profile-separated} is
\(\rho_0=\sqrt{1-\delta}(\rho_{\sin}-C_0\delta)\).  Finally, when
\(\delta\le\delta_0<1\), the bounds in \eqref{eq:relative-Hessian-kappa} and
\eqref{eq:relative-Hessian-A} depend only on \(\delta_0\) and \(S\).  Substitution into
\Cref{thm:tangent-adaptive} proves the stated independence from the raw anisotropy,
while retaining the explicitly stated dependence on transformed Hessian regularity.

\end{proof}

\section{Proofs for the Gaussian specialization}
\label{app:gaussian-proofs}

\subsection{The Gaussian profile and separated dyadic grids}
\label{app:gaussian-profile-proof}

\begin{proof}[Proof of \Cref{thm:gaussian-arbitrary-kappa}]
Diagonalize $  \Lambda=C^{-1}$ orthogonally.  In the eigenbasis, exact Hamiltonian dynamics are
independent oscillators:
\[
  X_{i,t}
  =
  X_{i,0}\cos(\omega_it)
  +
  \frac{V_{i,0}}{\omega_i}
  \sin(\omega_it).
\]
Since \(X_{i,0}\) and \(V_{i,0}\) are independent and
\(\E V_{i,0}^2=1\),
\[
  \E\!\left[
    V_{i,0}(X_{i,t}-X_{i,0})
  \right]
  =
  \frac{\sin(\omega_it)}{\omega_i}.
\]
Summing proves the eigenvalue formula in
\eqref{eq:gaussian-profile}.  Functional calculus and
\(C=\Lambda^{-1}\) give
\[
  \Lambda^{-1/2}\sin(\Lambda^{1/2}t)
  =
  \sin(C^{-1/2}t)C^{1/2},
\]
so the trace is exactly the uniform term used by
\citet{Oberdoerster2025}.

In the lower-curvature scaled variables, the precision matrix is $B=m^{-1}\Lambda$, whose spectrum lies in \([1,\kappa]\).  The phase-space flow matrix is
\[
  \begin{pmatrix}
    \cos(B^{1/2}s)
    &
    B^{-1/2}\sin(B^{1/2}s)
    \\
    -B^{1/2}\sin(B^{1/2}s)
    &
    \cos(B^{1/2}s)
  \end{pmatrix}.
\]
The four blocks have operator norms at most
\(1,1,\sqrt\kappa,1\), respectively.  Hence the full matrix has norm
at most \(2\sqrt\kappa\), proving
\eqref{eq:gaussian-tangent-amplification}. The profile frequencies lie in
\([\sqrt m,\sqrt L]\), so
\Cref{lem:frequency-crossing} gives a negative value before
\(2\pi/\sqrt m\).

For the uniform margin, let $  R=\sqrt\kappa,
$ and $  s=\sqrt m\,t,$ and let $  \mu_\Lambda
  =
  \frac1d
  \sum_{i=1}^d
  \delta_{\omega_i/\sqrt m}.$ Then
\begin{align*}
  \frac{\sqrt m}{d}u(t)
  =
  F_{\mu_\Lambda}(s),
  \qquad
  F_\mu(s)
  =
  \int_1^R
  \frac{\sin(rs)}r
  \mu(\dd r).
\end{align*}
For every probability measure \(\mu\) on \([1,R]\),
\Cref{lem:frequency-crossing} implies
\[
  \min_{0\le s\le2\pi}
  F_\mu(s)
  <
  0.
\]
The space \(\mathcal P([1,R])\) is weakly compact, and the map $\mu\longmapsto F_\mu$ is continuous into \(C([0,2\pi])\) in the supremum norm.  Hence
\begin{align*}
  c_{\mathrm G}(\kappa)
  =
  -
  \sup_{\mu\in\mathcal P([1,R])}
  \min_{0\le s\le2\pi}
  F_\mu(s)
  >
  0.
\end{align*}
Also $|F_\mu'(s)|\le 1.$ If $  F_\mu(s_0) \le -c_{\mathrm G}(\kappa),$ then $  F_\mu(s)
  \le
  -\frac12c_{\mathrm G}(\kappa)$
whenever $ |s-s_0|
  \le
  \frac12c_{\mathrm G}(\kappa).$ Rescaling proves
\eqref{eq:gaussian-uniform-negative}, after changing
\(c_{\mathrm G}(\kappa)\) by a universal factor.

It remains to construct separated dyadic grids.  Fix the covariance
and let \((a,b)\) be the first connected component of
$  \{t>0:u(t)<0\}.$ The finite sine sum \eqref{eq:gaussian-profile} is real analytic and
not identically zero, so \(0<a<b\) and its zeros are isolated.  Choose
a nonempty compact interval $I
  \subset
  (a,\min\{b,2a\}).$ Fix a depth \(k\ge2\) and put
\[
  c_{r,k}
  =
  \frac{2^r-1}{2^k-1},
  \qquad
  1\le r<k.
\]
Then \(c_{r,k}<1/2\), so \(c_{r,k}t<a\) for every \(t\in I\).
Remove from \(I\) the finitely many points for which
$  u(c_{r,k}t)=0$ 
for some \(r<k\) with \(c_{r,k}t\ge T_{\mathrm{sm}}\).  The remaining
set is open and has positive measure.  Choose a compact interval
\(I_k\) in one of its components.  Continuity gives
\[
  \min_{t\in I_k}
  \frac{\sqrt m}{d}
  \min\left\{
    -u(t),
    \min_{\substack{
      r<k\\
      c_{r,k}t\ge T_{\mathrm{sm}}
    }}
    u(c_{r,k}t)
  \right\}
  >
  0.
\]
For
\[
  h=\frac{t}{2^k-1},
  \qquad
  t\in I_k,
\]
depth \(k\) is profile separated with this target-dependent uniform
margin.  The step-size set \(I_k/(2^k-1)\) has positive Lebesgue
measure, and taking \(k\) large makes all its step sizes arbitrarily
small.
\end{proof}

\subsection{Gaussian diagnostic concentration}
\label{app:gaussian-concentration-proof}

\begin{proof}[Proof of \Cref{prop:gaussian-margin-certificate}]
Diagonalize the precision matrix and write $ \Lambda
  =
  \operatorname{diag}(
    \omega_1^2,\ldots,\omega_d^2
  ).$ At stationarity,
\[
  Y_i=\omega_iX_{0,i},
  \qquad
  V_i=V_{0,i},
  \qquad
  1\le i\le d,
\]
are independent standard Gaussian variables.  Put
\[
  \theta_i=\omega_it,
  \qquad
  c_i=\cos\theta_i,
  \qquad
  s_i=\sin\theta_i.
\]
Exact flow gives
\[
  X_{t,i}-X_{0,i}
  =
  \frac1{\omega_i}
  \{(c_i-1)Y_i+s_iV_i\},
\]
and
\[
  V_{t,i}
  =
  -s_iY_i+c_iV_i.
\]
Therefore
\begin{align}
  D_t^--u(t)
  &=
  \sum_{i=1}^d
  \frac1{\omega_i}
  \left\{
    (c_i-1)Y_iV_i
    +
    s_i(V_i^2-1)
  \right\},
  \label{eq:gaussian-Dminus-quadratic}\\
  D_t^+-u(t)
  &=
  \sum_{i=1}^d
  \frac1{\omega_i}
  \left\{
    s_i(1-c_i)(Y_i^2-1)
    +
    (\cos(2\theta_i)-c_i)Y_iV_i
    +
    s_ic_i(V_i^2-1)
  \right\}.
  \label{eq:gaussian-Dplus-quadratic}
\end{align}
Each expression is a centered quadratic form in the \(2d\)-dimensional
standard Gaussian vector \((Y,V)\).  The corresponding symmetric
matrices are block diagonal, and every \(2\times2\) block has operator
norm at most \(C/\omega_i\).  Consequently,
\[
  \norm{A_{t,\sigma}}_{\mathrm{op}}
  \le
  \frac C{\sqrt m},
  \qquad
  \norm{A_{t,\sigma}}_{\mathrm F}
  \le
  C\sqrt{\frac dm},
  \qquad
  \sigma\in\{-,+\}.
\]
The Gaussian Hanson--Wright inequality yields
\[
  \Pp\!\left[
    |D_t^\sigma-u(t)|
    >
    \frac C{\sqrt m}
    (\sqrt{dr}+r)
  \right]
  \le
  2e^{-r}.
\]
An interval beginning at any deterministic time along the stationary
trajectory has the same phase-space law at its left endpoint.  The
same estimate therefore applies to every member of \(\mathscr T\).
A union bound over the two endpoint diagnostics and the \(N\)
intervals proves
\eqref{eq:gaussian-margin-concentration}.  Taking $  r=\log(4N/\delta)$ proves
\eqref{eq:gaussian-margin-concentration-delta}.  Finally,
\eqref{eq:gaussian-margin-dimension} makes the right-hand side of
\eqref{eq:gaussian-margin-concentration-delta} at most
\(\rho_0d/(2\sqrt m)\), after increasing the universal constant.  Every
sign then agrees with the population profile.
\end{proof}

\subsection{Gaussian leapfrog stability}
\label{app:gaussian-leapfrog-proof}

\begin{proof}[Proof of \Cref{thm:gaussian-leapfrog-stability}]
The exact-flow assertion
\eqref{eq:gaussian-Cp-direct} follows from
\eqref{eq:gaussian-Dminus-quadratic}--%
\eqref{eq:gaussian-Dplus-quadratic} and the Gaussian quadratic-form
moment inequality
\begin{align}
  \norm{
    Z^\top AZ-\Tr A
  }_{L^p}
  \le
  C\left(
    \sqrt p\,\norm A_{\mathrm F}
    +
    p\norm A_{\mathrm{op}}
  \right),
  \qquad
  Z\sim N(0,I).
  \label{eq:gaussian-quadratic-moment}
\end{align}

Put
\begin{align*}
  \alpha_i
  =
  \left(
    1-\frac{h^2\omega_i^2}{4}
  \right)^{1/2},
  \qquad
  \vartheta_i
  =
  2\arcsin\!\left(
    \frac{h\omega_i}{2}
  \right).
\end{align*}
Since \(h\sqrt L\le1\), $ {\sqrt3}/{2}
  \le
  \alpha_i
  \le
  1.$ In the normalized coordinates \(Y_i=\omega_iX_i\), leapfrog has the
exact representation
\begin{align}
  \begin{pmatrix}
    \widehat Y_{i,n}\\
    \widehat V_{i,n}
  \end{pmatrix}
  =
  \begin{pmatrix}
    \cos(n\vartheta_i)
    &
    \alpha_i^{-1}\sin(n\vartheta_i)
    \\
    -\alpha_i\sin(n\vartheta_i)
    &
    \cos(n\vartheta_i)
  \end{pmatrix}
  \begin{pmatrix}
    Y_i\\
    V_i
  \end{pmatrix}.
  \label{eq:gaussian-leapfrog-mode}
\end{align}
Taylor's theorem gives $|\vartheta_i-h\omega_i|
  \le
  Ch^3\omega_i^3$ and $|1-\alpha_i|
  \le
  Ch^2\omega_i^2.$
For an integer \(n\), let
\[
  R_i(n)
  =
  \begin{pmatrix}
    \cos(nh\omega_i)
    &
    \sin(nh\omega_i)
    \\
    -\sin(nh\omega_i)
    &
    \cos(nh\omega_i)
  \end{pmatrix},
\]
and let \(\widehat R_i(n)\) denote the matrix in
\eqref{eq:gaussian-leapfrog-mode}.  Then
\[
  \norm{
    \widehat R_i(n)-R_i(n)
  }_{\mathrm{op}}
  \le
  C\left(
    h^2\omega_i^2
    +
    |n|h^3\omega_i^3
  \right).
  \label{eq:gaussian-mode-matrix-error}
\]
Moreover,
\[
  \norm{R_i(n)}_{\mathrm{op}}=1,
  \qquad
  \norm{\widehat R_i(n)}_{\mathrm{op}}\le C
\]
uniformly in \(n\).

Let \(I=[a:b]\cap\Z\in\mathscr I_K\).  The mode contributions to the
exact and leapfrog endpoint diagnostics are quadratic forms in
\((Y_i,V_i)\).  The product rule for matrix differences and
\eqref{eq:gaussian-mode-matrix-error} give
\[
  \norm{
    A_{I,i}^{\mathrm{LF},\sigma}
    -
    A_{I,i}^{\mathrm{ex},\sigma}
  }_{\mathrm{op}}
  \le
  \frac C{\omega_i}
  \max_{n\in\{a,b\}}
  \left(
    h^2\omega_i^2
    +
    |n|h^3\omega_i^3
  \right).
\]
Because
\[
  \max\{|a|,|b|\}h
  \le
  Kh
  \le
  \frac{S+1}{\sqrt m},
\]
the right-hand side is bounded by
\[
  \frac{
    C(1+S)Lh^2
  }{
    \sqrt m
  }.
\]
Consequently,
\begin{align*}
  \norm{
    A_I^{\mathrm{LF},\sigma}
    -
    A_I^{\mathrm{ex},\sigma}
  }_{\mathrm{op}}
  &\le
  \frac{
    C(1+S)Lh^2
  }{
    \sqrt m
  },
  \notag\\
  \norm{
    A_I^{\mathrm{LF},\sigma}
    -
    A_I^{\mathrm{ex},\sigma}
  }_{\mathrm F}
  &\le
  C(1+S)Lh^2
  \sqrt{\frac dm},
  \notag\\
  \left|
    \Tr\left(
      A_I^{\mathrm{LF},\sigma}
      -
      A_I^{\mathrm{ex},\sigma}
    \right)
  \right|
  &\le
  C(1+S)Lh^2
  \frac d{\sqrt m}.
\end{align*}
Using
\eqref{eq:gaussian-quadratic-moment} and
\[
  \norm{
    \max_{1\le j\le N}|X_j|
  }_{L^p}
  \le
  N^{1/p}
  \max_j\norm{X_j}_{L^p},
\]
condition \eqref{eq:gaussian-lf-p} yields
\begin{align*}
  \left\|
    \max_{I\in\mathscr I_K}
    \max_{\sigma\in\{-,+\}}
    |\widehat D_I^\sigma-D_I^\sigma|
  \right\|_{L^p}
  \le
  \frac{
    C(1+S)Lh^2
  }{
    \sqrt m
  }
  \bigl(
    d+\sqrt{dp}+p
  \bigr).
\end{align*}
This proves
\eqref{eq:gaussian-Ep-direct}.

The same mode formula gives an exactly preserved modified energy $ \widetilde H_{i,h}(q,p)
  =
  \frac12p^2
  +
  \frac12\omega_i^2\alpha_i^2q^2.$ Hence
\begin{align*}
  H_i(\widehat q_{i,n},\widehat p_{i,n})
  -
  H_i(q_i,p_i)
  =
  \frac{h^2\omega_i^2}{8}
  \left(
    \widehat Y_{i,n}^2-Y_i^2
  \right).
\end{align*}
Writing
\(c_{i,n}=\cos(n\vartheta_i)\) and \(s_{i,n}=\sin(n\vartheta_i)\),
we have
\[
  \widehat Y_{i,n}^2-Y_i^2
  =
  \begin{pmatrix}Y_i&V_i\end{pmatrix}
  \begin{pmatrix}
    c_{i,n}^2-1 & c_{i,n}s_{i,n}/\alpha_i\\
    c_{i,n}s_{i,n}/\alpha_i & s_{i,n}^2/\alpha_i^2
  \end{pmatrix}
  \begin{pmatrix}Y_i\\V_i\end{pmatrix}.
\]
Thus the energy-error block is
\[
  A_{i,n}^{H}
  =\frac{h^2\omega_i^2}{8}
  \begin{pmatrix}
    c_{i,n}^2-1 & c_{i,n}s_{i,n}/\alpha_i\\
    c_{i,n}s_{i,n}/\alpha_i & s_{i,n}^2/\alpha_i^2
  \end{pmatrix}.
\]
Since \(\alpha_i\ge\sqrt3/2\),
\(\|A_{i,n}^{H}\|_{\mathrm{op}}\le Ch^2\omega_i^2\), and
\[
  |\operatorname{tr}A_{i,n}^{H}|
  =\frac{h^2\omega_i^2}{8}s_{i,n}^2(\alpha_i^{-2}-1)
  \le Ch^4\omega_i^4.
\]
Consequently, after summing the block-diagonal matrices over modes, the operator norm is at most \(CLh^2\),
the Frobenius norm is at most \(CLh^2\sqrt d\), and the trace is at
most \(CL^2h^4d\) in absolute value.  Therefore
\begin{align*}
  \left\|
    \max_{|n|\le K}
    |H(\Phi_h^nZ_0)-H(Z_0)|
  \right\|_{L^p}
  \le
  C\left\{
    Lh^2(\sqrt{dp}+p)
    +
    L^2h^4d
  \right\}.
\end{align*}
Markov's inequality proves
\eqref{eq:gaussian-energy-tail}.

It remains to control the shortest dyadic intervals.  Fix
\(I=[a:a+r]\cap\Z\), set \(t=rh\), and abbreviate
\[
  \phi_i=r\vartheta_i,
  \qquad
  c_i=\cos\phi_i,
  \qquad
  s_i=\sin\phi_i.
\]
At the left endpoint,
\[
  \begin{pmatrix}
    Y_{i,a}\\
    V_{i,a}
  \end{pmatrix}
  =
  \widehat R_i(a)
  \begin{pmatrix}
    Y_i\\
    V_i
  \end{pmatrix}.
\]
Writing
\[
  A_{i,a}=\E Y_{i,a}^2,
  \qquad
  B_{i,a}=\E V_{i,a}^2,
  \qquad
  C_{i,a}=\E(Y_{i,a}V_{i,a}),
\]
direct multiplication gives
\begin{align*}
  A_{i,a}
  &=
  1+
  \sin^2(a\vartheta_i)
  (\alpha_i^{-2}-1),
  \notag\\
  B_{i,a}
  &=
  1-
  \sin^2(a\vartheta_i)
  (1-\alpha_i^2),
  \notag\\
  C_{i,a}
  &=
  \sin(a\vartheta_i)
  \cos(a\vartheta_i)
  (\alpha_i^{-1}-\alpha_i).
\end{align*}
Thus $ |A_{i,a}-1|
  +
  |B_{i,a}-1|
  +
  |C_{i,a}|
  \le
  Ch^2\omega_i^2$, uniformly in \(a\).

The two numerical endpoint diagnostics over the interval are
\[
  \widehat D_{I,i}^-
  =
  \frac1{\omega_i}
  V_{i,a}
  \left\{
    (c_i-1)Y_{i,a}
    +
    \frac{s_i}{\alpha_i}V_{i,a}
  \right\},
\]
and
\[
  \widehat D_{I,i}^+
  =
  \frac1{\omega_i}
  \left(
    -\alpha_is_iY_{i,a}
    +
    c_iV_{i,a}
  \right)
  \left\{
    (c_i-1)Y_{i,a}
    +
    \frac{s_i}{\alpha_i}V_{i,a}
  \right\}.
\]
Assume \(r\ge1\); the case \(r=0\) is a singleton and is
nonturning by definition. Choose \(c_{\mathrm{sm}}>0\) so that
\begin{align*}
  0\le\phi_i\le\frac18 \quad\text{whenever}\quad
  t\le\frac{c_{\mathrm{sm}}}{\sqrt L},
\end{align*}
and reduce the universal upper bound on \(Lh^2\) if necessary. Directly from the displayed covariance formulas,
\[
  B_{i,a}\ge\alpha_i^2,
  \qquad
  |C_{i,a}|\le\frac12(\alpha_i^{-1}-\alpha_i)
  =\frac{h^2\omega_i^2}{8\alpha_i}.
\]
Moreover, \(s_i\ge\phi_i/2\), \(1-c_i\le\phi_i^2/2\), and
\(\alpha_i\ge\sqrt3/2\).  Therefore
\begin{align*}
  \E\widehat D_{I,i}^-
  &=\frac1{\omega_i}
    \left\{(c_i-1)C_{i,a}+\frac{s_i}{\alpha_i}B_{i,a}\right\}\\
  &\ge\frac1{\omega_i}
    \left\{s_i\alpha_i-(1-c_i)|C_{i,a}|\right\}
  \ge c\frac{\phi_i}{\omega_i}.
\end{align*}
Since \(\vartheta_i=2\arcsin(h\omega_i/2)\ge h\omega_i\),
\(\phi_i/\omega_i=r\vartheta_i/\omega_i\ge rh=t\), and hence
\(\E\widehat D_{I,i}^-\ge c_1t\).
For the right diagnostic, reverse the interval using
\(\Phi_h^{-1}=S\Phi_hS\).  The original right-endpoint statistic equals
the left-endpoint statistic of the reversed interval initialized at
\((\widehat Y_{i,a+r},-\widehat V_{i,a+r})\).  The covariance identities above hold
uniformly in the shift index, and changing the sign of the momentum only changes the sign
of the cross covariance, whose absolute value is what enters the bound.  The same calculation
therefore gives \(\E\widehat D_{I,i}^+\ge c_1t\). The centered diagnostics are Gaussian quadratic forms whose
\(2\times2\) coefficient blocks have operator norm at most \(Ct\).
Summing over modes gives operator norm at most \(Ct\) and Frobenius
norm at most \(Ct\sqrt d\).  Applying
\eqref{eq:gaussian-quadratic-moment} at moment order \(q=cd\) yields
\[
  \Pp\!\left[
    \min\{
      \widehat D_I^-,
      \widehat D_I^+
    \}
    \le0
  \right]
  \le
  4e^{-c_2d}.
\]
A union bound over \(\mathscr I_K\) proves
\eqref{eq:gaussian-short-tail}.
\end{proof}

\begin{proof}[Proof of \Cref{cor:gaussian-practical}]
By
\Cref{thm:gaussian-leapfrog-stability},
\[
  \Xi_{p,h}(S,K_\star)
  \le
  C\left(
    \sqrt{\frac pd}
    +
    \frac pd
  \right)
  +
  C(1+S)Lh^2
  \left(
    1+\sqrt{\frac pd}+\frac pd
  \right).
\]
The two conditions in
\eqref{eq:gaussian-practical-sign} make this at most
\(\rho_0/4\).  Equations
\eqref{eq:gaussian-energy-tail} and
\eqref{eq:gaussian-short-tail} give $  \zeta_{p,h}(K_\star)
  \le
  CN_{K_\star}e^{-p}.$ The choice
\eqref{eq:gaussian-practical-p} therefore implies the
selector-specific failure condition in
\Cref{thm:adaptive} for both selectors.  The main theorem proves the
stopping and mixing claims.

For the step size
\eqref{eq:gaussian-practical-h}, $  Lh^2(\sqrt{dp}+p)=c_h,$ and
\[
  L^2h^4d
  =
  \frac{
    c_h^2d
  }{
    (\sqrt{dp}+p)^2
  }
  \le
  \frac{c_h^2}{p}.
\]
Thus
\eqref{eq:gaussian-energy-condition} holds when \(c_h\) is sufficiently
small.  Finally,
\[
  K_\star
  =
  1+
  \frac{a_\star}{h\sqrt m}
  =
  1+
  a_\star\sqrt\kappa
  \frac1{h\sqrt L},
\]
which proves
\eqref{eq:gaussian-practical-cost}.
\end{proof}

\subsection{The two-scale phase diagram}
\label{app:two-scale-proof}

\begin{proof}[Proof of \Cref{thm:two-scale-reconciliation}]
The identity
\eqref{eq:two-scale-profile-identity} follows from
\eqref{eq:gaussian-profile}.  The slow and fast blocks contribute,
respectively,
\begin{align*}
  d_1m_1^{-1/2}
  \sin(\sqrt{m_1}t)
\qquad \text{
and}\qquad
d_2m_2^{-1/2}
  \sin(\sqrt{m_2}t).
\end{align*}
Substituting
\[
  s=\sqrt{m_2}t,
  \qquad
  \kappa=m_2/m_1,
  \qquad
  q=d_2/d_1
\]
gives
\eqref{eq:g-kappa-q}.  The equivalence between profile separation and
the margin
\eqref{eq:two-scale-margin} is therefore exact.

Statement (i) follows immediately from the definitions.  Suppose that
\((\kappa,q)\notin\mathcal A\).  Since \(g_{\kappa,q}\) is continuous
and positive for all sufficiently small positive \(s\),
\(\mathcal J_{\kappa,q}\) is nonempty and open, with positive distance
from zero.  A dyadic fast time equals $ s_k(\bar h)
  =
  \sqrt{m_2}h(2^k-1).$ Hence the grid encounters a negative value during the first fast period
if and only if
\[
  h
  \in
  \bigcup_{k\ge1}
  \frac{
    \mathcal J_{\kappa,q}
  }{
    \sqrt{m_2}(2^k-1)
  },
\]
which is
\eqref{eq:trap-step-set}.

Fix a compact step-size interval contained in the interior of
\(\mathcal H_{\mathrm{trap}}\), and remove the finitely many points at
which a relevant dyadic time is a zero of \(g_{\kappa,q}\).  On each
remaining compact component, the first negative dyadic value is
bounded above by a negative constant and every preceding dyadic value
above the short-time cutoff is bounded below by a positive constant.
Compactness gives a uniform positive value of
\(\mathfrak r_{\kappa,q}\).  The selected physical time belongs to $  m_2^{-1/2}(0,2\pi),$ so $  a_\star
  =
  \sqrt{m_1}T_\star
  =
  \Theta(\kappa^{-1/2}).$ If one chooses a later critical candidate, the earlier negative fast
dyadic time enters the preterminal minimum in
\eqref{eq:two-scale-margin}; hence profile separation at the critical
depth fails.  At a boundary point of
\(\mathcal H_{\mathrm{trap}}\), a dyadic value lies at a profile zero
and the margin vanishes.  This proves (ii).

Assume now that \((\kappa,q)\in\mathcal A\).  We show that
\(g_{\kappa,q}\) cannot be negative before
\((\pi/2)\sqrt\kappa\).  If \(1\le\kappa\le16\), membership in
\(\mathcal A\) gives nonnegativity on \((0,2\pi)\), and $  2\pi
  \ge
  \frac\pi2\sqrt\kappa.$ Suppose \(\kappa\ge16\).  Let $  0<s\le\pi\sqrt\kappa-2\pi$ and write
\[
  s=2\pi n+y,
  \qquad
  n\in\Nzero,
  \qquad
  y\in[0,2\pi).
\]
The slow phase satisfies
\[
  \frac y{\sqrt\kappa}
  \le
  \frac s{\sqrt\kappa}
  \le
  \pi-\frac y{\sqrt\kappa},
  \qquad
  0\le
  \frac y{\sqrt\kappa}
  \le
  \frac\pi2.
\]
Therefore $  \sin(s/\sqrt\kappa)
  \ge
  \sin(y/\sqrt\kappa).$ If \(\sin y\ge0\), both terms in \(g_{\kappa,q}(s)\) are nonnegative.
If \(\sin y<0\), membership in \(\mathcal A\) gives
\[
  \sin(y/\sqrt\kappa)
  +
  \frac q{\sqrt\kappa}\sin y
  \ge
  0,
\]
and the preceding comparison again yields
\(g_{\kappa,q}(s)\ge0\).  Hence there is no negative value before
\(\pi\sqrt\kappa-2\pi\), which is at least
\((\pi/2)\sqrt\kappa\) for \(\kappa\ge16\).

By \Cref{thm:gaussian-arbitrary-kappa}, the profile is negative before
\(2\pi/\sqrt{m_1}\), equivalently before fast time
\(2\pi\sqrt\kappa\).  This proves
\eqref{eq:phase-A-critical-window}.

It remains to construct a positive-measure set of separated step
sizes.  Let \((a,b)\) be the first connected component of $  \{s>0:g_{\kappa,q}(s)<0\},$ and choose a compact interval $I
  \subset
  (a,\min\{b,2a\}).$ For a fixed depth \(k\ge2\), set
\[
  c_{r,k}
  =
  \frac{2^r-1}{2^k-1}.
\]
Then \(c_{r,k}<1/2\), so \(c_{r,k}s<a\) for all \(s\in I\) and
\(r<k\).  Since \(g_{\kappa,q}\) is real analytic and not identically
zero, only finitely many points of \(I\) satisfy $  g_{\kappa,q}(c_{r,k}s)=0$ at a preterminal time above the short-time cutoff.  Any compact
interval \(I_k\) in the remaining open set has a uniform positive
terminal and preterminal margin.  The step-size interval
\[
  \left\{
    \frac{
      s
    }{
      \sqrt{m_2}(2^k-1)
    }:
    s\in I_k
  \right\}
\]
has positive Lebesgue measure and yields profile separation.  Taking
\(k\) large makes these step sizes arbitrarily small.  In phase
\(\mathcal A\), the first negative component lies at critical scale;
outside \(\mathcal A\), the same construction applied to the first
component of \(\mathcal J_{\kappa,q}\) gives fast scale.
\end{proof}

\subsection{Gaussian profile kernels and selected-time interpolation}
\label{app:gaussian-interpolation-proof}

Let \(H_n\) be the probabilists' Hermite polynomial of degree \(n\),
normalized in \(L^2(N(0,1))\).  For a multi-index
\(\alpha\in\Nzero^d\), write
\[
  H_\alpha(y)
  =
  \prod_{i=1}^dH_{\alpha_i}(y_i).
\]
After whitening, $  Y_i=\omega_iX_i,$ these functions form an orthonormal basis of \(L^2(\pi)\).

For a signed probability law \(\lambda\) on integration indices and a
deterministic step grid \(h\Z\), exact Hamiltonian flow gives $  Y_i'
  =
  \cos(\omega_ihJ)Y_i
  +
  \sin(\omega_ihJ)G_i$,
$  G_i\sim N(0,1),$ where \(J\sim\lambda\) and the \(G_i\) are independent.  Mehler's
identity yields
\begin{align}
  P_\lambda H_\alpha
  =
  \left[
    \E_J
    \prod_{i=1}^d
    \cos(\omega_ihJ)^{\alpha_i}
  \right]
  H_\alpha.
  \label{eq:Hermite-eigenvalue}
\end{align}

Let \(U_{\mathrm M}\) have density
\begin{align}
  w_{\mathrm M}(u)
  =
  1-|u|,
  \qquad
  |u|\le1,
  \label{eq:triangular-density}
\end{align}
and let \(U_{\mathrm B}\) have density
\begin{align}
  w_{\mathrm B}(u)
  =
  2\min\{|u|,1-|u|\},
  \qquad
  |u|\le1.
  \label{eq:double-tent-density}
\end{align}

\begin{lemma}[Selector characteristic functions]
\label{lem:selector-characteristic}
With
\[
  \sinc z=\frac{\sin z}{z},
  \qquad
  \sinc(0)=1,
\]
we have
\begin{align}
  \phi_{\mathrm M}(\xi)
  &:=
  \E e^{\mathrm i\xi U_{\mathrm M}}
  =
  \sinc^2(\xi/2),
  \label{eq:phi-M}\\
  \phi_{\mathrm B}(\xi)
  &:=
  \E e^{\mathrm i\xi U_{\mathrm B}}
  =
  \frac{
    4\{2\cos(\xi/2)-\cos\xi-1\}
  }{
    \xi^2
  }.
  \label{eq:phi-B}
\end{align}
If $q_a(z)
  =
  \E\cos^2(zU_a),$ then
\begin{align}
  q_{\mathrm M}(z)
  &=
  \frac12\{1+\sinc^2z\},
  \label{eq:q-M}\\
  q_{\mathrm B}(z)
  &=
  \frac12
  \left\{
    1+
    \frac{
      2\cos z(1-\cos z)
    }{
      z^2
    }
  \right\}.
  \label{eq:q-B}
\end{align}
There are universal constants \(c_{\mathrm M},c_{\mathrm B}>0\) such
that
\begin{align}
  1-\sqrt{q_a(z)}
  \ge
  c_a\min\{z^2,1\},
  \qquad
  z\ge0,
  \quad
  a\in\{\mathrm M,\mathrm B\}.
  \label{eq:q-gap}
\end{align}
\end{lemma}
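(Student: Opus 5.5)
The plan is to compute the two characteristic functions directly from the densities, then obtain $q_a$ from the double-angle identity, and finally prove the gap bound by splitting into small and large $z$.

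\emph{Characteristic functions.} Both densities are even, so $\phi_a(\xi)=2\int_0^1 w_a(u)\cos(\xi u)\dd u$. For the triangular density, one integration by parts gives
\[
\phi_{\mathrm M}(\xi)=\frac{2(1-\cos\xi)}{\xi^2}=\sinc^2(\xi/2).
\]
For the double tent, I will split $[0,1]$ at $1/2$. On $[0,1/2]$ the density is $2u$, and on $[1/2,1]$ it is $2(1-u)$. Integrating $u\cos(\xi u)$ and $(1-u)\cos(\xi u)$ by parts, the boundary terms at $u=1/2$ cancel. What remains is
\[
\frac{4}{\xi^2}\bigl[\cos(\xi/2)-1\bigr]+\frac{4}{\xi^2}\bigl[\cos(\xi/2)-\cos\xi\bigr],
\]
which is \eqref{eq:phi-B}.

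\emph{The functions $q_a$.} Since $\cos^2(zU)=\tfrac12\{1+\cos(2zU)\}$, we have $q_a(z)=\tfrac12\{1+\phi_a(2z)\}$. Substituting $\xi=2z$ gives $\phi_{\mathrm M}(2z)=\sinc^2 z$. It also gives
\[
\phi_{\mathrm B}(2z)=\frac{2\cos z-\cos 2z-1}{z^2}=\frac{2\cos z-2\cos^2 z}{z^2},
\]
and together these yield \eqref{eq:q-M}--\eqref{eq:q-B}.

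\emph{Gap bound.} Because $1-\sqrt q=(1-q)/(1+\sqrt q)\ge (1-q)/2$ for $q\in[0,1]$, it suffices to show $1-q_a(z)\ge c\min\{z^2,1\}$. This is equivalent to $1-\phi_a(2z)\ge 2c\min\{z^2,1\}$.

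\emph{Small $z$.} Use $1-\phi_a(\xi)=\E[1-\cos(\xi U_a)]$ together with $1-\cos x\ge x^2/\pi^2$ for $|x|\le\pi$. For $|\xi|\le\pi$ this gives $1-\phi_a(\xi)\ge \xi^2\E U_a^2/\pi^2$, and $\E U_a^2>0$ is a universal constant. This covers $z\le\pi/2$.

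\emph{Large $z$.} For $z\ge\pi/2$, the explicit formulas give $|\phi_a(2z)|\le 4/z^2$ for M and $\le 4/z^2$ for B, which is $<1/2$ once $z\ge 3$, say. On the compact range $[\pi/2,3]$, I need $\phi_a(2z)<1$ strictly. This holds because $U_a$ has a density, so $\cos(2zU_a)=1$ only on a null set. Continuity then supplies a uniform positive margin.

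The only mildly delicate step is this compactness/strictness argument in the intermediate range. If explicit constants are wanted, I would instead bound $1-\phi_a$ numerically there. Everything else is routine calculus.
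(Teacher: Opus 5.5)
Your proposal is correct and follows essentially the paper's route. Both compute the two characteristic functions directly (the paper gets $\phi_{\mathrm M}$ from the difference-of-uniforms representation rather than by integration by parts), reduce via $q_a(z)=\tfrac12\{1+\phi_a(2z)\}$, and split into small and large $z$ with a second-moment lower bound for small $z$. The main difference is in the large-$z$ range, where the paper uses the sharper one-sided bounds $|\sinc z|\le\sin 1$ and $2\cos z(1-\cos z)\le 1/2$ for all $z\ge1$; this gives explicit constants directly and removes the need for your compactness argument on $[\pi/2,3]$.
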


\begin{proof}
The triangular variable is the difference of two independent
\(\operatorname{Unif}[0,1]\) variables, so its characteristic function
is the squared modulus of
\[
  \frac{e^{\mathrm i\xi}-1}{\mathrm i\xi},
\]
which proves
\eqref{eq:phi-M}.  Direct integration of
\eqref{eq:double-tent-density} gives
\[
  4\left[
    \int_0^{1/2}
    u\cos(\xi u)\dd u
    +
    \int_{1/2}^1
    (1-u)\cos(\xi u)\dd u
  \right],
\]
which simplifies to
\eqref{eq:phi-B}.  Equations
\eqref{eq:q-M} and
\eqref{eq:q-B} follow from $  \cos^2x=\frac12(1+\cos2x).$ For \(0\le z\le1\), concavity of sine on \([0,1]\) gives $\sin x\ge x\sin1.$ Therefore
\[
  1-\sqrt{q_a(z)}
  \ge
  \frac{1-q_a(z)}2
  =
  \frac12\E\sin^2(zU_a)
  \ge
  \frac{\sin^2(1)}2
  z^2\E U_a^2.
\]
A direct calculation gives
\[
  \E U_{\mathrm M}^2
  =
  \frac16,
  \qquad
  \E U_{\mathrm B}^2
  =
  \frac7{24}.
\]
For \(z\ge1\), $|\sinc z| \le  \sin1 < 1,$ so \(q_{\mathrm M}(z)\) is uniformly bounded away from one.  For BPS, $2\cos z(1-\cos z)
  \le 1/2,$ and hence \(q_{\mathrm B}(z)\le3/4\).  Combining the small- and
large-\(z\) bounds proves
\eqref{eq:q-gap}.
\end{proof}

\begin{lemma}[Discrete selector laws approximate their continuous limits]
\label{lem:selector-Wasserstein}
Let $U_{a,K}=J/K$ where \(J\) has law \(\lambda_{a,K}\) from
\eqref{eq:terminal-mul-law} or
\eqref{eq:terminal-bps-law}.  There is a coupling with \(U_a\) such
that
\begin{align}
  \E|U_{a,K}-U_a|
  \le
  \frac2K.
  \label{eq:selector-W1}
\end{align}
Consequently, for every \(z\ge0\),
\begin{align}
  \E|\cos(zU_{a,K})|
  \le
  \sqrt{q_a(z)}
  +
  \frac{2z}{K}.
  \label{eq:discrete-cos-bound}
\end{align}
\end{lemma}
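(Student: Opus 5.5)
The plan is to build an explicit coupling of $U_{a,K}=J/K$ with the continuous variable $U_a$ from the structural descriptions of the discrete laws derived in \Cref{sec:selector-minorization}, and then to deduce \eqref{eq:discrete-cos-bound} from the Lipschitz property of cosine together with Jensen.

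\emph{Multinomial case.} By \eqref{eq:mul-time-law}, $J=C-A$ where $A,C$ are independent and uniform on $\{0,\ldots,K-1\}$. Take $\xi_1,\xi_2$ independent and uniform on $[0,1)$, and set $A=\lfloor K\xi_1\rfloor$, $C=\lfloor K\xi_2\rfloor$ and $U_{\mathrm M}=\xi_2-\xi_1$. The latter has the triangular density \eqref{eq:triangular-density}. Since $|C/K-\xi_2|\le 1/K$ and $|A/K-\xi_1|\le1/K$, we get $|U_{\mathrm M,K}-U_{\mathrm M}|\le 2/K$ pointwise.

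\emph{BPS case.} From the derivation of \eqref{eq:bps-time-law}, with $n=K/2$ and $A,C$ independent and uniform on $\{0,\ldots,n-1\}$, we have $J=\epsilon(n-A+C)$, where $\epsilon=\pm1$ is an independent fair sign. Take $\xi_1,\xi_2$ uniform on $[0,1)$, set $A=\lfloor n\xi_1\rfloor$, $C=\lfloor n\xi_2\rfloor$, and define
\[
U_{\mathrm B}=\epsilon\left(\tfrac12-\tfrac{\xi_1}{2}+\tfrac{\xi_2}{2}\right).
\]
The magnitude $\tfrac12+\tfrac12(\xi_2-\xi_1)$ has a tent density on $[0,1]$ peaked at $1/2$, namely $4\min\{u,1-u\}$. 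After symmetrizing with $\epsilon$, this yields exactly the double-tent density \eqref{eq:double-tent-density}; I will check the normalization $\int 2\min\{|u|,1-|u|\}\,du=1/2\cdot2=1$ to confirm. Then
\[
|U_{\mathrm B,K}-U_{\mathrm B}|\le \frac{|A/n-\xi_1|+|C/n-\xi_2|}{2}\le \frac1n=\frac2K.
\]
Both couplings are therefore pointwise within $2/K$, which is stronger than \eqref{eq:selector-W1}.

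\emph{Proof of \eqref{eq:discrete-cos-bound}.} Since $|\cos x-\cos y|\le|x-y|$, we have
\[
\E|\cos(zU_{a,K})|\le\E|\cos(zU_a)|+z\,\E|U_{a,K}-U_a|\le \E|\cos(zU_a)|+\frac{2z}{K}.
\]
Cauchy--Schwarz then gives $\E|\cos(zU_a)|\le(\E\cos^2(zU_a))^{1/2}=\sqrt{q_a(z)}$, using the definition of $q_a$ in \Cref{lem:selector-characteristic}.

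The only step needing care is confirming that the discrete BPS representation, $n-A+C$ with a reflected sign, matches \eqref{eq:bps-time-law} exactly, so that the coupling really has marginal $\lambda_{\mathrm B,K}$. The count of pairs $(a,c)$ with $n-a+c=j$ equals $\min\{j,K-j\}$, each pair has probability $1/n^2=4/K^2$, and the sign contributes a factor $1/2$. This reproduces $2\min\{|j|,K-|j|\}/K^2$, so the marginal is correct.
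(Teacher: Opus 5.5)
Your proposal is correct and matches the paper's proof: the same representations $J=C-A$ and $J=\pm(n-A+C)$, a uniform-jitter coupling (your floor construction is the same coupling) with error at most $2/K$, and then the $z$-Lipschitz bound on $u\mapsto|\cos(zu)|$ followed by Cauchy--Schwarz. You also explicitly verify both discrete and continuous marginals and note that the bound holds pointwise (indeed within $1/K$), which supplies details the paper leaves implicit.
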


\begin{proof}
For multinomial selection, write $J=C-A$, where \(A,C\) are independent and uniform on
\(\{0,\ldots,K-1\}\).  Couple \(A/K\) and \(C/K\) with independent
uniform variables on \([0,1]\) by adding independent
\(\operatorname{Unif}[0,1]/K\) jitters.  The total error in the
difference is at most \(2/K\), and the continuous difference has
density
\eqref{eq:triangular-density}.

For BPS, set \(K=2n\).  Conditional on a right final extension, $  J=n-A+C,$ where \(A,C\) are independent and uniform on
\(\{0,\ldots,n-1\}\); a left extension gives the reflected law.
Coupling \(A/K\) and \(C/K\) with half of independent
\(\operatorname{Unif}[0,1]\) variables again incurs total error at most
\(2/K\), and the continuous reflected law has density
\eqref{eq:double-tent-density}.  This proves
\eqref{eq:selector-W1}.

The function $  u\longmapsto|\cos(zu)|$ is \(z\)-Lipschitz.  Hence
\[
  \E|\cos(zU_{a,K})|
  \le
  \E|\cos(zU_a)|
  +
  \frac{2z}{K}
  \le
  \sqrt{
    \E\cos^2(zU_a)
  }
  +
  \frac{2z}{K},
\]
which proves
\eqref{eq:discrete-cos-bound}.
\end{proof}

\begin{proof}[Proof of \Cref{thm:gaussian-time-interpolation}]
Let \(\alpha\ne0\) and choose an index \(i\) with \(\alpha_i\ge1\).
From \eqref{eq:Hermite-eigenvalue},
\[
  \left|
    \E_J
    \prod_{r=1}^d
    \cos(\omega_rhJ)^{\alpha_r}
  \right|
  \le
  \E_J|\cos(\omega_ihJ)|
  =
  \E|\cos(z_iU_{\sigma,K})|,
  \qquad
  z_i=\omega_i\widetilde T.
\]
By \Cref{lem:selector-characteristic,lem:selector-Wasserstein},
\[
  1-|\lambda_\alpha|
  \ge
  c_\sigma\min\{z_i^2,1\}
  -
  \frac{2z_i}{K}.
\]
Since \(z_i\ge\sqrt m\,\widetilde T\) and
\(z_i\le\sqrt L\,\widetilde T\),
\[
  1-|\lambda_\alpha|
  \ge
  c_\sigma\min\{m\widetilde T^2,1\}
  -
  C_\sigma\frac{\sqrt L\,\widetilde T}{K}.
\]
The Hermite basis diagonalizes the kernel, so taking the infimum over
\(\alpha\ne0\) proves \eqref{eq:gaussian-discrete-gap}.

Under \eqref{eq:gaussian-K-gap-condition}, the absolute spectral gap is at
least $  \frac{c_\sigma}{2}\min\{\widetilde a^2,1\}.$ If $  r_0=\frac{\dd\nu}{\dd\pi},$ and $  f_0=r_0-1,$ warmness gives \(\norm{f_0}_2^2\le M\).  Absolute spectral contraction yields
\[
  \norm{P^nf_0}_2
  \le
  \left(
    1-
    \frac{c_\sigma}{2}
    \min\{\widetilde a^2,1\}
  \right)^n
  \sqrt M.
\]
Using total variation at most one half of the \(L^2\) norm and solving
for \(n\) proves \eqref{eq:gaussian-interpolation-transitions}.
Multiplication by \(K=\widetilde T/h\) gives
\eqref{eq:gaussian-interpolation-work}.  Finally,
\[
  \widetilde T=\frac{\widetilde a}{\sqrt m},
  \qquad
  h=\widetilde\Theta(L^{-1/2}d^{-1/4}),
\]
so
\[
  \frac{\widetilde T}{h}
  =
  \widetilde\Theta(
    \widetilde a\sqrt\kappa\,d^{1/4}
  ),
\]
which proves \eqref{eq:gaussian-smooth-interpolation}.
\end{proof}

\end{document}